\newcommand{\nc}{\newcommand}
\renewcommand{\AA}{{\mathbb{A}}}
\nc{\CC}{{\mathbb{C}}}
\nc{\EE}{{\mathbb{E}}}
\nc{\LL}{{\mathbb{L}}}
\nc{\RR}{{\mathbb{R}}}
\nc{\PP}{{\mathbb{P}}}
\renewcommand{\P}{{\mathbb{P}}}
\nc{\OO}{{\mathbb{O}}}
\nc{\QQ}{{\mathbb{Q}}}
\nc{\VV}{{\mathbb{V}}}
\nc{\WW}{{\mathbb{W}}}
\nc{\XX}{{\mathbb{X}}}
\nc{\YY}{{\mathbb{Y}}}
\nc{\ZZ}{{\mathbb{Z}}}
\nc{\cA}{{\mathcal{A}}}
\nc{\cB}{{\mathscr{B}}}
\nc{\cC}{{\mathscr{C}}}
\nc{\cD}{{\mathscr{D}}}
\nc{\cE}{{\mathscr{E}}}
\nc{\cF}{{\mathscr{F}}}
\nc{\cG}{{\mathscr{G}}}
\nc{\cH}{{\mathscr{H}}}
\nc{\cI}{{\mathscr{I}}}
\nc{\cJ}{{\mathscr{J}}}
\nc{\cK}{{\mathscr{K}}}
\nc{\cL}{{\mathscr{L}}}
\nc{\cM}{{\mathscr{M}}}
\nc{\cN}{{\mathscr{N}}}
\nc{\cO}{{\mathscr{O}}}
\nc{\cP}{{\mathscr{P}}}
\nc{\cQ}{{\mathscr{Q}}}
\nc{\cR}{{\mathscr{R}}}
\nc{\cS}{{\mathscr{S}}}
\nc{\cT}{{\mathscr{T}}}
\nc{\cU}{{\mathscr{U}}}
\nc{\cV}{{\mathscr{V}}}
\nc{\cW}{{\mathscr{W}}}
\nc{\cX}{{\mathscr{X}}}
\nc{\cY}{{\mathscr{Y}}}
\nc{\cZ}{{\mathscr{Z}}}
\nc{\bA}{{\mathbf{A}}}
\nc{\bB}{{\mathbf{B}}}
\nc{\bC}{{\mathbf{C}}}
\nc{\bD}{{\mathbf{D}}}
\nc{\bE}{{\mathbf{E}}}
\nc{\bF}{{\mathbf{F}}}
\nc{\bG}{{\mathbf{G}}}
\nc{\bH}{{\mathbf{H}}}
\nc{\bI}{{\mathbf{I}}}
\nc{\bJ}{{\mathbf{J}}}
\nc{\bK}{{\mathbf{K}}}
\nc{\bL}{{\mathbf{L}}}
\nc{\bM}{{\mathbf{M}}}
\nc{\bN}{{\mathbf{N}}}
\nc{\bO}{{\mathbf{O}}}
\nc{\bP}{{\mathbf{P}}}
\nc{\bQ}{{\mathbf{Q}}}
\nc{\bR}{{\mathbf{R}}}
\nc{\bS}{{\mathbf{S}}}
\nc{\bT}{{\mathbf{T}}}
\nc{\bU}{{\mathbf{U}}}
\nc{\bV}{{\mathbf{V}}}
\nc{\bW}{{\mathbf{W}}}
\nc{\bX}{{\mathbf{X}}}
\nc{\bY}{{\mathbf{Y}}}
\nc{\bZ}{{\mathbf{Z}}}
\nc{\ba}{{\mathbf{a}}}
\nc{\bb}{{\mathbf{b}}}
\nc{\bc}{{\mathbf{c}}}
\nc{\bd}{{\mathbf{d}}}
\nc{\be}{{\mathbf{e}}}
\nc{\bg}{{\mathbf{g}}}
\nc{\bh}{{\mathbf{h}}}
\nc{\bi}{{\mathbf{i}}}
\nc{\bj}{{\mathbf{j}}}
\nc{\bk}{{\mathbf{k}}}
\nc{\bl}{{\mathbf{l}}}
\nc{\bm}{{\mathbf{m}}}
\nc{\bn}{{\mathbf{n}}}
\nc{\bo}{{\mathbf{o}}}
\nc{\bp}{{\mathbf{p}}}
\nc{\bq}{{\mathbf{q}}}
\nc{\br}{{\mathbf{r}}}
\nc{\bs}{{{s}}}
\nc{\bt}{{\mathbf{t}}}
\nc{\bu}{{\mathbf{u}}}
\nc{\bv}{{\mathbf{v}}}
\nc{\bw}{{\mathbf{w}}}
\nc{\bx}{{\mathbf{x}}}
\nc{\by}{{\mathbf{y}}}
\nc{\bz}{{\mathbf{z}}}
\nc{\fA}{{\mathfrak{A}}}
\nc{\fB}{{\mathfrak{B}}}
\nc{\fC}{{\mathfrak{C}}}
\nc{\fD}{{\mathfrak{D}}}
\nc{\fE}{{\mathfrak{E}}}
\nc{\fF}{{\mathfrak{F}}}
\nc{\fG}{{\mathfrak{G}}}
\nc{\fH}{{\mathfrak{H}}}
\nc{\fI}{{\mathfrak{I}}}
\nc{\fJ}{{\mathfrak{J}}}
\nc{\fK}{{\mathfrak{K}}}
\nc{\fL}{{\mathfrak{L}}}
\nc{\fM}{{\mathfrak{M}}}
\nc{\fN}{{\mathfrak{N}}}
\nc{\fO}{{\mathfrak{O}}}
\nc{\fP}{{\mathfrak{P}}}
\nc{\fQ}{{\mathfrak{Q}}}
\nc{\fR}{{\mathfrak{R}}}
\nc{\fS}{{\mathfrak{S}}}
\nc{\fT}{{\mathfrak{T}}}
\nc{\fU}{{\mathfrak{U}}}
\nc{\fV}{{\mathfrak{V}}}
\nc{\fW}{{\mathfrak{W}}}
\nc{\fX}{{\mathfrak{X}}}
\nc{\fY}{{\mathfrak{Y}}}
\nc{\fZ}{{\mathfrak{Z}}}
\nc{\fa}{{\mathfrak{a}}}
\nc{\fb}{{\mathfrak{b}}}
\nc{\fc}{{\mathfrak{c}}}
\nc{\fd}{{\mathfrak{d}}}
\nc{\fe}{{\mathfrak{e}}}
\nc{\ff}{{\mathfrak{f}}}
\nc{\fg}{{\mathfrak{g}}}
\nc{\fh}{{\mathfrak{h}}}
\nc{\fj}{{\mathfrak{j}}}
\nc{\fk}{{\mathfrak{k}}}
\nc{\fl}{{\mathfrak{l}}}
\nc{\fm}{{\mathfrak{m}}}
\nc{\fn}{{\mathfrak{n}}}
\nc{\fo}{{\mathfrak{o}}}
\nc{\fp}{{\mathfrak{p}}}
\nc{\fq}{{\mathfrak{q}}}
\nc{\fr}{{\mathfrak{r}}}
\nc{\fs}{{\mathfrak{s}}}
\nc{\ft}{{\mathfrak{t}}}
\nc{\fu}{{\mathfrak{u}}}
\nc{\fv}{{\mathfrak{v}}}
\nc{\fw}{{\mathfrak{w}}}
\nc{\fx}{{\mathfrak{x}}}
\nc{\fy}{{\mathfrak{y}}}
\nc{\fz}{{\mathfrak{z}}}
\nc{\sA}{{\mathsf{A}}}
\nc{\sB}{{\mathsf{B}}}
\nc{\sC}{{\mathsf{C}}}
\nc{\sD}{{\mathsf{D}}}
\nc{\sE}{{\mathsf{E}}}
\nc{\sF}{{\mathsf{F}}}
\nc{\sG}{{\mathsf{G}}}
\nc{\sH}{{\mathsf{H}}}
\nc{\sI}{{\mathsf{I}}}
\nc{\sJ}{{\mathsf{J}}}
\nc{\sK}{{\mathsf{K}}}
\nc{\sL}{{\mathsf{L}}}
\nc{\sM}{{\mathsf{M}}}
\nc{\sN}{{\mathsf{N}}}
\nc{\sO}{{\mathsf{O}}}
\nc{\sP}{{\mathsf{P}}}
\nc{\sQ}{{\mathsf{Q}}}
\nc{\sR}{{\mathsf{R}}}
\nc{\sS}{{\mathsf{S}}}
\nc{\sT}{{\mathsf{T}}}
\nc{\sU}{{\mathsf{U}}}
\nc{\sV}{{\mathsf{V}}}
\nc{\sW}{{\mathsf{W}}}
\nc{\sX}{{\mathsf{X}}}
\nc{\sY}{{\mathsf{Y}}}
\nc{\sZ}{{\mathsf{Z}}}
\nc{\sa}{{\mathsf{a}}}
\nc{\sd}{{\mathsf{d}}}
\nc{\se}{{\mathsf{e}}}
\nc{\sg}{{\mathsf{g}}}
\nc{\sh}{{\mathsf{h}}}
\nc{\si}{{\mathsf{i}}}
\nc{\sj}{{\mathsf{j}}}
\nc{\sk}{{\mathsf{k}}}
\nc{\sm}{{\mathsf{m}}}
\nc{\sn}{{\mathsf{n}}}
\nc{\so}{{\mathsf{o}}}
\nc{\sq}{{\mathsf{q}}}
\nc{\sr}{{\mathsf{r}}}
\nc{\st}{{\mathsf{t}}}
\nc{\su}{{\mathsf{u}}}
\nc{\sv}{{\mathsf{v}}}
\nc{\sw}{{\mathsf{w}}}
\nc{\sx}{{\mathsf{x}}}
\nc{\sy}{{\mathsf{y}}}
\nc{\sz}{{\mathsf{z}}}
\nc{\oA}{{\overline{A}}}
\nc{\oB}{{\overline{B}}}
\nc{\oC}{{\overline{C}}}
\nc{\oD}{{\overline{D}}}
\nc{\oE}{{\overline{E}}}
\nc{\oF}{{\overline{F}}}
\nc{\oG}{{\overline{G}}}
\nc{\oH}{{\overline{H}}}
\nc{\oI}{{\overline{I}}}
\nc{\oJ}{{\overline{J}}}
\nc{\oK}{{\overline{K}}}
\nc{\oL}{{\overline{L}}}
\nc{\oM}{{\overline{M}}}
\nc{\oN}{{\overline{N}}}
\nc{\oO}{{\overline{O}}}
\nc{\oP}{{\overline{P}}}
\nc{\oQ}{{\overline{Q}}}
\nc{\oR}{{\overline{R}}}
\nc{\oS}{{\overline{S}}}
\nc{\oT}{{\overline{T}}}
\nc{\oU}{{\overline{U}}}
\nc{\oV}{{\overline{V}}}
\nc{\oW}{{\overline{W}}}
\nc{\oX}{{\overline{X}}}
\nc{\oY}{{\overline{Y}}}
\nc{\oZ}{{\overline{Z}}}
\nc{\oa}{{\overline{a}}}
\nc{\ob}{{\overline{b}}}
\nc{\oc}{{\overline{c}}}
\nc{\od}{{\overline{d}}}
\nc{\of}{{\overline{f}}}
\nc{\og}{{\overline{g}}}
\nc{\oh}{{\overline{h}}}
\nc{\oi}{{\overline{i}}}
\nc{\oj}{{\overline{j}}}
\nc{\ok}{{\overline{k}}}
\nc{\ol}{{\overline{l}}}
\nc{\om}{{\overline{m}}}
\nc{\on}{{\overline{n}}}
\nc{\oo}{{\overline{o}}}
\nc{\op}{{\overline{p}}}
\nc{\oq}{{\overline{q}}}
\nc{\os}{{\overline{s}}}
\nc{\ot}{{\overline{t}}}
\nc{\ou}{{\overline{u}}}
\nc{\ov}{{\overline{v}}}
\nc{\ow}{{\overline{w}}}
\nc{\ox}{{\overline{x}}}
\nc{\oy}{{\overline{y}}}
\nc{\oz}{{\overline{z}}}
\nc{\tA}{{\tilde{A}}}
\nc{\tB}{{\tilde{B}}}
\nc{\tC}{{\tilde{C}}}
\nc{\tD}{{\tilde{D}}}
\nc{\tE}{{\tilde{E}}}
\nc{\tF}{{\tilde{F}}}
\nc{\tG}{{\tilde{G}}}
\nc{\tH}{{\tilde{H}}}
\nc{\tI}{{\tilde{I}}}
\nc{\tJ}{{\tilde{J}}}
\nc{\tK}{{\tilde{K}}}
\nc{\tL}{{\tilde{L}}}
\nc{\tM}{{\tilde{M}}}
\nc{\tN}{{\tilde{N}}}
\nc{\tO}{{\tilde{O}}}
\nc{\tP}{{\widetilde{P}}}
\nc{\tQ}{{\tilde{Q}}}
\nc{\tR}{{\tilde{R}}}
\nc{\tS}{{\tilde{S}}}
\nc{\tT}{{\tilde{T}}}
\nc{\tU}{{\tilde{U}}}
\nc{\tV}{{\tilde{V}}}
\nc{\tW}{{\tilde{W}}}
\nc{\tX}{{\widetilde{X}}}
\nc{\tY}{{\tilde{Y}}}
\nc{\tZ}{{\tilde{Z}}}
\nc{\hcX}{{\widehat{\cX}}}
\nc{\tcX}{{\widetilde{\cX}}}
\nc{\hpi}{{\hat{\pi}}}
\nc{\hrho}{{\hat{\rho}}}
\nc{\ta}{{\tilde{a}}}
\nc{\tb}{{\tilde{b}}}
\nc{\tc}{{\tilde{c}}}
\nc{\td}{{\tilde{d}}}
\nc{\te}{{\tilde{e}}}
\nc{\tf}{{\tilde{f}}}
\nc{\tg}{{\tilde{g}}}
\nc{\ti}{{\tilde{i}}}
\nc{\tj}{{\tilde{j}}}
\nc{\tk}{{\tilde{k}}}
\nc{\tl}{{\tilde{l}}}
\nc{\tm}{{\tilde{m}}}
\nc{\tn}{{\tilde{n}}}
\nc{\tp}{{\tilde{p}}}
\nc{\tq}{{\tilde{q}}}
\nc{\tr}{{\tilde{r}}}
\nc{\ts}{{\tilde{s}}}
\nc{\tu}{{\tilde{u}}}
\nc{\tv}{{\tilde{v}}}
\nc{\tw}{{\tilde{w}}}
\nc{\tx}{{\tilde{x}}}
\nc{\ty}{{\tilde{y}}}
\nc{\tz}{{\tilde{z}}}
\nc{\hA}{{\hat{A}}}
\nc{\hB}{{\hat{B}}}
\nc{\hC}{{\hat{C}}}
\nc{\hD}{{\hat{D}}}
\nc{\hE}{{\hat{E}}}
\nc{\hF}{{\hat{F}}}
\nc{\hG}{{\hat{G}}}
\nc{\hH}{{\hat{H}}}
\nc{\hI}{{\hat{I}}}
\nc{\hJ}{{\hat{J}}}
\nc{\hK}{{\hat{K}}}
\nc{\hL}{{\hat{L}}}
\nc{\hM}{{\hat{M}}}
\nc{\hN}{{\hat{N}}}
\nc{\hO}{{\hat{O}}}
\nc{\hP}{{\hat{P}}}
\nc{\hQ}{{\hat{Q}}}
\nc{\hR}{{\hat{R}}}
\nc{\hS}{{\hat{S}}}
\nc{\hT}{{\hat{T}}}
\nc{\hU}{{\hat{U}}}
\nc{\hV}{{\hat{V}}}
\nc{\hW}{{\hat{W}}}
\nc{\hX}{{\hat{X}}}
\nc{\hY}{{\hat{Y}}}
\nc{\hZ}{{\hat{Z}}}
\nc{\ha}{{\hat{a}}}
\nc{\hb}{{\hat{b}}}
\nc{\hc}{{\hat{c}}}
\nc{\hd}{{\hat{d}}}
\nc{\he}{{\hat{e}}}
\nc{\hf}{{\hat{f}}}
\nc{\hg}{{\hat{g}}}
\nc{\hh}{{\hat{h}}}
\nc{\hi}{{\hat{i}}}
\nc{\hj}{{\hat{j}}}
\nc{\hk}{{\hat{k}}}
\nc{\hl}{{\hat{l}}}
\nc{\hm}{{\hat{m}}}
\nc{\hn}{{\hat{n}}}
\nc{\ho}{{\hat{o}}}
\nc{\hp}{{\hat{p}}}
\nc{\hq}{{\hat{q}}}
\nc{\hr}{{\hat{r}}}
\nc{\hs}{{\hat{s}}}
\nc{\hu}{{\hat{u}}}
\nc{\hv}{{\hat{v}}}
\nc{\hw}{{\hat{w}}}
\nc{\hx}{{\hat{x}}}
\nc{\hy}{{\hat{y}}}
\nc{\hz}{{\hat{z}}}
\nc{\rO}{{\mathrm{O}}}
\nc{\rT}{{\mathrm{T}}}
\nc{\rD}{{\mathrm{D}}}
\nc{\rG}{{\mathrm{G}}}
\nc{\tcA}{{\widetilde{\cA}}}
\nc{\tcE}{{\widetilde{\cE}}}
\nc{\tfE}{{\widetilde{\fE}}}
\nc{\tcF}{{\widetilde{\cF}}}
\nc{\eps}{\varepsilon}
\nc{\lan}{\big\langle}
\nc{\ran}{\big\rangle}
\nc{\kk}{{\mathsf{k}}}
\nc{\bkk}{{\bar\kk}}
\nc{\bDelta}{{\boldsymbol\Delta}}
\nc{\quand}{\quad\text{and}\quad}
\nc{\qquand}{\qquad\text{and}\qquad}
\def\bw#1#2{\textstyle{\bigwedge\hskip-0.9mm^{#1}}\hskip0.2mm{#2}}
\nc{\perf}{\mathrm{perf}}
\DeclareMathOperator{\pr}{\mathrm{pr}}
\DeclareMathOperator{\Hom}{\mathrm{Hom}}
\DeclareMathOperator{\Ext}{\mathrm{Ext}}
\DeclareMathOperator{\End}{\mathrm{End}}
\DeclareMathOperator{\RHom}{\mathrm{RHom}}
\DeclareMathOperator{\Tor}{\mathrm{Tor}}
\DeclareMathOperator{\CExt}{\mathscr{E}\mathit{xt}}
\DeclareMathOperator{\RCHom}{\mathrm{R}\mathscr{H}\mathit{om}}
\DeclareMathOperator{\Hilb}{\mathrm{Hilb}}
\DeclareMathOperator{\Spec}{\mathrm{Spec}}
\DeclareMathOperator{\Bl}{\mathrm{Bl}}
\DeclareMathOperator{\Pic}{\mathrm{Pic}}
\DeclareMathOperator{\coh}{\mathrm{coh}}
\DeclareMathOperator{\Ker}{\mathrm{Ker}}
\DeclareMathOperator{\Ima}{\mathrm{Im}}
\DeclareMathOperator{\Cone}{\mathrm{Cone}}
\DeclareMathOperator{\Gr}{\mathrm{Gr}}
\DeclareMathOperator{\Fl}{\mathrm{Fl}}
\DeclareMathOperator{\Gm}{{\mathrm{G}_{\mathrm{m}}}}
\DeclareMathOperator{\PGL}{\mathrm{PGL}}
\DeclareMathOperator{\id}{\mathrm{id}}
\DeclareMathOperator{\rank}{\mathrm{rk}}
\DeclareMathOperator{\ch}{\mathrm{ch}}
\DeclareMathOperator{\Br}{\mathrm{Br}}
\DeclareMathOperator{\mmod}{\mathrm{-mod}}
\DeclareMathOperator{\DP}{{\mathfrak{DP}_6}}
\theoremstyle{plain}
\newtheorem{theorem}{Theorem}[section]
\newtheorem{lemma}[theorem]{Lemma}
\newtheorem{proposition}[theorem]{Proposition}
\newtheorem{corollary}[theorem]{Corollary}
\theoremstyle{definition}
\newtheorem{definition}[theorem]{Definition}
\theoremstyle{remark}
\newtheorem{remark}[theorem]{Remark}
\title{Derived categories of families of sextic del Pezzo surfaces}
\author{Alexander Kuznetsov}
\address{{\sloppy
\parbox{0.9\textwidth}{
Algebraic Geometry Section, Steklov Mathematical Institute of Russian Academy of Sciences,\\
8 Gubkin str., Moscow 119991 Russia
\\[5pt]
The Poncelet Laboratory, Independent University of Moscow
\hfill\\[5pt]
Laboratory of Algebraic Geometry, National Research University Higher School of Economics, Russian Federation
}\bigskip}}
\email{akuznet@mi.ras.ru}
\date{}
\thanks{I was partially supported by the Russian Academic Excellence Project ``5-100'', by RFBR grant 15-01-02164, and by the Simons foundation.}
\begin{document}

\begin{abstract}
We construct a natural semiorthogonal decomposition for the derived category of an arbitrary flat family of sextic del Pezzo surfaces with at worst du Val singularities.
This decomposition has three components equivalent to twisted derived categories of finite flat schemes of degrees 1, 3, and 2 over the base of the family.
We provide a modular interpretation for these schemes and compute them explicitly in a number of standard families.
For two such families the computation is based on a symmetric version of homological projective duality 
for $\P^2 \times \P^2$ and $\P^1 \times \P^1 \times \P^1$, which we explain in an appendix.
\end{abstract}

\maketitle


\section{Introduction}

In this paper we describe the bounded derived category of coherent sheaves on an arbitrary flat family 
of del Pezzo surfaces of canonical degree~6 with du Val singularities.
Our description  clarifies and makes more precise results of~\cite{blunk2011derived} and~\cite{auel2015semiorthogonal}
about \emph{smooth} sextic del Pezzo surfaces over non-closed fields.
We expect our results to be useful for a description of derived categories of varieties, 
that admit a structure of a family of sextic del Pezzo surfaces.
There are at least two interesting examples of this sort.

One example is provided by special cubic fourfolds of discriminant 18. 
In~\cite{addington2016cubic} it was shown, that a general such cubic fourfold contains an elliptic scroll, 
and after its blowup the cubic fourfold acquires a structure of a family of sextic del Pezzo surfaces over $\P^2$
(this is quite similar to the case of cubic fourfolds containing a plane, when blowing up the plane one gets a family of two-dimensional quadrics over $\P^2$).
Another example is provided by Gushel--Mukai fourfolds (\cite{debarre-kuznetsov, kuznetsov-perry}) containing a Veronese surface.
The results of this paper should have a direct application in these two cases and provide a description of the derived categories of cubic and Gushel--Mukai fourfolds of these types,
and in particular, a geometric interpretation of their K3 categories (see~\cite{kuznetsov2010cubic,kuznetsov2016perry}).


Our main result (Theorem~\ref{theorem:sod-dp6}) proves, 
that given a flat family $\cX \to S$ all of whose fibers are sextic del Pezzo surfaces with at worst du Val singularities,
there are two finite flat morphisms $\cZ_2 \to S$ and~$\cZ_3 \to S$ of degrees $3$ and $2$ respectively, 
with Brauer classes $\beta_{\cZ_2}$ and $\beta_{\cZ_3}$ of order $2$ and $3$ respectively,
and an~$S$-linear semiorthogonal decomposition of the bounded derived category of coherent sheaves
\begin{equation*}
\bD(\cX) = \langle \bD(S), \bD(\cZ_2,\beta_{\cZ_2}), \bD(\cZ_3,\beta_{\cZ_3}) \rangle,
\end{equation*}
where the second and the third components are the twisted derived categories.

\medskip

To construct the semiorthogonal decomposition, we first investigate in detail the case 
when $S$ is the spectrum of an algebraically closed field $\kk$, 
and so $\cX$ is just a single sextic del Pezzo surface $X$ over~$\kk$ with du Val singularities.
In this case, to describe $\bD(X)$ we first consider the minimal resolution of singularities~\mbox{$\pi \colon \tX \to X$}.
Here $\tX$ is a weak del Pezzo surface of degree 6; it has at most three $(-2)$-curves contracted by $\pi$, 
which in the worst case form two chains of lengths 2 and 1.
The category $\bD(X)$ then can be identified with the Verdier quotient of the category $\bD(\tX)$ 
by the subcategory generated by the sheaves~$\cO_\Delta(-1)$, 
for $\Delta$ running through the set of all $(-2)$-curves.

On the other hand, the surface $\tX$ can be realized as an iterated blowup of $\P^2$, and so comes with a natural exceptional collection.
We mutate this collection slightly (Proposition~\ref{proposition:tx-excol}) 
to get a semiorthogonal decomposition $\bD(\tX) = \langle \tcA_1, \tcA_2, \tcA_3 \rangle$ 
such that for each $(-2)$-curve $\Delta$ the sheaf $\cO_\Delta(-1)$ is contained in one of the components $\tcA_i$ (Lemma~\ref{lemma:ker-pis});
when $X$ is smooth this semiorthogonal decomposition coincides with the three-block semiorthogonal decomposition from~\cite{karpov-nogin}.
After that we prove (Theorem~\ref{theorem:dbx}) that~$\bD(X)$ has a semiorthogonal decomposition $\bD(X) = \langle \cA_1, \cA_2, \cA_3 \rangle$, 
whose components are Verdier quotients of the categories $\tcA_i$ by the subcategories generated by appropriate sheaves~$\cO_\Delta(-1)$.
An explicit computation shows that the categories~$\tcA_i$ are equivalent to products of derived categories of so-called Auslander algebras,
and their Verdier quotients $\cA_i$ are equivalent to derived categories of zero-dimensional schemes of lengths~$1$, $3$, and~$2$ respectively.


This approach, however, does not generalize to families of del Pezzo surfaces, since one cannot construct a relative minimal resolution.
To deal with this problem, we go back to the case of a single del Pezzo surface $X$ (still over an algebraically closed field), 
and provide a \emph{modular interpretation} for the components of the constructed semiorthogonal decomposition. 
Namely, we show that the zero-dimensional schemes associated with the nontrivial components $\cA_2$ and $\cA_3$ 
(the component~$\cA_1$ is generated by the structure sheaf $\cO_X$ and has a natural counterpart in any family)
can be identified with the moduli spaces of semistable sheaves on $X$ with Hilbert polynomials $h_d(t) = (3t + d)(t+1)$ 
for~$d = 2$ and~$d = 3$ respectively, see Theorem~\ref{theorem:moduli}.
These moduli spaces turn out to be fine, and the corresponding universal families provide 
fully faithful Fourier--Mukai functors from derived categories of the moduli spaces into $\bD(X)$.

This description, of course, can be easily used in a family $\cX \to S$.
We consider the relative moduli spaces $\cM_d(\cX/S)$ of semistable sheaves on fibers of $\cX$ over $S$ with Hilbert polynomials $h_d(t)$.
Now, however, the moduli spaces need not to be fine, so we consider their coarse moduli spaces $\cZ_d$ and the Brauer obstruction classes $\beta_{\cZ_d}$ on them.
Then the universal families are well defined as $\beta_{\cZ_d}^{-1}$-twisted sheaves on $\cX \times_S \cZ_d$ and define Fourier--Mukai functors from the twisted derived categories $\bD(\cZ_d,\beta_{\cZ_d})$ to $\bD(\cX)$.
Using the results over an algebraically closed field described earlier, we show in Theorem~\ref{theorem:sod-dp6} that these functors are fully faithful, 
and together with the pullback functor $\bD(S) \to \bD(\cX)$ form the required semiorthogonal decomposition.


The question of understanding the derived category of a family $\cX \to S$ of sextic del Pezzo surfaces thus reduces to understanding the schemes $\cZ_2 \to S$ and $\cZ_3 \to S$ together with their Brauer classes.
We provide a Hilbert scheme interpretation of these. 
Namely, we show in Proposition~\ref{proposition:fd-md} that 
the relative Hilbert scheme $F_2(\cX/S)$ of conics in the fibers of $\cX \to S$ is a $\P^1$-bundle over $\cZ_2$ with associated Brauer class $\beta_{\cZ_2}$, and
the relative Hilbert scheme $F_3(\cX/S)$ of twisted cubic curves is a~$\P^2$-bundle over $\cZ_3$ with associated Brauer class $\beta_{\cZ_3}$.
We also prove that the relative Hilbert scheme of lines $F_1(\cX/S)$ can be written as $F_1(\cX/S) \cong \cZ_2 \times_S \cZ_3$.

Another useful result is the following regularity criterion.
We show that the total space $\cX$ of a flat family $\cX \to S$ of sextic del Pezzo surfaces with du Val singularities is regular 
if and only if the three schemes $S$, $\cZ_2$, and~$\cZ_3$, associated with it, are all regular (Proposition~\ref{proposition:smoothness});
in the same vein, the morphism $\cX \to S$ is smooth 
if and only if the morphisms $\cZ_2 \to S$ and $\cZ_3 \to S$ are (Remark~\ref{remark:smoothness}).

This leads to the following description of the schemes~$\cZ_2$ and~$\cZ_3$ in case of regular~$\cX$ --- 
the schemes~$\cZ_2$ and~$\cZ_3$ are isomorphic to the normal closures of their generic fibers over~$S$.
This shows that to understand~$\cZ_2$ and~$\cZ_3$ globally, it is enough to understand them over any dense open subset, 
or even over the general point of~$S$ if~$S$ is integral.
In particular, if $\cX \to S$ and $\cX' \to S$ are two families with regular total spaces 
and~$F_d(\cX/S)$ is birational (over~$S$) to~$F_d(\cX'/S)$ for some $d \in \{2, 3\}$, 
then~$\cZ_d(\cX/S) \cong \cZ_d(\cX'/S)$ and~$\beta_{\cZ_d(\cX/S)} = \beta_{\cZ_d(\cX'/S)}$ 
(Corollary~\ref{corollary:birational-isomorphism-fd}).
We expect this property to be very useful in geometric applications mentioned at the beginning of the Introduction.


We finish the paper by an explicit description of the schemes $\cZ_2$ and $\cZ_3$ for some ``standard'' families of sextic del Pezzo surfaces.

The first standard family is the family of codimension 2 linear sections of~$\P^2 \times \P^2$.
In this case we show that $\cZ_3 = S \sqcup S$, $\cZ_2$ is the scheme of ``degenerate linear equations'' of the fibers of $\cX$, 
and both Brauer classes are trivial, see Proposition~\ref{proposition:dp-p2p2}.

The second standard family is the family of hyperplane sections of~$\P^1 \times \P^1  \times \P^1$.
In this case we show that $\cZ_2 = S \sqcup S \sqcup S$, $\cZ_3$ is the double cover of $S$ branched over the divisor of ``degenerate linear equations'' of the fibers of $\cX$, 
and again both Brauer classes are trivial, see Proposition~\ref{proposition:dp-p1p1p1}.

In both cases we deduce the required description of $\bD(\cX)$ from a symmetric version of homological projective duality 
for $\P^2 \times \P^2$ and $\P^1 \times \P^1 \times \P^1$ respectively that we discuss in Appendices~\ref{appendix:hpd-p2p2}
and~\ref{appendix:hpd-p1p1p1}.
Note that the description via the homological projective duality allows to extend the general description of~$\bD(\cX)$ 
to a wider class of families of sextic del Pezzo surfaces, allowing in particular non-integral degenerations. 
In these cases the schemes~$\cZ_2$ and $\cZ_3$ controlling the components of $\bD(\cX)$ become non-flat over $S$ (see Remark~\ref{remark:extended-family}).

We also consider families of relative anticanonical models of the blowups of $\P^2$ (resp.\ of $\P^1 \times \P^1$) in length 3 (resp.\ length 2) subschemes.
We show that in these cases one of the schemes $\cZ_2$ and $\cZ_3$ coincides with the family of the blowup centers, while the other is obtained by gluing appropriate number of copies of $S$,
see Propositions~\ref{proposition:blowup-p2} and~\ref{proposition:blowup-p1p1} for details.


Of course, the approach used in this paper can be applied to del Pezzo families of other degree.
In case of a single del Pezzo surface over an algebraically closed field one should analyze 
possible configurations of $(-2)$-curves on its weak del Pezzo resolution and find a semiorthogonal decomposition
such that for any $(-2)$-curve $\Delta$ the sheaf $\cO_\Delta(-1)$ is contained in one of its components.
Most probably, (weak del Pezzo analogues of) the three-block collections of Karpov and Nogin~\cite{karpov-nogin} should be used here.
This approach definitely should work for del Pezzo surfaces of degrees $d \ge 5$, and we leave it to the readers to check the results it leads to.

For $d \le 4$, however, the results of~\cite[Theorem~5.1]{auel2015semiorthogonal} show 
that an $S$-linear semiorthogonal decomposition for a general flat family~$\cX \to S$ 
whose components are twisted derived categories of finite coverings of~$S$ does not exist.
So, a new idea is needed to treat this case.
We hope that the approach developed in~\cite{KKS} will be useful.

%


The paper is organized as follows.
In Section~\ref{section:preliminaries} we discuss the geometry of sextic del Pezzo surfaces with du Val singularities 
over an algebraically closed field and remind some general results about resolutions of rational singularities and Grothendieck duality.
In Section~\ref{section:dp6-single} we describe the derived category of a single del Pezzo surface with du Val singularities 
over an algebraically closed field.
In Section~\ref{section:moduli-spaces} we provide a modular interpretation for this description 
and discuss the relation of the corresponding moduli spaces to Hilbert schemes of curves.
In Section~\ref{section:families} we prove the main result of the paper --- the semiorthogonal decomposition 
of the derived category for a family of sextic del Pezzo surfaces with du Val singularities,
and discuss some properties of this decomposition.
In particular, we relate regularity of the total space~$\cX$ of the family to that of~$S$, $\cZ_2$, and~$\cZ_3$.
In Section~\ref{section:special-families} we describe the schemes $\cZ_2$ and $\cZ_3$ for standard families of sextic del Pezzo surfaces.

In Appendix~\ref{appendix:auslander} we discuss the derived categories of Auslander algebras and their relation to derived categories of zero-dimensional schemes.
In Appendix~\ref{appendix:moduli-stack} we show that the moduli stack of sextic del Pezzo surfaces is smooth.
Finally, in Appendices~\ref{appendix:hpd-p2p2} and~\ref{appendix:hpd-p1p1p1} we describe the symmetric homological projective duality for~$\P^2 \times \P^2$, $\Fl(1,2;3)$, and $\P^1 \times \P^1 \times \P^1$ respectively.


{\bf Conventions.}
Throughout the paper we work over a field $\kk$, whose characteristic is assumed to be distinct from~2 and~3.
In sections~\ref{section:preliminaries}, \ref{section:dp6-single} and~\ref{section:moduli-spaces} we assume 
that $\kk$ is algebraically closed, while in sections~\ref{section:families} and~\ref{section:special-families} we leave this assumption.
For a scheme $X$ we denote by $\bD(X)$ the bounded derived category of coherent sheaves on $X$, 
and unless something else is specified explicitly, this is what we mean by a derived category.
All functors that we consider are derived --- for instance $\otimes$ stands for the derived tensor product, 
$f^*$ and~$f_*$ stand for the derived pullback and pushforward functors.
If we want to consider the classical pullback or pushforward, we write $L_0f^*$ and $R^0f_*$ respectively 
(and similarly for other classical functors).
We think of the Brauer group of a scheme as of the group of Morita-equivalence classes of Azumaya algebras on it.
For a Brauer class $\beta$ on a scheme $X$ we denote by $\bD(X,\beta)$ the twisted bounded derived category of coherent sheaves.
We refer to~\cite{huybrechts} for an introduction into derived categories, and to~\cite{icm2014} and references therein 
for an introduction into semiorthogonal decompositions.


{\bf Acknowledgement.}
During the work on the paper I benefited from discussions with many people.
Let me mention 
Nick Addington,
Valery Alexeev, 
Marcello Bernardara,
Sasha Efimov,
Shinnosuke Okawa, 
Alex Perry, 
Yura Prokhorov,
Jenya Tevelev
--- I thank them all for their help.
I am also grateful to the referee for useful comments and suggestions.


\section{Preliminaries}
\label{section:preliminaries}

In this section $\kk$ is an algebraically closed field of characteristic distinct from~2 and~3.

\subsection{Sextic del Pezzo surfaces}
\label{subsection:dp6}

For purposes of this paper we adopt the following definition.

\begin{definition}\label{definition:dp6}
A {\sf sextic du Val del Pezzo surface} is a normal integral projective surface $X$ over a field $\kk$ with at worst du Val singularities and ample anticanonical class such that $K_X^2 = 6$.
\end{definition}

Recall that du Val surface singularities are just canonical singularities or, equivalently, rational double points.
In particular, any surface $X$ with du Val singularities is Gorenstein, so $\omega_X$ is a line bundle, $K_X$ is a Cartier divisor, and its square is well-defined.

Let $\pi \colon \tX \to X$ be the minimal (in particular crepant) resolution of singularities of $X$.
It is well-known (see, e.g.~\cite[Section~8.4.2]{dolgachev2012cag}) that the surface $\tX$ is rational and can be obtained from $\P^2$ by a sequence of three blowups of a point, i.e., we have a diagram
\begin{equation*}
X \xleftarrow{\ \pi\ } \tX = X_3 \to X_2 \to X_1 \to X_0 = \P^2,
\end{equation*}
where each map $X_i \to X_{i-1}$ is the blowup of a point $P_i \in X_{i-1}$.
We denote by $h$ the hyperplane class on $\P^2$ and its pullback to $\tX$.
We denote by $E_i \subset \tX$ the pullback (i.e., the total preimage) to $\tX$ of the exceptional divisor of $X_i \to X_{i-1}$ and by $e_i$ its class in $\Pic(\tX)$.
The following result is standard.

\begin{lemma}
\label{lemma:pic-tx}
We have $\Pic(\tX) \cong \ZZ\langle h, e_1,e_2,e_3 \rangle$, with $h^2 = 1$, $e_i^2 = -1$, $he_i = e_ie_j = 0$ for all $i \ne j$.
Moreover, 
\begin{equation}\label{eq:ktx}
K_\tX = -3h + e_1 + e_2 + e_3 = \pi^*K_X.
\end{equation}
\end{lemma}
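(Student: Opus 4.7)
The plan is to combine two well-known pieces: (i) the behavior of the Picard group and intersection form under a point blowup, iterated along the given tower $\tX = X_3 \to X_2 \to X_1 \to \P^2$, and (ii) the crepancy of the minimal resolution of du Val singularities.

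For the Picard group and intersection numbers, I would argue by induction on $i$. On $X_0 = \P^2$, we have $\Pic(X_0) = \ZZ h$ with $h^2 = 1$. For each blowup $\sigma_i \colon X_i \to X_{i-1}$ at a smooth point $P_i$ with new exceptional divisor $\varepsilon_i$, the standard formulas give $\Pic(X_i) = \sigma_i^*\Pic(X_{i-1}) \oplus \ZZ\cdot[\varepsilon_i]$, together with $\varepsilon_i^2 = -1$, $\sigma_i^* D \cdot \varepsilon_i = 0$ for every $D \in \Pic(X_{i-1})$, and $K_{X_i} = \sigma_i^* K_{X_{i-1}} + \varepsilon_i$. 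Since $e_i \in \Pic(\tX)$ is by definition the pullback of $\varepsilon_i$ along the tower $\tX \to X_i$, iterating gives $\Pic(\tX) \cong \ZZ\langle h,e_1,e_2,e_3\rangle$.

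The intersection relations then follow from the projection formula applied to each blowup in turn: pushing $e_i \cdot e_j$ (with $i<j$) down to $X_j$ turns it into $e_i^{(j)}\cdot\varepsilon_j$, where $e_i^{(j)}$ is a pullback from $X_{j-1}$, so applying projection formula once more and using that $(\sigma_j)_*\varepsilon_j = 0$ kills it; the same argument with the map $\tX \to \P^2$ handles $h\cdot e_i = 0$. The relations $h^2 = 1$ and $e_i^2 = -1$ are likewise invariant under further pullback by projection formula, reducing them to the base case. Summing the inductive formula for $K_{X_i}$ yields $K_\tX = -3h + e_1 + e_2 + e_3$.

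The last equality $K_\tX = \pi^* K_X$ is where the du Val hypothesis enters: for canonical (equivalently, du Val) surface singularities the minimal resolution is crepant, i.e., the discrepancy divisor vanishes, which is exactly the statement $K_\tX = \pi^* K_X$. This can be cited from a standard reference (e.g.\ Reid's Young Person's Guide) or, equivalently, deduced from the fact that each exceptional $(-2)$-curve $\Delta$ on $\tX$ satisfies $K_\tX \cdot \Delta = 0$, which pins down the pullback uniquely since $\pi_*K_\tX = K_X$. There is essentially no hard step here; the only point worth being careful about is that the $e_i$ are total transforms rather than proper transforms, so one must not confuse them with the classes of the irreducible exceptional components on $\tX$ (whose intersection pattern depends on the infinitely-near configuration of $P_1,P_2,P_3$), while the relations above hold universally precisely because they involve total transforms.
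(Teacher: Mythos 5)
Your proof is correct and is precisely the standard argument the paper has in mind: the lemma is stated without proof ("The following result is standard"), and you supply the expected combination of the blowup formulas for $\Pic$, the intersection form, and $K$, together with crepancy of the minimal resolution of du Val (canonical) singularities. Your remark that the $e_i$ are total transforms, so the relations $e_ie_j=0$ hold regardless of the infinitely-near configuration, is exactly the right point of care.
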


The surface $X$ is the anticanonical model of $\tX$.
In other words, $X$ is obtained from $\tX$ by contraction of all $(-2)$-curves.
By~\cite[Section~8.4.2]{dolgachev2012cag} there are six possibilities for configurations of the blowup centers and $(-2)$-curves on $\tX$
(see Table~\ref{table:dp6} below for a picture).
\begin{description}
\item[Type 0] 
Neither of $P_i$ lies on the exceptional divisor in $X_{i-1}$ and their images in $\P^2$ do not lie on a common line.
Then $\tX$ contains no $(-2)$-curves and $X = \tX$ is smooth.
\item[Type 1] 
Neither of $P_i$ lies on the exceptional divisor in $X_{i-1}$ but their images in $\P^2$ lie on a common line.
Then $\tX$ contains a unique $(-2)$-curve (the strict transform $\Delta_{123}$ of that line) and~$X$ has one~$A_1$ singularity.
\item[Type 2] 
The point $P_2$ lies on the exceptional divisor of $X_1 \to X_0$, the point $P_3$ is away from the exceptional divisors, 
and the line through $P_1$ in the direction of $P_2$ on $\P^2$ does not pass through the image of $P_3$.
Then $\tX$ contains a unique $(-2)$-curve (the strict transform $\Delta_{12}$ of the exceptional divisor of $X_1 \to X_0$) and $X$ has one $A_1$ singularity.
\item[Type 3] 
The point $P_2$ lies on the exceptional divisor of $X_1 \to X_0$, the point $P_3$ is away from the exceptional divisors, 
but the line through $P_1$ in the direction of $P_2$ on $\P^2$ passes through the image of $P_3$.
Then $\tX$ contains two disjoint $(-2)$-curves 
(the strict transforms $\Delta_{123}$ and $\Delta_{12}$ of the line on~$\P^2$ and of the exceptional divisor of $X_1 \to X_0$, respectively) 
and $X$ has two $A_1$ singularities.
\item[Type 4] 
The point $P_2$ lies on the exceptional divisor of $X_1 \to X_0$, the point $P_3$ lies on the exceptional divisor of $X_2 \to X_1$, 
and the strict transform $L_{12}$ of the line through $P_1$ in the direction of $P_2$ does not contain $P_3$.
Then $\tX$ contains a 2-chain of $(-2)$-curves 
(the strict transforms $\Delta_{12}$ and $\Delta_{23}$ of the exceptional divisors of $X_1 \to X_0$ and $X_2 \to X_1$, respectively) 
and $X$ has one $A_2$ singularity.
\item[Type 5] 
The point $P_2$ lies on the exceptional divisor of $X_1 \to X_0$, the point $P_3$ lies on the exceptional divisor of $X_2 \to X_1$, 
and the strict transform of the line through $P_1$ in the direction of $P_2$ contains $P_3$.
Then $\tX$ contains a 2-chain of $(-2)$-curves and one more $(-2)$-curve disjoint from the chain (the strict transforms $\Delta_{123}$, $\Delta_{12}$ and $\Delta_{23}$ of the line 
and the exceptional divisors of~$X_1 \to X_0$ and $X_2 \to X_1$ respectively) and $X$ has one $A_2$ singularity and one $A_1$ singularity.
\end{description}

For reader's convenience we draw the configurations of exceptional curves on sextic del Pezzo surfaces of all types. 
Red thick lines are the $(-2)$-curves, while the thin lines are $(-1)$-curves. 
We denote by~$\bDelta = \bDelta(X)$ the set of all $(-2)$-curves on $\tX$. 
Note that the $(-2)$ curves (when they exist) on $\tX$ are contained in the following linear systems:
\begin{equation*}
\Delta_{12} = E_1 - E_2 \in |e_1 - e_2|,
\qquad 
\Delta_{23} = E_2 - E_3 \in |e_2 - e_3|,
\qquand
\Delta_{123} \in |h - e_1 - e_2 - e_3|.
\end{equation*}
We denote by $L_{ij}$ the strict transform of the line connecting (the images on $\P^2$ of) the points $P_i$ and $P_j$.

\begin{table}[h]
\nc{\xfactor}{.3}
\nc{\yfactor}{.25}
\begin{tabular}{|c|c|c|}
\hline
\begin{tikzpicture}[xscale = \xfactor, yscale = \yfactor]  
\draw (2,0) -- (1,2) node [left] {$E_1$} -- (0,4) -- (1,6) node [left] {$L_{13}$} -- (2,8) -- (4,8) node [above] {$E_3$} -- (6,8) -- (7,6) node [right] {$L_{23}$} -- (8,4) -- (7,2) node [right] {$E_2$} -- (6,0) -- (4,0) node [above] {$L_{12}$} -- (2,0);
\end{tikzpicture}
 
&

\begin{tikzpicture}[xscale = \xfactor, yscale = \yfactor]
\draw[ultra thick,  color = red] (0,1) -- (2,2) node [below] {$\Delta_{12}$} -- (4,3);
\draw (4,3) -- (6,2) node [below] {$E_2$} -- (8,1);
\draw (0,0) -- (0,4) node [left] {$L_{13}$} -- (0,8);
\draw (0,7) -- (4,7) node [above] {$E_3$} -- (6,7) node [right] {\hphantom{$\Delta_{123}$}};
\draw (8,0) -- (8,4) node [right] {$L_{12}$} -- (8,8);
\end{tikzpicture}

&

\begin{tikzpicture}[xscale = \xfactor, yscale = \yfactor]
\draw[ultra thick,  color = red] (0,1)  -- (2,2) node[above left] {$\Delta_{12}$} -- (4,3);
\draw[ultra thick,  color = red] (4,3) -- (6,2) node [above right] {$\Delta_{23}$} -- (8,1);
\draw (8,1)  -- (10,2) node[below right] {$E_{3}$} -- (12,3);
\draw (6,0) -- (6,8) node [left] {$L_{12}$};
\end{tikzpicture}

\\
\hline 
Type 0, $\bDelta = \varnothing$
&
Type 2, $\bDelta = \{ \Delta_{12} \}$
&
Type 4, $\bDelta = \{ \Delta_{12}, \Delta_{23} \}$

\\
\hline 
\hline

\begin{tikzpicture}[xscale = \xfactor, yscale = \yfactor]
\draw node [left] {\hphantom{$E_1$}} (0,1) -- (8,1) node [right] {$E_1$};
\draw (0,4) -- (8,4) node [right] {$E_2$};
\draw (0,7) -- (8,7) node [right] {$E_3$};
\draw[ultra thick,  color = red] (4,0) -- (4,9) node [right] {$\Delta_{123}$};
\end{tikzpicture}

&

\begin{tikzpicture}[xscale = \xfactor, yscale = \yfactor]
\draw[ultra thick,  color = red] (0,1) -- (2,2) node [below] {$\Delta_{12}$} -- (4,3);
\draw (4,3) -- (6,2) node [below] {$E_2$} -- (8,1);
\draw[ultra thick,  color = red] (8,0) -- (8,4) node [right] {$\Delta_{123}$} -- (8,8);
\draw (0,7) node [left] {\hphantom{$L_{13}$}} -- (4,7) node [above] {$E_3$} -- (8,7);
\end{tikzpicture}

&

\begin{tikzpicture}[xscale = \xfactor, yscale = \yfactor]
\draw[ultra thick,  color = red] (0,1)  -- (2,2) node[above left] {$\Delta_{12}$} -- (4,3);
\draw[ultra thick,  color = red] (4,3) -- (6,2) node [above right] {$\Delta_{23}$} -- (8,1);
\draw (8,1)  -- (10,2) node[below] {$E_{3}$} -- (12,3);
\draw[ultra thick,  color = red] (12,0) -- (12,8) node [left] {$\Delta_{123}$};
\end{tikzpicture}

\\
\hline

Type 1, $\bDelta = \{ \Delta_{123} \}$
&
Type 3, $\bDelta = \{ \Delta_{12}, \Delta_{123} \}$
&
Type 5, $\bDelta = \{ \Delta_{12}, \Delta_{23}, \Delta_{123} \}$

\\
\hline
\end{tabular}
\medskip
\caption{Configurations of exceptional curves on sextic del Pezzo surfaces}
\label{table:dp6}
\end{table}
\begin{table}[h]\tabcolsep=2em
\nc{\xfactor}{.7}
\nc{\yfactor}{.7}
\begin{tabular}{cccc}
\begin{tikzpicture}[xscale = \xfactor, yscale = \yfactor]
\filldraw[black] (0,0) circle (.2em) -- (1,0) circle (.2em) -- (2,1) circle (.2em) -- (2,2) circle (.2em) -- (1,2) circle (.2em) -- (0,1) circle (.2em) -- (0,0);
\filldraw (1,1) circle (.2em);
\end{tikzpicture}
&
\begin{tikzpicture}[xscale = \xfactor, yscale = \yfactor]
\filldraw[black] (1,0) circle (.2em) -- (2,1) circle (.2em) -- (2,2) circle (.2em) -- (1,2);
\filldraw[red] (1,2) circle (.2em);
\filldraw[black] (1,2) -- (0,2) circle (.2em) -- (0,1) circle (.2em) -- (1,0);
\filldraw[black] (1,1) circle (.2em);
\end{tikzpicture}
&
\begin{tikzpicture}[xscale = \xfactor, yscale = \yfactor]
\filldraw[black] (0,0) circle (.2em) -- (1,0) circle (.2em) -- (2,2) circle (.2em) -- (1,2);
\filldraw[red] (1,2) circle (.2em);
\filldraw[black] (1,2) -- (0,2) circle (.2em) -- (0,1);
\filldraw[red] (0,1) circle (.2em);
\filldraw[black] (0,1) -- (0,0);
\filldraw[black] (1,1) circle (.2em);
\end{tikzpicture}
&
\begin{tikzpicture}[xscale = \xfactor, yscale = \yfactor]
\filldraw[black] (0,0) circle (.2em) -- (3,2) circle (.2em) -- (2,2);
\filldraw[red] (2,2) circle (.2em);
\filldraw[black] (2,2) -- (1,2);
\filldraw[red] (1,2) circle (.2em);
\filldraw[black] (1,2) -- (0,2) circle (.2em) -- (0,1);
\filldraw[red] (0,1) circle (.2em);
\filldraw[black] (0,1) -- (0,0);
\filldraw[black] (1,1) circle (.2em);
\end{tikzpicture}
\\
Type 0 & Type 2 & Type 3 & Type 5
\end{tabular}
\medskip
\caption{Polygons of toric sextic del Pezzo surfaces (types 1 and 4 are not toric)}
\end{table}
In each of these types there is a unique (up to isomorphism) sextic del Pezzo surface.
Moreover, the surfaces of types $0$, $2$, $3$, and $5$ are toric
(in particular, the surface of type~5 is the weighted projective plane~$\P(1,2,3)$, 
see~\cite[Example~5.8]{kawamata2015multi} for an alternative description of its derived category).
The surfaces of type~1 and~4 are not toric. 

\subsection{Resolutions of rational surface singularities}
\label{subsection:resolutions}

In the next section we investigate the derived category of a singular del Pezzo surface~$X$ through its minimal resolution $\tX$.
In this subsection we collect some facts about resolutions of surface singularities we are going to use.

Let $X$ be a normal surface with rational singularities and let $\pi \colon \tX \to X$ be its resolution.
The derived categories of $X$ and $\tX$ are related by the (derived) pushforward functor $\pi_* \colon \bD(\tX) \to \bD(X)$.
The (derived) pullback functor does not preserve boundedness, but is well defined on the bounded from above derived category $\pi^* \colon \bD^-(X) \to \bD^-(\tX)$.
Denote by $\bDelta$ the set of irreducible components of the exceptional divisor of $\pi$;
each of these is a smooth rational curve on $\tX$.

\begin{lemma}
\label{lemma:pi-properties}
Let $X$ be a normal surface with rational singularities and let $\pi \colon \tX \to X$ be its resolution.
The functor $\pi^* \colon \bD^-(X) \to \bD^-(\tX)$ is fully faithful.
The functor $\pi_* \colon \bD^-(\tX) \to \bD^-(X)$ is its right adjoint, 
it preserves boundedness, and there is an isomorphism of functors
\begin{equation}\label{eq:pis-pis-id}
\pi_* \circ \pi^* \cong \id.
\end{equation}
Moreover, 
\begin{equation*}
\Ima \pi^* = {}^\perp\langle \cO_\Delta(-1) \rangle_{\Delta \in \bDelta}
\qquand
\Ker \pi_* = \langle \cO_\Delta(-1) \rangle^\oplus_{\Delta \in \bDelta},
\end{equation*}
where $\langle - \rangle^\oplus$ denotes the minimal triangulated subcategory closed under infinite direct sums defined in $\bD^-$.
\end{lemma}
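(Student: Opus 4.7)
The plan is to organize everything around the single input that $X$ has rational singularities, so that $R\pi_*\cO_{\tX} \simeq \cO_X$. First I would derive the formal statements in one stroke. The projection formula gives $\pi_*\pi^*F \simeq F \otimes R\pi_*\cO_{\tX} \simeq F$ for every $F \in \bD^-(X)$, which is exactly~\eqref{eq:pis-pis-id}. The derived adjunction $\pi^* \dashv \pi_*$ on bounded-above complexes is standard for proper morphisms of Noetherian schemes. Since $\pi$ has fibres of dimension at most $1$, one has $R^i\pi_* = 0$ for $i \ge 2$, so $\pi_*$ has cohomological amplitude in $[0,1]$ and preserves $\bD^-$ as well as $\bD^b$. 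Fully faithfulness of $\pi^*$ then follows formally from $\pi_*\pi^* \simeq \id$.

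Next I would dispatch the easy halves of the two orthogonality descriptions. Each exceptional component $\Delta$ is a smooth rational curve contracted to a point, so $\pi_*\cO_\Delta(-1) = \RGamma(\P^1,\cO_{\P^1}(-1)) = 0$; hence $\langle\cO_\Delta(-1)\rangle^{\oplus} \subseteq \Ker\pi_*$, and by adjunction $\Hom(\pi^*G,\cO_\Delta(-1)[n]) = 0$ for all $G$ and $n$, which yields $\Ima\pi^* \subseteq {}^{\perp}\langle\cO_\Delta(-1)\rangle$. For the reverse inclusions I would use the counit triangle
\begin{equation*}
\pi^*\pi_*F \to F \to C \to \pi^*\pi_*F[1],
\end{equation*}
which satisfies $\pi_*C = 0$ after applying $\pi_*$ and invoking $\pi_*\pi^* \simeq \id$. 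Both reverse inclusions will follow from the single statement
\begin{equation*}
\Ker\pi_* = \langle\cO_\Delta(-1)\rangle^{\oplus},
\end{equation*}
since then $C$ always lies in $\langle\cO_\Delta(-1)\rangle^{\oplus}$, and if moreover $F \in {}^{\perp}\langle\cO_\Delta(-1)\rangle$ then so does $C$ (because $\pi^*\pi_*F$ does), so $C$ lies simultaneously in $\langle\cO_\Delta(-1)\rangle^{\oplus}$ and its own orthogonal, forcing $C = 0$.

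The main obstacle is therefore the inclusion $\Ker\pi_* \subseteq \langle\cO_\Delta(-1)\rangle^{\oplus}$. My approach would be to pass to a formal neighbourhood of each singular point, reducing to the case of a tree $E = \bigcup \Delta$ of smooth rational curves with negative-definite intersection form. Truncation and induction on cohomological amplitude reduce the question to a single coherent sheaf $F$ with $R\pi_*F = 0$, whose support is necessarily contained in $E$. I would then build such an $F$ out of the sheaves $\cO_\Delta(-1)$ by induction on the scheme-theoretic thickening of $\supp F$ along $E$: restriction to an irreducible $\Delta \simeq \P^1$ gives a sheaf admitting a two-term resolution by twists of $\cO_{\P^1}$, and the combination of $R\pi_*$-acyclicity with the negative-definiteness of the intersection matrix forces those twists, after further extensions along adjacent components, to reduce to copies of $\cO_{\Delta'}(-1)$. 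The combinatorial bookkeeping through the tree of exceptional components is the delicate part.
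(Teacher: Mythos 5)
The formal half of your argument (adjunction, projection formula giving $\pi_*\pi^*\cong\id$, full faithfulness, preservation of boundedness via $R^{\ge 2}\pi_*=0$, the easy inclusions, and the reduction of both reverse inclusions to the single claim $\Ker\pi_*=\langle\cO_\Delta(-1)\rangle^\oplus$) is correct and coincides with what the paper does. The difference is in the hard claim itself: the paper does not prove it but cites Bodzenta--Bondal (Prop.~9.14 and Thm.~9.15 of their flops paper, together with Bridgeland's spectral sequence, reproduced as Lemma~\ref{lemma:spectral}), whereas you attempt a direct d\'evissage. Your reduction to a single coherent sheaf $F$ with $R\pi_*F=0$ is fine (the spectral sequence of Lemma~\ref{lemma:spectral} shows that $\pi_*\cF=0$ forces $R\pi_*\cH^i(\cF)=0$ for every $i$), but from that point on the proposal stops being a proof.

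Two concrete problems. First, you identify negative-definiteness of the exceptional intersection form as the mechanism forcing the twists down to $\cO_{\Delta}(-1)$. Negative-definiteness holds for the exceptional locus of \emph{every} resolution of a normal surface singularity (Mumford), so it cannot be the input that makes the statement true: the lemma genuinely requires rationality, and for non-rational singularities the null category $\{F:R\pi_*F=0\}$ is strictly larger than the extension closure of the $\cO_\Delta(-1)$. What the d\'evissage actually uses is Artin's characterization of rational singularities --- $p_a(Z)\le 0$, equivalently $H^1(\cO_Z)=0$, for every effective exceptional cycle $Z$ --- to control the cohomology of the successive graded pieces of $F$ along thickenings of the exceptional divisor; your sketch never invokes this, and without it the inductive step fails. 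Second, the inductive step is only described as ``combinatorial bookkeeping through the tree,'' with the admission that it is ``the delicate part''; since this is precisely the content of the cited results of Bodzenta--Bondal, leaving it at that level means the essential point of the lemma is asserted rather than proved. (A smaller issue, which the paper also glosses over: passing from orthogonality to each $\cO_\Delta(-1)$ to orthogonality to the subcategory $\langle\cO_\Delta(-1)\rangle^\oplus$ closed under infinite direct sums requires an argument in $\bD^-$, since $\Hom$ out of a non-compact object need not commute with infinite direct sums in the target.)
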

\begin{proof}
The pullback-pushforward adjunction is standard.
By projection formula we have
\begin{equation*}
\pi_*(\pi^*\cF) \cong \cF \otimes \pi_*\cO_\tX,
\end{equation*}
and since $X$ has rational singularities, the canonical morphism $\cO_X \to \pi_*\cO_\tX$ is an isomorphism, hence~\eqref{eq:pis-pis-id} holds.
By adjunction it follows that $\pi^*$ is fully faithful.
Finally, by~\cite[Lemma~2.1]{bodzenta2015flops} (see also~\cite[Lemma~3.1]{bridgeland} and Lemma~\ref{lemma:spectral} below) and~\cite[Proposition~9.14 and Theorem~9.15]{bodzenta2015flops}
the category $\Ker\pi_*$ is generated by sheaves $\cO_\Delta(-1)$.
The description of $\Ima\pi^*$ follows by adjunction.
\end{proof}

The following Bridgeland's spectral sequence argument is quite useful, so we remind it here.

\begin{lemma}[\protect{\cite[Lemma~3.1]{bridgeland}}]
\label{lemma:spectral}
Let $\pi \colon \tX \to X$ be a proper morphism with fibers of dimension at most~$1$.
Let $\cF$ be a possibly unbounded complex of quasicoherent sheaves on $\tX$ and let $\cH^i(\cF)$ be its cohomology sheaf in degree $i$.
The spectral sequence $R^i\pi_*(\cH^j(\cF)) \Rightarrow \cH^{i+j}(\pi_*(\cF))$ degenerates at the second page, 
and gives for each $i$ an exact sequence
\begin{equation*}
0 \to R^1\pi_*\cH^{i-1}(\cF) \to \cH^i(\pi_*(\cF)) \to R^0\pi_*\cH^i(\cF) \to 0.
\end{equation*}
In particular, if $\cH^i(\pi_*(\cF)) = 0$ for $i \le p$ for some integer $p$, then $\pi_*(\cH^i(\cF)) = 0$ for $i \le p - 1$ and
\begin{equation*}
\pi_*\cF \cong \pi_*(\tau^{\ge p}\cF),
\end{equation*}
where $\tau$ stands for the truncation functor with respect to the canonical filtration.
\end{lemma}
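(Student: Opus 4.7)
The plan is to deduce everything from the fact that a proper morphism with fibers of dimension at most one has cohomological amplitude at most one, so that $R^i\pi_* = 0$ for $i \geq 2$ by Grothendieck's vanishing theorem. This collapses the hypercohomology spectral sequence to just two nonzero rows.

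First, I would write down the standard hypercohomology spectral sequence
$E_2^{p,q} = R^p\pi_*\cH^q(\cF) \Rightarrow \cH^{p+q}(\pi_*\cF)$.
Since only the columns $p=0$ and $p=1$ survive, every differential $d_r \colon E_r^{p,q} \to E_r^{p+r,q-r+1}$ with $r \geq 2$ either originates in column $\geq 2$ (so the source is zero) or lands in column $\geq r \geq 2$ (so the target is zero). Thus $E_2 = E_\infty$, which proves degeneration. The $E_\infty$-filtration on $\cH^i(\pi_*\cF)$ has only two steps, with graded pieces $E_\infty^{1,i-1}$ and $E_\infty^{0,i}$, yielding the asserted short exact sequence. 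A subtle point here is convergence when $\cF$ is unbounded, but the spectral sequence lives in two columns, so no infinite crossings occur and convergence is automatic.

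For the ``in particular'' part, the exact sequence with $i$ replaced by $i$ and $i+1$ gives, under the hypothesis $\cH^i(\pi_*\cF) = 0$ for $i \leq p$, the vanishings $R^0\pi_*\cH^i(\cF) = 0$ for $i \leq p$ and $R^1\pi_*\cH^i(\cF) = 0$ for $i \leq p-1$. Combined, these yield $\pi_*\cH^i(\cF) = 0$ (as a complex, i.e.\ the derived pushforward) for all $i \leq p - 1$.

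Finally, I would apply the degenerate spectral sequence to the truncation $\tau^{\leq p-1}\cF$: its cohomology sheaves are the $\cH^i(\cF)$ for $i \leq p-1$ (and zero otherwise), each of which has trivial derived pushforward by the previous step, so the short exact sequences force $\cH^j(\pi_*\tau^{\leq p-1}\cF) = 0$ for all $j$, hence $\pi_*\tau^{\leq p-1}\cF = 0$. The distinguished triangle $\tau^{\leq p-1}\cF \to \cF \to \tau^{\geq p}\cF$ then gives the desired isomorphism $\pi_*\cF \cong \pi_*\tau^{\geq p}\cF$. The main conceptual obstacle — unbounded $\cF$ — is defused precisely because the two-column structure of the spectral sequence makes convergence and degeneration trivial to verify, reducing the whole proof to elementary bookkeeping.
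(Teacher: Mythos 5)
Your proposal is correct and follows essentially the same route as the paper: the two-column collapse of the hypercohomology spectral sequence from $R^{\ge 2}\pi_* = 0$, the resulting short exact sequences, and then the vanishing of $\pi_*(\tau^{\le p-1}\cF)$ combined with the canonical truncation triangle. Your explicit remark that the two-column structure makes convergence for unbounded $\cF$ unproblematic is a point the paper leaves implicit, but the argument is the same.
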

\begin{proof}
The fibers of $\pi$ are at most 1-dimensional, hence $R^{\ge 2}\pi_* = 0$, and the second page of the spectral sequence looks like
\begin{equation*}
\xymatrix@R=1ex{
\dots &
0 &
0 &
0 &
0 &
\dots
\\
\dots &
R^1\pi_*\cH^{i-2}(\cF) & 
R^1\pi_*\cH^{i-1}(\cF) & 
R^1\pi_*\cH^{i}(\cF) & 
R^1\pi_*\cH^{i+1}(\cF) &
\dots
\\
\dots &
R^0\pi_*\cH^{i-2}(\cF) & 
R^0\pi_*\cH^{i-1}(\cF) \ar[uul]_{d_2} & 
R^0\pi_*\cH^{i}(\cF) \ar[uul]_{d_2} & 
R^0\pi_*\cH^{i+1}(\cF) \ar[uul]_{d_2} & 
\dots
}
\end{equation*}
It follows that the spectral sequence degenerates at the second page, and gives the required exact sequences.
The vanishing of $\pi_*(\cH^i(\cF))$ for all $i \le p-1$ follows immediately from the exact sequences, and in its turn implies $\pi_*(\tau^{\le p-1}\cF) = 0$.
Applying the pushforward to the canonical truncation triangle~$\tau^{\le p-1}\cF \to \cF \to \tau^{\ge p}\cF$ we obtain the required isomorphism.
\end{proof}

The following consequences of this observation will be used later.

\begin{corollary}\label{corollary:pis-epi}
The functor $\pi_* \colon \bD(\tX) \to \bD(X)$ is essentially surjective.
\end{corollary}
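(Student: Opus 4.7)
The plan is to start with an arbitrary object $\cF \in \bD(X)$ and produce a bounded object $\tcF \in \bD(\tX)$ whose pushforward recovers $\cF$. The natural candidate is $\pi^*\cF$, but the obstacle is that $\pi^*$ is only defined on $\bD^-$ (since $X$ is singular, the pullback of a bounded complex may have unboundedly many cohomology sheaves from below), so $\pi^*\cF$ lives a priori in $\bD^-(\tX)$ rather than $\bD(\tX)$. The key idea is to truncate it appropriately and argue, via Lemma~\ref{lemma:spectral}, that the truncation does not affect the pushforward.

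Concretely, I would proceed as follows. Fix $\cF \in \bD(X)$ and consider $\pi^*\cF \in \bD^-(\tX)$. By Lemma~\ref{lemma:pi-properties} we have the isomorphism
\begin{equation*}
\pi_*(\pi^*\cF) \cong \cF.
\end{equation*}
Since $\cF$ is bounded, there exists an integer $p$ such that $\cH^i(\pi_*(\pi^*\cF)) = 0$ for all $i \le p$. Now apply Lemma~\ref{lemma:spectral} to $\pi^*\cF$: it gives the identification
\begin{equation*}
\pi_*(\pi^*\cF) \cong \pi_*\bigl(\tau^{\ge p}(\pi^*\cF)\bigr).
\end{equation*}
Set $\tcF := \tau^{\ge p}(\pi^*\cF)$. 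Because $\pi^*\cF$ lies in $\bD^-(\tX)$ it is bounded above, and the truncation ensures boundedness from below, so $\tcF \in \bD(\tX)$. Combining the two displays above, $\pi_*\tcF \cong \cF$, which proves essential surjectivity.

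The only subtle point is the applicability of Lemma~\ref{lemma:spectral}, which requires the fibers of $\pi$ to have dimension at most one; this is automatic for a resolution of a normal surface, so no further obstacle arises. The whole argument is thus a short corollary of the spectral sequence lemma combined with the $\pi_* \circ \pi^* \cong \id$ identity already established in Lemma~\ref{lemma:pi-properties}.
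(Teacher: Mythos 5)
Your argument is correct and is essentially identical to the paper's proof: both take $\tcF = \tau^{\ge p}(\pi^*\cF)$ for suitable $p$ and invoke Lemma~\ref{lemma:spectral} together with $\pi_*\circ\pi^*\cong\id$ to conclude $\pi_*\tcF\cong\cF$. No gaps.
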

\begin{proof}
Let $\cF \in \bD(X)$ and assume that $p$ is such that $\tau^{\le p}(\cF) = 0$.
Then $\cF \cong \pi_*(\pi^*\cF) \cong \pi_*(\tau^{\ge p}\pi^*\cF)$, and clearly $\tau^{\ge p}\pi^*\cF \in \bD(\tX)$.
\end{proof}

\subsection{Grothendieck and Serre duality}
\label{subsection:duality}

Let $f \colon X \to Y$ be a proper morphism. 
The {\sf Grothendieck duality} is a bifunctorial isomorphism
\begin{equation}\label{eq:Gd}
\RHom(f_*\cF,\cG) \cong \RHom(\cF,f^!\cG),
\end{equation}
where $f^!$ is the twisted pullback functor (if $\cG$ is perfect, $f^!\cG \cong \cG \otimes \omega^\bullet_{X/Y}$, where $\omega^\bullet_{X/Y} = f^!\cO_Y$ is the relative dualizing complex).
In other words, the twisted pullback functor is right adjoint to the (derived) pushforward.

Grothendieck duality has many consequences.
One of them --- Serre duality for Gorenstein schemes --- will be very useful for our purposes.

\begin{proposition}\label{proposition:serre-gorenstein}
Let $X$ be a projective Gorenstein $\kk$-scheme of dimension $n$.
If either $\cF$ or $\cG$ is a perfect complex, there is a natural Serre duality isomorphism
\begin{equation*}
\Ext^i(\cF,\cG)^\vee \cong \Ext^{n-i}(\cG,\cF \otimes \omega_X).
\end{equation*}
\end{proposition}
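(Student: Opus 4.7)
The plan is to deduce the statement as a direct consequence of Grothendieck duality~\eqref{eq:Gd} applied to the structure morphism $f \colon X \to \Spec \kk$. The Gorenstein hypothesis is precisely what ensures that the relative dualizing complex $\omega_{X/\Spec \kk}^\bullet = f^!\cO_{\Spec \kk}$ is not just a complex, but has the form $\omega_X[n]$ with $\omega_X$ an honest line bundle, which accounts for the shift by $n$ in the statement.

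First, I would rewrite both sides of the desired isomorphism as cohomology of a single $\RHom$-complex. On the left, $\Ext^i(\cF,\cG)^\vee = (H^i \RHom(\cF,\cG))^*$, and on the right, $\Ext^{n-i}(\cG, \cF \otimes \omega_X) = H^{-i} \RHom(\cG, \cF \otimes \omega_X[n])$. Thus it suffices to produce a natural isomorphism of complexes of $\kk$-vector spaces
\begin{equation*}
\RHom(\cF,\cG)^\vee \cong \RHom(\cG, \cF \otimes \omega_X[n]),
\end{equation*}
where on the left $(-)^\vee$ stands for the $\kk$-linear dual of the complex of vector spaces.

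Next I would assemble this from Grothendieck duality. Setting $\cH := \RCHom(\cF,\cG)$, the perfectness hypothesis on $\cF$ (or, symmetrically, on $\cG$) allows us to identify $\cH \cong \cF^\vee \otimes \cG$ and then to rewrite
\begin{equation*}
\RCHom(\cH, \omega_X[n]) \cong \RCHom(\cF^\vee \otimes \cG,\, \omega_X[n]) \cong \RCHom(\cG,\, \cF \otimes \omega_X[n]).
\end{equation*}
Pushing forward along $f$ turns the left-hand global-$\Hom$ expression into $\RHom(\cF,\cG)^\vee$ (via the Grothendieck duality isomorphism~\eqref{eq:Gd} with $\cG = \cO_{\Spec \kk}$) and the right-hand expression into $\RHom(\cG, \cF \otimes \omega_X[n])$, giving the required identification. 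Taking $H^i$ and dualizing degrees then produces the stated Serre duality.

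I do not anticipate a serious obstacle: the only genuinely delicate point is handling the case when $\cG$ rather than $\cF$ is perfect, which one treats by applying the same argument to $\RHom(\cG,\cF\otimes\omega_X)$ instead (since the roles of $\cF$ and $\cG$ are symmetric after accounting for the twist by $\omega_X$), or by dualizing the perfect object throughout. The Gorenstein hypothesis is used exactly once, in identifying $f^!\cO_{\Spec \kk}$ with the shifted line bundle $\omega_X[n]$; without it the same argument yields only the more general formula involving the dualizing complex $\omega_X^\bullet$.
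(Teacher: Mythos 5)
Your proposal is correct and follows essentially the same route as the paper: Grothendieck duality for the structure morphism $f\colon X\to\Spec(\kk)$, the Gorenstein identification $f^!(\kk)\cong\omega_X[n]$, and the rewriting $\RCHom(\cF,\cG)\cong\cF^\vee\otimes\cG$ for perfect $\cF$, with the case of perfect $\cG$ handled by the same symmetry swap. The only cosmetic difference is that the paper first treats locally free $\cF$ and then passes to general perfect complexes via the stupid filtration, whereas you invoke the perfect-complex dual directly.
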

\begin{proof}
Let $f \colon X \to \Spec(\kk)$ be the structure morphism.
If $\cF$ is a locally free sheaf, then we have $\Ext^i(\cF,\cG) \cong H^i(X,\cF^\vee \otimes \cG)$, and this is a cohomology group of $f_*(\cF^\vee \otimes \cG)$.
By Grothendieck duality
\begin{equation*}
\RHom(f_*(\cF^\vee \otimes \cG),\kk) \cong \RHom(\cF^\vee \otimes \cG,f^!(\kk)).
\end{equation*}
Since $X$ is Gorenstein, $f^!(\kk) \cong \omega_X[n]$, hence the right hand side equals $\RHom(\cF^\vee \otimes \cG, \omega_X[n])$.
Since~$\cF$ is locally free, this can be rewritten as $\RHom(\cG,\cF \otimes \omega_X[n])$.
Computing the cohomology groups in degree $-i$, we deduce the required duality isomorphism.

For arbitrary perfect $\cF$ the Serre duality follows by using the stupid filtration.

Finally, when $\cG$ is perfect, we replace $\cF$ by $\cG$, $\cG$ by $\cF \otimes \omega_X$, and $i$ by $n - i$, and deduce the required isomorphism from the previous case.
\end{proof}

Let us also discuss a contravariant duality functor or a projective $\kk$-scheme~$X$. 
It follows from sheafified Grothendieck duality that 
\begin{equation*}
\RCHom(-,\omega^\bullet_{X/\kk}) \colon \bD(X)^{\mathrm{opp}} \xrightarrow{\ \sim\ } \bD(X).
\end{equation*}
is an equivalence of categories.
In case when the scheme $X$ is Gorenstein, the dualizing complex $\omega^\bullet_{X/\kk}$ 
is a shift of the canonical line bundle, $\omega^\bullet_{X/\kk} \cong \omega_X[\dim X]$, 
and it follows that the usual duality functor
\begin{equation*}
\cF \mapsto \cF^\vee := \RCHom(\cF,\cO_X) \cong \RCHom(\cF,\omega_X^\bullet) \otimes \omega_X^{-1}[-\dim X]
\end{equation*}
is also an equivalence of categories $\bD(X)^{\mathrm{opp}} \xrightarrow{\ \sim\ } \bD(X)$.

\section{Derived category of a single sextic del Pezzo surface}
\label{section:dp6-single}

Let $X$ be a sextic du Val del Pezzo surface (Definition~\ref{definition:dp6}) over an algebraically closed field $\kk$, and let~$\pi \colon \tX \to X$ be its minimal resolution of singularities.
We use freely notation introduced in Section~\ref{subsection:dp6}.

\subsection{Derived category of the resolution}\label{subsection:derived-tx}

We start by describing the derived category of $\tX$.
In the case when $X$ is smooth (and so $\tX = X$), the following result has been proved in~\cite[Proposition~4.2(3)]{karpov-nogin}
and in~\cite[proof of Proposition~9.1]{auel2015semiorthogonal}.
We leave it to the readers to check that the same arguments work for any du Val del Pezzo surface of degree~6.

\begin{proposition}
\label{proposition:tx-excol}
Let $X$ be a sextic du Val del Pezzo surface over an algebraically closed field $\kk$ 
and let~$\pi \colon \tX \to X$ be its minimal resolution of singularities. 
There is a semiorthogonal decomposition
\begin{equation}
\label{eq:dbtx}
\bD(\tX) = \langle \tcA_1, \tcA_2, \tcA_3  \rangle,
\end{equation}
whose components are generated by the following exceptional collections of line bundles
\begin{equation}
\begin{aligned}
\label{eq:tx-excol}
\tcA_1 &= \langle \cO_\tX \rangle,\\
\tcA_2 &= \langle \cO_\tX(h - e_1), \cO_\tX(h - e_2), \cO_\tX(h - e_3) \rangle,\\
\tcA_3 &= \langle \cO_\tX(h), \cO_\tX(2h - e_1 - e_2 - e_3) \rangle. 
\end{aligned}
\end{equation} 
\end{proposition}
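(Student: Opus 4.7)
The plan is to construct the decomposition via Orlov's blowup formula and then recognize the stated collection through a controlled sequence of mutations.

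First, since $\tX$ is obtained from $\P^2$ by a sequence of three blowups of points (possibly infinitely near), three applications of Orlov's blowup formula, starting from Beilinson's exceptional collection $\bD(\P^2) = \langle \cO, \cO(h), \cO(2h)\rangle$, produce a full exceptional collection on $\bD(\tX)$ of length six, consisting of three Beilinson line bundles pulled back from $\P^2$ and three exceptional objects supported on the successive exceptional divisors. In particular, $K_0(\tX)$ is free of rank six, matching the total length $1 + 3 + 2 = 6$ of the collection in~\eqref{eq:tx-excol}.

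Second, I would verify that the six line bundles of~\eqref{eq:tx-excol} are pairwise semiorthogonal. Using the identity $\RHom(\cO_\tX(D'),\cO_\tX(D)) = \RGamma(\tX,\cO_\tX(D-D'))$ this reduces to checking $H^\bullet(\tX,\cO_\tX(F)) = 0$ for each relevant difference class $F$, namely classes of the form $-h+e_i$, $e_j-e_i$ (with $i<j$), $-h$, $-h + e_i + e_j$, and $-2h+e_1+e_2+e_3$. Since $-K_\tX$ is nef and big on the weak del Pezzo $\tX$, Kawamata--Viehweg vanishing disposes of the higher cohomology as soon as $F - K_\tX$ is nef and big; the $H^0$ computations can then be read off after pushforward along $\pi_0 \colon \tX \to \P^2$, using that $\pi_{0*}\cO_\tX(\sum a_i e_i)$ is concentrated in degree $0$ with an explicit ideal-sheaf description. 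The numerical data depend only on the intersection form from Lemma~\ref{lemma:pic-tx}, so this step is uniform in the type.

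Third, to prove fullness it suffices to show that the collection~\eqref{eq:tx-excol} spans the same subcategory as the Orlov collection of Step~1. I would do this by explicit mutations: mutating each exceptional sheaf supported on a component of the exceptional locus through the appropriate Beilinson line bundles (and through each other) converts them into the twists $\cO_\tX(h - e_i)$ and $\cO_\tX(2h - e_1 - e_2 - e_3)$, with the mutation triangles controlled by the $\Hom$-computations already performed in Step~2.

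The main obstacle lies in Step~3: the precise shape of the mutations depends on whether the blowup centers are in general position or infinitely near, so in principle one has to handle all six configurations of Section~\ref{subsection:dp6} consistently. However, since both the source collection (from Orlov's formula, after accounting for the support of the exceptional sheaves) and the target collection~\eqref{eq:tx-excol} consist of objects whose numerical classes in $\Pic(\tX)$ are fixed independently of type, the identities needed to identify the two collections are checked in $K_0(\tX)$ and then promoted to quasi-isomorphisms in $\bD(\tX)$ by the uniform vanishings of Step~2 --- which is exactly what makes the argument of~\cite{karpov-nogin} go through in the du Val case.
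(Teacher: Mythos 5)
Your strategy coincides with the one the paper itself relies on: the stated proof is a pointer to \cite[Proposition~4.2(3)]{karpov-nogin} and \cite[proof of Proposition~9.1]{auel2015semiorthogonal}, and those arguments are exactly your Steps 1--3 (the blowup exceptional collection, pairwise cohomology vanishings read off from the intersection form of Lemma~\ref{lemma:pic-tx}, and mutations into the twisted collection). Your Step 2 is fine --- you omit the difference classes $-e_i$ arising from $\Ext^\bullet(\cO_\tX(h),\cO_\tX(h-e_i))$ and note correctly that Kawamata--Viehweg does not cover every class (e.g.\ $e_j-e_i-K_\tX$ fails to be nef in types 4 and 5), but Riemann--Roch together with the $H^0$ and $H^2$ vanishings closes those cases directly.

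The one point you should repair is the last paragraph of Step 3. Checking an identity in $K_0(\tX)$ and then ``promoting it to a quasi-isomorphism'' using cohomology vanishing is not a valid inference: objects with equal classes in $K_0$ need not be isomorphic, and no amount of $\Ext$-vanishing between \emph{other} objects upgrades a numerical identity to an isomorphism. Fortunately you do not need this. The uniformity across the six types that you are after comes for free from working with the \emph{total} transforms $E_i$ (of class $e_i$), regardless of whether the centers are infinitely near: the short exact sequences $0 \to \cO_\tX(h-e_i) \to \cO_\tX(h) \to \cO_{E_i} \to 0$ (using $h\cdot e_i = 0$) and the analogous Koszul-type sequences relating $\cO_\tX(2h-e_1-e_2-e_3)$ to $\cO_\tX(2h)$ and the $\cO_{E_i}$ hold verbatim in every type, and these triangles --- not $K_0$ --- are what show that your collection and the Orlov collection generate the same subcategory. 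With that substitution the argument is complete and is the intended one.
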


%

If $X$ is smooth (hence $\tX = X$) the exceptional line bundles in each of the components $\tcA_i$ of~\eqref{eq:tx-excol} are pairwise orthogonal.
However, for singular $X$ this is no longer true.
We describe the structure of the categories~$\tcA_i$ below, but before that we observe a self-duality property of~\eqref{eq:dbtx}.

As it is explained in Section~\ref{subsection:duality} the derived dualization functor $\cF \mapsto \cF^\vee$ 
provides an anti-autoequiva\-lence of~$\bD(\tX)$. 
When applied to~\eqref{eq:dbtx} it produces another semiorthogonal decomposition
\begin{equation}\label{eq:dbtx-dual}
\bD(\tX) = \langle \tcA_3^\vee, \tcA_2^\vee, \tcA_1^\vee \rangle.
\end{equation} 
It turns out that it is also the right mutation-dual of~\eqref{eq:dbtx}, i.e., is obtained from~\eqref{eq:dbtx} by a standard sequence of mutations.
Below we denote by $\LL_\cA$ the left mutation functor through an admissible subcategory $\cA$.

\begin{lemma}\label{lemma:dual-sod-tx}
The semiorthogonal decomposition~\eqref{eq:dbtx-dual} is right mutation-dual to~\eqref{eq:dbtx}, i.e., 
\begin{equation*}
\tcA_1^\vee = \tcA_1,
\qquad
\tcA_2^\vee = \LL_{\tcA_1}(\tcA_2),
\quad\text{and}\quad 
\tcA_3^\vee = \LL_{\tcA_1}(\LL_{\tcA_2}(\tcA_3)) = \tcA_3 \otimes \omega_\tX.
\end{equation*}
\end{lemma}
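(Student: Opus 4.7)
The strategy is to verify the three claimed equalities in turn, starting from the easiest.

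The identity $\tcA_1^\vee = \tcA_1$ is immediate since $\tcA_1 = \langle \cO_\tX \rangle$ and $\cO_\tX^\vee \cong \cO_\tX$. Next, the equality $\tcA_3^\vee = \tcA_3 \otimes \omega_\tX$ can be verified by a direct check on the two generators of $\tcA_3$. Using $\omega_\tX \cong \cO_\tX(-3h + e_1 + e_2 + e_3)$ from~\eqref{eq:ktx}, we compute
\begin{align*}
\cO_\tX(h)^\vee &= \cO_\tX(-h) \cong \cO_\tX(2h - e_1 - e_2 - e_3) \otimes \omega_\tX, \\
\cO_\tX(2h - e_1 - e_2 - e_3)^\vee &= \cO_\tX(-2h + e_1 + e_2 + e_3) \cong \cO_\tX(h) \otimes \omega_\tX,
\end{align*}
so dualization swaps the two generators of $\tcA_3$ up to twisting by $\omega_\tX$, and the two subcategories coincide.

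For the mutation identification, I would apply a standard sequence of three left mutations to~\eqref{eq:dbtx}: move $\tcA_3$ past $\tcA_2$, then past $\tcA_1$, and finally move $\tcA_2$ past $\tcA_1$. This produces
\begin{equation*}
\bD(\tX) = \langle \LL_{\tcA_1}(\LL_{\tcA_2}(\tcA_3)),\ \LL_{\tcA_1}(\tcA_2),\ \tcA_1 \rangle.
\end{equation*}
The first component is the right orthogonal of the subcategory generated by the other two; as an unordered subcategory this is just $\langle \tcA_1, \tcA_2\rangle$, so $\LL_{\tcA_1}(\LL_{\tcA_2}(\tcA_3)) = \langle \tcA_1, \tcA_2\rangle^\perp$. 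By Serre duality on the Gorenstein surface $\tX$ (Proposition~\ref{proposition:serre-gorenstein}), for any admissible subcategory $\cC \subset \bD(\tX)$ one has the identity $\cC^\perp = ({}^\perp \cC) \otimes \omega_\tX$ as subcategories. Applying this to $\cC = \langle \tcA_1, \tcA_2\rangle$ and using that ${}^\perp\langle \tcA_1, \tcA_2\rangle = \tcA_3$ from~\eqref{eq:dbtx}, we obtain $\LL_{\tcA_1}(\LL_{\tcA_2}(\tcA_3)) = \tcA_3 \otimes \omega_\tX$, which combines with the previous step to give the full chain $\tcA_3^\vee = \tcA_3 \otimes \omega_\tX = \LL_{\tcA_1}(\LL_{\tcA_2}(\tcA_3))$.

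The remaining equality $\tcA_2^\vee = \LL_{\tcA_1}(\tcA_2)$ then follows by a uniqueness argument: in any three-term semiorthogonal decomposition $\bD(\tX) = \langle \cB_1, \cB_2, \cB_3\rangle$, the middle component is forced by the other two via $\cB_2 = {}^\perp\cB_1 \cap \cB_3^\perp$. The dualized decomposition~\eqref{eq:dbtx-dual} and the mutated one above have the same first component (by the previous paragraph) and the same third component $\tcA_1$, so their middle components must coincide as well. The main point requiring care throughout is the bookkeeping of left versus right orthogonals when invoking Serre duality; modulo that, everything is formal manipulation with the admissible subcategories provided by Proposition~\ref{proposition:tx-excol}.
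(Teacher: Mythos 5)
Your proof is correct and follows essentially the same route as the paper's: the trivial check for $\tcA_1$, the explicit line-bundle computation identifying $\tcA_3^\vee$ with $\tcA_3 \otimes \omega_\tX$, the standard fact that the full left mutation of the last component is the Serre twist, and Serre duality to pin down the middle component. The only cosmetic difference is that you characterize $\tcA_2^\vee$ as the middle component forced by the two outer ones, whereas the paper computes $\LL_{\tcA_1}(\tcA_2) = \tcA_1^\perp \cap \tcA_3^\perp$ directly and converts the orthogonals by Serre duality --- these are the same argument.
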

\begin{proof}
The claim is trivial for the first component, since $\tcA_1^\vee  = \langle \cO_\tX^\vee  \rangle = \langle \cO_\tX \rangle = \tcA_1$,
and is easy for the last component, since $\LL_{\tcA_1}(\LL_{\tcA_2}(\tcA_3)) = \tcA_3  \otimes \omega_\tX$ and
\begin{equation}\label{eq:ce3-dual}
\begin{aligned}
& \cO_\tX(h) \otimes \omega_\tX && \cong \cO_\tX(-2h + e_1 + e_2 + e_3) && \cong \cO_\tX(2h - e_1 - e_2 - e_3)^\vee,\\
& \cO_\tX(2h - e_1 - e_2 - e_3) \otimes \omega_\tX && \cong \cO_\tX(-h) && \cong \cO_\tX(h)^\vee.
\end{aligned}
\end{equation} 
Finally, for the second component we have 
$\LL_{\tcA_1}(\tcA_2) = \tcA_1^\perp \cap \tcA_3^\perp = (\tcA_1^\vee)^\perp \cap  {}^\perp(\tcA_3 \otimes \omega_X) = \tcA_2^\vee$
by Serre duality.
\end{proof}

The structure of the components $\tcA_i$ of the decomposition~\eqref{eq:dbtx} depends on the type of $X$.
We explain this dependence in Proposition~\ref{proposition:tcai-auslander}.
For $m = 2$ and $m = 3$ we denote by $\tR_m$ the Auslander algebra defined by~\eqref{eq:trm}, and refer to Appendix~\ref{appendix:auslander} for its basic properties, 
especially note the definition~\eqref{eq:projective-resolution-ei} of the {\sf standard exceptional modules} and Proposition~\ref{proposition:rm-equivalence}.

\begin{proposition}
\label{proposition:tcai-auslander}
The components $\tcA_i$ of~\eqref{eq:dbtx} are equivalent to products of derived categories of Auslander algebras as indicated in the next table:
\begin{equation*}\def\arraystretch{1.7}%
\begin{array}{|c|c|c|c|}
\hline
\text{Type of $X$} & \tcA_1 & \tcA_2 & \tcA_3 \\
\hline 
0 & \bD(\kk) & \bD(\kk) \times \bD(\kk) \times \bD(\kk) & \bD(\kk) \times \bD(\kk) \\
\hline
1 & \bD(\kk) & \bD(\kk) \times \bD(\kk) \times \bD(\kk) & \bD(\tR_2) \\
\hline
2 & \bD(\kk) & \bD(\tR_2) \times \bD(\kk) & \bD(\kk)  \times \bD(\kk) \\
\hline
3 & \bD(\kk) & \bD(\tR_2) \times \bD(\kk) & \bD(\tR_2) \\
\hline
4 & \bD(\kk) & \bD(\tR_3) & \bD(\kk)  \times \bD(\kk) \\
\hline
5 & \bD(\kk) & \bD(\tR_3) & \bD(\tR_2) \\
\hline
\end{array}
\end{equation*}
This equivalence takes the exceptional line bundles in~\eqref{eq:tx-excol} 
to the standard exceptional modules over the corresponding Auslander algebra.
\end{proposition}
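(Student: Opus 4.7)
The plan is to reduce the statement to (i) an explicit cohomology computation on the resolution $\tX$ and (ii) an appeal to the recognition result for $\bD(\tR_m)$ provided by Proposition~\ref{proposition:rm-equivalence} in Appendix~\ref{appendix:auslander}. Since each component $\tcA_i$ is generated by an exceptional collection of line bundles (Proposition~\ref{proposition:tx-excol}), the whole structure of $\tcA_i$ is encoded in the graded $\Ext$-algebra of the direct sum of these generators, together with its $A_\infty$-refinement on the canonical DG enhancement of $\bD(\tX)$.

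First I would carry out the relevant $\Ext$-computations. These reduce to cohomology of line bundles $\cO_\tX(D)$ with either $D = e_j - e_i$ (for pairs inside $\tcA_2$) or $D = \pm(h - e_1 - e_2 - e_3)$ (for the pair inside $\tcA_3$). In every such case one has $D^2 = -2$ and $D \cdot K_\tX = 0$, so Riemann--Roch gives $\chi(\cO_\tX(D)) = 0$. Since $\tX$ is rational and $K_\tX - D$ has negative $h$-degree, Serre duality forces $H^2(\cO_\tX(D)) = 0$, hence $h^0 = h^1$. A short sign analysis on $h$-degrees shows that $h^0(\cO_\tX(D)) = 1$ exactly when $-D$ is a nonnegative combination of $(-2)$-curves present on $\tX$, and vanishes otherwise. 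Running through the six possibilities of Section~\ref{subsection:dp6} then pinpoints, for each type, which $\Ext$-groups between the generators are nonzero, producing precisely the block pattern recorded in the table (including the strict semiorthogonality in the opposite direction, which follows from the symmetric computation and the anti-effectivity of $e_j - e_i$ for $j > i$).

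The next step is to assemble these computations. Whenever a subset of generators of $\tcA_i$ is $\Ext$-orthogonal to its complement, the corresponding admissible subcategory splits off as a direct product factor equivalent to $\bD(\kk)^n$; this accounts for all the trivial factors in the table. Each remaining ``$\Ext$-connected'' block consists of $m$ line bundles whose pattern of $\Ext^0$- and $\Ext^1$-groups matches that of the standard exceptional modules over $\tR_m$ defined by~\eqref{eq:projective-resolution-ei}. I would then invoke Proposition~\ref{proposition:rm-equivalence}, which asserts that such a configuration of exceptional objects in a triangulated category generates a subcategory equivalent to $\bD(\tR_m)$, identifying the generators with the standard exceptional modules.

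The main obstacle is the last point: matching the dimensions of the $\Ext$-groups is not by itself sufficient for a derived equivalence, since one also has to control the compositions and the $A_\infty$-structure on the Ext-algebra. This is exactly what the universal recognition result of Proposition~\ref{proposition:rm-equivalence} is tailored to handle, so the argument is really about feeding the geometric input into that abstract result. As a consistency check I would verify that the composition of morphisms $\cO_\tX(h - e_i) \to \cO_\tX(h - e_j) \to \cO_\tX(h - e_k)$, induced by multiplication by the sections cutting out $\Delta_{ij}$ and $\Delta_{jk}$, agrees with the direct morphism coming from the section of $\cO_\tX(e_i - e_k)$ cutting out the chain $\Delta_{ij} + \Delta_{jk}$; this makes the identification with $\tR_m$ strictly multiplicative in the cases relevant to us.
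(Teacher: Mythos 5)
Your overall strategy is the same as the paper's: reduce everything to $\Ext$-computations between the line bundles generating $\tcA_2$ and $\tcA_3$, split off the completely orthogonal $\bD(\kk)$-factors, and feed the connected blocks into the recognition criterion of Proposition~\ref{proposition:rm-equivalence}. Your cohomology computation via Riemann--Roch ($D^2=-2$, $D\cdot K_\tX=0$, so $\chi=0$ and $h^0=h^1$) plus an effectivity analysis is a perfectly good substitute for the paper's explicit exact sequences, modulo one sign slip: the nonvanishing $H^0(\cO_\tX(D))=\kk$ occurs when $D$ itself (not $-D$) is a nonnegative combination of the $(-2)$-curves present, e.g.\ $e_1-e_2$ is the class of $\Delta_{12}$; as written your criterion contradicts the anti-effectivity remark you make two lines later.

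The genuine gap is in how you discharge the hypotheses of Proposition~\ref{proposition:rm-equivalence}(b). That proposition does not ``handle'' the compositions for you --- the condition that the multiplication $\Ext^p(\cL_0,\cL_1)\otimes\Ext^q(\cL_1,\cL_2)\to\Ext^{p+q}(\cL_0,\cL_2)$ be an isomorphism whenever $p=0$ \emph{or} $q=0$ is a hypothesis you must verify; only the $A_\infty$-formality is supplied internally. Your proposed consistency check (compatibility of the sections cutting out $\Delta_{12}$, $\Delta_{23}$, and $\Delta_{12}+\Delta_{23}$) verifies only the case $p=q=0$, i.e.\ $\Hom\otimes\Hom\to\Hom$. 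The cases $(p,q)=(0,1)$ and $(1,0)$, involving $\Ext^1$, are not formal consequences of this and are exactly where the nontrivial work lies in types $4$ and $5$: the paper establishes them by applying $\Ext^\bullet(\cO_\tX(h-e_1),-)$ and $\Ext^\bullet(-,\cO_\tX(h-e_3))$ to the sequences realizing $\cO_{\Delta_{12}}(-1)$ and $\cO_{\Delta_{23}}(-1)$ as cokernels, and checking that $\Ext^\bullet(\cO_\tX(h-e_1),\cO_{\Delta_{23}}(-1))$ and $\Ext^\bullet(\cO_{\Delta_{12}}(-1),\cO_\tX(h-e_3))$ vanish. Without some such argument your identification $\tcA_2\cong\bD(\tR_3)$ in types $4$ and $5$ is not justified; the rest of the table (where only $\tR_2$ or orthogonal sums occur) needs no composition check and is complete as you have it.
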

\begin{proof}
The component $\tcA_1$ is generated by a single exceptional object, hence is equivalent to $\bD(\kk)$.
So, in view of Proposition~\ref{proposition:rm-equivalence} to prove the proposition it is enough to compute $\Ext$-spaces 
between the exceptional line bundles forming the components $\tcA_2$ and $\tcA_3$ of the semiorthogonal decomposition~\eqref{eq:dbtx}.

First of all, we have
\begin{equation*}
\Ext^p(\cO_\tX(h-e_i), \cO_\tX(h-e_j)) \cong H^p(\tX, \cO_\tX(e_i - e_j)).
\end{equation*}
Assuming $i \ne j$ and using exact sequences (note that $e_i\cdot e_j = 0$ and $e_i^2 = -1$ by Lemma~\ref{lemma:pic-tx})
\begin{equation*}
0 \to \cO_\tX(e_i - e_j) \to \cO_\tX(e_i) \to \cO_{E_j} \to 0,
\qquad\qquad
0 \to \cO_\tX \to \cO_\tX(e_i) \to \cO_{E_i}(-1) \to 0,
\end{equation*}
we obtain an exact sequence
\begin{equation*}
0 \to H^0(\tX, \cO_\tX(e_i - e_j)) \to \kk \to \kk \to H^1(\tX, \cO_\tX(e_i - e_j)) \to 0,
\end{equation*}
where the middle map is given by the restriction to $E_j$ of the equation of $E_i$.
This restriction vanishes if and only if $E_j$ is a component of $E_i$ --- in this case we deduce that $\Ext^\bullet(\cO_\tX(h-e_i), \cO_\tX(h-e_j)) \cong \kk \oplus \kk[-1]$,
and otherwise $\Ext^\bullet(\cO_\tX(h-e_i), \cO_\tX(h-e_j)) = 0$.
By Proposition~\ref{proposition:rm-equivalence} this gives the required description of $\tcA_2$ in types from~0 to 3.
In the last two types (4 and 5) it remains to check that multiplication map
\begin{equation}\label{eq:multiplication-ext-tx}
\Ext^p(\cO_\tX(h-e_1),\cO_\tX(h-e_2)) \otimes \Ext^q(\cO_\tX(h-e_2),\cO_\tX(h-e_3)) \to \Ext^{p+q}(\cO_\tX(h-e_1),\cO_\tX(h-e_3))
\end{equation}
is an isomorphism when $p = 0$ or $q = 0$.
For this consider exact sequences
\begin{equation}\label{eq:e12}
0 \to \cO_\tX(h - e_1) \xrightarrow{\ \Delta_{12}\ } \cO_\tX(h - e_2) \to \cO_{\Delta_{12}}(-1) \to 0,
\end{equation}
\begin{equation}\label{eq:e23}
0 \to \cO_\tX(h - e_2) \xrightarrow{\ \Delta_{23}\ } \cO_\tX(h - e_3) \to \cO_{\Delta_{23}}(-1) \to 0,
\end{equation}
where $\Delta_{12} = E_1 - E_2$ and $\Delta_{23} = E_2 - E_3$.
Using Lemma~\ref{lemma:pic-tx}, we compute
\begin{equation*}
\Ext^\bullet(\cO_\tX(h-e_1),\cO_{\Delta_{23}}(-1)) \cong
H^\bullet(\Delta_{23},\cO_\tX(e_1 - h)\vert_{\Delta_{23}} \otimes \cO_{\Delta_{23}}(-1)) =
H^\bullet(\Delta_{23},\cO_{\Delta_{23}}(-1)) = 0.
\end{equation*}
Applying the functor $\Ext^\bullet(\cO_\tX(h-e_1),-)$ to~\eqref{eq:e23}, we deduce that~\eqref{eq:multiplication-ext-tx} is an isomorphism for~$q = 0$. 
Similarly, by Serre duality $\Ext^\bullet(\cO_{\Delta_{12}}(-1),\cO_\tX(h-e_3))$ is dual to $\Ext^\bullet(\cO_\tX(e_1 + e_2 - 2h),\cO_{\Delta_{12}}(-1))$, and
a computation similar to the above shows it is zero.
Applying the functor $\Ext^\bullet(-,\cO_\tX(h-e_3))$ to~\eqref{eq:e12}, we deduce that~~\eqref{eq:multiplication-ext-tx} is an isomorphism for $p = 0$. 

To describe $\tcA_3$ we only need to compute 
\begin{equation*}
\Ext^p(\cO_\tX(h), \cO_\tX(2h - e_1 - e_2 - e_3)) \cong H^p(\tX, \cO_\tX(h - e_1 - e_2 - e_3)).
\end{equation*}
Using exact sequences
\begin{align*}
0 \to \cO_\tX(h - e_1) \to \cO_\tX(h) &\to \cO_{E_1} \to 0,\\
0 \to \cO_\tX(h - e_1 - e_2) \to \cO_\tX(h - e_1) &\to \cO_{E_2} \to 0,\\
0 \to \cO_\tX(h - e_1 - e_2 - e_3) \to \cO_\tX(h - e_1 - e_2) &\to \cO_{E_3} \to 0,
\end{align*}
we conclude that $H^\bullet(\tX, \cO_\tX(h - e_1 - e_2 - e_3)) = 0$ 
if the three centers of the blowups are not contained on a line (i.e., in types~0, 2, and 4), and 
$H^0(\tX, \cO_\tX(h - e_1 - e_2 - e_3)) = H^1(\tX, \cO_\tX(h - e_1 - e_2 - e_3)) = \kk$ otherwise.
By Proposition~\ref{proposition:rm-equivalence} this describes $\tcA_3$.
\end{proof}

The nontrivial morphisms between the line bundles $\cO_\tX(h-e_1)$, $\cO_\tX(h-e_2)$, and $\cO_\tX(h-e_3)$ when exist are realized by exact sequences~\eqref{eq:e12} and~\eqref{eq:e23}.
Analogously, the nontrivial morphism between the line bundles $\cO_\tX(h)$ and $\cO_\tX(2h - e_1 - e_2 - e_3)$ is realized by the exact sequence
\begin{equation}\label{eq:e123}
0 \to \cO_\tX(h) \xrightarrow{\ \Delta_{123}\ } \cO_\tX(2h - e_1 - e_2 - e_3) \to \cO_{\Delta_{123}}(-1) \to 0.
\end{equation}
This shows that each category $\tcA_i$ is a product of categories of the form described in~\cite[Theorem~2.5]{hille2017tilting},
and in fact, the Auslander algebras $\tR_m$ appearing in Proposition~\ref{proposition:tcai-auslander},
are nothing but the particular forms of the algebras from \emph{loc.\ cit.}, cf.~\cite[\S3.2]{hille2017tilting}.

\begin{remark}
\label{remark:delta-simples}
Comparing exact sequences~\eqref{eq:resolution-simple} of standard exceptional modules over an Auslander algebra with the exact sequences~\eqref{eq:e12}, \eqref{eq:e23} and~\eqref{eq:e123},
and taking into account that the equivalence of Proposition~\ref{proposition:tcai-auslander} takes the exceptional line bundles generating $\tcA_i$ 
to the standard exceptional modules over the corresponding Auslander algebras, we conclude that the sheaves~$\cO_\Delta(-1)$ go to the corresponding simple modules.
For instance, for a surface of type 5, when we have the maximal number of $(-2)$-curves on $\tX$, and when $\tcA_2 \cong \bD(\tR_3)$, $\tcA_3 \cong \bD(\tR_2)$, 
the sheaves $\cO_{\Delta_{12}}(-1)$ and~$\cO_{\Delta_{23}}(-1)$ go to $S_1$ and $S_2$ in $\bD(\tR_3)$, and the sheaf $\cO_{\Delta_{123}}(-1)$ goes to $S_1$ in $\bD(\tR_2)$.
\end{remark}

\subsection{Semiorthogonal decomposition for a sextic del Pezzo surface}
\label{subsection:derived-x}

Now we apply the above computations to describe the derived category of a sextic du Val del Pezzo surface $X$.
The main result of this section is the next theorem.

\begin{theorem}
\label{theorem:dbx}
Let $X$ be a sextic du Val del Pezzo surface over an algebraically closed field~$\kk$, and let $\pi \colon \tX \to X$ be its minimal resolution of singularities.
Then there is a unique semiorthogonal decomposition
\begin{equation}\label{eq:dbx}
\bD(X) = \langle \cA_1, \cA_2, \cA_3 \rangle,
\end{equation}
such that $\pi_*(\tcA_i) = \cA_i$ and $\pi^*(\cA_i \cap \bD^\perf(X)) \subset \tcA_i$, where $\tcA_i$ are the components of~\eqref{eq:dbtx}.
The components~$\cA_i$ are admissible with projection functors of finite cohomological amplitude, 
and are equivalent to products of derived categories of finite-dimensional algebras as indicated 
in the next table:
\begin{equation*}\def\arraystretch{1.2}%
\begin{array}{|c|c|c|c|}
\hline
\text{Type of $X$} & \cA_1 & \cA_2 & \cA_3 \\
\hline 
0 & \bD(\kk) & \bD(\kk) \times \bD(\kk) \times \bD(\kk) & \bD(\kk) \times \bD(\kk) \\
\hline
1 & \bD(\kk) & \bD(\kk) \times \bD(\kk) \times \bD(\kk) & \bD(\kk[t]/t^2) \\
\hline
2 & \bD(\kk) & \bD(\kk[t]/t^2) \times \bD(\kk) & \bD(\kk)  \times \bD(\kk) \\
\hline
3 & \bD(\kk) & \bD(\kk[t]/t^2) \times \bD(\kk) & \bD(\kk[t]/t^2) \\
\hline
4 & \bD(\kk) & \bD(\kk[t]/t^3) & \bD(\kk)  \times \bD(\kk) \\
\hline
5 & \bD(\kk) & \bD(\kk[t]/t^3) & \bD(\kk[t]/t^2) \\
\hline
\end{array}
\end{equation*}
The categories $\cA_i^\perf := \cA_i \cap \bD^\perf(X)$ form a semiorthogonal decomposition of the perfect category
\begin{equation*}
\bD^\perf(X) = \langle \cA_1^\perf, \cA_2^\perf, \cA_3^\perf \rangle.
\end{equation*}
\end{theorem}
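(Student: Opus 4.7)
The plan is to descend the decomposition~\eqref{eq:dbtx} to $\bD(X)$ using the identification of~$\bD(X)$ with the Verdier quotient $\bD(\tX)/\cK$, where $\cK := \langle \cO_\Delta(-1) \rangle_{\Delta \in \bDelta}$, supplied by Lemma~\ref{lemma:pi-properties}. The critical input is Remark~\ref{remark:delta-simples}, which asserts that under the equivalences of Proposition~\ref{proposition:tcai-auslander} each generator $\cO_\Delta(-1)$ of $\cK$ lies in exactly one component $\tcA_i$ and corresponds to a specific ``non-terminal'' simple module over the relevant Auslander algebra. Setting $\cK_i := \cK \cap \tcA_i$, one has $\cK = \langle \cK_1, \cK_2, \cK_3 \rangle$ with $\cK_i \subset \tcA_i$.

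The main technical step is a general descent principle: if $\cT = \langle \cT_1, \ldots, \cT_n \rangle$ is a semiorthogonal decomposition and $\cK_i \subset \cT_i$ are thick subcategories, then the Verdier quotient inherits a semiorthogonal decomposition $\cT/\langle \cK_1, \ldots, \cK_n \rangle = \langle \cT_1/\cK_1, \ldots, \cT_n/\cK_n \rangle$. Semiorthogonality of the quotient components follows from that of the $\cT_i$ by checking that any morphism between quotients is represented by a morphism in $\cT$ modulo $\cK$, and the decomposition of $\cK$ into $\cK_i \subset \cT_i$ cancels the obstructions; generation is automatic. Applied to our setup, defining $\cA_i := \pi_*(\tcA_i)$ yields~\eqref{eq:dbx} with $\cA_i \simeq \tcA_i/\cK_i$.

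To identify the components, I would combine Proposition~\ref{proposition:tcai-auslander}, which writes each $\tcA_i$ as a product of factors~$\bD(\tR_m)$ for $m\in \{1,2,3\}$ (with $\tR_1 = \kk$), with the results of Appendix~\ref{appendix:auslander}: the Verdier quotient of~$\bD(\tR_m)$ by the subcategory generated by the simples $S_1, \ldots, S_{m-1}$ is equivalent to $\bD(\kk[t]/t^m)$. The distribution of simples among the factors dictated by Remark~\ref{remark:delta-simples} then matches the table. For the defining properties, $\pi_*(\tcA_i) = \cA_i$ is by construction, and for $\cF \in \cA_i \cap \bD^\perf(X)$ the object $\pi^*\cF$ is bounded and belongs to ${}^\perp\cK$ by Lemma~\ref{lemma:pi-properties}, while $\pi_*(\pi^*\cF) = \cF \in \cA_i$ forces $\pi^*\cF \in \tcA_i$ via the semiorthogonal decomposition of $\bD(\tX)$. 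Uniqueness is immediate from these two properties combined with $\pi_* \pi^* = \id$.

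For admissibility with finite cohomological amplitude and the perfect-category refinement, I would compute the projection functors $\bD(X) \to \cA_i$ explicitly as compositions of~$\pi_*$ with the projections from~\eqref{eq:dbtx}; the latter are bounded since $\tX$ is smooth projective, and $\pi_*$ is bounded by Lemma~\ref{lemma:pi-properties}, yielding finite cohomological amplitude. Restricting to $\bD^\perf(X)$ and using that $\pi^* \colon \bD^\perf(X) \to \bD(\tX) = \bD^\perf(\tX)$ is fully faithful with image~$^\perp\cK$ produces the perfect decomposition $\bD^\perf(X) = \langle \cA_1^\perf, \cA_2^\perf, \cA_3^\perf \rangle$ by intersecting~\eqref{eq:dbtx} with~${}^\perp\cK$. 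The main obstacle I foresee is the general Verdier-quotient-of-SOD descent principle, since the quotient components $\cA_i$ need no longer have finite global dimension (e.g.\ $\bD(\kk[t]/t^m)$ for $m \geq 2$), so their admissibility in~$\bD(X)$ must be verified directly via the explicit projection formulas rather than by appealing to smoothness of~$X$.
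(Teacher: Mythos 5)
Your overall strategy --- sort the sheaves $\cO_\Delta(-1)$ into the components $\tcA_i$, descend the decomposition along $\pi$, and identify the quotients of the Auslander factors with $\bD(\kk[t]/t^m)$ --- is the same as the paper's, but your load-bearing step has a gap. You take as input that $\bD(X)$ \emph{is} the Verdier quotient $\bD(\tX)/\langle\cO_\Delta(-1)\rangle$ and then invoke a descent principle for semiorthogonal decompositions of quotients. Lemma~\ref{lemma:pi-properties} does not give you that identification: it describes $\Ker\pi_*$ and $\Ima\pi^*$ inside $\bD^-$, where $\langle-\rangle^\oplus$ is closed under infinite direct sums, and essential surjectivity of $\pi_*$ on bounded categories (Corollary~\ref{corollary:pis-epi}) is far from full faithfulness of the induced functor out of the bounded quotient. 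The introduction states the quotient description only as motivation; it is not proved or used in the paper (it is essentially the subject of~\cite{KKS}). The paper's proof avoids it entirely: it extends~\eqref{eq:dbtx} to $\bD^-(\tX)$, uses the adjoint pair $(\pi^*,\pi_*)$ with $\pi_*\pi^*\cong\id$ together with Lemmas~\ref{lemma:ker-pis} and~\ref{lemma:ker-pis-orthogonality} to build a semiorthogonal decomposition of $\bD^-(X)$ with explicit projection functors $\alpha_i=\pi_*\circ\tilde\alpha_i\circ\pi^*$, and only then restricts to $\bD(X)$. Your semiorthogonality and full-faithfulness claims would in the end have to be proved by exactly these adjunction computations, so the Verdier-quotient shortcut rests on an unproved equivalence without actually saving work.

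Two further points. First, your boundedness argument for the projection functors fails as stated: the projection is $\pi_*\circ\tilde\alpha_i\circ\pi^*$, and $\pi^*$ does \emph{not} preserve boundedness when $X$ is singular; boundedness of the composite requires the truncation argument of Lemma~\ref{lemma:spectral} combined with the fact that $\tilde\alpha_i$ preserves $\Ker\pi_*$ (Lemma~\ref{lemma:ker-pis}), as carried out in Proposition~\ref{proposition:sod-dbx}. Second, you correctly flag admissibility as the remaining obstacle, but ``verifying it via the explicit projection formulas'' only yields the one-sided admissibility that any semiorthogonal decomposition carries; the paper's actual mechanism is to produce the two rotated decompositions~\eqref{eq:dbx-other} by twisting with $\omega_X$ (using $\omega_\tX\cong\pi^*\omega_X$), so that each $\cA_i$ occurs at both ends of some decomposition. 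You would need this, or an equivalent Serre-functor argument, to conclude.
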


The proof takes the rest of this subsection.
We start with some preparations.

Consider the semiorthogonal decomposition~\eqref{eq:dbtx}.
Since $\tX$ is smooth, every component $\tcA_i$ of $\bD(\tX)$ is admissible, hence~\eqref{eq:dbtx} is a strong semiorthogonal decomposition 
in the sense of~\cite[Definition~2.6]{kuznetsov2011base}.
Therefore, by~\cite[Proposition~4.2]{kuznetsov2011base} it extends to a semiorthogonal decomposition of the bounded above derived category
\begin{equation}\label{eq:dbtx-}
\bD^-(\tX) = \langle \tcA_1^-, \tcA_2^-, \tcA_3^- \rangle.
\end{equation}
The next lemma describes the intersections of its components with the kernel category of the pushforward functor $\pi_*$.
Recall that $\bDelta$ denotes the set of all $(-2)$-curves defined on $\tX$. 
We denote by $\bDelta_2 \subset \bDelta$ the subset formed by those of the curves $\Delta_{12}$ and $\Delta_{23}$ 
that are defined on $\tX$ and by $\bDelta_3 \subset \bDelta$ its complement.

\begin{lemma}
\label{lemma:ker-pis}
We have 
\begin{equation*}
\tcA_1^- \cap \Ker \pi_* = 0,
\qquad 
\tcA_2^- \cap \Ker \pi_* = \langle \cO_{\Delta}(-1) \rangle^\oplus_{\Delta \in \bDelta_2},
\quand
\tcA_3^- \cap \Ker \pi_* = \langle \cO_{\Delta}(-1) \rangle^\oplus_{\Delta \in \bDelta_3}.
\end{equation*}
Moreover, for any $\cF \in \Ker\pi_* \subset \bD^-(\tX)$ there is a canonical direct sum decomposition 
\begin{equation*}
\cF = \cF_2 \oplus \cF_3
\end{equation*}
with $\cF_j \in \tcA_j^- \cap \Ker \pi_*$.
\end{lemma}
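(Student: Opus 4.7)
The plan is to use Lemma~\ref{lemma:pi-properties}, which identifies $\Ker\pi_*$ as the $\oplus$-closed triangulated subcategory of $\bD^-(\tX)$ generated by $\{\cO_\Delta(-1)\}_{\Delta \in \bDelta}$, and to split this generating set according to the partition $\bDelta = \bDelta_2 \sqcup \bDelta_3$. First I would observe, as in Remark~\ref{remark:delta-simples} (whose argument applies verbatim in every type), that under the equivalences of Proposition~\ref{proposition:tcai-auslander} the sheaf $\cO_\Delta(-1)$ corresponds to a simple module over the relevant Auslander algebra, and in particular $\cO_\Delta(-1) \in \tcA_j$ whenever $\Delta \in \bDelta_j$. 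This already yields the containments ``$\supseteq$'' in statements~(2) and~(3).

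Next I would check---either by inspection of Table~\ref{table:dp6} or directly from Lemma~\ref{lemma:pic-tx}, noting that $\Delta_{123} \cdot \Delta_{12} = \Delta_{123} \cdot \Delta_{23} = 0$ on the smooth surface $\tX$ so these effective irreducible curves with distinct classes must be disjoint---that any $\Delta \in \bDelta_2$ and $\Delta' \in \bDelta_3$ present on $\tX$ have disjoint supports, and hence $\Ext^\bullet(\cO_\Delta(-1), \cO_{\Delta'}(-1)) = 0$ in both directions. A standard closure argument then upgrades this to the subcategories they generate: the full subcategory of $\Ker\pi_*$ consisting of objects that split as a sum of an object in $\langle\cO_\Delta(-1)\rangle^\oplus_{\Delta\in\bDelta_2}$ and an object in $\langle\cO_\Delta(-1)\rangle^\oplus_{\Delta\in\bDelta_3}$ is closed under shifts, infinite direct sums, and cones (the vanishing of cross-$\Ext$'s forces any morphism between two such splittings to be diagonal, so cones split), and contains all generators of $\Ker\pi_*$; hence it exhausts $\Ker\pi_*$. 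This delivers the canonical direct sum decomposition asserted at the end.

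To conclude I would invoke the SOD~\eqref{eq:dbtx-}. Given $\cF \in \tcA_2^- \cap \Ker\pi_*$, apply the canonical splitting to write $\cF = \cF_2 \oplus \cF_3$ with $\cF_j \in \tcA_j^-$; the SOD projection onto $\tcA_3^-$ sends the object $\cF \in \tcA_2^-$ to $0$, but sends the direct sum to $\cF_3$, so $\cF_3 = 0$ and $\cF \in \langle \cO_\Delta(-1) \rangle^\oplus_{\Delta \in \bDelta_2}$ as required; the symmetric argument handles~(3). For~(1), any $\cF \in \tcA_1^- \cap \Ker\pi_*$ lies in $\tcA_1^- \cap (\tcA_2^- \oplus \tcA_3^-) \subset \tcA_1^- \cap {}^\perp\tcA_1^-$, whence $\Hom(\cF,\cF)=0$ and $\cF = 0$.

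The step requiring most care is the propagation of the orthogonal splitting from the generating sheaves to arbitrary objects of $\Ker\pi_*$; once that closure argument is made precise, everything else is either immediate from the preparatory results or a formal consequence of the semi-orthogonal decomposition~\eqref{eq:dbtx-}.
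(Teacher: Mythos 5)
Your proof is correct and follows essentially the same route as the paper: identify $\Ker\pi_*$ via Lemma~\ref{lemma:pi-properties}, place each $\cO_\Delta(-1)$ in $\tcA_2$ or $\tcA_3$ (the paper cites the exact sequences~\eqref{eq:e12}, \eqref{eq:e23}, \eqref{eq:e123} directly, which is what Remark~\ref{remark:delta-simples} encodes), and use disjointness of the supports to get complete orthogonality and hence the splitting. The only differences are that you verify disjointness by intersection numbers rather than by inspection of Table~\ref{table:dp6}, and you spell out the final deduction of the three intersection equalities from the semiorthogonal decomposition~\eqref{eq:dbtx-}, a step the paper leaves implicit.
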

\begin{proof}
By Lemma~\ref{lemma:pi-properties}
an object $\cF \in \bD^-(\tX)$ is in $\Ker\pi_*$ if and only if every its cohomology sheaf~$\cH^j(\cF)$ 
is an iterated extension of sheaves $\cO_\Delta(-1)$
where $\Delta$ run over the set $\bDelta$ of all $(-2)$-curves on~$\tX$.
Note that $\cO_\Delta(-1) \in \tcA_2$ for $\Delta \in \bDelta_2$ by~\eqref{eq:e12} and~\eqref{eq:e23}, while $\cO_\Delta(-1) \in \tcA_3$ for $\Delta \in \bDelta_3$ by~\eqref{eq:e123}.
Moreover, the subcategories generated by the sheaves $\cO_\Delta(-1)$ with $\Delta \in \bDelta_2$ and $\Delta \in \bDelta_3$ are completely orthogonal, since the supports of these sheaves do not intersect.
This last observation shows that any $\cF \in \Ker\pi_*$ decomposes as $\cF_2 \oplus \cF_3$ with the required properties (just take $\cF_2$ and $\cF_3$ to be the components of $\cF$ 
supported on the union of the curves $\Delta$ from $\bDelta_2$ and $\bDelta_3$ respectively).
\end{proof}

Another useful observation is the following.

\begin{lemma}\label{lemma:ker-pis-orthogonality}
We have $\Hom(\tcA_i^-,\tcA_j^- \cap \Ker \pi_*) = 0$ for any $i \ne j$.
\end{lemma}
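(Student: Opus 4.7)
The plan is to handle the two cases $i>j$ and $i<j$ separately. The case $i>j$ is immediate from the semiorthogonality of~\eqref{eq:dbtx-}: since $\tcA_j^- \cap \Ker\pi_* \subset \tcA_j^-$, we get
\begin{equation*}
\Hom(\tcA_i^-, \tcA_j^- \cap \Ker\pi_*) \subset \Hom(\tcA_i^-, \tcA_j^-) = 0.
\end{equation*}

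For the substantive case $i<j$, I would use Lemma~\ref{lemma:ker-pis}, which says that $\tcA_j^- \cap \Ker\pi_*$ is generated in $\bD^-(\tX)$ (under shifts, cones and infinite direct sums) by the sheaves $\cO_\Delta(-1)$ with $\Delta \in \bDelta_j$. For any fixed $\cG$, the full subcategory $\{\cF : \Hom(\cF,\cG[n]) = 0 \text{ for all } n\}$ is triangulated and closed under direct sums, so it suffices to verify
\begin{equation*}
\Hom^\bullet_\tX(\cL, \cO_\Delta(-1)) = 0
\end{equation*}
for every exceptional line bundle $\cL$ appearing in the collection~\eqref{eq:tx-excol} generating $\tcA_i$, and every $\Delta \in \bDelta_j$; the generators of $\tcA_i$ then generate $\tcA_i^-$ in the analogous $\bD^-$ sense.

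The above Hom group rewrites via the closed embedding $\Delta \hookrightarrow \tX$ as $H^\bullet(\Delta, \cL|_\Delta \otimes \cO_\Delta(-1))$, which vanishes whenever $\cL \cdot \Delta = 0$, since $\Delta \cong \P^1$ and $H^\bullet(\P^1, \cO(-1)) = 0$. The needed intersection numbers are read off from Lemma~\ref{lemma:pic-tx} using the classes $\Delta_{12} \in |e_1 - e_2|$, $\Delta_{23} \in |e_2 - e_3|$ and $\Delta_{123} \in |h - e_1 - e_2 - e_3|$. For $i=1$ the generator is $\cO_\tX$ and the vanishing is automatic; for $(i,j)=(2,3)$ the only curve involved is $\Delta_{123}$, and a direct computation gives $(h - e_k) \cdot (h - e_1 - e_2 - e_3) = 1 - 1 = 0$ for each $k \in \{1,2,3\}$. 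The main point requiring some care is the propagation of the vanishing from the finite generating set of line bundles to all of $\tcA_i^-$, but this is routine once one notes that $\tcA_i^-$ is the closure of $\tcA_i$ in $\bD^-(\tX)$ under shifts, cones and bounded-above direct sums, precisely the closure preserved by the vanishing of $\Hom^\bullet(-,\cG)$.
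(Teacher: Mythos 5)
Your proof is correct and follows essentially the same route as the paper: reduce to the generating line bundles of $\tcA_i$ and the sheaves $\cO_\Delta(-1)$ generating $\tcA_j^-\cap\Ker\pi_*$, then check that the relevant intersection numbers vanish so that $H^\bullet(\P^1,\cO(-1))=0$ applies. The only (harmless) deviation is that you dispatch the $i>j$ cases by semiorthogonality of~\eqref{eq:dbtx-}, whereas the paper verifies those by the same intersection computation (e.g.\ $h\cdot(e_k-e_l)=0$); also note the restriction in your Hom computation should be of $\cL^{-1}$ rather than $\cL$, which is immaterial since you only use $\cL\cdot\Delta=0$.
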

\begin{proof}
Since $\tcA_i^-$ is generated by an exceptional collection of line bundles and $\tcA_j^- \cap \Ker \pi_*$ is generated by sheaves $\cO_\Delta(-1)$ for $\Delta \in \bDelta_j$,
it is enough to check that any of the line bundles generating $\tcA_i^-$ restricts trivially to any curve $\Delta$ from $\bDelta_j$.

For $i = 1$ this is evident; for $i = 2$ we have
$(h-e_k)\cdot \Delta_{123} = (h - e_k)(h - e_1 - e_2 - e_3) = 1 - 1 = 0$,
and for $i = 3$ we have 
$h\cdot (e_k - e_l) = 0$
and
$(2h - e_1 - e_2 - e_3) \cdot (e_k - e_l) = 1 - 1 = 0$.
\end{proof}

Now we can show that~\eqref{eq:dbtx-} induces a semiorthogonal decomposition of~$\bD^-(X)$.
Denote by $\tilde\alpha_i$ the projection functors of the decomposition~\eqref{eq:dbtx-}.
By~\cite[Proposition~4.2]{kuznetsov2011base} the projection functors of~\eqref{eq:dbtx} are given 
by the restrictions of $\tilde\alpha_i$ to $\bD(\tX)$.

\begin{proposition}\label{proposition:sod-dbx-}
The subcategories $\cA_i^- = \{ \cF \in \bD^-(X) \mid  \pi^*(\cF) \in \tcA_i^- \} \subset \bD^-(X)$ form a semiorthogonal decomposition
\begin{equation*}
\bD^-(X) = \langle \cA_1^-, \cA_2^-, \cA_3^- \rangle
\end{equation*}
with projection functors given by
\begin{equation}
\label{eq:alpha}
\alpha_i = \pi_* \circ \tilde\alpha_i \circ \pi^*.
\end{equation}
Moreover, we have
\begin{equation}\label{eq:pis-ai}
\pi_*(\tcA_i^-) = \cA_i^-.
\end{equation}
\end{proposition}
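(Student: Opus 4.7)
The plan is to transfer the semiorthogonal decomposition~\eqref{eq:dbtx-} from $\bD^-(\tX)$ to $\bD^-(X)$ along the adjoint pair $(\pi^*,\pi_*)$, exploiting that by Lemma~\ref{lemma:pi-properties} the pullback $\pi^*$ is fully faithful and identifies $\bD^-(X)$ with the left orthogonal to $\Ker\pi_*=\langle\cO_\Delta(-1)\rangle^\oplus_{\Delta\in\bDelta}$ inside $\bD^-(\tX)$. Semiorthogonality of the $\cA_i^-$ is then immediate: for $\cG\in\cA_i^-$ and $\cF\in\cA_j^-$ with $i>j$, the full faithfulness of $\pi^*$ gives $\Hom(\cG,\cF)=\Hom(\pi^*\cG,\pi^*\cF)=0$ by the semiorthogonality of~\eqref{eq:dbtx-}. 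For generation, I would apply $\pi_*$ to the canonical filtration of $\pi^*\cF$ arising from~\eqref{eq:dbtx-}: using $\pi_*\pi^*\cF\cong\cF$ one obtains a filtration of $\cF$ whose successive quotients are $\alpha_i(\cF):=\pi_*\tilde\alpha_i\pi^*(\cF)$.

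The heart of the proof, and the step I would expect to require the most care, is to show that each $\alpha_i(\cF)$ actually lies in $\cA_i^-$, equivalently that $\pi^*\pi_*\cG\in\tcA_i^-$ whenever $\cG:=\tilde\alpha_i\pi^*\cF\in\tcA_i^-$. From the counit triangle
\begin{equation*}
K\to\pi^*\pi_*\cG\to\cG\to K[1]
\end{equation*}
and~\eqref{eq:pis-pis-id} one sees that $\pi_*K=0$, so Lemma~\ref{lemma:ker-pis} furnishes a decomposition $K=\bigoplus_j K_j$ with $K_j\in\tcA_j^-\cap\Ker\pi_*$. Lemma~\ref{lemma:ker-pis-orthogonality} then gives $\Hom(\cG,K_j[1])=0$ for $j\ne i$, so the connecting map $\cG\to K[1]$ factors through $K_i[1]$ and the triangle splits as $\pi^*\pi_*\cG\cong R\oplus\bigoplus_{j\ne i}K_j$, where $R\in\tcA_i^-$ (an extension of $\cG$ by $K_i$, both of which lie in $\tcA_i^-$).

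To eliminate the unwanted summands I would compute $\Hom(\pi^*\pi_*\cG,K_j)$ in two ways: the direct sum decomposition produces an inclusion $\End(K_j)\hookrightarrow\Hom(\pi^*\pi_*\cG,K_j)$ sending $\id_{K_j}$ to a nonzero element as soon as $K_j\ne 0$; on the other hand, by the $(\pi^*,\pi_*)$-adjunction and $\pi_*K_j=0$ this same group equals $\Hom(\pi_*\cG,\pi_*K_j)=0$. Hence $K_j=0$ for all $j\ne i$ and $\pi^*\pi_*\cG\cong R\in\tcA_i^-$, as desired.

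Assembling the pieces, the $\cA_i^-$ form a semiorthogonal decomposition of $\bD^-(X)$ with projection functors $\alpha_i=\pi_*\tilde\alpha_i\pi^*$. The identity $\pi_*(\tcA_i^-)=\cA_i^-$ then drops out: the inclusion $\pi_*(\tcA_i^-)\subset\cA_i^-$ is precisely the preservation $\pi^*\pi_*(\tcA_i^-)\subset\tcA_i^-$ just established, while the reverse inclusion is immediate since $\cF\in\cA_i^-$ means $\pi^*\cF\in\tcA_i^-$ and then $\cF\cong\pi_*\pi^*\cF\in\pi_*(\tcA_i^-)$.
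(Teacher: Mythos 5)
Your proof is correct and follows essentially the same route as the paper: semiorthogonality via full faithfulness of $\pi^*$, generation by pushing forward the filtration of $\pi^*\cF$ from~\eqref{eq:dbtx-}, and the key point $\pi^*\pi_*(\tcA_i^-)\subset\tcA_i^-$ established from the counit triangle using Lemma~\ref{lemma:ker-pis}, Lemma~\ref{lemma:ker-pis-orthogonality}, and adjunction against $\Ker\pi_*$. The only cosmetic difference is that you first split off the summands $K_j$ ($j\ne i$) and then kill them, whereas the paper kills them directly by showing $\Hom(\cG,\cG_j)=0$ from the same two vanishings.
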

\begin{proof}
By Lemma~\ref{lemma:pi-properties} the functor $\pi^* \colon \bD^-(X) \to \bD^-(\tX)$ is fully faithful, hence the subcategories $\cA_i^-$ are semiorthogonal.

Let us prove~\eqref{eq:pis-ai}.
For this take any object $\cF \in \tcA_i^-$ and consider the standard triangle
\begin{equation*}
\pi^*(\pi_*\cF) \to \cF \to \cG.
\end{equation*}
Then, of course, $\cG \in \Ker\pi_*$.
By Lemma~\ref{lemma:ker-pis} we have $\cG = \cG_2 \oplus \cG_3$ with~$\cG_j \in \tcA_j^- \cap \Ker \pi_*$.
If~$j \ne i$ then~$\Ext^\bullet(\cF,\cG_j) = 0$ by Lemma~\ref{lemma:ker-pis-orthogonality} and $\Ext^\bullet(\pi^*(\pi_*\cF),\cG_j) = 0$ since $\cG_j \in \Ker  \pi_*$.
It follows from the above triangle that $\Hom(\cG,\cG_j) = 0$, hence $\cG_j = 0$ since it is a direct summand of $\cG$.
Therefore we have $\cG \in \tcA_i^-$, hence $\pi^*(\pi_*\cF) \in \tcA_i^-$, hence $\pi_*\cF \in \cA_i^-$ by definition of the latter.
This proves an inclusion $\pi_*(\tcA_i^-) \subset \cA_i^-$.
The other inclusion follows from~\eqref{eq:pis-pis-id}.

Now let us decompose any $\cF \in \bD^-(X)$.
For this take $\tcF := \pi^*(\cF) \in \bD^-(\tX)$ and consider its decomposition with respect to~\eqref{eq:dbtx-}.
It is given by a chain of morphisms 
\begin{equation*}
0 = \tcF_3 \rightarrow \tcF_2 \rightarrow \tcF_1 \rightarrow \tcF_0 = \tcF,
\end{equation*}
whose cones are $\tilde\alpha_i(\tcF) \in \tcA_i^-$.
Pushing it forward to $X$, we obtain a chain of morphisms 
\begin{equation*}
0 = \pi_*(\tcF_3) \rightarrow \pi_*(\tcF_2) \rightarrow \pi_*(\tcF_1) \rightarrow \pi_*(\tcF_0) = \pi_*(\tcF) \cong \cF,
\end{equation*}
whose cones are $\pi_*(\tilde\alpha_i(\tcF)) \cong \pi_*(\tilde\alpha_i(\pi^*\cF)) \in \pi_*(\tcA_i^-) = \cA_i^-$.
This proves the semiorthogonal decomposition and shows that its projection functors are given by~\eqref{eq:alpha}.
\end{proof}

And now we can construct the required semiorthogonal decomposition of~$\bD(X)$.

\begin{proposition}
\label{proposition:sod-dbx}
The subcategories 
\begin{equation*}
\cA_i := \cA_i^- \cap \bD(X) = \{ \cF \in \bD(X) \mid \pi^*(\cF) \in \tcA_i^- \} \subset \bD(X)
\end{equation*}
provide a semiorthogonal decomposition~\eqref{eq:dbx}.
Its projection functors $\alpha_i$ are given by~\eqref{eq:alpha}; they preserve boundedness and have finite cohomological amplitude.
Finally, $\cA_i = \pi_*(\tcA_i)$.
\end{proposition}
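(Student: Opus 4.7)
The plan is to reduce the statement to Proposition~\ref{proposition:sod-dbx-}. Setting $\cA_i := \cA_i^- \cap \bD(X)$, semiorthogonality and the explicit form~\eqref{eq:alpha} of the projection functors are inherited for free from the analogous decomposition of $\bD^-(X)$. The entire substance thus reduces to showing that $\alpha_i = \pi_* \circ \tilde\alpha_i \circ \pi^*$ preserves $\bD(X) \subset \bD^-(X)$ with finite cohomological amplitude; granted this, the $\bD^-(X)$-filtration of any $\cF \in \bD(X)$ produced by Proposition~\ref{proposition:sod-dbx-} automatically lies in $\bD(X)$, giving~\eqref{eq:dbx}. The main obstacle is that $\pi^*\cF$ itself need not be bounded below when $X$ is singular --- the skyscraper sheaf $\cO_p$ at a singular point $p \in X$ has infinite projective dimension, so $\pi^*\cO_p$ has unbounded cohomology --- and one therefore cannot invoke ``boundedness in, boundedness out'' for $\tilde\alpha_i$ abstractly.

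To circumvent this, I would exploit the fact that $\tcA_i^-$ is generated by the exceptional collection of line bundles in~\eqref{eq:tx-excol}, so $\tilde\alpha_i$ can be realized by a finite sequence of mutations through line bundles $L$ on $\tX$, each of which involves only the computation of an $\RHom$-group $\RHom_\tX(L, \pi^*\cF)$. By the projection formula,
\[
\RHom_\tX(L, \pi^*\cF) \;\cong\; R\Gamma\bigl(X,\, R\pi_*(L^{-1}) \otimes \cF\bigr),
\]
and this is a bounded complex of $\kk$-vector spaces, because $\cF$ is bounded, $R\pi_*(L^{-1})$ is bounded (the fibres of $\pi$ have dimension at most $1$), and $R\Gamma$ on the projective surface $X$ has cohomological dimension at most $2$. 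Consequently every intermediate output of the mutation procedure remains in $\bD(\tX)$, and in particular $\tilde\alpha_i(\pi^*\cF) \in \bD(\tX) \cap \tcA_i^- = \tcA_i$. Since $\pi_*$ preserves boundedness (Lemma~\ref{lemma:pi-properties}), we conclude $\alpha_i(\cF) \in \bD(X)$, and the same count provides a uniform bound on the cohomological amplitude of $\alpha_i$.

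Finally, for the identification $\cA_i = \pi_*(\tcA_i)$: the inclusion $\pi_*(\tcA_i) \subset \cA_i$ follows from~\eqref{eq:pis-ai} combined with $\pi_*\bD(\tX) \subset \bD(X)$. Conversely, for $\cF \in \cA_i$ the uniqueness of the $\bD^-(X)$-decomposition forces $\alpha_i(\cF) = \cF$, and by the key step this equals $\pi_*(\tilde\alpha_i(\pi^*\cF))$ with $\tilde\alpha_i(\pi^*\cF) \in \tcA_i$, so $\cF \in \pi_*(\tcA_i)$. The hard part throughout is the boundedness estimate of the middle paragraph; without the specific geometric input that $\tcA_i$ is admissibly generated by line bundles whose derived pushforwards to $X$ are bounded, there would be no a priori reason for $\tilde\alpha_i$ to carry $\pi^*\bD(X)$ into $\bD(\tX)$.
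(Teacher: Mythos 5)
Your reduction to Proposition~\ref{proposition:sod-dbx-} and your identification of the obstacle (unboundedness of $\pi^*\cF$) match the paper, but the way you resolve the obstacle contains a genuine error. The claim that $\RHom_{\tX}(L,\pi^*\cF)\cong \RGamma(X,\pi_*(L^{-1})\otimes\cF)$ is bounded is false: a derived tensor product of two bounded complexes on the \emph{singular} surface $X$ need not be bounded, and boundedness here would require $\pi_*(L^{-1})$ to be a perfect complex on $X$, which it is not for the line bundles of~\eqref{eq:tx-excol}. Concretely, let $X$ be of type~1 with $A_1$-point $p$. Since $h\cdot\Delta_{123}=1$, the sheaf $\pi_*\cO_\tX(-h)$ is a rank-one reflexive sheaf that is not locally free at $p$; over a du Val point such a module has an infinite ($2$-periodic) minimal free resolution, so $\Tor_j(\pi_*\cO_\tX(-h),\cO_p)\ne 0$ for all $j\ge 0$ and $\RHom_\tX(\cO_\tX(h),\pi^*\cO_p)$ is unbounded below. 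In the same example one sees directly that your intermediate conclusion $\tilde\alpha_i(\pi^*\cF)\in\bD(\tX)$ fails: here $\Ker\pi_*=\langle\cO_{\Delta_{123}}(-1)\rangle^\oplus$ is entirely contained in $\tcA_3^-$, so $\tilde\alpha_3$ acts as the identity on the unbounded tail $\tau^{\le a-1}(\pi^*\cO_p)$, and $\tilde\alpha_3(\pi^*\cO_p)$ is unbounded. This also breaks your proof of $\cA_i\subset\pi_*(\tcA_i)$, which rests on $\tilde\alpha_i(\pi^*\cF)\in\tcA_i$.

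The point you are missing is that boundedness only appears \emph{after} applying $\pi_*$; it is not visible on $\tX$. The paper's mechanism is: by Lemma~\ref{lemma:spectral} the tail $\tau^{\le a-1}(\pi^*\cF)$ lies in $\Ker\pi_*$; by Lemma~\ref{lemma:ker-pis} every object of $\Ker\pi_*$ is the direct sum of its $\tcA_2^-$- and $\tcA_3^-$-components, so each $\tilde\alpha_i$ preserves $\Ker\pi_*$; hence $\alpha_i(\cF)=\pi_*\tilde\alpha_i(\pi^*\cF)\cong\pi_*\tilde\alpha_i(\tau^{\ge a}\pi^*\cF)$, where now the input is bounded and both $\tilde\alpha_i$ (on the smooth $\tX$) and $\pi_*$ preserve boundedness, with the amplitude bound coming from that of $\tilde\alpha_i$. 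Likewise, for $\cA_i\subset\pi_*(\tcA_i)$ one lifts $\cF$ to a bounded $\tcF\in\bD(\tX)$ via Corollary~\ref{corollary:pis-epi} and notes that the cone of $\pi^*\cF\to\tcF$ lies in $\Ker\pi_*$, so $\cF\cong\pi_*(\tilde\alpha_i(\tcF))$ with $\tilde\alpha_i(\tcF)\in\tcA_i$. The interplay between $\tilde\alpha_i$ and $\Ker\pi_*$ is the essential ingredient absent from your argument, and no purely local $\RHom$-computation on $\tX$ can replace it.
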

\begin{proof}
For the first claim it is enough to check that the projection functors $\alpha_i$ preserve boundedness.
Take any $\cF \in \bD^{[a,b]}(X)$ and consider $\pi^*(\cF) \in \bD^{(-\infty,b]}(\tX)$.
By projection formula $\pi_*(\pi^*(\cF)) \cong \cF$, hence by Lemma~\ref{lemma:spectral} we have $\tau^{\le a-1}(\pi^*(\cF)) \in \Ker \pi_*$.
Consider the triangle
\begin{equation*}
\tilde\alpha_i(\tau^{\le a-1}(\pi^*(\cF))) \to \tilde\alpha_i(\pi^*(\cF)) \to \tilde\alpha_i(\tau^{\ge a}(\pi^*(\cF)))
\end{equation*}
obtained by applying the projection functor $\tilde\alpha_i$ to the canonical truncation triangle.
By Lemma~\ref{lemma:ker-pis} the functor $\tilde\alpha_i$ preserves~$\Ker\pi_*$, hence the first term of the triangle is in $\Ker \pi_*$.
Therefore, applying the pushforward we obtain an isomorphism 
\begin{equation*}
\alpha_i(\cF) = \pi_*(\tilde\alpha_i(\pi^*(\cF))) \cong \pi_*(\tilde\alpha_i(\tau^{\ge a}(\pi^*(\cF))).
\end{equation*}
So, it remains to note that $\tau^{\ge a}(\pi^*(\cF)) \in \bD^{[a,b]}(\tX)$,
hence $\pi_*(\tilde\alpha_i(\tau^{\ge a}(\pi^*(\cF)))$ is bounded, since both $\tilde\alpha_i$ and $\pi_*$ preserve boundedness.
Moreover, if the cohomological amplitude of $\tilde\alpha_i$ is $(p,q)$ 
(it is finite since~$\tX$ is smooth, see~\cite[Proposition~2.5]{kuznetsov2008resolution}), then
$\pi_*(\tilde\alpha_i(\tau^{\ge a}(\pi^*(\cF))) \in \bD^{[a+p,b+q+1]}(X)$.
In particular, $\alpha_i$ has finite cohomological amplitude.

Let us prove the last claim.
By~\eqref{eq:pis-ai} we have $\pi_*(\tcA_i) \subset \cA_i^-$, and since $\pi_*$ preserves boundedness, we have $\pi_*(\tcA_i) \subset \cA_i$.
To check that this inclusion is an equality, take any $\cF \in \cA_i$.
By Corollary~\ref{corollary:pis-epi} there exists $\tcF \in \bD(\tX)$ such that $\cF \cong \pi_*(\tcF)$.
Let $\cG$ be the cone of the natural morphism $\pi^*\cF \to \tcF$.
Then $\cG \in \Ker  \pi_*$.
Moreover, $\tilde\alpha_i(\cG) \in \Ker \pi_*$ by Lemma~\ref{lemma:ker-pis}, hence applying the functor $\pi_* \circ \tilde\alpha_i$ 
to the distinguished triangle $\pi^*\cF \to \tcF \to \cG$, we deduce an isomorphism $\cF \cong \alpha_i(\cF) \cong \pi_*(\tilde\alpha_i(\tcF))$,
and it remains to note that $\tilde\alpha_i(\tcF) \in \tcA_i$.
\end{proof}

Next, we identify the categories $\cA_1$, $\cA_2$, and $\cA_3$ constructed in Proposition~\ref{proposition:sod-dbx} 
with the corresponding products of $\bD(\kk[t]/t^m)$.
By Proposition~\ref{proposition:tcai-auslander} each category $\tcA_i$ is equivalent to a product of derived categories $\bD(\tR_m)$ of Auslander algebras $\tR_m$.
Take one of these and denote by $\tilde\gamma \colon \bD(\tR_m) \to \bD(\tX)$ its embedding functor.
Let 
\begin{equation*}
\pi_{m*} \colon \bD^-(\tR_m) \to \bD^-(\kk[t]/t^m)
\qquad \text{and} \qquad
\pi_m^* \colon \bD^-(\kk[t]/t^m) \to \bD^-(\tR_m)
\end{equation*}
be the functors described in Appendix~\ref{appendix:auslander}, see equation~\eqref{eq:pms}.
Note that $\pi_m^*$ is left adjoint to~$\pi_{m*}$.

\begin{proposition}
\label{proposition:gamma}
The functor 
\begin{equation}\label{def:gamma}
\gamma := \pi_* \circ \tilde\gamma \circ \pi_m^* \colon \bD^-(\kk[t]/t^m) \to \bD^-(X).
\end{equation}
is fully faithful and preserves boundedness.
Moreover, the diagrams
\begin{equation}
\label{eq:gamma-tgamma}
\vcenter{\xymatrix@C=5em{
\bD^-(\tR_m) \ar[r]^-{\tilde\gamma} \ar[d]_{\pi_{m*}} &
\bD^-(\tX) \ar[d]^{\pi_{*}} 
\\
\bD^-(\kk[t]/t^m) \ar[r]^{\gamma} &
\bD^-(X)
}}
\qquad\text{and}\qquad
\vcenter{\xymatrix@C=5em{
\bD^-(\tR_m) \ar[r]^-{\tilde\gamma} &
\bD^-(\tX)  
\\
\bD^-(\kk[t]/t^m) \ar[r]^{\gamma} \ar[u]^{\pi_m^{*}} &
\bD^-(X) \ar[u]_{\pi^{*}}
}}
\end{equation}
are both commutative.
\end{proposition}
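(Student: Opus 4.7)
The plan is to first establish commutativity of both diagrams in~\eqref{eq:gamma-tgamma}, then deduce fully faithfulness of~$\gamma$ by an adjunction computation, and finally handle boundedness by a truncation argument modelled on the proof of Proposition~\ref{proposition:sod-dbx}. Throughout, I rely on the algebraic analogues of Lemma~\ref{lemma:pi-properties} and~\eqref{eq:pis-pis-id} developed in Appendix~\ref{appendix:auslander}: $\pi_m^*$ is fully faithful, $\pi_{m*}\pi_m^* \cong \id$, and $\Ima \pi_m^* = {}^\perp\langle S_j\rangle$ inside $\bD^-(\tR_m)$, where the~$S_j$ are the simple modules generating~$\Ker\pi_{m*}$.

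For the right-hand diagram I would show that $\tilde\gamma(\pi_m^* M) \in \Ima\pi^*$ for every $M \in \bD^-(\kk[t]/t^m)$. By Lemma~\ref{lemma:pi-properties} this reduces to checking orthogonality against $\cO_\Delta(-1)$ for each~$\Delta \in \bDelta$. For $\Delta \in \bDelta_j$ with $j \ne i$ this is immediate from Lemma~\ref{lemma:ker-pis-orthogonality}, since $\tilde\gamma(\pi_m^* M) \in \tcA_i^-$ and $\cO_\Delta(-1) \in \tcA_j^- \cap \Ker\pi_*$. For $\Delta \in \bDelta_i$, Remark~\ref{remark:delta-simples} identifies $\cO_\Delta(-1)$ with the image under $\tilde\gamma$ of a simple $\tR_m$-module $S_j$, and orthogonality follows from the assumed description of $\Ima\pi_m^*$. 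The isomorphism $\pi^*\pi_* \cong \id$ on $\Ima\pi^*$ then gives $\pi^*\gamma(M) \cong \tilde\gamma(\pi_m^* M)$, i.e., commutativity of the right-hand diagram.

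For the left-hand diagram I would apply $\pi_* \circ \tilde\gamma$ to the unit triangle $\pi_m^*\pi_{m*}\tN \to \tN \to \cH$, where the cone $\cH$ lies in $\Ker\pi_{m*}$ and is therefore an iterated extension of the simples $S_j$. By Remark~\ref{remark:delta-simples} these go under $\tilde\gamma$ to sheaves $\cO_\Delta(-1)$ with $\Delta \in \bDelta_i$, which lie in $\Ker\pi_*$; hence $\pi_*\tilde\gamma(\cH) = 0$, and the triangle yields the required isomorphism $\gamma(\pi_{m*}\tN) \cong \pi_*\tilde\gamma(\tN)$. Fully faithfulness of $\gamma$ then follows from the chain
\begin{equation*}
\Hom(\gamma M, \gamma N) \cong \Hom(\pi^*\gamma M, \tilde\gamma\pi_m^* N) \cong \Hom(\tilde\gamma\pi_m^* M, \tilde\gamma\pi_m^* N) \cong \Hom(\pi_m^* M, \pi_m^* N) \cong \Hom(M, N),
\end{equation*}
using in turn the adjunction $\pi^* \dashv \pi_*$, the just-established right-hand diagram, the fully faithfulness of $\tilde\gamma$, and the adjunction $\pi_m^* \dashv \pi_{m*}$ together with $\pi_{m*}\pi_m^* \cong \id$.

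For boundedness I would mimic the proof of Proposition~\ref{proposition:sod-dbx}: for $M \in \bD^{[a,b]}(\kk[t]/t^m)$, an algebraic version of Lemma~\ref{lemma:spectral} places $\tau^{\le a-1}(\pi_m^* M)$ inside $\Ker\pi_{m*}$, hence $\tilde\gamma$ sends it into $\Ker\pi_*$, and therefore $\gamma(M) \cong \pi_*\tilde\gamma(\tau^{\ge a}\pi_m^* M)$, which is bounded since both $\tilde\gamma$ and $\pi_*$ preserve boundedness on bounded complexes. The main obstacle I anticipate is not the geometry itself but setting up in the appendix the algebraic counterparts of Lemmas~\ref{lemma:pi-properties} and~\ref{lemma:spectral} cleanly enough that the arguments above, which are essentially transfers of the proofs of Propositions~\ref{proposition:sod-dbx-} and~\ref{proposition:sod-dbx} through the equivalence $\tilde\gamma$, go through without modification.
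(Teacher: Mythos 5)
Your proof is correct and, for the two commutative diagrams and for full faithfulness, follows essentially the same route as the paper: both diagrams are established from Proposition~\ref{proposition:auslander-localization} (the cone of $\pi_m^*\pi_{m*}M \to M$ lies in $\langle S_1,\dots,S_{m-1}\rangle^\oplus$, and $\Ima \pi_m^* = {}^\perp\langle S_1,\dots,S_{m-1}\rangle$) combined with Remark~\ref{remark:delta-simples}, Lemma~\ref{lemma:pi-properties}, and Lemma~\ref{lemma:ker-pis-orthogonality}; full faithfulness is then read off from the right-hand diagram, exactly as in the paper, since $\pi^*\circ\gamma \cong \tilde\gamma\circ\pi_m^*$ is a composition of fully faithful functors. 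The one place you diverge is boundedness: the paper invokes essential surjectivity of $\pi_{m*}$ on bounded categories (also part of Proposition~\ref{proposition:auslander-localization}) to write $N \cong \pi_{m*}(M)$ with $M \in \bD(\tR_m)$ bounded, so that the left-hand diagram immediately gives $\gamma(N) \cong \pi_*(\tilde\gamma(M))$, which is bounded. Your truncation argument reaches the same conclusion and is valid --- in fact it is easier than you anticipate, since $\pi_{m*}$ is exact on modules, so $\tau^{\le a-1}(\pi_m^*M) \in \Ker\pi_{m*}$ follows directly from $\pi_{m*}\pi_m^* \cong \id$ without any analogue of Lemma~\ref{lemma:spectral} --- but it is a slightly longer detour. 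Finally, the ``main obstacle'' you flag is not an obstacle: all the algebraic counterparts you list (full faithfulness of $\pi_m^*$, $\pi_{m*}\pi_m^*\cong\id$, and the descriptions of $\Ker\pi_{m*}$ and $\Ima\pi_m^*$) are already proved in Appendix~\ref{appendix:auslander}.
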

\begin{proof}
By definition $\gamma \circ \pi_{m*} = \pi_* \circ \tilde\gamma \circ \pi_m^* \circ \pi_{m*}$, so for commutativity of the first diagram it is enough to check that for any $M \in \bD^-(\tR_m)$ 
the cone of the canonical morphism $\pi_m^*(\pi_{m*}M) \to M$ is killed by the functor $\pi_* \circ \tilde\gamma$.
But this cone, by Proposition~\ref{proposition:auslander-localization} is contained in the subcategory generated by the simple modules $S_1,\dots,S_{m-1}$ over $\tR_m$.
By Remark~\ref{remark:delta-simples} the functor $\tilde\gamma$ takes these modules to sheaves $\cO_\Delta(-1)$ 
for appropriate $(-2)$-curves $\Delta$ on $\tX$, and the functor $\pi_*$ kills every $\cO_\Delta(-1)$ by Lemma~\ref{lemma:pi-properties}.

For commutativity of the second diagram note that by Proposition~\ref{proposition:auslander-localization} the image of the functor~$\pi_m^*$ 
is the left orthogonal ${}^\perp\langle S_1,\dots, S_{m-1} \rangle \subset \bD^-(\tR_m)$ of the subcategory generated by the simple modules.
Therefore, using Lemma~\ref{lemma:ker-pis-orthogonality} and Remark~\ref{remark:delta-simples} we deduce that the image of $\tilde\gamma \circ \pi_m^*$ is contained 
in the left orthogonal ${}^\perp \langle \cO_\Delta(-1) \rangle \subset \bD^-(\tX)$ of the subcategory generated by all $(-2)$-curves $\Delta$.
Hence, by Lemma~\ref{lemma:pi-properties} it is contained in the image of $\pi^* \colon \bD^-(X) \to \bD^-(\tX)$.
Thus, the functor $\pi^* \circ \pi_*$ is identical on the image of $\tilde\gamma \circ \pi_m^*$, hence 
$\tilde\gamma \circ \pi_m^* \cong 
\pi^* \circ \pi_* \circ \tilde\gamma \circ \pi_m^* \cong 
\pi^* \circ \gamma$,
and so the second diagram commutes.

Let us show that $\gamma$ is fully faithful.
Indeed, by commutativity of the second diagram in~\eqref{eq:gamma-tgamma}, this follows from full faithfulness of the functors $\pi_m^*$ (Proposition~\ref{proposition:auslander-localization}), 
$\tilde\gamma$ (Proposition~\ref{proposition:tcai-auslander}), and $\pi^*$ (Lemma~\ref{lemma:pi-properties}).

Finally, let us show that $\gamma$ preserves boundedness.
Indeed, take any $N \in \bD(\kk[t]/t^m)$.
By Proposition~\ref{proposition:auslander-localization} there exists $M \in \bD(\tR_m)$ such that $N \cong \pi_{m*}(M)$.
Then $\gamma(N) = \gamma(\pi_{m*}(M))$ and by commutativity of the first diagram this is the same as $\pi_*(\tilde\gamma(M))$.
We know that $\tilde\gamma(M)$ is bounded by Proposition~\ref{proposition:tcai-auslander}, hence so is $\pi_*(\tilde\gamma(M)) = \gamma(N)$.
\end{proof}

Now we are ready to describe the components $\cA_i$ of~\eqref{eq:dbx}.
Let $\tcA_i = \bD(\tR_{m_1}) \times \dots \times \bD(\tR_{m_s})$ be the corresponding component of~\eqref{eq:dbtx}.
Let $\tilde\gamma_j \colon \bD(\tR_{m_j}) \to \bD(\tX)$ be the corresponding embedding functors and 
let $\gamma_j \colon \bD(\kk[t]/t^{m_j}) \to \bD(X)$ be the embeddings constructed in Proposition~\ref{proposition:gamma}.

\begin{proposition}
\label{proposition:sod-cai}
The fully faithful functors $\gamma_1,\dots,\gamma_s$ induce a completely orthogonal decomposition 
\begin{equation*}
\cA_i = \bD(\kk[t]/t^{m_1}) \times \dots \times \bD(\kk[t]/t^{m_s}).
\end{equation*}
\end{proposition}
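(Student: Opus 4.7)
The plan is to verify three things: (i) each $\gamma_j$ factors through $\cA_i$, (ii) the images of $\gamma_j$ and $\gamma_k$ for $j \ne k$ are completely orthogonal inside $\cA_i$, and (iii) every object of $\cA_i$ splits as a direct sum of objects in the images. Once all three hold, the product description follows.

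For (i), recall from Proposition~\ref{proposition:sod-dbx} that $\cA_i = \pi_*(\tcA_i)$. Any $N \in \bD(\kk[t]/t^{m_j})$ can be written as $N \cong \pi_{m_j*}(M)$ for some $M \in \bD(\tR_{m_j})$ by Proposition~\ref{proposition:auslander-localization}. Then the first diagram of~\eqref{eq:gamma-tgamma} gives $\gamma_j(N) = \gamma_j(\pi_{m_j*}(M)) \cong \pi_*(\tilde\gamma_j(M))$, which lies in $\pi_*(\tcA_i) = \cA_i$.

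For (ii), fix $j \ne k$, objects $N_j \in \bD(\kk[t]/t^{m_j})$, $N_k \in \bD(\kk[t]/t^{m_k})$, and write $N_k \cong \pi_{m_k*}(M_k)$. Using the adjunction $\pi^* \dashv \pi_*$ from Lemma~\ref{lemma:pi-properties} together with the first diagram of~\eqref{eq:gamma-tgamma},
\begin{equation*}
\Hom(\gamma_j(N_j), \gamma_k(N_k)) \cong \Hom(\gamma_j(N_j), \pi_*(\tilde\gamma_k(M_k))) \cong \Hom(\pi^*\gamma_j(N_j), \tilde\gamma_k(M_k)).
\end{equation*}
By the second diagram of~\eqref{eq:gamma-tgamma}, $\pi^*\gamma_j(N_j) \cong \tilde\gamma_j(\pi_{m_j}^*N_j)$ lies in the image of $\tilde\gamma_j$. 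Since the decomposition $\tcA_i = \bD(\tR_{m_1}) \times \dots \times \bD(\tR_{m_s})$ from Proposition~\ref{proposition:tcai-auslander} is a product of categories (and in particular completely orthogonal), the last $\Hom$ space vanishes.

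For (iii), take any $\cF \in \cA_i$. By Proposition~\ref{proposition:sod-dbx} we may write $\cF \cong \pi_*(\tcF)$ for some $\tcF \in \tcA_i$. The product decomposition of $\tcA_i$ gives $\tcF \cong \bigoplus_{j=1}^s \tilde\gamma_j(M_j')$ for some $M_j' \in \bD(\tR_{m_j})$; applying $\pi_*$ and using the first diagram of~\eqref{eq:gamma-tgamma} yields
\begin{equation*}
\cF \cong \bigoplus_{j=1}^s \pi_*(\tilde\gamma_j(M_j')) \cong \bigoplus_{j=1}^s \gamma_j(\pi_{m_j*}(M_j')),
\end{equation*}
which expresses $\cF$ as a direct sum of objects in the images of the $\gamma_j$. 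The main subtlety is just verifying that complete orthogonality transfers through the pair of adjoint functors $(\pi^*,\pi_*)$; the two commutative diagrams in~\eqref{eq:gamma-tgamma} are exactly what makes the reduction to the already-established orthogonality in $\tcA_i$ work. Everything else is a direct consequence of the fully faithfulness of the $\gamma_j$ established in Proposition~\ref{proposition:gamma}.
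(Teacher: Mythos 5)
Your proposal is correct and follows essentially the same route as the paper's proof: membership in $\cA_i$ via $\cA_i = \pi_*(\tcA_i)$ and the first diagram of~\eqref{eq:gamma-tgamma}, orthogonality by transporting the question to $\tcA_i \subset \bD(\tX)$ through $\pi^*\circ\gamma_j \cong \tilde\gamma_j\circ\pi_{m_j}^*$ and invoking Proposition~\ref{proposition:tcai-auslander}, and generation by lifting $\cF$ to $\tcF \in \tcA_i$, splitting there, and pushing forward. The only cosmetic difference is that you use the adjunction $\pi^* \dashv \pi_*$ on one argument where the paper appeals to full faithfulness of $\pi^*$ on both; this changes nothing of substance.
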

\begin{proof}
By definition $\gamma_j = \pi_* \circ \tilde\gamma_j \circ \pi_{m_j}^*$ and its image is contained in $\pi_*(\tcA_i^-) = \cA_i^-$.
Moreover, since $\gamma_j$ preserves boundedness, it actually is in $\cA_i$.

To see that the images of $\gamma_j$ are orthogonal, it is enough to check orthogonality of the images of functors $\pi^* \circ \gamma_j = \tilde\gamma_j \circ \pi_{m_j}^*$,
which follows immediately from Proposition~\ref{proposition:tcai-auslander}.

Finally, let us show the generation.
Assume $\cF \in \cA_i$ and let $\cG \in \tcA_i$ be such that $\pi_*\cG \cong \cF$ (it exists by Proposition~\ref{proposition:sod-dbx}). 
Then $\cG$ has a direct sum decomposition $\cG \cong \oplus \cG_j$, 
where $\cG_j \in \tilde\gamma_j(\bD(\tR_{m_j}))$ (Proposition~\ref{proposition:tcai-auslander}).
Therefore, $\cF \cong \pi_*(\cG) \cong \oplus \pi_*(\cG_j)$.
Moreover, $\pi_*(\cG_j) \in \pi_*(\tilde\gamma_j(\bD(\tR_{m_j}))) = \gamma_j(\bD(\kk[t]/t^m))$,
hence objects $\cF_j = \pi_*(\cG_j)$ give the required decomposition of $\cF$.
\end{proof}

Now we can finish the proof of Theorem~\ref{theorem:dbx}.

\begin{lemma}
\label{lemma:cai-admissible}
The subcategories $\cA_i \subset \bD(X)$ in~\eqref{eq:dbx} are admissible.
\end{lemma}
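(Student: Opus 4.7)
The plan is to construct both a left and a right adjoint to the inclusion $\cA_i \hookrightarrow \bD(X)$. By Proposition~\ref{proposition:sod-cai}, $\cA_i$ decomposes as a completely orthogonal product $\prod_j \gamma_j(\bD(\kk[t]/t^{m_j}))$ of images of fully faithful functors, and adjoints of the inclusion of such a product assemble componentwise from adjoints of each summand. It therefore suffices to construct both adjoints for each individual $\gamma_j \colon \bD(\kk[t]/t^{m_j}) \to \bD(X)$.

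I would build these adjoints by reversing the factorization $\gamma_j = \pi_* \circ \tilde\gamma_j \circ \pi_{m_j}^*$ and taking the adjoint of each factor. All three factors have both adjoints: $\pi_*$ has left adjoint $\pi^*$ (Lemma~\ref{lemma:pi-properties}), while its right adjoint $\pi^!$ agrees with $\pi^*$ on perfect complexes thanks to the crepancy $\omega_{\tX/X} \cong \cO_\tX$ coming from~\eqref{eq:ktx}; the embedding $\tilde\gamma_j$ is admissible as a direct summand of the admissible subcategory $\tcA_i \subset \bD(\tX)$ (admissibility being automatic in the derived category of the smooth projective surface $\tX$); and $\pi_{m_j}^*$ has right adjoint $\pi_{m_j *}$ by Proposition~\ref{proposition:auslander-localization}, together with a left adjoint obtained from the standard bimodule formalism for the finite-dimensional algebras $\tR_{m_j}$ and $\kk[t]/t^{m_j}$ (using, in particular, that $\kk[t]/t^{m_j}$ is self-injective). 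Composing in the reverse order yields candidate adjoint functors $\gamma_j^L, \gamma_j^R \colon \bD^-(X) \to \bD^-(\kk[t]/t^{m_j})$.

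The hard part will be verifying that these candidates preserve boundedness, since $\pi^*\cF$ for $\cF \in \bD(X)$ may be unbounded on $\tX$ because of the singularities of $X$. I would resolve this exactly as in the proof of Proposition~\ref{proposition:sod-dbx}: the unbounded tail $\tau^{\le a-1}(\pi^*\cF)$ lies in $\Ker\pi_*$ by Lemma~\ref{lemma:spectral}, this kernel is preserved under the projection onto $\tcA_i$ by Lemma~\ref{lemma:ker-pis}, and it is consequently annihilated by the subsequent composition $\pi_{m_j *} \circ \tilde\gamma_j^R \circ \pi_*$. Combined with the finite cohomological amplitude of the projection onto $\tcA_i$ on the smooth $\tX$ (from~\cite[Proposition~2.5]{kuznetsov2008resolution}), this truncation argument delivers bounded adjoints and settles admissibility.
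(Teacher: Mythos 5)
Your plan of assembling both adjoints of $\cA_i \hookrightarrow \bD(X)$ factor by factor from $\gamma_j = \pi_* \circ \tilde\gamma_j \circ \pi_{m_j}^*$ has two genuine gaps, both at the outer factors. First, the left adjoint of $\pi_{m}^*$: Proposition~\ref{proposition:auslander-localization} produces only the \emph{right} adjoint $\pi_{m*}$ and realizes $\Ima \pi_m^* = \langle P_0 \rangle^\oplus$ as the \emph{last} component of a semiorthogonal decomposition of $\bD^-(\tR_m)$, i.e., as a right admissible subcategory. Because $\bD(\kk[t]/t^m)$ is not smooth, you cannot upgrade right admissibility to left admissibility by general nonsense; a left adjoint of $\pi_m^*$ would have to be constructed explicitly (presumably using that $P_0$ is projective-injective), and neither your sketch nor anything in the paper does this. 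Second, the right adjoint of $\pi_*$ is $\pi^!$, and crepancy identifies $\pi^!$ with $\pi^*$ only on \emph{perfect} complexes; the objects you must feed into $\pi^!$ are arbitrary bounded complexes on the singular surface $X$, and there $\pi^!$ lands in $\bD^+(\tX)$ and genuinely fails to preserve boundedness: choosing $\cF$ with $\pi_*\cF \cong \cO_x$ for a singular point $x$ (possible by Corollary~\ref{corollary:pis-epi}), one gets $\Hom(\cF,\pi^!\cO_x[i]) = \Ext^i(\cO_x,\cO_x) \ne 0$ for all $i \ge 0$, so $\pi^!\cO_x$ is unbounded above. The truncation argument you cite (Lemma~\ref{lemma:spectral} together with Lemma~\ref{lemma:ker-pis}) controls unbounded tails in $\bD^-$ coming from $\pi^*$ and is relevant only to the left adjoint; it says nothing about the $\bD^+$-tails produced by $\pi^!$, and you would need a separate dual argument there which is not sketched.

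For comparison, the paper's proof sidesteps all of this with a rotation trick: twisting \eqref{eq:dbtx} by $\omega_\tX = \pi^*\omega_X$ and pushing forward yields the additional decompositions $\bD(X) = \langle \cA_3\otimes\omega_X, \cA_1, \cA_2\rangle$ and $\bD(X) = \langle \cA_2\otimes\omega_X, \cA_3\otimes\omega_X, \cA_1\rangle$ of \eqref{eq:dbx-other}. Up to the autoequivalence $-\otimes\omega_X$, each $\cA_i$ then occurs as the first component of one semiorthogonal decomposition of $\bD(X)$ and as the last component of another, which gives both adjoints at once. If you want to keep the flavor of your approach, the left adjoint is already available as the projection functor $\alpha_i$ of Proposition~\ref{proposition:sod-dbx}; the genuinely missing ingredient is the right adjoint, and the $\omega$-rotated decompositions are the efficient way to produce it.
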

\begin{proof}
Consider the semiorthogonal decompositions
\begin{equation*}
\bD(\tX) = \langle \tcA_3 \otimes \omega_\tX, \tcA_1, \tcA_2 \rangle
\qquand
\bD(\tX) = \langle \tcA_2 \otimes \omega_\tX, \tcA_3 \otimes \omega_\tX, \tcA_1 \rangle,
\end{equation*}
obtained from~\eqref{eq:dbtx} by mutations.
Since $\omega_\tX \cong \pi^*\omega_X$ and $\omega_X$ is a line bundle on $X$, we have 
\begin{equation*}
\pi_*(\tcA_i \otimes \omega_\tX) = \pi_*(\tcA_i) \otimes \omega_X = \cA_i \otimes \omega_X,
\end{equation*}
hence the arguments of this subsection also prove semiorthogonal decompositions
\begin{equation}\label{eq:dbx-other}
\bD(X) = \langle \cA_3 \otimes \omega_X, \cA_1, \cA_2 \rangle
\qquand
\bD(X) = \langle \cA_2 \otimes \omega_X, \cA_3 \otimes \omega_X, \cA_1 \rangle.
\end{equation}
Since the twist by $\omega_X$ is an autoequivalence of $\bD(X)$, these decompositions together with~\eqref{eq:dbx} 
show that each $\cA_i$ is admissible.
Indeed, (up to a twist) each $\cA_i$ appears in one of the three decompositions on the left, and in one on the right,
hence it is both left and right admissible.
\end{proof}

\begin{proof}[Proof of Theorem~\textup{\ref{theorem:dbx}}]
The semiorthogonal decomposition~\eqref{eq:dbx} is constructed in Proposition~\ref{proposition:sod-dbx}.
The equality $\cA_i = \pi_*(\tcA_i)$, which implies uniqueness of the decomposition, 
and finiteness of cohomological amplitude of the projection functors $\alpha_i$ are also proved there.
Admissibility of $\cA_i$ is proved in Lemma~\ref{lemma:cai-admissible}.
The structure of the components $\cA_i$ is described in Proposition~\ref{proposition:sod-cai}.
The required semiorthogonal decomposition of~$\bD^\perf(X)$ is obtained by~\cite[Proposition~4.1]{kuznetsov2011base}
and the embedding $\pi^*(\cA_i^\perf) \subset \tcA_i$ is evident from the definition
of the categories $\cA_i$ (see Propositions~\ref{proposition:sod-dbx-} and~\ref{proposition:sod-dbx}).
\end{proof}

For further convenience, we would like to rewrite the semiorthogonal decomposition~\eqref{eq:dbx} geometrically.

\begin{corollary}
\label{corollary:dbx-z2-z3}
For every sextic du Val del Pezzo surface $X$ over an algebraically closed field $\kk$ there are 
zero-dimensional Gorenstein schemes $Z_1$, $Z_2$, $Z_3$ of lengths $1$, $3$, and $2$ respectively, such that $\cA_i \cong \bD(Z_i)$ and
\begin{equation}
\label{eq:dbx-dz}
\bD(X) = \langle \bD(Z_1), \bD(Z_2), \bD(Z_3) \rangle.
\end{equation}
The scheme structure of $Z_i$ depends on the type of $X$ as follows:
\begin{equation*}\def\arraystretch{1.2}%
\begin{array}{|c|c|c|c|}
\hline
\text{Type of $X$} & Z_1 & Z_2 & Z_3 \\
\hline 
0 & \Spec(\kk) & \Spec(\kk) \sqcup \Spec(\kk) \sqcup \Spec(\kk) & \Spec(\kk) \sqcup \Spec(\kk) \\
\hline
1 & \Spec(\kk) & \Spec(\kk) \sqcup \Spec(\kk) \sqcup \Spec(\kk) & \Spec(\kk[t]/t^2) \\
\hline
2 & \Spec(\kk) & \Spec(\kk[t]/t^2) \sqcup \Spec(\kk) & \Spec(\kk)  \sqcup \Spec(\kk) \\
\hline
3 & \Spec(\kk) & \Spec(\kk[t]/t^2) \sqcup \Spec(\kk) & \Spec(\kk[t]/t^2) \\
\hline
4 & \Spec(\kk) & \Spec(\kk[t]/t^3) & \Spec(\kk)  \sqcup \Spec(\kk) \\
\hline
5 & \Spec(\kk) & \Spec(\kk[t]/t^3) & \Spec(\kk[t]/t^2) \\
\hline
\end{array}
\end{equation*}
\end{corollary}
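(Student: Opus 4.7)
The statement is essentially a geometric reformulation of Theorem~\ref{theorem:dbx}, so the plan is to translate the algebraic table into a scheme-theoretic table. The key observation is that for an Artinian $\kk$-algebra $A$ one has $\bD(A\mmod) \cong \bD(\Spec A)$ tautologically, and a product of module categories corresponds to a disjoint union of spectra:
\begin{equation*}
\bD(A_1) \times \dots \times \bD(A_s) \cong \bD(\Spec(A_1) \sqcup \dots \sqcup \Spec(A_s)) = \bD(\Spec(A_1 \times \dots \times A_s)).
\end{equation*}

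First, I would define $Z_1 := \Spec(\kk)$ and for $i = 2,3$ set $Z_i := \Spec(A_i)$, where $A_i$ is the product of algebras $\kk[t]/t^{m_j}$ appearing in the $i$-th row of the table of Theorem~\ref{theorem:dbx}. Under this definition the equivalences $\cA_i \cong \bD(Z_i)$ follow from Proposition~\ref{proposition:sod-cai} together with the tautological identification above, and the semiorthogonal decomposition~\eqref{eq:dbx-dz} is just a rewriting of~\eqref{eq:dbx}. The scheme-theoretic table is then read off directly from the algebraic one in Theorem~\ref{theorem:dbx}.

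Next, I would verify the numerical and structural properties. Each local factor $\kk[t]/t^m$ is Artinian with socle $t^{m-1}\kk$ of $\kk$-dimension $1$, hence is Gorenstein; a finite disjoint union of Gorenstein schemes is Gorenstein, so every $Z_i$ is Gorenstein. For the lengths, inspection of the six rows shows that $\dim_\kk A_2 = 3$ and $\dim_\kk A_3 = 2$ in every case (either $3 = 1+1+1 = 2+1 = 3$ and $2 = 1+1 = 2$), which gives the lengths $1$, $3$, $2$ claimed. Since the lengths are constant across the six types, this check is essentially bookkeeping.

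There is no real obstacle here; the only thing one has to be a little careful about is the fact that what Theorem~\ref{theorem:dbx} produces is really the bounded derived category of finitely generated modules, and that for a commutative Artinian local $\kk$-algebra this coincides with $\bD(\Spec A)$ as defined in the \textbf{Conventions}. This is immediate, so the proof is essentially a one-line reference to Theorem~\ref{theorem:dbx} followed by the Gorenstein and length verification above.
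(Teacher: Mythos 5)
Your proposal is correct and matches what the paper intends: the corollary is stated without a separate proof precisely because it is the tautological translation of the table in Theorem~\ref{theorem:dbx} via $\bD(\kk[t]/t^{m_1}\times\dots\times\kk[t]/t^{m_s})\cong\bD(\Spec(\kk[t]/t^{m_1})\sqcup\dots\sqcup\Spec(\kk[t]/t^{m_s}))$, using Proposition~\ref{proposition:sod-cai}. Your Gorenstein and length checks are exactly the routine bookkeeping the paper leaves implicit.
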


Since $X$ is Gorenstein, we can produce yet another semiorthogonal decomposition by dualization.
Recall that $\cF^\vee$ denotes the derived dual of $\cF$, see Section~\ref{subsection:duality}.

\begin{proposition}\label{proposition:dbx-dual}
If $X$ is a sextic del Pezzo surface over an algebraically closed field, there is a semiorthogonal decomposition 
\begin{equation}
\label{eq:dbx-dual}
\bD(X) = \langle \cA_3^\vee, \cA_2^\vee, \cA_1^\vee \rangle,
\end{equation}
where
\begin{equation*}
\cA_i^\vee := \{ \cF \in \bD(X) \mid \cF^\vee \in \cA_i \} \cong \bD(Z_i),
\end{equation*}
and $\cA_i$ are the components of~\eqref{eq:dbx}.
Moreover, this semiorthogonal decomposition 
is right mutation-dual to~\eqref{eq:dbx}, i.e., $\cA_1^\vee = \cA_1$, $\cA_2^\vee = \LL_{\cA_1}(\cA_2)$ and $\cA_3^\vee = \LL_{\cA_1}(\LL_{\cA_2}(\cA_3))$.
\end{proposition}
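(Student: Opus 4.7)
The plan is to apply the dualization anti-autoequivalence $\cF \mapsto \cF^\vee$ of $\bD(X)$, which exists and is exact because $X$ is Gorenstein (see Section~\ref{subsection:duality}), to the decomposition~\eqref{eq:dbx}. Any (anti-)autoequivalence carries a semiorthogonal decomposition to a semiorthogonal decomposition with the order reversed, so this immediately yields~\eqref{eq:dbx-dual} with components~$\cA_i^\vee$. For the identification $\cA_i^\vee \cong \bD(Z_i)$, restriction of dualization to $\cA_i$ gives an anti-equivalence $\cA_i^{\mathrm{opp}} \xrightarrow{\sim} \cA_i^\vee$; combining with $\cA_i \cong \bD(Z_i)$ from Corollary~\ref{corollary:dbx-z2-z3} and the self-duality anti-equivalence $\bD(Z_i)^{\mathrm{opp}} \xrightarrow{\sim} \bD(Z_i)$ available for any zero-dimensional Gorenstein scheme, one recovers an equivalence $\cA_i^\vee \cong \bD(Z_i)$.

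For the mutation-duality I would verify the three equalities in turn. The equality $\cA_1^\vee = \cA_1$ is immediate: by Theorem~\ref{theorem:dbx} we have $\cA_1 = \pi_*\tcA_1 = \langle \cO_X \rangle$, and $\cO_X^\vee = \cO_X$. To prove $\cA_3^\vee = \LL_{\cA_1}(\LL_{\cA_2}(\cA_3))$ I would show both sides coincide with $\cA_3 \otimes \omega_X$. For the right-hand side, the sequential left mutations produce the leftmost component of a decomposition of $\bD(X)$ with the pair $\langle \cA_1, \cA_2 \rangle$ on the right, that is, $\LL_{\cA_1}\LL_{\cA_2}\cA_3 = {}^\perp \langle \cA_1, \cA_2 \rangle$; the first decomposition in~\eqref{eq:dbx-other} identifies this with $\cA_3 \otimes \omega_X$. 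For the left-hand side, the key observation is that $\pi$ is crepant by~\eqref{eq:ktx}, so $\pi^!\cO_X \cong \cO_\tX$ and Grothendieck duality collapses to an isomorphism $\pi_*(\tcF^\vee) \cong (\pi_*\tcF)^\vee$ for all $\tcF \in \bD(\tX)$. Combining this with $\cA_3 = \pi_*\tcA_3$, the identification $\tcA_3^\vee = \tcA_3 \otimes \omega_\tX$ from Lemma~\ref{lemma:dual-sod-tx}, the projection formula, and $\omega_\tX \cong \pi^*\omega_X$, one obtains $\cA_3^\vee = \pi_*(\tcA_3^\vee) = \pi_*(\tcA_3 \otimes \omega_\tX) = \cA_3 \otimes \omega_X$.

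The last equality $\cA_2^\vee = \LL_{\cA_1}(\cA_2)$ will then follow from the uniqueness of the middle component of a three-term semiorthogonal decomposition with specified outer terms. Indeed, both $\bD(X) = \langle \cA_3^\vee, \cA_2^\vee, \cA_1^\vee \rangle$ and $\bD(X) = \langle \LL_{\cA_1}\LL_{\cA_2}\cA_3, \LL_{\cA_1}\cA_2, \cA_1 \rangle$ share the leftmost component $\cA_3 \otimes \omega_X$ (by the previous step) and the rightmost component $\cA_1$, and the middle component of such a decomposition is uniquely characterized as the right orthogonal to the left component intersected with the left orthogonal to the right component.

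The main technical obstacle is the Grothendieck-duality comparison between $\cA_3^\vee$ and $\cA_3 \otimes \omega_X$, since one has to cross the bridge between $\bD(\tX)$ and $\bD(X)$: this is where crepancy of $\pi$ and the projection formula do the real work. Everything else reduces to short manipulations with mutations and orthogonal complements on $\bD(X)$.
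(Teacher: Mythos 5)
Your proof is correct. The first half --- applying the dualization anti-autoequivalence to the decomposition of Corollary~\ref{corollary:dbx-z2-z3} and using Gorensteinness of the $Z_i$ to identify $\bD(Z_i)^{\mathrm{opp}}\cong\bD(Z_i)$ --- is exactly the paper's argument. For the mutation-duality you take a genuinely different route. The paper observes that formula~\eqref{eq:alpha} for the projection functors yields $\LL_{\cA_i}\circ\pi_*\cong\pi_*\circ\LL_{\tcA_i}$ and $\LL_{\tcA_i}\circ\pi^*\cong\pi^*\circ\LL_{\cA_i}$, and then pushes all three identities of Lemma~\ref{lemma:dual-sod-tx} down to $X$ at once. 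You instead pin down only the outer components: $\cA_1^\vee=\cA_1$ trivially, and $\cA_3^\vee=\cA_3\otimes\omega_X=\LL_{\cA_1}\LL_{\cA_2}\cA_3$ by combining crepancy of $\pi$ (so Grothendieck duality collapses to $\pi_*(\tcF^\vee)\cong(\pi_*\tcF)^\vee$), the equality $\cA_3=\pi_*(\tcA_3)$, the identity $\tcA_3^\vee=\tcA_3\otimes\omega_\tX$ from Lemma~\ref{lemma:dual-sod-tx}, the projection formula, and the first decomposition in~\eqref{eq:dbx-other}; you then recover $\cA_2^\vee=\LL_{\cA_1}(\cA_2)$ formally because the middle term of a three-term semiorthogonal decomposition with admissible components is determined by the outer two. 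Your route makes explicit the useful compatibility $\pi_*\circ(-)^\vee\cong(-)^\vee\circ\pi_*$ and avoids having to justify the commutation of mutation functors with $\pi_*$, at the modest cost of invoking~\eqref{eq:dbx-other} and admissibility (Lemma~\ref{lemma:cai-admissible}). One small slip in wording: for $\langle\cB_1,\cB_2,\cB_3\rangle$ the middle component is ${}^\perp\cB_1\cap\cB_3^\perp$, i.e.\ the \emph{left} orthogonal of the first-listed component intersected with the \emph{right} orthogonal of the last-listed one, not the other way around; this does not affect the validity of your argument, since all you use is that the middle component is determined by the outer two.
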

\begin{proof}
As we discussed in Section~\ref{subsection:duality}, the functor $\cF \mapsto \cF^\vee$ is an equivalence $\bD(X)^{\mathrm{opp}} \to \bD(X)$ (since~$X$ is Gorenstein).
When applied to the semiorthogonal decomposition of Corollary~\ref{corollary:dbx-z2-z3}, it gives~\eqref{eq:dbx-dual} 
with~$\cA_i^\vee \cong \bD(Z_i)^{\mathrm{opp}}$.
Since $Z_1$, $Z_2$ and $Z_3$ are themselves Gorenstein, we have $\bD(Z_i)^{\mathrm{opp}} \cong \bD(Z_i)$.

Finally, let us prove mutation-duality of~\eqref{eq:dbx-dual} and~\eqref{eq:dbx}.
It follows from~\eqref{eq:alpha} that $\LL_{\cA_i} \circ \pi_* \cong \pi_* \circ \LL_{\tcA_i}$ and $\LL_{\tcA_i} \circ \pi^* \cong \pi^* \circ \LL_{\cA_i}$.
So, the required result follows easily from Lemma~\ref{lemma:dual-sod-tx}.
\end{proof}

\subsection{Generators of the components}

In this section we construct generators of the subcategories $\cA_2$ and $\cA_3$ of $\bD(X)$ 
and describe the equivalences $\bD(Z_2) \xrightarrow{\ \sim\ } \cA_2$ and $\bD(Z_3) \xrightarrow{\ \sim\ } \cA_3$ of Corollary~\ref{corollary:dbx-z2-z3} as Fourier--Mukai functors.

We will use the following lemma.

\begin{lemma}\label{lemma:sheaves-y-z}
Let $Z = \Spec(\kk[t]/t^m)$.
For any scheme $Y$ we have an equivalence 
\begin{equation*}
\coh(Y \times Z) \cong \coh(Y,\cO_Y[t]/t^m),
\end{equation*}
where the right hand side is the category of coherent sheaves $\cF$ on $Y$ with an operator $t \colon \cF \to \cF$ such that $t^m = 0$.
An object $(\cF,t) \in \coh(Y,\cO_Y[t]/t^m)$ considered as a sheaf on $Y \times Z$ is flat over $Z$ if and only if all the maps in the next chain of epimorphisms
\begin{equation}\label{eq:t-quotients}
\xymatrix@1{\cF/(t\cF) \ar@{->>}[r]^-t & (t\cF)/(t^2\cF) \ar@{->>}[r]^-t & \ \dots\  \ar@{->>}[r]^-t & (t^{m-1}\cF)/(t^m\cF)}
\end{equation}
are isomorphisms.
\end{lemma}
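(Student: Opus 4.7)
\emph{Plan for the proof.}
For the first claim, the projection $p \colon Y \times Z \to Y$ is a finite (hence affine) morphism with $p_* \cO_{Y \times Z} \cong \cO_Y \otimes_\kk \kk[t]/t^m \cong \cO_Y[t]/t^m$ as sheaves of $\cO_Y$-algebras. Since pushforward along an affine morphism identifies quasicoherent sheaves on the source with quasicoherent modules over the pushforward algebra on the target, restricting to the coherent level gives the asserted equivalence.

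For the flatness criterion, set $R := \kk[t]/t^m$ and note that $Z = \Spec(R)$ has a single point, so flatness of $\cF$ over $Z$ amounts to flatness of every stalk $\cF_{(y, z_0)}$ as an $R$-module (acting via $t$). The sheaf maps in~\eqref{eq:t-quotients} are surjective by construction (because $t \cdot t^i \cF = t^{i+1} \cF$), so the property of being an isomorphism can be checked stalkwise. The problem thus reduces to the following purely algebraic statement: \emph{for any $R$-module $M$, the module $M$ is flat if and only if every multiplication-by-$t$ map $t^i M / t^{i+1} M \to t^{i+1} M / t^{i+2} M$ is an isomorphism.}

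For the ``if'' direction, choose a $\kk$-basis $\{\bar{e}_i\}_{i \in I}$ of $M/tM$, lift it to elements $\{e_i\} \subset M$, and form the $R$-linear map $\phi \colon R^{(I)} \to M$. Its cokernel $N$ satisfies $N/tN = 0$ by construction, hence $N = tN = \dots = t^m N = 0$ using $t^m = 0$, so $\phi$ is surjective. Both $R^{(I)}$ and $M$ carry $t$-adic filtrations of length $m$ and $\phi$ is filtered; on associated graded $\phi$ induces maps $\phi^i$, and since multiplication by $t$ on the associated graded of $R^{(I)}$ is an isomorphism, commutativity shows that $\phi^{i+1}$ is an isomorphism iff $\phi^i$ is and the $i$-th chain map is. Because $\phi^0$ is an isomorphism by construction, all $\phi^i$ are isomorphisms inductively, and for a finite filtration this forces $\phi$ itself to be an isomorphism; so $M \cong R^{(I)}$ is free, in particular flat. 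For the ``only if'' direction, tensor the short exact sequence $0 \to (t^{i+2}) \to (t^{i+1}) \to (t^{i+1})/(t^{i+2}) \to 0$ with $M$: flatness identifies $(t^j) \otimes_R M$ with $t^j M$, and under $(t^j)/(t^{j+1}) \cong R/(t) \cong \kk$ each multiplication-by-$t$ map becomes the identity $M/tM \to M/tM$, so the chain maps in~\eqref{eq:t-quotients} are isomorphisms.

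The one subtlety worth flagging is that the stalks $\cF_{(y, z_0)}$ are in general \emph{not} finitely generated as $R$-modules (only as modules over the larger local ring $\cO_{Y,y}[t]/t^m$), which rules out a direct appeal to the structure theorem for finitely generated modules over an Artinian principal ideal ring; the filtration argument above handles arbitrary $R$-modules and so bypasses this point.
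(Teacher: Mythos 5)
Your proof is correct, but it takes a genuinely different route from the paper's. For the flatness criterion the paper does not argue stalkwise with filtrations: it invokes the local criterion (flatness over $Z$ is equivalent to the vanishing of the higher derived pullbacks along $Y \hookrightarrow Y \times Z$, i.e., of $\Tor_{>0}^{\kk[t]/t^m}(\cF,\kk)$), computes these via the standard $2$-periodic free resolution of $\kk$ over $\kk[t]/t^m$, and translates exactness of the resulting complex $\cdots \xrightarrow{t}\cF \xrightarrow{t^{m-1}} \cF \xrightarrow{t} \cF \to \cdots$ into the two conditions $\Ker(t) = t^{m-1}\cF$ and $\Ker(t^{m-1}) = t\cF$; the second says that the composition of all maps in~\eqref{eq:t-quotients} is injective, hence (all maps being surjective) that each is an isomorphism, and the converse is similarly direct. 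Your argument instead proves outright that the graded condition forces each stalk to be a \emph{free} $\kk[t]/t^m$-module, by lifting a basis of $M/tM$ and running an induction on the $t$-adic filtration, and derives the converse by tensoring the exact sequences $0 \to (t^{i+2}) \to (t^{i+1}) \to (t^{i+1})/(t^{i+2}) \to 0$ with a flat $M$. What the paper's approach buys is brevity, at the cost of quietly using that $\Tor$-vanishing against the residue field implies flatness for arbitrary (not necessarily finitely generated) modules over this Artinian local ring; your approach is longer but entirely self-contained on exactly this point, which you correctly flag, and it yields the slightly stronger conclusion that flat equals free here. Both are valid proofs of the lemma.
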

\begin{proof}
The first part follows immediately from an identification $Y \times Z = \Spec_Y(\cO_Y[t]/t^m)$.
To verify flatness, we should compute the derived pullback functors for the embedding $Y \to Y \times Z$ corresponding to the unique closed point of~$Z$.
Using the standard resolution
\begin{equation*}
\dots \xrightarrow{\ t^{m-1}\ } \kk[t]/t^m \xrightarrow{\ t\ } \kk[t]/t^m \xrightarrow{\ t^{m-1}\ } \kk[t]/t^m \xrightarrow{\ t\ } \kk[t]/t^m \xrightarrow{\ \ \ } \kk \to 0
\end{equation*}
we see that $(\cF,t)$ is flat over $Z$ if and only if the complex
\begin{equation*}
\dots \xrightarrow{\ t^{m-1}\ } \cF \xrightarrow{\ t\ } \cF \xrightarrow{\ t^{m-1}\ } \cF \xrightarrow{\ t\ } \cF \xrightarrow{\ t^{m-1}\ } \dots 
\end{equation*}
is exact.
This means that $\Ker(\cF \xrightarrow{\ t\ } \cF) = t^{m-1}\cF$ and $\Ker(\cF \xrightarrow{\ t^{m-1}\ } \cF) = t\cF$.
The second equality implies that the composition of all maps in~\eqref{eq:t-quotients} is an isomorphism, hence so is each of them.
On the other hand, if all the maps in~\eqref{eq:t-quotients} are isomorphisms then both equalities above easily follow.
This proves the required criterion of flatness.
\end{proof}

In what follows for a connected component $Z$ of $Z_d$ we denote by $\gamma_Z$ the embedding functor 
(constructed in Proposition~\ref{proposition:gamma})
\begin{equation*}
\gamma_Z \colon \bD(Z) \hookrightarrow \bD(X),
\end{equation*}
By definition~\eqref{def:gamma} we have $\gamma_Z = \pi_* \circ \tilde\gamma_Z \circ \pi_m^*$, 
where $\pi_m^* \colon \bD^-(Z) \to \bD^-(\tR_m)$ is the categorical resolution of the scheme $Z = \Spec(\kk[t]/t^m)$ described in~\eqref{eq:pms}, 
and $\tilde\gamma_Z \colon \bD(\tR_m) \to \bD(\tX)$ is the embedding of Proposition~\ref{proposition:tcai-auslander}.
Then 
\begin{equation*}
\gamma_{Z_2} = \bigoplus_{Z \subset Z_2} \gamma_Z
\qquand 
\gamma_{Z_3} = \bigoplus_{Z \subset Z_3} \gamma_Z,
\end{equation*}
(Proposition~\ref{proposition:sod-cai}) with the sum over all connected components in both cases.

In the next lemma we describe the images in $\bD(X)$ of the structure sheaves of points of the schemes~$Z_d$ under their embeddings $\gamma_{Z_d}$.
We use the convention of Section~\ref{subsection:dp6} on numbering the blowup centers.

\begin{lemma}\label{lemma:e-z-small}
$(1)$ 
If $z$ is the unique closed $\kk$-point of $Z_1$ then $\gamma_{Z_1}(\cO_z) \cong \cO_X \cong \pi_*\cO_\tX$.

\noindent$(2)$
Let $z \in Z_2$ be a closed $\kk$-point and let $\cE_z := \gamma_{Z_2}(\cO_z) \in \bD(X)$ be the corresponding object.
Then
\begin{itemize}
\item[(a)] 
if the component of $Z_2$ containing $z$ is reduced and corresponds to the $i$-th blowup center, then 
\begin{equation*}
\cE_z \cong \pi_*\cO_\tX(h-e_i);
\end{equation*}
\item[(b)] 
if the component of $Z_2$ containing $z$ has length $2$
then 
\begin{equation*}
\cE_z \cong \pi_*\cO_\tX(h-e_1) \cong \pi_*\cO_\tX(h - e_2);
\end{equation*}
\item[(c)] 
if the component of $Z_2$ containing $z$ has length $3$, then
\begin{equation*}
\cE_z \cong \pi_*\cO_\tX(h-e_1) \cong \pi_*\cO_\tX(h - e_2) \cong \pi_*\cO_\tX(h - e_3).
\end{equation*}
\end{itemize}
\noindent$(3)$
Let $z \in Z_3$ be a closed $\kk$-point and let $\cE_z := \gamma_{Z_3}(\cO_z) \in \bD(X)$ be the corresponding object.
Then
\begin{itemize}
\item[(a)] 
if the component of $Z_3$ containing $z$ is reduced then 
\begin{equation*}
\cE_z \cong \pi_*\cO_\tX(h)
\qquad\text{or}\qquad 
\cE_z \cong \pi_*\cO_\tX(2h - e_1 - e_2 - e_3);
\end{equation*}
\item[(b)] 
if the component of $Z_3$ containing $z$ has length $2$ then 
\begin{equation*}
\cE_z \cong \pi_*\cO_\tX(h) \cong \pi_*\cO_\tX(2h - e_1 - e_2 - e_3).
\end{equation*}
\end{itemize}
In all these cases, $\cE_z$ is a globally generated sheaf, and if $z \in Z_d$ then
\begin{equation*}
\dim H^0(X,\cE_z) = d
\qquand
H^{i}(X,\cE_z) = 0
\quad\text{for $i \ne 0$}.
\end{equation*}
\end{lemma}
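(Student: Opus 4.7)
The plan is to express $\cE_z = \gamma_Z(\cO_z)$ as a pushforward from $\tX$ by exploiting the commutative diagrams in~\eqref{eq:gamma-tgamma}, where $Z$ denotes the connected component of $Z_d$ containing $z$, of length~$m$, so that $Z \cong \Spec(\kk[t]/t^m)$. By the first square in~\eqref{eq:gamma-tgamma}, whenever $M \in \bD(\tR_m)$ satisfies $\pi_{m*}M \cong \cO_z$, one has $\cE_z \cong \pi_*(\tilde\gamma_Z(M))$. For $m = 1$ (cases~(1), (2a), (3a)) this is trivial with $M = \kk$, and $\tilde\gamma_Z$ maps $\kk$ to the unique exceptional line bundle $\cL$ generating the corresponding component of $\tcA_Z$, giving $\cE_z \cong \pi_*\cL$; case~(1) uses additionally $\pi_*\cO_\tX \cong \cO_X$ by the rational singularities of $X$. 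For $m \geq 2$, each of the $m$ standard exceptional $\tR_m$-modules is a valid choice for $M$ (by Appendix~\ref{appendix:auslander}, where this follows from the resolutions~\eqref{eq:resolution-simple} together with the characterization of $\Ker\pi_{m*}$ as generated by the simples $S_1,\dots,S_{m-1}$), and by Proposition~\ref{proposition:tcai-auslander} these modules correspond under $\tilde\gamma_Z$ precisely to the generating exceptional line bundles listed in~\eqref{eq:tx-excol}.

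The consistency of the resulting $m$ candidate formulas (the isomorphism chains in~(2b), (2c), (3b)) is independently verified by applying $\pi_*$ to the short exact sequences~\eqref{eq:e12}, \eqref{eq:e23}, and~\eqref{eq:e123}: their third terms $\cO_\Delta(-1)$ are acyclic for $\pi_*$ (Lemma~\ref{lemma:pi-properties}, together with $\cO_\Delta(-1) \cong \cO_{\P^1}(-1)$ on the contracted fiber), so the long exact sequence of derived pushforwards identifies the middle terms. For the cohomology statements, I would first check case by case that $\cL|_\Delta$ has degree $\geq -1$ on every $(-2)$-curve $\Delta$ using Lemma~\ref{lemma:pic-tx}, giving $R^{>0}\pi_*\cL = 0$ and reducing, via the projection formula, to $H^\bullet(X,\cE_z) \cong H^\bullet(\tX,\cL)$. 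Riemann--Roch combined with Lemma~\ref{lemma:pic-tx} yields $\chi(\cL) \in \{1,2,3,3\}$, matching $d$; Serre duality combined with the fact that $\omega_\tX \otimes \cL^{-1}$ has strictly negative degree on the hyperplane class forces $H^2(\tX,\cL) = 0$. Global generation of each $\cL$ on $\tX$ is visible directly from its linear system (e.g.\ $\cO_\tX(h)$ is pulled back from $\P^2$, and $\cO_\tX(h-e_i)$ and $\cO_\tX(2h-e_1-e_2-e_3)$ by pencils of lines and conics through the relevant points), and the acyclicity of $\pi_*\cL$ transfers this to~$\cE_z$.

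The main technical obstacle is the vanishing of $H^1(\tX,\cL)$: for instance $\cO_\tX(h-e_2)$ satisfies $\cO_\tX(h-e_2)\cdot\Delta_{12} = -1$ when $\Delta_{12}$ is a $(-2)$-curve, so $\cL$ is not nef and Kawamata--Viehweg does not apply directly. The clean workaround is to reuse the isomorphisms from the previous paragraph at the cohomological level: if $\cL\cdot\Delta = -1$ then the sequence $0 \to \cL(-\Delta) \to \cL \to \cL|_\Delta \to 0$ has acyclic third term, yielding $H^\bullet(\tX,\cL) \cong H^\bullet(\tX,\cL(-\Delta))$; iterating along the negative $(-2)$-curves eventually produces a nef representative to which standard vanishing applies. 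A secondary subtlety is the verification that every standard exceptional $\tR_m$-module pushes forward to $\cO_z$; this is the key input from Appendix~\ref{appendix:auslander} and underpins the freedom to represent $\cE_z$ by any generating line bundle of the component.
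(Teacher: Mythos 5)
Your identification of $\cE_z$ with the pushforwards $\pi_*\cL$ is essentially the paper's argument: the paper uses the first square of \eqref{eq:gamma-tgamma} together with $\pi_{m*}(S_0)\cong\cO_z$ and the coincidence $S_0=E_0$, and then obtains the remaining isomorphisms in the non-reduced cases by applying $\pi_*$ to \eqref{eq:e12}, \eqref{eq:e23}, \eqref{eq:e123}; your observation that \emph{every} standard exceptional module satisfies $\pi_{m*}(E_i)\cong\cO_z$ (from \eqref{eq:resolution-simple} and exactness of $\pi_{m*}$) is the same computation packaged upstairs, and your "consistency check" via the exact sequences is literally the paper's step. The cohomology computation is likewise in the paper's spirit (it is dismissed there as "straightforward"); note only that $H^\bullet(X,\cE_z)\cong H^\bullet(\tX,\cL)$ is automatic for the \emph{derived} pushforward by Leray, so the vanishing of $R^{>0}\pi_*\cL$ is really needed to know that $\cE_z$ is a sheaf rather than to make this reduction.

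The one step that fails as written is the claim that global generation of \emph{each} $\cL$ on $\tX$ is visible from its linear system. A globally generated line bundle is nef, and you yourself observe that $\cO_\tX(h-e_2)\cdot\Delta_{12}=-1$; the same happens for $\cO_\tX(h-e_3)\cdot\Delta_{23}$ and for $\cO_\tX(2h-e_1-e_2-e_3)\cdot\Delta_{123}$ whenever these $(-2)$-curves are present. In those cases the curve $\Delta$ lies in the base locus of $|\cL|$, so $\cL$ is \emph{not} globally generated on $\tX$ and there is nothing to transfer to $\cE_z$ from that representative. The repair is exactly what the paper does: in each isomorphism chain of a non-reduced component one representative is globally generated on $\tX$ (namely $\cO_\tX(h-e_1)$ for the chains in $Z_2$ and $\cO_\tX(h)$ for the chain in $Z_3$, while for reduced components the offending $(-2)$-curves are absent), and the isomorphisms $\cE_z\cong\pi_*\cO_\tX(h-e_1)\cong\cdots$ that you have already established let you prove global generation of $\cE_z$ using that single representative. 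With this correction your argument closes up and coincides with the paper's proof.
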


\begin{proof}
The statements (1), (2), and~(3) follow
directly from the first commutative diagram in~\eqref{eq:gamma-tgamma},
isomorphism~$\pi_{m*}(S_0) \cong \cO_z$ proved in Proposition~\ref{proposition:auslander-localization}, 
and the fact that the simple module $S_0$ coincides with the standard exceptional module $E_0$ (see~\eqref{eq:resolution-simple}), so that by Proposition~\ref{proposition:tcai-auslander}
the functor $\tilde\gamma$ takes it to one of the line bundles in~\eqref{eq:tx-excol}.
Going over the possible cases gives for~$\cE_z$ the first isomorphisms.
The other isomorphisms of sheaves in case of non-reduced components follow from exact sequences~\eqref{eq:e12}, \eqref{eq:e23} and~\eqref{eq:e123} respectively.

In cases $(1)$, $(2a)$ and $(3a)$ the global generation is easy (the corresponding sheaves are already globally generated on $\tX$; 
extending their evaluation homomorphisms to exact sequences and pushing them forward to $X$ it is easy to see that their pushforwards are also globally generated).
In cases $(2b)$, $(2c)$, and~$(3b)$ the same argument shows that $\pi_*\cO_\tX(h-e_1)$, $\pi_*\cO_\tX(h-e_3)$ and $\pi_*\cO_\tX(h)$ are globally generated, 
and for the remaining two sheaves we can use the corresponding isomorphisms.

The cohomology computation reduces to a computation on $\tX$ which is straightforward.
\end{proof}

Next we determine the images 
\begin{equation}
\label{def:fez}
\fE_Z := \gamma_Z(\cO_Z) \in \bD(X)
\qquad\text{and}\qquad 
\tfE_Z := \pi^*(\fE_Z) \in \bD(\tX)
\end{equation}
of the structure sheaves of connected components of~$Z_d$.
We denote by~$\ell(Z)$ the length of~$Z$.

\begin{proposition}
\label{proposition:e-z-big}
Both $\fE_Z$ and $\tfE_Z$ are vector bundles of rank~$\ell(Z)$ on~$X$ and~$\tX$ respectively, and
\begin{equation}
\label{eq:ez-pullback}
\fE_Z \cong \pi_*(\tfE_Z).
\end{equation}
Moreover, $\fE_Z$ is an iterated extension of the sheaf $\cE_z$, where $z$ is the closed point of~$Z$.
In particular, the bundle $\fE_Z$ is globally generated with $\dim H^0(X,\fE_Z) = d\ell(Z)$ and $H^i(X,\fE_Z) = 0$ for $i \ne 0$.
\end{proposition}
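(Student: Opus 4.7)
The isomorphism $\fE_Z \cong \pi_*(\tfE_Z)$ is immediate from the definition $\tfE_Z := \pi^*(\fE_Z)$ combined with the identity $\pi_*\pi^* \cong \id$ of Lemma~\ref{lemma:pi-properties}. For the remaining claims, the engine is the canonical filtration
\[
0 = t^m\cO_Z \subset t^{m-1}\cO_Z \subset \cdots \subset \cO_Z
\]
of $\cO_Z = \kk[t]/t^m$, whose successive subquotients are all isomorphic to the residue field $\cO_z$. Applying the triangulated functor $\gamma_Z$ to the short exact sequences $0 \to t^{i+1}\cO_Z \to t^i\cO_Z \to \cO_z \to 0$ and using $\gamma_Z(\cO_z) = \cE_z$ from Lemma~\ref{lemma:e-z-small}, an induction on $i$ (starting from $\gamma_Z(t^{m-1}\cO_Z) = \cE_z$) together with the long exact sequence in cohomology for the standard t-structure (which forces $\cH^{\ne 0}$ of the middle term of a distinguished triangle whose outer terms are sheaves to vanish) shows that every $\gamma_Z(t^i\cO_Z)$ is a coherent sheaf. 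Consequently $\fE_Z$ is an iterated extension in $\coh(X)$ of $\ell(Z) = m$ copies of $\cE_z$.

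For the local-freeness assertion I would pass to the smooth resolution $\tX$. By the second diagram of~\eqref{eq:gamma-tgamma} one has $\tfE_Z = \tilde\gamma_Z(\pi_m^*(\cO_Z))$. Using the description of $\pi_m^*(\cO_Z)$ supplied in Appendix~\ref{appendix:auslander} together with Remark~\ref{remark:delta-simples}, $\tfE_Z$ inherits a filtration on $\tX$ whose successive subquotients are either line bundles from~\eqref{eq:tx-excol} or sheaves $\cO_\Delta(-1)$ for appropriate $(-2)$-curves. Since the smooth surface $\tX$ has vanishing local $\Ext^1$ between line bundles, these iterated extensions yield a locally free sheaf of rank $\ell(Z)$; a direct tally of the line-bundle restriction degrees $L\cdot\Delta$ interleaved with the $\cO_\Delta(-1)$ pieces shows $\tfE_Z|_\Delta \cong \cO_\Delta^{\ell(Z)}$ for every $(-2)$-curve $\Delta$, whence $\tfE_Z$ descends along the crepant resolution to a locally free sheaf on $X$, which has to be $\fE_Z$ by the isomorphism of the opening paragraph.

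The cohomology and global generation statements follow by one further induction on the filtration. From Lemma~\ref{lemma:e-z-small} we have $\dim H^0(X,\cE_z) = d$ and $H^{>0}(X,\cE_z) = 0$, so the long exact sequences of the successive extensions $0 \to \fE_Z^{(k-1)} \to \fE_Z^{(k)} \to \cE_z \to 0$ yield $\dim H^0(X,\fE_Z) = d\ell(Z)$ and $H^{>0}(X,\fE_Z) = 0$ by additivity. Global generation propagates because the vanishing of $H^1(\cE_z)$ makes each $H^0(\fE_Z^{(k)}) \twoheadrightarrow H^0(\fE_Z^{(k-1)})$ surjective, and evaluation of $H^0(\fE_Z^{(k)})$ at any point of $X$ then surjects onto both the sub- and quotient-fibers, hence onto the whole fiber. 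The main obstacle is the local-freeness step: the bare statement that $\fE_Z$ is an iterated extension of $\cE_z$ is insufficient, because $\cE_z$ is a rank-one reflexive sheaf that can fail to be locally free at du Val singularities of $X$, and one genuinely needs the specific pattern of standard exceptionals $E_j$ and simples $S_j$ appearing in $\pi_m^*(\cO_Z)$ to force the trivialization of $\tfE_Z$ on each $(-2)$-curve.
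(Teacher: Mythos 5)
Your overall architecture matches the paper's: reduce everything to $\tX$ via $\tfE_Z = \tilde\gamma_Z(\pi_m^*(\cO_Z))$, and handle the cohomology and global generation by induction along the filtration of $\cO_Z$ with graded pieces $\cO_z$. Those parts are fine, and you are right to flag local freeness as the crux --- $\cE_z = \pi_*\cO_\tX(h-e_i)$ is indeed only reflexive, not locally free, at the singular points. But your resolution of that crux has a genuine gap. You filter $\pi_m^*(\cO_Z)$ so that the subquotients of $\tfE_Z$ are ``line bundles \emph{or} sheaves $\cO_\Delta(-1)$'' (i.e., you are using the composition series by simples), and then invoke vanishing of local $\Ext^1$ between line bundles. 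That argument only applies to extensions of locally free sheaves by locally free sheaves; once torsion pieces $\cO_\Delta(-1)$ appear as subquotients, an iterated extension need not be locally free, and nothing in your tally rules that out. The paper avoids this entirely by observing that $\pi_m^*(\cO_Z) \cong P_0$ and that $P_0$ is an iterated extension of the \emph{standard} modules $E_0,\dots,E_{m-1}$ via \eqref{eq:projective-resolution-ei}; since $\tilde\gamma_Z(E_i)$ is a line bundle by Proposition~\ref{proposition:tcai-auslander}, $\tfE_Z$ is an iterated extension of line bundles on the smooth surface $\tX$ (the explicit sequences \eqref{eq:tcez-12}--\eqref{eq:tcez-12-23}), hence locally free with no further input.

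Your second step --- ``a direct tally of the degrees $L\cdot\Delta$ shows $\tfE_Z|_\Delta \cong \cO_\Delta^{\ell(Z)}$, whence $\tfE_Z$ descends'' --- also does not close the gap. In the non-reduced cases the degrees occurring on the relevant $(-2)$-curve are $\{-1,+1\}$ (e.g.\ $(h-e_2)\cdot\Delta_{12}=-1$, $(h-e_1)\cdot\Delta_{12}=+1$), which is consistent with both $\cO_\Delta^{\oplus 2}$ and $\cO_\Delta(-1)\oplus\cO_\Delta(1)$; to get triviality one must show the restricted extension is non-split, e.g.\ by using $\Hom(\tfE_Z,\cO_\Delta(-1))=0$, which holds because $\tfE_Z\in\Ima\pi^*={}^\perp\langle\cO_\Delta(-1)\rangle$ (Lemma~\ref{lemma:pi-properties}). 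Moreover the whole descent detour is unnecessary: since $\tfE_Z=\pi^*\fE_Z$ by definition \eqref{def:fez}, once $\tfE_Z$ is locally free one gets local freeness of $\fE_Z$ directly by comparing derived fibers, $Li_x^*\fE_Z\cong Li_{\tilde x}^*(\pi^*\fE_Z)$ for any point $\tilde x$ over $x$, which is how the paper concludes. So: correct skeleton, correct identification of the hard point, but the argument you supply at that point does not yet prove local freeness.
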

\begin{proof}
First, by commutativity of~\eqref{eq:gamma-tgamma} we have $\tfE_Z \cong \tilde\gamma_Z(\pi_m^*(\cO_Z))$.
Furthermore, $\pi_m^*(\cO_Z) \cong P_0$, the principal projective module, see~\eqref{eq:pms}, so that
\begin{equation}\label{eq:tcez-p0}
\tfE_Z \cong \tilde\gamma_Z(P_0).
\end{equation}
By~\eqref{eq:projective-resolution-ei} the module $P_0$ is an iterated extension of the standard exceptional modules $E_i$, 
hence $\tilde\gamma_Z(P_0)$ is an iterated extension of $\tilde\gamma_Z(E_i)$, 
and as it is explained in the proof of Proposition~\ref{proposition:tcai-auslander} these are line bundles on $\tX$.
More precisely, if $Z$ is reduced, then 
\begin{equation*}
\tfE_Z \cong \cO_\tX(h-e_i)\quad\text{for some $i$},\quad\text{or}\quad 
\tfE_Z \cong \cO_\tX(h),\quad\text{or}\quad
\tfE_Z \cong \cO_\tX(2h-e_1-e_2-e_3),
\end{equation*}
depending on whether $Z$ is a component of $Z_2$ or of $Z_3$.
Furthermore, if $Z$ has length~2 then we have exact sequences
\begin{equation}\label{eq:tcez-12}
0 \to \cO_\tX(h-e_2) \to \tfE_Z \to \cO_\tX(h-e_1) \to 0,
\end{equation}
when $Z$ is a component of $Z_2$, or
\begin{equation}\label{eq:tcez-123}
0 \to \cO_\tX(2h-e_1-e_2-e_3) \to \tfE_Z \to \cO_\tX(h) \to 0,
\end{equation}
when $Z$ is a component of $Z_3$.
Finally, if $Z$ has length~3 we have two exact sequences
\begin{equation}\label{eq:tcez-12-23}
0 \to \cO_\tX(h-e_3) \to \tfE_Z \to \tfE'_Z \to 0,
\qquad\text{where}\qquad
0 \to \cO_\tX(h-e_2) \to \tfE'_Z \to \cO_\tX(h-e_1) \to 0.
\end{equation}
In all cases $\tfE_Z$ is evidently a vector bundle of rank~$\ell(Z)$, 
and since $\tfE_Z \cong \pi^*\fE_Z$, it follows that $\fE_Z$ is also a vector bundle of the same rank~$\ell(Z)$.
Finally, since $Z \cong \Spec(\kk[t]/t^m)$, the sheaf $\cO_Z$ is an iterated extension of the sheaf $\cO_z$,
where $z$ is the closed point of $Z$, hence $\fE_Z = \gamma_Z(\cO_Z)$ is an iterated extension of the sheaf $\cE_z = \gamma_Z(\cO_z)$. 
Now the last part of Lemma~\ref{lemma:e-z-small} implies the last part of the proposition. 
\end{proof}

The vector bundles constructed above allow to present the equivalence of $\bD(Z_d)$ and $\cA_d \subset \bD(X)$ as a Fourier--Mukai functor.
Set 
\begin{equation}\label{eq:cezd}
\fE_{Z_1} := \cO_X,
\qquad
\fE_{Z_2} := \bigoplus_{Z \subset Z_2} \fE_Z,
\qquand
\fE_{Z_3} := \bigoplus_{Z \subset Z_3} \fE_Z,
\end{equation}
where the sums are taken over all connected components of $Z_2$ and $Z_3$ respectively.
By Proposition~\ref{proposition:e-z-big} these are globally generated vector bundles on $X$ of ranks $1$, $3$, and $2$ respectively.
We denote by~\mbox{$p_X \colon X \times Z_d \to X$} the natural projection.

\begin{proposition}
\label{proposition:fm-zd}
For each $d \in \{1,2,3\}$ there is a sheaf 
\begin{equation*}
\cE_{Z_d} \in \coh(X \times Z_d),
\end{equation*}
flat over $Z_d$, such that $p_{X*}\cE_{Z_d} \cong \fE_{Z_d}$, and the equivalence $\gamma_{Z_d} \colon \bD(Z_d) \to \cA_d \subset \bD(X)$ of Corollary~\textup{\ref{corollary:dbx-z2-z3}} can be written as
\begin{equation*}
\gamma_{Z_d} \cong \Phi_{\cE_{Z_d}} \colon \bD(Z_d) \to \bD(X),
\end{equation*}
where $\Phi_{\cE_{Z_d}}$ is the Fourier--Mukai functor with kernel $\cE_{Z_d}$.
Moreover, the equivalence \mbox{$\bD(Z_d) \cong \cA_d^\vee \subset \bD(X)$} of Proposition~\textup{\ref{proposition:dbx-dual}} is given by the Fourier--Mukai functor $\Phi_{\cE_{Z_d}^\vee}$ with kernel $\cE_{Z_d}^\vee$.
\end{proposition}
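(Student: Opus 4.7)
My plan is to construct $\cE_{Z_d}$ component-by-component using Lemma~\ref{lemma:sheaves-y-z}. Writing $Z_d = \bigsqcup Z$ as a disjoint union of its connected components $Z \cong \Spec(\kk[t]/t^m)$ and exploiting $X \times Z_d = \bigsqcup (X \times Z)$, it suffices to construct each $\cE_Z \in \coh(X \times Z)$ flat over $Z$ with $p_{X*}\cE_Z \cong \fE_Z$; the Lemma translates this into equipping $\fE_Z$ with a nilpotent endomorphism $t_{\fE_Z}$ satisfying $t_{\fE_Z}^m = 0$ and the isomorphism criterion~\eqref{eq:t-quotients}. I define $t_{\fE_Z}$ as the image under $\gamma_Z$ of the multiplication-by-$t$ endomorphism of $\cO_Z$; then $t_{\fE_Z}^m = \gamma_Z(t^m) = 0$ is automatic.

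To verify the flatness criterion, I apply the exact triangulated functor $\gamma_Z$ to the filtration
\[
\cO_Z \supset t\cO_Z \supset t^2\cO_Z \supset \cdots \supset t^m\cO_Z = 0,
\]
whose successive subquotients are all canonically identified with the residue field $\cO_z$ via multiplication by $t$. Proposition~\ref{proposition:e-z-big} and its proof ensure that all arising terms $\gamma_Z(t^i\cO_Z)$ and $\gamma_Z(\cO_Z/t^i\cO_Z)$ are honest sheaves rather than mere complexes, so the resulting distinguished triangles become short exact sequences of coherent sheaves. In particular, the image sheaf $t_{\fE_Z}^i\fE_Z \subset \fE_Z$ coincides with $\gamma_Z(t^i\cO_Z)$, and the successive quotients $t_{\fE_Z}^i\fE_Z / t_{\fE_Z}^{i+1}\fE_Z$ are all isomorphic to $\cE_z = \gamma_Z(\cO_z)$ with the maps between them induced by $t_{\fE_Z}$ being isomorphisms, exactly as required by~\eqref{eq:t-quotients}.

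To identify $\gamma_{Z_d}$ with the Fourier--Mukai functor $\Phi_{\cE_{Z_d}}$, I observe that both are $\kk$-linear exact functors sending $\cO_{Z_d}$ to $\fE_{Z_d}$, and that under this isomorphism the induced ring maps $\cO_{Z_d} = \End(\cO_{Z_d}) \to \End(\fE_{Z_d})$ agree: for $\Phi_{\cE_{Z_d}}$ this map is determined by the $\cO_{Z_d}$-structure of the kernel $\cE_{Z_d}$, and by construction this structure is precisely the one induced from $\gamma_{Z_d}$. The Fourier--Mukai functor $\Phi_{\cE_{Z_d}}$ is manifestly $\bD(Z_d)$-linear as a module functor over the monoidal category $\bD(Z_d)$; tracing the composition $\gamma_{Z_d} = \pi_* \circ \tilde\gamma \circ \pi_m^*$ through the $\cO_{Z_d}$-action yields an analogous linearity for $\gamma_{Z_d}$, after which agreement on the generator with matching action forces the natural isomorphism of functors.

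For the dual equivalence, $\cE_{Z_d}$ is perfect, being flat over $Z_d$ and coherent on the Gorenstein scheme $X \times Z_d$, so $\cE_{Z_d}^\vee$ is a well-defined perfect complex. Flat base change together with Grothendieck duality gives
\[
\Phi_{\cE_{Z_d}^\vee}(\cO_{Z_d}) \cong (p_{X*}\cE_{Z_d})^\vee \cong \fE_{Z_d}^\vee,
\]
which lies in $\cA_d^\vee$ by Proposition~\ref{proposition:dbx-dual} since $\fE_{Z_d} \in \cA_d$. The same linearity argument then identifies $\Phi_{\cE_{Z_d}^\vee}$ with the dualization equivalence $\bD(Z_d) \to \cA_d^\vee$. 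The principal obstacle throughout is the final identification of functors: passing from agreement on the generator $\cO_{Z_d}$ and its endomorphism action to a natural isomorphism of triangulated functors on all of $\bD(Z_d)$, a category which for $m \geq 2$ is not generated as a thick subcategory by $\cO_{Z_d}$ alone; a cleaner alternative would be to appeal to Bondal--Van den Bergh's representability theorem for exact functors, together with uniqueness of Fourier--Mukai kernels, to directly extract $\cE_{Z_d}$ from $\gamma_{Z_d}$.
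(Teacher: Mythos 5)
Your construction of the kernel is exactly the paper's: decompose $Z_d$ into connected components, transport the $t$-action on $\cO_Z$ through $\gamma_Z$ to get a $\kk[t]/t^m$-module structure on $\fE_Z$, observe via~\eqref{eq:fez-quotients} that the subquotients of the induced filtration are all isomorphic to $\cE_z$ so that the epimorphisms in~\eqref{eq:t-quotients} are forced to be isomorphisms, and conclude flatness from Lemma~\ref{lemma:sheaves-y-z}. That part is fine.

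The gap is in the identification $\gamma_{Z_d} \cong \Phi_{\cE_{Z_d}}$, and you have correctly diagnosed it yourself: agreement of two exact functors on $\cO_{Z_d}$ together with the $\End(\cO_{Z_d})$-action does not produce a natural isomorphism of functors, and $\cO_{Z_d}$ does not classically generate $\bD(Z_d)$ when $Z_d$ is non-reduced (the residue field $\cO_z$ has infinite projective dimension over $\kk[t]/t^m$, so it is not in the thick subcategory generated by $\cO_{Z_d}$). Your proposed escape route via Bondal--Van den Bergh representability plus uniqueness of Fourier--Mukai kernels does not repair this: those representability results are tailored to smooth projective sources (or at least to functors out of perfect complexes), not to the singular affine scheme $Z_d$, and uniqueness of kernels fails in general; moreover, even granting representability you would still face the same problem of identifying the abstract kernel with $\cE_{Z_d}$ from its value on $\cO_{Z_d}$ alone. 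The idea you are missing is structural rather than abstract: by its very definition~\eqref{def:gamma}, $\gamma_Z = \pi_* \circ \tilde\gamma_Z \circ \pi_m^*$ is a \emph{composition} of Fourier--Mukai functors ($\tilde\gamma_Z$ is Fourier--Mukai by the construction in Propositions~\ref{proposition:tcai-auslander} and~\ref{proposition:rm-equivalence}, $\pi_m^*$ is tensoring with the bimodule $P_0$, and $\pi_*$ is pushforward along a morphism), hence is itself a Fourier--Mukai functor with a kernel obtained by convolution. Since $Z$ is affine, that kernel is nothing but the object $\gamma_Z(\cO_Z) \cong \fE_Z$ of $\bD(X)$ equipped with its natural $\kk[t]/t^m$-module structure, which is precisely $\cE_Z$ under the equivalence of Lemma~\ref{lemma:sheaves-y-z}. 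No comparison of abstract functors on a generator is ever needed.

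A smaller inaccuracy: you assert that $\cE_{Z_d}$ is perfect on $X \times Z_d$ because it is flat over $Z_d$ and the ambient scheme is Gorenstein. This is false in general: the fiber $\cE_z = \cE_{Z_d}|_{X \times \{z\}}$ is $\pi_*$ of a line bundle and is typically not locally free at the singular points of $X$, so $\cE_{Z_d}$ need not be perfect. What you actually need is only that $\RCHom(\cE_{Z_d},\cO)$ is a well-defined bounded object and that dualization is an anti-equivalence, which holds because $X \times Z_d$ is Gorenstein (Section~\ref{subsection:duality}); the paper then deduces the dual statement by observing that the equivalence of Lemma~\ref{lemma:sheaves-y-z} is compatible with dualization since the dualizing complex of $Z_d$ is trivial.
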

\begin{proof}
Let $Z \cong \Spec(\kk[t]/t^m) \subset Z_d$ be a connected component.
We apply Lemma~\ref{lemma:sheaves-y-z} to construct a sheaf $\cE_Z$ on~$X \times Z$ such that $\fE_Z \cong p_{X*}\cE_Z$.
For this, first, note that the filtration on $\cO_Z$ induced by the natural action of the ring $\kk[t]/t^m$ 
has quotients isomorphic to $\cO_z$, where $z$ is the closed point of $Z$,
hence the induced filtration on $\fE_Z \cong \gamma_Z(\cO_Z)$ has quotients isomorphic to $\gamma(\cO_z) = \cE_z$, i.e., 
\begin{equation}\label{eq:fez-quotients}
(t^i\fE_Z)/(t^{i+1}\fE_Z) \cong \cE_z.
\end{equation}
It is clear that the epimorphisms between these quotients induced by $t$ have to be isomorphisms,
hence the sheaf $\cE_Z$ associated with $\fE_Z$ is flat over~$Z$ by Lemma~\ref{lemma:sheaves-y-z}.


Next, we define $\cE_{Z_d}$ as the sum $\cE_{Z_d} := \bigoplus \cE_Z$ over connected components $Z \subset Z_d$.
Then the first part of the proposition holds.
It remains to show that the functors $\gamma_{Z_d}$ are Fourier--Mukai.

The functor $\tilde\gamma_Z \colon \bD(\tR_m) \to \bD(\tX)$ by construction 
(which is explained in Proposition~\ref{proposition:tcai-auslander} and is based on Proposition~\ref{proposition:rm-equivalence}) 
is a Fourier--Mukai functor.
By~\eqref{def:gamma} the functor $\gamma_Z \colon \bD(Z) \to \bD(X)$ is also a Fourier--Mukai functor.
Since the scheme $Z$ is affine, the kernel object defining the functor $\gamma_Z$ can be identified with the sheaf  $\gamma_Z(\cO_Z) \cong \fE_Z \in \bD(X)$ with its natural module structure over $\kk[Z] = \kk[t]/t^m$, 
which corresponds to the sheaf $\cE_Z$ by its definition above.

The last claim follows from this by dualization (note that Grothendieck duality implies that the equivalence of Lemma~\ref{lemma:sheaves-y-z} is compatible with dualization, since the dualizing complex of $Z$ is trivial).
\end{proof}

\begin{corollary}\label{corollary:a-as-orthogonals}
The components $\cA_2$ and $\cA_3$ of $\bD(X)$ are compactly generated by the bundles $\fE_{Z_2}$ and~$\fE_{Z_3}$ respectively.
In particular, we have $\cA_2 = {}^\perp \cO_X \cap \fE_{Z_3}^\perp$ and $\cA_3 = {}^\perp \cO_X \cap {}^\perp\fE_{Z_2}$.
\end{corollary}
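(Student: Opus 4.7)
The plan is to handle the two assertions in sequence: first establish that $\fE_{Z_d}$ generates $\cA_d$ as a thick subcategory (which is the relevant notion of compact generation inside the bounded derived category), then use this together with the semiorthogonal decomposition to derive the orthogonality formulas.

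For the generation step, I would invoke Proposition~\ref{proposition:fm-zd}: the Fourier--Mukai functor $\Phi_{\cE_{Z_d}}\colon \bD(Z_d)\xrightarrow{\sim}\cA_d$ is an equivalence, and by the formula $p_{X*}\cE_{Z_d}\cong \fE_{Z_d}$ it sends $\cO_{Z_d}$ to $\fE_{Z_d}$. Since $Z_d$ is a zero-dimensional Gorenstein scheme (Corollary~\ref{corollary:dbx-z2-z3}), it is affine and $\cO_{Z_d}$, viewed as the regular module over the finite-dimensional algebra $\kk[Z_d]$, is a projective generator of the abelian category of coherent sheaves; in particular every object of $\bD(Z_d)$ lies in the thick triangulated closure of $\cO_{Z_d}$. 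Transporting this statement through the equivalence, $\fE_{Z_d}$ thickly generates $\cA_d$, and consequently
\[
\fE_{Z_d}^\perp \cap \bD(X) = \cA_d^\perp,\qquad {}^\perp \fE_{Z_d}\cap \bD(X) = {}^\perp \cA_d.
\]

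For the orthogonal description, I start from the semiorthogonal decomposition $\bD(X)=\langle \cA_1,\cA_2,\cA_3\rangle$ of Theorem~\ref{theorem:dbx}, where $\cA_1=\langle\cO_X\rangle$. Semiorthogonality immediately gives the inclusions $\cA_2\subset {}^\perp\cO_X\cap \fE_{Z_3}^\perp$ and $\cA_3\subset {}^\perp\cO_X\cap {}^\perp\fE_{Z_2}$ after applying the generation identities above. For the reverse inclusions, take $F\in {}^\perp\cO_X\cap \fE_{Z_3}^\perp$. Admissibility of $\cA_1$ (Lemma~\ref{lemma:cai-admissible}) yields a decomposition triangle $A\to F\to A_1$ with $A\in\cA_1^\perp=\langle\cA_2,\cA_3\rangle$ and $A_1\in\cA_1$; applying $\Hom(-,A_1)$ to this triangle and using $\Hom(A,A_1[k])=0$ (semiorthogonality) together with $\Hom(F,\cO_X[k])=0$ forces $\Hom(A_1,A_1)=0$, so $A_1=0$ and $F\in\langle\cA_2,\cA_3\rangle$. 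Repeating the same triangle chase inside the SOD $\langle\cA_2,\cA_3\rangle$, now using $F\in\cA_3^\perp$, eliminates the $\cA_3$-component of $F$ and places $F$ in $\cA_2$. The description of $\cA_3$ is the mirror argument: after the first step places $F$ in $\langle\cA_2,\cA_3\rangle$, the hypothesis $F\in {}^\perp\fE_{Z_2}={}^\perp\cA_2$ combined with the triangle $A_2\to F\to A_3$ and semiorthogonality kills $A_2$.

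I do not foresee a substantial obstacle; the content is essentially formal, once compact generation of $\cA_d$ by $\fE_{Z_d}$ is in hand. The only point that needs a little care is the passage from the pointwise vanishing of $\Hom$ against a single object to vanishing against the whole component, which is precisely what thick generation provides, and the clean invocation of admissibility of $\cA_1$ and of $\cA_3$ (inside $\langle\cA_2,\cA_3\rangle$) to justify the triangle decompositions used in the final paragraph.
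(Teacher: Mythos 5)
Your overall route is the same as the paper's (identify $\fE_{Z_d}=\gamma_{Z_d}(\cO_{Z_d})$, use that $\cO_{Z_d}$ generates $\bD(Z_d)$ because $Z_d$ is affine, then chase triangles in the semiorthogonal decomposition~\eqref{eq:dbx}), and your triangle chase in the last paragraph is fine. But there is one genuine error in the generation step: it is \emph{not} true that every object of $\bD(Z_d)$ lies in the thick triangulated closure of $\cO_{Z_d}$. The thick closure of the free module over a finite-dimensional algebra is exactly the category of perfect complexes, and for non-reduced $Z_d$ (i.e.\ for $X$ of types $1$--$5$, where $Z_d$ has a component $\Spec(\kk[t]/t^m)$ with $m\ge 2$) the residue field $\kk$ has infinite projective dimension over $\kk[t]/t^m$, so $\bD^{\perf}(Z_d)\subsetneq\bD(Z_d)$. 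Being a projective generator of the abelian category does not upgrade to thick generation of the bounded derived category; it only gives generation in the ``compact generator'' sense, which is precisely the word the paper uses.

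Fortunately the identities you extract from the false claim, namely $\fE_{Z_d}^\perp\cap\bD(X)=\cA_d^\perp$ and ${}^\perp\fE_{Z_d}\cap\bD(X)={}^\perp\cA_d$, are still correct, and the repair is short: what your argument actually needs is only that the right (resp.\ left) orthogonal of $\cO_{Z_d}$ \emph{inside} $\bD(Z_d)$ vanishes. For the right orthogonal, $\Ext^\bullet(\cO_{Z_d},N)\cong\RGamma(N)$ is the underlying complex of $\kk[Z_d]$-modules since $Z_d$ is affine, so its vanishing forces $N=0$; for the left orthogonal, use that $Z_d$ is zero-dimensional Gorenstein, so $\RCHom(-,\cO_{Z_d})$ is an anti-equivalence and $\Hom^\bullet(N,\cO_{Z_d})=0$ again forces $N=0$. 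Combining this with the right adjoint of the (admissible) inclusion $\cA_d\hookrightarrow\bD(X)$ gives $\fE_{Z_d}^\perp=\cA_d^\perp$ (and dually with the left adjoint), after which your concluding paragraph goes through unchanged.
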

\begin{proof}
Indeed, the derived category of an affine scheme $Z_d$ is compactly generated by its structure sheaf~$\cO_{Z_d}$.
Therefore, the component $\cA_d$ of $\bD(X)$ is compactly generated by the bundle $\fE_{Z_d} = \gamma_{Z_d}(\cO_{Z_d})$.
For the second claim use~\eqref{eq:dbx}.
\end{proof}

As we will see in the next section, for verification of the orthogonality the following numerical result is useful.
Denote 
\begin{equation*}
\chi(\cF,\cG) = \sum (-1)^i \dim \Ext^i(\cF,\cG)
\qquad\text{and}\qquad 
r(\cF) = \sum (-1)^i \rank(\cH^i(\cF))
\end{equation*}
(assuming the sums are actually finite).
Note that $r(\cF) = \chi(\cO_P,\cF)$ for any smooth point $P$ of $X$.

\begin{lemma}\label{lemma:chi-ez-f}
For any $\cF \in \bD(X)$ we have
\begin{alignat*}{2}
\chi(\fE_{Z_2},\cF) &= 2 &&\chi(\cO_X,\cF) + \chi(\cO_X,\cF(K_X)) - 3r(\cF),\\
\chi(\fE_{Z_3},\cF) &=   &&\chi(\cO_X,\cF) + \chi(\cO_X,\cF(K_X)) - 2r(\cF).
\end{alignat*}
\end{lemma}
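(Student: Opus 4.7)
The plan is to reduce the computation to a Riemann--Roch calculation on the smooth surface $\tX$. Since $\fE_{Z_d}$ is a vector bundle, hence perfect, by Proposition~\ref{proposition:e-z-big}, and $\pi_*\pi^* \cong \id$ on $\bD^-(X)$ by Lemma~\ref{lemma:pi-properties}, the adjunction $\pi^* \dashv \pi_*$ gives
\[
\chi(\fE_{Z_d}, \cF) \;=\; \chi_\tX(\pi^*\fE_{Z_d}, \pi^*\cF).
\]
Because $\pi$ is crepant by~\eqref{eq:ktx}, we have $\pi^*\omega_X \cong \omega_\tX$, so the same argument yields $\chi(\cO_X, \cF) = \chi_\tX(\cO_\tX, \pi^*\cF)$ and $\chi(\cO_X, \cF(K_X)) = \chi_\tX(\cO_\tX, \pi^*\cF \otimes \omega_\tX)$; clearly $r(\cF) = r(\pi^*\cF)$ as well. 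It therefore suffices to verify the two identities after pulling back to $\tX$.

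Next, I would identify the class of $\pi^*\fE_{Z_d}$ in $K_0(\tX)$. The filtrations from Proposition~\ref{proposition:e-z-big} (exact sequences~\eqref{eq:tcez-12}, \eqref{eq:tcez-123}, and~\eqref{eq:tcez-12-23}) give, uniformly across all six types of $X$,
\[
[\pi^*\fE_{Z_2}] \;=\; \sum_{i=1}^{3}[\cO_\tX(h-e_i)], \qquad [\pi^*\fE_{Z_3}] \;=\; [\cO_\tX(h)] + [\cO_\tX(2h-e_1-e_2-e_3)].
\]
Since $(h-e_i)^2 = 0$, $(2h-e_1-e_2-e_3)^2 = 1$, and $-K_\tX = 3h - e_1 - e_2 - e_3$ by Lemma~\ref{lemma:pic-tx}, a short calculation yields $\ch(\pi^*\fE_{Z_2}) = 3 - K_\tX$ and $\ch(\pi^*\fE_{Z_3}) = 2 - K_\tX + [\mathrm{pt}]$.

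Finally, since $K_\tX^2 = 6$ and $\chi(\cO_\tX) = 1$, Noether's formula gives $\mathrm{td}(\tX) = 1 - K_\tX/2 + [\mathrm{pt}]$. Writing $\ch(\pi^*\cF) = r + D + s[\mathrm{pt}]$ with $r = r(\cF)$, Riemann--Roch on $\tX$ yields
\[
\chi_\tX(\cO_\tX, \pi^*\cF) = s - \tfrac{1}{2}D\cdot K_\tX + r, \qquad \chi_\tX(\cO_\tX, \pi^*\cF \otimes \omega_\tX) = s + \tfrac{1}{2}D\cdot K_\tX + r,
\]
and a direct Chern-character computation (using $K_\tX^2 = 6$) gives $\chi_\tX(\pi^*\fE_{Z_2}, \pi^*\cF) = 3s - \tfrac{1}{2}D\cdot K_\tX$ and $\chi_\tX(\pi^*\fE_{Z_3}, \pi^*\cF) = 2s$. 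Assembling these linear expressions verifies both identities of the lemma. No step poses a real obstacle; the only point that deserves care is the type-independence of $[\pi^*\fE_{Z_d}]$ in $K_0(\tX)$, which is immediate from the filtrations above.
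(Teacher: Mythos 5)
Your overall strategy is the same as the paper's (reduce to the smooth resolution $\tX$ by adjunction, identify $\ch(\tfE_{Z_d})$ from the filtrations of Proposition~\ref{proposition:e-z-big}, and apply Riemann--Roch), and all your numerical computations check out. But there is a genuine gap at the step where you write $\ch(\pi^*\cF) = r + D + s[\mathrm{pt}]$ and apply Riemann--Roch on $\tX$ to $\pi^*\cF$. For a general $\cF \in \bD(X)$ with $X$ singular (types 1--5), $\pi^*\cF$ is only bounded above: it has infinitely many nonzero cohomology sheaves in negative degrees, supported on the $(-2)$-curves. These do not define a class in $K_0(\tX)$, and in particular the alternating sum defining $D = c_1(\pi^*\cF)$ need not stabilize (each $\cO_\Delta(-1)$ contributes a nonzero multiple of $[\Delta]$ to $c_1$). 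So ``Riemann--Roch on $\tX$ applied to $\pi^*\cF$'' is not meaningful, even though the Euler characteristics $\chi_\tX(\pi^*\fE_{Z_d},\pi^*\cF)$ themselves are finite by adjunction. This matters for the intended applications, where $\cF$ is a non-perfect sheaf on a singular surface.

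The repair is small and is exactly what the paper does: by Corollary~\ref{corollary:pis-epi} choose a \emph{bounded} $\tcF \in \bD(\tX)$ with $\pi_*\tcF \cong \cF$ (e.g.\ $\tcF = \tau^{\ge p}\pi^*\cF$ for $p \ll 0$), so that $\chi(\fE_{Z_d},\cF) = \chi(\tfE_{Z_d},\tcF)$ by adjunction and Riemann--Roch legitimately applies to $\tcF$. The paper then also avoids expanding the Chern character of the second argument altogether: it writes $\ch(\tfE_{Z_2}) = 2\ch(\cO_\tX) + \ch(\omega_\tX^{-1}) - 3\ch(\cO_\tP)$ (and similarly for $\tfE_{Z_3}$ with coefficient $-2$), uses bilinearity of $\chi(-,-)$ in the first argument, and transfers each term back to $X$ via $\chi(\cO_\tX,\tcF) = \chi(\cO_X,\cF)$, $\chi(\omega_\tX^{-1},\tcF) = \chi(\omega_X^{-1},\cF)$, $\chi(\cO_\tP,\tcF) = r(\cF)$ for a general point $\tP$. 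If you adopt the bounded lift, your computation goes through verbatim with $\tcF$ in place of $\pi^*\cF$ (note that $r$, $D$, $s$ for $\tcF$ may differ from those of $\cF$ in the $D$ and $s$ slots, but your final formulas only use quantities that push down correctly).
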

\begin{proof}
By Corollary~\ref{corollary:pis-epi} we can write $\cF = \pi_*\tcF$ for some $\tcF \in \bD(\tX)$, hence by adjunction
\begin{equation*}
\chi(\fE_{Z_d}, \cF) = \chi(\fE_{Z_d},\pi_*\tcF) = \chi(\pi^*\fE_{Z_d},\tcF) = \chi(\tfE_{Z_d},\tcF).
\end{equation*}
As it is explained in Proposition~\ref{proposition:e-z-big} the bundle $\tfE_{Z_2}$ 
is an extension (possibly trivial) of the line bundles~$\cO_\tX(h-e_i)$ for $i = 1,2,3$, hence 
\begin{equation*}
\ch(\tfE_{Z_2}) = \sum \ch(\cO_\tX(h-e_i)) = 3 + (3h - e_1 - e_2 - e_3) = 3 - K_\tX = 2\ch(\cO_\tX) + \ch(\omega_\tX^{-1}) - 3\ch(\cO_\tP),
\end{equation*}
where $\tP$ is a general point on $\tX$.
Therefore, by Riemann--Roch and adjunction we have
\begin{equation*}
\chi(\tfE_{Z_2},\tcF) = 
2\chi(\cO_\tX,\tcF) + \chi(\omega_\tX^{-1},\tcF) - 3\chi(\cO_\tP,\tcF) =
2\chi(\cO_X,\cF) + \chi(\omega_X^{-1},\cF) - 3\chi(\cO_{\pi(\tP)},\cF),
\end{equation*}
which gives the first equality.
Similarly, for $\tfE_{Z_3}$ we have
\begin{equation*}
\ch(\tfE_{Z_3}) = \ch(\cO_\tX(h)) + \ch(\cO_\tX(2h - e_1 - e_2 - e_3)) = 
\ch(\cO_\tX) + \ch(\omega_\tX^{-1}) - 2\ch(\cO_\tP),
\end{equation*}
and using Riemann--Roch and adjunction in the same way as before, we finish the proof.
\end{proof}

Denote by $p_2 \colon X \times Z_2 \to Z_2$, $p_3 \colon X \times Z_3 \to Z_3$, and $p_{23} \colon X \times Z_2 \times Z_3 \to Z_2 \times Z_3$ the projections.
Similarly, consider the projections $p_{X2} \colon X \times Z_2 \times Z_3 \to X \times Z_2$ and $p_{X3} \colon X \times Z_2 \times Z_3 \to X \times Z_3$.

\begin{lemma}
\label{lemma:ps-ez}
The pushforwards $p_{2*}\cE_{Z_2}$ and $p_{3*}\cE_{Z_3}$ are vector bundles on $Z_2$ and $Z_3$ of rank $2$ and $3$ respectively.
Similarly, 
the pushforward $p_{23*}(p_{X2}^*\cE_{Z_2}^\vee \otimes p_{X3}^*\cE_{Z_3})$ is a line bundle on $Z_2 \times Z_3$.
\end{lemma}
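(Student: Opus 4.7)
The plan is to deduce the first two claims from Grauert's cohomology-and-base-change, and the third claim from a further application of the same principle to a fibrewise $\Ext$ computation.

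For the first two claims, by Proposition~\ref{proposition:fm-zd} the sheaf $\cE_{Z_d}$ is flat over $Z_d$, and by Lemma~\ref{lemma:e-z-small} the fibre over every closed point $z \in Z_d$ is the sheaf $\cE_z$, which is globally generated with $\dim H^0(X,\cE_z) = d$ and $H^i(X,\cE_z) = 0$ for $i > 0$. Since $Z_d$ is zero-dimensional and these dimensions are constant along fibres, Grauert's theorem forces $p_{d*}\cE_{Z_d}$ to be locally free of rank~$d$.

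For the third claim, flatness of $\cE_{Z_3}$ over $Z_3$ permits flat base change, so that the derived fibre of $p_{23*}(p_{X2}^*\cE_{Z_2}^\vee \otimes p_{X3}^*\cE_{Z_3})$ at any closed point $(z_2,z_3) \in Z_2 \times Z_3$ is the complex $\RHom_X(\cE_{z_2},\cE_{z_3})$; once this is shown to be $\kk$ concentrated in degree~$0$ for every such point, a final invocation of cohomology-and-base-change produces the desired line bundle. The Euler characteristic is pinned down by Lemma~\ref{lemma:chi-ez-f} applied to $\cF = \cE_{z_3}$: using $\chi(\cO_X,\cE_{z_3}) = 3$ and $r(\cE_{z_3}) = 1$ from Lemma~\ref{lemma:e-z-small}, together with a direct Riemann--Roch computation on $\tX$ yielding $\chi(\cO_X,\cE_{z_3}(K_X)) = 0$, one obtains $\chi_X(\fE_{Z_2},\cE_{z_3}) = 3$; since Proposition~\ref{proposition:e-z-big} exhibits $\fE_{Z_2}$ as a length-$3$ iterated extension whose successive quotients are of the form $\cE_{z}$, and since the Euler pairing is constant along the flat family $\cE_{Z_2}$, each quotient contributes $1$, whence $\chi_X(\cE_{z_2},\cE_{z_3}) = 1$.

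The vanishing $\Ext^{\neq 0}_X(\cE_{z_2},\cE_{z_3}) = 0$ is then obtained by substituting the explicit presentations $\cE_{z_2} \cong \pi_*\cO_\tX(h-e_i)$ and $\cE_{z_3} \cong \pi_*\cO_\tX(h)$ or $\pi_*\cO_\tX(2h-e_1-e_2-e_3)$ from Lemma~\ref{lemma:e-z-small} and reducing to the smooth surface $\tX$ via the adjunctions $\pi^* \dashv \pi_*$ and $\pi_* \dashv \pi^!$ (with $\pi^! \cong \pi^*$ by crepancy), after which the computation becomes a short line-bundle cohomology check. The main obstacle is precisely this last step: because $\cE_{z_2}$ need not be perfect on the singular surface~$X$, one must keep track of the $\cO_\Delta(-1)$ correction terms by which $\pi^*\pi_*\cO_\tX(D)$ differs from $\cO_\tX(D)$, using Lemma~\ref{lemma:ker-pis} and Lemma~\ref{lemma:ker-pis-orthogonality} to verify that these corrections do not contribute to higher Ext's since the relevant line bundles lie in the distinct semiorthogonal pieces $\tcA_2$ and $\tcA_3$ of the decomposition~\eqref{eq:dbtx}.
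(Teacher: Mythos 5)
For the first two pushforwards your argument is in substance the paper's: the paper does not cite Grauert but verifies the criterion by hand, using Lemma~\ref{lemma:sheaves-y-z} to reduce local freeness of $p_{d*}\cE_{Z_d}$ over $\kk[t]/t^m$ to the statement that the $t$-maps between the filtration quotients of $H^0(X,\fE_Z)$ are isomorphisms, which holds because each quotient is $H^0(X,\cE_z)$ of constant dimension $d$. One caveat: Grauert's theorem in its usual form requires a reduced base, and $Z_d$ is non-reduced in general; what actually saves you is the fibrewise vanishing $H^{>0}(X,\cE_z)=0$, which lets you run Grothendieck's cohomology-and-base-change over the Artinian base. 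So this half is correct modulo the name of the theorem.

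The third claim is where your route has a genuine gap. You restrict to a closed point $(z_2,z_3)$ and assert that the derived fibre of the pushforward is $\RHom_X(\cE_{z_2},\cE_{z_3})$. But the kernel is $p_{X2}^*\cE_{Z_2}^\vee\otimes p_{X3}^*\cE_{Z_3}$, whose derived fibre is the derived tensor product $\cE_{z_2}^\vee\otimes\cE_{z_3}$; this agrees with $\RCHom(\cE_{z_2},\cE_{z_3})$ only when $\cE_{z_2}$ is perfect, and it is not: $\cE_{z_2}=\pi_*\cO_\tX(h-e_i)$ is a non-free maximal Cohen--Macaulay module at a singular point of $X$ lying under a curve $\Delta\in\bDelta_2$ with $(h-e_i)\cdot\Delta\neq 0$, hence has infinite projective dimension there (for a du Val point the two objects differ drastically, one being bounded above, the other having $2$-periodic nonvanishing $\CExt$'s). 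For the same reason your parenthetical ``$\pi^!\cong\pi^*$ by crepancy'' fails: the paper notes that $\pi^*$ does not preserve boundedness, i.e.\ $\pi$ is not of finite Tor-dimension, so $\pi^!(-)\cong\pi^*(-)\otimes\omega_{\tX/X}$ holds only on perfect complexes. Your proposed repair via the $\cO_\Delta(-1)$ corrections can indeed be pushed through to compute $\RHom_X(\cE_{z_2},\cE_{z_3})=\kk$ (use the triangles $\pi^*\pi_*L\to L\to\cG$ with $\cG\in\Ker\pi_*\cap\tcA_d^-$, adjunction in one direction and Serre duality on $\tX$ in the other), but that computes the wrong object and does not close the first gap.

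The paper avoids all of this by never restricting to fibres over $Z_2\times Z_3$: it first pushes the kernel down to $X$, where it becomes the honest vector bundle $\fE_{Z_2}^\vee\otimes\fE_{Z_3}$ (Proposition~\ref{proposition:e-z-big}), identifies $H^\bullet(X,\fE_{Z_2}^\vee\otimes\fE_{Z_3})=\Ext^\bullet(\fE_{Z_2},\fE_{Z_3})\cong\Ext^\bullet(\tfE_{Z_2},\tfE_{Z_3})$ using the underived equality $\pi^*\fE_{Z_d}=\tfE_{Z_d}$ and full faithfulness of $\pi^*$, and then checks, via the bifiltration coming from~\eqref{eq:tcez-12}--\eqref{eq:tcez-123} and the analogue of Lemma~\ref{lemma:sheaves-y-z}, that the $t$- and $t'$-maps between the one-dimensional quotients are isomorphisms. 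If you want a fibrewise argument you must at least replace the $\cE_z$'s by the bundles $\fE_Z$ before dualizing and tensoring; your Euler-characteristic computation via Lemma~\ref{lemma:chi-ez-f} is correct but becomes redundant once the relevant $\Ext$'s are computed as line-bundle cohomology on $\tX$.
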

\begin{proof}
Let $Z \cong \Spec(\kk[t]/t^m)$ be a connected component of $Z_d$ with $d \in \{2,3\}$ with closed point~$z$.
Consider the fiber square
\begin{equation*}
\xymatrix{
X \times Z \ar[r]^-{p_Z} \ar[d]_{p_X} & Z \ar[d] \\
X \ar[r] & \Spec(\kk)
}
\end{equation*}
The pushforward to $\Spec(\kk)$ of $p_{Z*}\cE_{Z}$ equals the pushforward of $p_{X*}(\cE_{Z}) \cong \fE_Z$, 
i.e.~$H^\bullet(X,\fE_{Z})$ with its natural $\kk[t]/t^m$-module structure.
So, by Lemma~\ref{lemma:sheaves-y-z} to check that $p_{Z*}\cE_{Z}$ is a vector bundle
it is enough to show that the natural epimorphisms between the quotient spaces $(t^iH^0(X,\fE_{Z}))/(t^{i+1}H^0(X,\fE_{Z}))$ are isomorphisms
(the other cohomology groups of $\fE_Z$ vanish by Proposition~\ref{proposition:e-z-big}).
By~\eqref{eq:fez-quotients} all these quotients are isomorphic to $H^0(X,\cE_z)$, 
hence $d$-dimensional by Lemma~\ref{lemma:e-z-small}, 
hence the epimorphisms $t$ between them are isomorphisms.
This proves that $p_{Z*}\cE_{Z}$ is locally free of rank~$d$ on $Z$.

For the second statement, let $Z \subset Z_2$ and $Z' \subset Z_3$ be connected components, 
$Z \cong \Spec(\kk[t]/t^m)$, and~$Z' \cong \Spec(\kk[t']/(t')^{m'})$.
Under an analogue of Lemma~\ref{lemma:sheaves-y-z} the sheaf $p_{23*}(p_{X2}^*\cE_{Z_2}^\vee \otimes p_{X3}^*\cE_{Z_3})$ 
corresponds to the vector space $H^\bullet(X,\fE_{Z_2}^\vee \otimes \fE_{Z_3})$ with its natural bifiltration.
So, we have to check that the operators $t$ and $t'$ induce isomorphisms between the quotients of this bifiltration.
Since
\begin{equation*}
H^\bullet(X,\fE_{Z_2}^\vee \otimes \fE_{Z_3}) 
\cong \Ext^\bullet(\fE_{Z_2},\fE_{Z_3})
\cong \Ext^\bullet(\pi^*\fE_{Z_2},\pi^*\fE_{Z_3})
\cong \Ext^\bullet(\tfE_{Z_2},\tfE_{Z_3}),
\end{equation*}
and the bifiltration is induced by the defining exact sequences~\eqref{eq:tcez-12}, \eqref{eq:tcez-12-23} and~\eqref{eq:tcez-123} of $\tfE_Z$ and $\tfE_{Z'}$, 
it is enough to compute $\Ext$-spaces between the corresponding line bundles on $\tX$.
A direct computation gives 
\begin{alignat*}{3}
& \Ext^\bullet(\cO_\tX(h-e_i),\cO_\tX(h))			&&\cong 	H^\bullet(\tX,\cO_\tX(e_i)) 		&&= \kk,\\
& \Ext^\bullet(\cO_\tX(h-e_i),\cO_\tX(2h - e_1 - e_2 - e_3)) 	&&\cong 	H^\bullet(\tX,\cO_\tX(h - e_j - e_k)) 	&&= \kk,
\end{alignat*}
therefore, the epimorphisms $t$ and $t'$ between them are isomorphisms, and the sheaf $p_{23*}(p_{X2}^*\cE_{Z_2}^\vee \otimes p_{X3}^*\cE_{Z_3})$ is locally free of rank 1 on $Z \times Z'$.

Summing up over all connected components completes the proof of the lemma.
\end{proof}

\begin{remark}
The above lemma can be interpreted as a computation of the gluing bimodules (cf.~\cite[Section~2.2]{kuznetsov2015categorical}) between the components of~\eqref{eq:dbx}.
It says that $\bD(X)$ is the gluing of $\bD(\kk)$, $\bD(Z_2)$, and $\bD(Z_3)$ with the gluing bimodules being $\kk[Z_2]^{\oplus 2}$, $\kk[Z_2 \times Z_3]$, and $\kk[Z_3]^{\oplus 3}$.
\end{remark}

\section{Moduli spaces interpretation}\label{section:moduli-spaces}

In this section we provide a modular interpretation for the finite length schemes $Z_2$ and $Z_3$ that appeared in the semiorthogonal decomposition~\eqref{eq:dbx-dz} of $\bD(X)$
and for the Fourier--Mukai kernels $\cE_{Z_2}$ and~$\cE_{Z_3}$ of Proposition~\ref{proposition:fm-zd}.
This interpretation is essential for the description of the derived category of a family of sextic del Pezzo surfaces in Section~\ref{section:families}.

All through this section $X$ is a sextic du Val del Pezzo surface (as defined in Definition~\ref{definition:dp6}) over an algebraically closed field $\kk$, 
and we use freely the notation introduced in Section~\ref{subsection:dp6}.

\subsection{Moduli of rank 1 sheaves}\label{subsection:moduli}

For a sheaf $\cF$ on $X$ we denote by 
\begin{equation*}
h_\cF(t) := \chi(\cF(-tK_X)) \in \ZZ[t],
\end{equation*}
the Hilbert polynomial of $\cF$ with respect to the anticanonical polarization of $X$.
This is a quadratic polynomial with the leading coefficient equal to $r(\cF)\cdot K_X^2/2 = 3r(\cF)$.
Note that 
\begin{equation}\label{eq:hox}
h_{\cO_X}(t) = 3t(t+1) + 1.
\end{equation}

For each $d \in \ZZ$ we consider the polynomial
\begin{equation}\label{eq:hd}
h_d(t) := (3t + d)(t+1) 
\in \ZZ[t].
\end{equation}
An elementary verification shows
\begin{equation}\label{eq:hd-inequalities}
h_4(t) > h_3(t) > h_2(t) > h_{\cO_X}(t) > h_4(t-1) > h_3(t-1) > h_2(t-1)
\qquad\text{for all $t \gg 0$.}
\end{equation} 
Below we will be interested in semistable sheaves on $X$ with Hilbert polynomial $h_d(t)$.

\begin{lemma}
\label{lemma:hd-stable}
A sheaf $\cF$ on $X$ with Hilbert polynomial $h_d(t)$ is Gieseker semistable if and only if it is Gieseker stable and if and only if it is torsion-free.
\end{lemma}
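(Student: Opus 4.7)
The plan is to establish a cycle of implications, routing through the observation that $h_d(t)$ of degree $2$ forces $\cF$ to have full two-dimensional support on $X$ with generic rank $1$. Indeed the leading coefficient of the Hilbert polynomial of any coherent sheaf equals $(r(\cF)\,K_X^2/2)\,t^2+\cdots=3r(\cF)t^2+\cdots$ in our setting, and matching with $h_d(t)=3t^2+(d+3)t+d$ gives $r(\cF)=1$, while the degree $2$ of $h_d$ means $\dim \supp \cF =2$. So throughout the argument $\cF$ is a coherent sheaf of generic rank $1$ with full support.

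The implication (stable) $\Rightarrow$ (semistable) is tautological. For (semistable) $\Rightarrow$ (torsion-free), I use that Gieseker semistability by definition entails purity: every nonzero subsheaf of $\cF$ has dimension equal to $\dim \cF=2$. Since $X$ is integral and two-dimensional, any torsion subsheaf is supported on a proper closed subscheme and therefore has dimension at most $1$. Purity forces such a subsheaf to vanish, so $\cF$ is torsion-free.

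For (torsion-free) $\Rightarrow$ (stable), let $\cG\subsetneq\cF$ be a proper nonzero subsheaf. Then $\cG$ is torsion-free as a subsheaf of a torsion-free sheaf, and localizing at the generic point of $X$ shows $\cG_\eta\hookrightarrow\cF_\eta=\kk(\eta)$ is nonzero, hence an isomorphism; thus $\cG$ has generic rank $1$ as well. Consequently $\cF/\cG$ is a nonzero torsion sheaf supported in dimension at most $1$, so its Hilbert polynomial $h_{\cF/\cG}(t)$ is a nonzero polynomial of degree at most $1$ with positive leading coefficient. The reduced Hilbert polynomials therefore satisfy
\begin{equation*}
p_\cF(t)-p_\cG(t)=\tfrac{1}{3}\bigl(h_\cF(t)-h_\cG(t)\bigr)=\tfrac{1}{3}h_{\cF/\cG}(t)>0\qquad\text{for all }t\gg 0,
\end{equation*}
which is precisely Gieseker stability.

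There is essentially no obstacle here beyond keeping track of the definition of Gieseker (semi)stability; the only place where care is needed is the observation that purity of a two-dimensional sheaf on an integral two-dimensional scheme is equivalent to torsion-freeness, and that any proper nonzero subsheaf of a generically rank $1$ torsion-free sheaf has rank $1$ with torsion quotient of lower-dimensional support. The latter is the standard reason why moduli of rank $1$ torsion-free sheaves automatically consist of stable objects.
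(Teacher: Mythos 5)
Your proof is correct and follows the same route as the paper's (much terser) argument: rank $1$ from the leading coefficient, so only rank-$0$ subsheaves could destabilize, whence (semi)stability is equivalent to torsion-freeness. You have simply written out in full the purity and reduced-Hilbert-polynomial bookkeeping that the paper leaves implicit.
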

\begin{proof}
As we observed above, the leading monomial of $h_d(\cF)$ being $3t^2$ means that $r(\cF) = 1$.
Therefore, such a sheaf $\cF$ could (and will) be destabilized only by a subsheaf of rank~0, i.e., by a torsion sheaf.
Thus, $\cF$ is (semi)stable if and only if it is torsion-free.
\end{proof}

Recall the sheaves $\cE_z$, $\fE_Z$, and $\fE_{Z_d}$ on $X$ introduced 
in Lemma~\ref{lemma:e-z-small}, \eqref{def:fez}, and~\eqref{eq:cezd} respectively.

\begin{lemma}\label{lemma:hilbert-fz}
Let $d \in \{2, 3\}$.
For any closed point $z \in Z_d$ the sheaf $\cE_z$ is stable with $h_{\cE_z}(t) = h_d(t)$.
Its derived dual~$\cE_z^\vee$ is a stable sheaf with $h_{\cE_z^\vee}(t) = h_d(-t-1) = h_{6-d}(t-1)$.
Furthermore, the sheaf $\fE_{Z_d}$ is semistable with $h_{\fE_{Z_d}} = \ell(Z_d)h_d$.
\end{lemma}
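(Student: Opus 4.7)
The plan is to dispatch the three assertions in order, reducing each to a computation on the resolution $\pi\colon\tX\to X$.

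For a closed point $z\in Z_d$ with $d\in\{2,3\}$, Lemma~\ref{lemma:e-z-small} writes $\cE_z \cong \pi_*L$ for an explicit line bundle $L\in\tcA_d$ drawn from the list~\eqref{eq:tx-excol}. Using the projection formula together with the crepancy relation $\pi^*K_X = K_\tX$ of Lemma~\ref{lemma:pic-tx}, I would rewrite
\begin{equation*}
h_{\cE_z}(t) \;=\; \chi\bigl(\cE_z(-tK_X)\bigr) \;=\; \chi\bigl(L\otimes\cO_\tX(-tK_\tX)\bigr),
\end{equation*}
and then evaluate the right-hand side by Riemann--Roch on the smooth surface $\tX$: a direct computation in each of the three possibilities for $L$ reproduces $(3t+d)(t+1) = h_d(t)$. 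Torsion-freeness of $\pi_*L$ is automatic (proper birational pushforward of a line bundle from an integral smooth surface), so Lemma~\ref{lemma:hd-stable} upgrades it to stability.

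For the dual, Grothendieck duality~\eqref{eq:Gd} combined with crepancy (which yields $\pi^!\cO_X\cong\cO_\tX$) gives
\begin{equation*}
\cE_z^\vee \;\cong\; R\pi_*L^{-1}.
\end{equation*}
To conclude this complex is a sheaf I need $R^1\pi_*L^{-1}=0$; I will check this case by case by computing $L^{-1}\cdot\Delta \ge -1$ for every $(-2)$-curve $\Delta$ on $\tX$ (using Lemma~\ref{lemma:pic-tx} together with the explicit list of curves recorded in Section~\ref{subsection:dp6}), which forces $L^{-1}|_\Delta$ to have vanishing $H^1$, and then appealing to the rationality of the singularities of $X$ to upgrade fibrewise vanishing on the reduced exceptional locus to vanishing on formal neighbourhoods. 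Torsion-freeness of the resulting sheaf $\cE_z^\vee = \pi_*L^{-1}$ is again automatic. The Hilbert polynomial then follows from Serre duality on the Gorenstein surface $X$ (Proposition~\ref{proposition:serre-gorenstein}):
\begin{equation*}
h_{\cE_z^\vee}(t) \;=\; \chi\bigl(\cE_z\bigl((t+1)K_X\bigr)\bigr) \;=\; h_{\cE_z}(-t-1) \;=\; h_d(-t-1),
\end{equation*}
and a one-line polynomial substitution starting from the definition~\eqref{eq:hd} confirms $h_d(-t-1) = h_{6-d}(t-1)$. Stability again follows from torsion-freeness, since the proof of Lemma~\ref{lemma:hd-stable} only uses that the Hilbert polynomial has leading term $3t^2$, which is equally the case here.

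Finally, by Proposition~\ref{proposition:e-z-big} every $\fE_Z$ with $Z\subset Z_d$ is an iterated extension of the stable sheaf $\cE_z$ (where $z$ is the closed point of $Z$), all sharing the common reduced Hilbert polynomial $h_d(t)/3$. Since extensions and direct sums of Gieseker semistable sheaves with a fixed reduced Hilbert polynomial are themselves semistable, the direct sum $\fE_{Z_d} = \bigoplus_{Z\subset Z_d}\fE_Z$ is semistable with Hilbert polynomial $\sum_{Z\subset Z_d}\ell(Z)\,h_d = \ell(Z_d)\,h_d$. I expect the only non-routine step to be the vanishing $R^1\pi_*L^{-1}=0$ which promotes $\cE_z^\vee$ to an honest sheaf; everything else is a formal consequence of what is already established or a direct Riemann--Roch calculation.
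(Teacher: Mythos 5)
Most of your proposal is sound and runs close to the paper's own argument: your direct Riemann--Roch computation of $h_{\cE_z}$ on $\tX$ is a valid (if heavier) substitute for the paper's trick of noting that the leading term is $3t^2$ and evaluating at $t=0$ and $t=-1$ via $\chi(X,\cE_z)=d$ and $\cE_z\in\cA_d$; the stability claims via torsion-freeness and Lemma~\ref{lemma:hd-stable}, the Serre-duality computation of $h_{\cE_z^\vee}$, and the semistability of $\fE_{Z_d}$ as an iterated extension and direct sum of stable sheaves with a common reduced Hilbert polynomial are all fine. The identification $\cE_z^\vee\cong\pi_*(L^{-1})$ via Grothendieck duality and crepancy is also correct.

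The genuine gap is in your proof that $\cE_z^\vee$ is a sheaf. The principle you invoke --- that $L^{-1}\cdot\Delta\ge -1$ for every $(-2)$-curve $\Delta$ forces $R^1\pi_*L^{-1}=0$ --- is false once the exceptional locus is reducible. Take an $A_2$ point with chain $\Delta_1+\Delta_2=:Z$ and the line bundle $\cO_\tX(Z)$: it has degree $Z\cdot\Delta_i=-1$ on each component, yet on the connected nodal curve $Z$ one has $\chi(Z,\cO_\tX(Z)|_Z)=\chi(\cO_Z)+Z^2=1-2=-1$ with $h^0=0$ (negative degree on each component), so $H^1(Z,\cO_\tX(Z)|_Z)\ne 0$ and hence $R^1\pi_*\cO_\tX(Z)\ne 0$ by the theorem on formal functions. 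So componentwise $H^1$-vanishing does not propagate even to the reduced exceptional cycle, and ``rationality of the singularities'' is not a licence for the upgrade to formal neighbourhoods (it only gives $R^1\pi_*\cO_\tX=0$). Your specific bundles $L^{-1}$ do satisfy the vanishing, but along your route one must check $H^1(D,L^{-1}|_D)=0$ for a cofinal family of effective exceptional divisors $D$, which you have not done. The paper avoids all of this: $\fE_Z$ is locally free by Proposition~\ref{proposition:e-z-big}, so $\fE_Z^\vee$ is a sheaf, and since $\fE_Z$ is filtered by copies of $\cE_z$, dualizing the filtration shows the top nonvanishing cohomology of $\cE_z^\vee$ sits in degree $0$, i.e.\ $\cE_z^\vee$ is a sheaf. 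I recommend substituting that argument for your vanishing claim.
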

\begin{proof}
As we already mentioned, $r(\cE_z) = 1$ implies the leading monomial of $h_{\cE_z}$ equals $3t^2$.
So, to show an equality of polynomials $h_{\cE_z} = h_d$ it is enough to check that they take the same values at points~$t = 0$ and~$t = -1$.
In other words, we have to check that 
\begin{equation*}
\chi(X,\cE_z) = h_d(0) = d
\qquand
\chi(X,\cE_z \otimes \omega_X) = h_d(-1) = 0.
\end{equation*}
The first is proved in Lemma~\ref{lemma:e-z-small}, and the second follows from $\cE_z \in \cA_d$ and decompositions~\eqref{eq:dbx-other}.

The sheaf $\cE_z$ is torsion free since by Lemma~\ref{lemma:e-z-small} it is the direct image of a line bundle 
under a dominant map $\pi$, hence is stable by Lemma~\ref{lemma:hd-stable}.
Furthermore, it follows from Proposition~\ref{proposition:e-z-big} that $\fE_{Z_d}$ is semistable with the same reduced Hilbert polynomial.

To show that $\cE_z^\vee$ is a sheaf, consider the connected component $Z$ of~$Z_d$ containing~$z$ and
note that the sheaf $\fE_Z$ is filtered by $\cE_z$, hence $\fE_Z^\vee$ is filtered by $\cE_z^\vee$.
But $\fE_Z$ is locally free by Proposition~\ref{proposition:e-z-big}, 
hence~$\fE_Z^\vee$ is a sheaf, hence $\cE_z^\vee$ is a sheaf as well.
%
%
%
%
Stability of $\cE_z^\vee$ is proved in the same way as that of $\cE_z$; and the fact that its Hilbert polynomial equals $h_d(-t-1)$ follows easily from Serre duality (Proposition~\ref{proposition:serre-gorenstein}).
\end{proof}

Denote by 
\begin{equation}
\label{eq:mdx}
\cM_d(X) := \cM_{X,-K_X}(h_d)
\end{equation}
the moduli space of Gieseker semistable sheaves on $X$ with Hilbert polynomial $h_d(t)$ (with respect to the anticanonical polarization of $X$).
We aim at description of these moduli spaces for $d  \in \{2,3,4\}$.

We start by describing their closed points.

\begin{lemma}
\label{lemma:md-points}
Let $\cF$ be a torsion free sheaf on $X$ whose Hilbert polynomial is $h_d(t)$ with $d \in \{2, 3, 4 \}$.
$(i)$ If $d \in \{2, 3\}$ there is a unique closed point $z \in Z_d$ such that $\cF \cong \cE_z$.
\newline
$(ii)$ If $d \in \{3, 4\}$ there is a unique closed point $z \in Z_{6-d}$ such that $\cF \cong \cE_z^\vee \otimes \omega_X^{-1}$.
\end{lemma}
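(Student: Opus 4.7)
For part~(i), the plan is to first place $\cF$ inside the component $\cA_d$ and then identify it through the Fourier--Mukai equivalence $\gamma_{Z_d}\colon \bD(Z_d) \xrightarrow{\sim} \cA_d$ of Proposition~\ref{proposition:fm-zd}. The orthogonality conditions defining $\cA_d$ inside~\eqref{eq:dbx} are $\RHom(\cF,\cO_X) = 0$ together with $\RHom(\fE_{Z_3},\cF) = 0$ (for $d = 2$) or $\RHom(\cF,\fE_{Z_2}) = 0$ (for $d = 3$). The Euler-characteristic vanishings are immediate: $\chi(\cF,\cO_X) = h_d(-1) = 0$ by Serre duality (Proposition~\ref{proposition:serre-gorenstein}), and Lemma~\ref{lemma:chi-ez-f} combined with Serre duality gives the analogous vanishings involving $\fE_{Z_{d'}}$. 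The vanishing of each individual $\Ext^i$ reduces, after Serre duality converts $\Ext^2$ into $\Hom$, to non-existence of morphisms between Gieseker (semi)stable sheaves with strictly comparable reduced Hilbert polynomials; these strict inequalities are exactly~\eqref{eq:hd-inequalities}, while stability of $\cF$ comes from Lemma~\ref{lemma:hd-stable} and semistability of $\cO_X$ and $\fE_{Z_{d'}}$ from Lemma~\ref{lemma:hilbert-fz}.

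Once $\cF \in \cA_d$, I would write $\cF = \gamma_{Z_d}(M)$ and identify $M$. Since the Fourier--Mukai kernel $\cE_{Z_d}$ is a sheaf flat over $Z_d$ and the projection $X \times Z_d \to X$ is affine (because $Z_d$ is finite), the functor $\gamma_{Z_d}$ is $t$-exact, so $M$ is an honest sheaf on the zero-dimensional scheme $Z_d$. Decomposing $M = \bigoplus_{z \in Z_d} M_z$ by support gives $\cF = \bigoplus_z \gamma_{Z_d}(M_z)$, and since $\cF$ is stable hence indecomposable (Lemma~\ref{lemma:hd-stable}), exactly one summand $M_z$ is nonzero. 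Finally $[M] = \ell(M)\,[\cO_z]$ in $K_0(Z_d)$ yields $h_\cF = \ell(M)\cdot h_d$, forcing $\ell(M) = 1$ and therefore $M \cong \cO_z$, so $\cF \cong \cE_z$; uniqueness of $z$ then follows from the full faithfulness of $\gamma_{Z_d}$.

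For part~(ii) the plan is to run essentially the same argument in the dual semiorthogonal decomposition $\bD(X) = \langle \cA_3^\vee, \cA_2^\vee, \cA_1^\vee\rangle$ of Proposition~\ref{proposition:dbx-dual}, twisted by $\omega_X^{-1}$. An analogous orthogonality-and-stability computation (using that $h_{\cF \otimes \omega_X}(t) = h_d(t-1)$, that $\cE_z^\vee$ is a stable sheaf by Lemma~\ref{lemma:hilbert-fz}, and again the polynomial inequalities~\eqref{eq:hd-inequalities}) places $\cF$ inside the twisted component $\cA_{6-d}^\vee \otimes \omega_X^{-1}$. The corresponding twisted Fourier--Mukai equivalence $\bD(Z_{6-d}) \xrightarrow{\sim} \cA_{6-d}^\vee \otimes \omega_X^{-1}$ (which sends $\cO_z$ to $\cE_z^\vee \otimes \omega_X^{-1}$) allows verbatim repetition of the identification step, and $t$-exactness again holds by flatness once one notes that $\cE_{Z_{6-d}}^\vee$ is a sheaf on $X \times Z_{6-d}$ (since its fibers $\cE_z^\vee$ are sheaves by Lemma~\ref{lemma:hilbert-fz}).

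The hard part will be the identification step---showing that the object $M \in \bD(Z_d)$ corresponding to $\cF$ is necessarily the structure sheaf of a closed point. This rests on three ingredients: the $t$-exactness of the Fourier--Mukai equivalence (which pins $M$ down as a genuine sheaf on $Z_d$), the indecomposability of the stable sheaf $\cF$ (which collapses the support of $M$ to a single point), and the Hilbert polynomial matching (which forces the length of $M$ to equal one). The orthogonality verifications that place $\cF$ into the correct component are then routine stability manipulations, once one observes that all the reduced Hilbert polynomials in sight are strictly separated by~\eqref{eq:hd-inequalities}.
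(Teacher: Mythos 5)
Your proposal is correct and follows essentially the same route as the paper: first place $\cF$ in $\cA_d$ (resp.\ in the $\omega_X^{-1}$-twist of $\cA_{6-d}^\vee$) via the orthogonality characterization of Corollary~\ref{corollary:a-as-orthogonals}, stability, Serre duality, and the Euler-characteristic computation of Lemma~\ref{lemma:chi-ez-f}, and then identify $\cF$ with some $\cE_z$ through the equivalence with $\bD(Z_d)$. The only (immaterial) difference is in the final identification step, where the paper simply observes that every object of $\bD(Z_d)$ is an iterated extension of shifts of the $\cO_z$ and invokes purity and rank~$1$ of $\cF$, whereas you reach the same conclusion via $t$-exactness of $\gamma_{Z_d}$, indecomposability, and a length count.
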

\begin{proof}
First, assume $d = 2$.
Let us show that $\cF \in \cA_2$. 
By Corollary~\ref{corollary:a-as-orthogonals} for this we should check that~$\Ext^\bullet(\cF,\cO_X) = \Ext^\bullet(\fE_{Z_3},\cF) = 0$.
By Lemma~\ref{lemma:hd-stable} the sheaf $\cF$ is stable.
By Lemma~\ref{lemma:hilbert-fz} and~\eqref{eq:hd-inequalities}, we have $\frac1{\ell(Z_3)}h_{\fE_{Z_3}}(t) > h_\cF(t) > h_{\cO_X}(t)$, hence by semistability 
\begin{equation*}
\Hom(\fE_{Z_3},\cF) = \Hom(\cF,\cO_X) = 0.
\end{equation*}
Similarly, $\frac1{\ell(Z_3)}h_{\fE_{Z_3}}(t-1) < h_\cF(t) < h_{\cO_X}(t+1)$, hence
\begin{equation*}
\Hom(\cF,\fE_{Z_3}(K_X)) = \Hom(\cO_X(-K_X),\cF) = 0.
\end{equation*}
By Serre duality on $X$ (note that $\fE_{Z_3}$ is locally free and use Proposition~\ref{proposition:serre-gorenstein}) we deduce 
\begin{equation*}
\Ext^2(\fE_{Z_3},\cF) = \Ext^2(\cF,\cO_X) = 0.
\end{equation*}
Since (again by Serre duality and local freeness of $\cO_X$ and $\fE_{Z_3}$) we can have 
nontrivial $\Ext^p(\fE_{Z_3},\cF)$ and~$\Ext^p(\cF,\cO_X)$ only for $p \in \{0,1,2\}$,
it remains to check that 
\begin{equation*}
\chi(\fE_{Z_3},\cF) = \chi(X,\cF(K_X)) = 0.
\end{equation*}
The second equality here just follows from $h_\cF(-1) = h_2(-1) = 0$,
and the first follows from Lemma~\ref{lemma:chi-ez-f} and $h_2(0) + h_2(-1) - 2 = 0$.

Thus, we have shown that $\cF \in \cA_2 \cong \bD(Z_2)$. 
Since $Z_2$ is a zero-dimensional scheme, any object in~$\bD(Z_2)$ is isomorphic 
to an iterated extension of shifts of structure sheaves of closed points in~$Z_2$.
By Lemma~\ref{lemma:e-z-small} these sheaves correspond to sheaves~$\cE_z \in \cA_2$, hence $\cF$ is an extension of shifts of those.
Since $\cF$ is a pure sheaf of rank equal to~1, 
we conclude that $\cF \cong \cE_z$ for a closed point $z \in Z_2$.

The case $d = 3$ is treated similarly. 
%
The same argument with~$\cA_2$ and~$\cA_3$ replaced by~$\cA_2^\vee \otimes \omega_X^{-1}$ and~$\cA_3^\vee \otimes \omega_X^{-1}$ 
and~\eqref{eq:dbx} replaced by a twist of~\eqref{eq:dbx-dual} proves part~$(ii)$.
\end{proof}

Below we consider families of objects parameterized by a scheme $S$.
Let $p_S \colon M \to S$ be a morphism of schemes.
For each geometric point $s \in S$ we denote by $M_s$ the scheme-theoretic fiber of $M$ over $s$ 
and by~$i_{M_s} \colon M_s \hookrightarrow M$ its embedding.
The embedding $s \hookrightarrow S$ is denoted simply by $i_s$.

\begin{lemma}
\label{lemma:fiberwise}
Assume $\cF \in \bD^-(M)$ and $M$ is proper over $S$, and in $(ii)$ and $(iii)$ that $M$ is flat over $S$.
\begin{enumerate}
\item[$(i)$] 
If $i_{M_s}^*(\cF) \in \bD^{\le p_0}(M_s)$ for some integer $p_0$ and all geometric points $s \in S$, then $\cF \in \bD^{\le p_0}(M)$.
In particular, if $i_{M_s}^*(\cF) = 0$ for all geometric points $s \in S$, then $\cF = 0$.
\item[$(ii)$] 
If for all geometric points $s \in S$ the pullback $i_{M_s}^*(\cF)$ is a pure sheaf 
such that $H^k(M_s, i_{M_s}^*\cF) = 0$ for all~$k \ne 0$ and $\dim H^0(M_s,i_{M_s}^*\cF) = r$ for a constant $r$,
then $p_{S*}(\cF)$ is a vector bundle of rank~$r$ on~$S$.
\item[$(iii)$] 
If for any geometric point $s \in S$ there is a point $m \in M_s$ such that $i_{M_s}^*(\cF) \cong \cO_m$, then 
there is a unique section $\varphi \colon S \to M$ of the projection $p_S$ and a line bundle $\cL \in \Pic(S)$ such that $\cF \cong \varphi_*\cL$.
\end{enumerate}
\end{lemma}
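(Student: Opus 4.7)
I plan to treat the three parts in sequence, with each part feeding into the next.

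For $(i)$, I would use the hyper-Tor spectral sequence
\begin{equation*}
E_2^{p,q} = L_{-p}i_{M_s}^*\cH^q(\cF) \Rightarrow \cH^{p+q}(i_{M_s}^*\cF)
\end{equation*}
associated with the closed immersion $i_{M_s}$. The top cohomology is isolated: if $q_0$ is the largest $q$ with $\cH^q(\cF) \ne 0$, then $\cH^{q_0}(i_{M_s}^*\cF) \cong L_0 i_{M_s}^* \cH^{q_0}(\cF)$, the classical pullback. Assuming $q_0 > p_0$, I would pick a geometric point $m \in \supp\cH^{q_0}(\cF)$, set $s = p_S(m)$, and then this classical pullback is nonzero at $m$, contradicting $i_{M_s}^*\cF \in \bD^{\le p_0}(M_s)$.

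For $(ii)$, flatness of $p_S$ gives the identification $i_{M_s}^*\cF \cong \cF \otimes^L_{\cO_S}\kk(s)$ in $\bD(M_s)$. Combined with $(i)$ and the hypothesis that each such fiber is concentrated in degree $0$, the pointwise concentration forces $\cF$ to be a coherent sheaf flat over $S$ by the standard Tor-amplitude criterion (residue-field fibers detect relative flatness for bounded complexes). Then Grauert's cohomology-and-base-change theorem applies: the fiberwise dimensions $\dim H^k(M_s, i_{M_s}^*\cF)$ are locally constant (zero for $k \ne 0$ and $r$ for $k = 0$), so $R^0 p_{S*}\cF$ is locally free of rank $r$ and $R^k p_{S*}\cF = 0$ for $k > 0$, giving the desired vector bundle on $S$.

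For $(iii)$, I apply $(ii)$ with $r = 1$ to obtain a line bundle $\cL := p_{S*}\cF$ on $S$, and twist by $p_S^*\cL^{-1}$ to reduce to the case $p_{S*}\cF = \cO_S$ with $\cF$ itself a coherent sheaf flat over $S$. The adjunction counit $\eps \colon \cO_M \to \cF$ restricts fiberwise to a map $\cO_{M_s} \to \cO_m$, and its pushforward to $s$ is the identity on $\kk(s)$, so this fiberwise map is the canonical surjection. By Nakayama's lemma, $\eps$ is a surjection of coherent sheaves, and hence $\cF \cong \cO_Z$ for a closed subscheme $Z \hookrightarrow M$ whose scheme-theoretic fiber over each geometric $s$ is the single reduced point $m$. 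Therefore $Z \to S$ is finite flat of degree $1$, i.e.\ an isomorphism, and $\varphi$ is its inverse composed with $Z \hookrightarrow M$. Untwisting by $\cL$ gives $\cF \cong \varphi_*\cL$; uniqueness follows because $\varphi(S) = \supp\cF$ determines $\varphi$, and then $\cL \cong \varphi^*\cF$ is forced.

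The main obstacle I expect is the Tor-amplitude step in $(ii)$ and the corresponding passage from fiberwise to global sheafiness in $(iii)$: it is here that one most essentially uses the flatness of $M$ over $S$, via the identification of $i_{M_s}^*\cF$ with $\cF \otimes^L_{\cO_S}\kk(s)$. Once $\cF$ is known to be a coherent sheaf flat over $S$, the rest amounts to Grauert's theorem together with Nakayama-type bookkeeping.
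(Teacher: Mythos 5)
Your argument is correct, and part $(i)$ coincides with the paper's proof (isolate the top cohomology sheaf via the hyper-Tor spectral sequence and restrict to a fiber meeting its support). For $(ii)$ and $(iii)$ you take a genuinely different route: you first upgrade $\cF$ itself to a coherent sheaf flat over $S$ by the fiberwise Tor-amplitude criterion, and only then push forward, invoking cohomology and base change; the surjectivity of $\cO_M \to \cF$ in $(iii)$ is then obtained by Nakayama applied to the cokernel. The paper never establishes flatness of $\cF$ over $S$: it works directly with $\cG = p_{S*}\cF$, using the base-change isomorphism $i_s^*\cG \cong H^\bullet(M_s, i_{M_s}^*\cF)$ and the same spectral sequence as in $(i)$, now on $S$, to see that $L_0 i_s^*\cH^0(\cG)$ has constant rank and $L_1 i_s^*\cH^0(\cG)=0$, whence $\cH^0(\cG)$ is locally free by Serre's criterion; in $(iii)$ it passes to $\cH^0(\cF)$ and kills $\tau^{\le -1}\cF$ at the very end by restricting the truncation triangle to fibers. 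Your approach is cleaner conceptually but leans on heavier standard machinery (the residue-field detection of relative Tor-amplitude for pseudo-coherent complexes, which does hold in this Noetherian finite-type setting); the paper's is more self-contained, using only the two-row spectral sequence and the local criterion for projectivity. One small caution: the theorem you cite as ``Grauert'' classically requires a reduced base; what you actually need is the cohomology-and-base-change (exchange) criterion, which applies here without reducedness precisely because you have full vanishing of $H^k$ for $k\ne 0$ on every fiber (so $R^kp_{S*}\cF=0$ for $k>0$ by Nakayama, forcing the relevant comparison maps in degrees $0$ and $-1$ to be surjective and hence $R^0p_{S*}\cF$ locally free).
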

\begin{proof}
$(i)$ 
Let $\cH^k(\cF)$ be the top nonzero cohomology sheaf of $\cF$; by assumption it is a coherent sheaf on $M$.
Let $s \in S$ be a geometric point in the image of the support of~$\cH^k(\cF)$ under the map $p_S \colon M \to S$.
The spectral sequence
\begin{equation}
\label{eq:spectral}
L_qi_{M_s}^*\cH^p(\cF) \Rightarrow \cH^{p-q}(i_{M_s}^*\cF)
\end{equation}
implies that $\cH^k(i_{M_s}^*\cF) \cong L_0i_{M_s}^*\cH^k(\cF) \ne 0$, hence $k \le p_0$.

$(ii)$ Set $\cG := p_{S*}(\cF)$.
By base change we have an isomorphism 
\begin{equation*}
H^\bullet(M_s,i_{M_s}^*(\cF)) \cong i_s^* (\cG).
\end{equation*}
By assumption, the left hand side is a vector space of dimension $r$ in degree $0$.
By~$(i)$ we have
$\cG \in \bD^{\le 0}(S)$.
Moreover the spectral sequence~\eqref{eq:spectral} with $M = S$ and $\cF = \cG$
\begin{equation*}
L_q i_s^* \cH^p(\cG) \Rightarrow \cH^{p-q}(i_s^*(\cG)) \cong H^{p-q}(M_s, i_{M_s}^*(\cF))
\end{equation*}
shows that $L_0i_s^*\cH^0(\cG) \cong H^0(M,i_{M_s}^*\cF)$ and $L_1i_s^*\cH^0(\cG) = 0$.
Therefore, by Serre's criterion the sheaf~$\cH^0(\cG)$ is a vector bundle of rank~$r$.
Looking again at the spectral sequence, we see that the canonical map $\cG \to \cH^0(\cG)$ induces an isomorphism $i_s^*\cG \to i_s^*(\cH^0(\cG))$ for each $s \in S$, 
hence by part $(i)$ we have an isomorphism $\cG \cong \cH^0(\cG)$.
Thus $p_{S*}(\cF) = \cG$ is a vector bundle of rank $r$.

$(iii)$ 
By part $(i)$ we have $\cF \in \bD^{\le 0}(M)$ and by part~$(ii)$ we know that $p_{S*}(\cF)$ is a line bundle.
Denote it by $\cL$.
Then replacing the object $\cF$ by~$\cF \otimes p_S^*\cL^{-1}$, we may assume that $p_{S*}(\cF) \cong \cO_S$.
We have by adjunction a map $\cO_{M} = p_S^*\cO_S \to \cF \to \cH^0(\cF)$ 
that restricts to the fiber $M_s$ over a geometric point~\mbox{$s \in S$} as the natural map $\cO_{M_s} \to \cO_m$.
Therefore, it is fiberwise surjective, hence by part $(i)$ applied to its cone it is surjective on $M$.
So, $\cH^0(\cF) \cong \cO_\Gamma$ is the structure sheaf of a closed subscheme $\Gamma \subset M$.

By the assumption for each geometric point $s \in S$ we have $i_s^*\cO_\Gamma \cong \cO_m$. 
Therefore the map $p_S\vert_\Gamma \colon \Gamma \to S$ is finite and flat of degree~1.
By $(ii)$ we have $p_{S*}\cO_\Gamma \cong \cO_S$, so it follows that the map $p_S\vert_\Gamma \colon \Gamma \to S$ is an isomorphism.
Therefore, $\Gamma$ is the image of a section $\varphi \colon S \to M$ of the morphism $p_S$.

The above argument proves $\cH^0(\cF) \cong \cO_\Gamma \cong \varphi_*\cO_S$.
Restricting the triangle $\tau^{\le -1}\cF \to \cF \to \cH^0(\cF)$ to an arbitrary fiber $M_s$ we deduce that $i_{M_s}^*(\tau^{\le -1}\cF) = 0$ for any $s \in S$, hence $\tau^{\le -1}\cF = 0$ by part $(i)$.
Therefore $\cF \cong \varphi_*\cO_S$.
\end{proof}

Now we return to a sextic del Pezzo surface $X$ and the moduli spaces $\cM_d(X)$ defined by~\eqref{eq:mdx}.
Recall the sheaves $\cE_{Z_d}$ on $X \times Z_d$ constructed in Proposition~\ref{proposition:fm-zd}.

\begin{theorem}
\label{theorem:moduli}
Let $X$ be a sextic du Val del Pezzo surface over an algebraically closed field $\kk$. 
The moduli spaces $\cM_2(X)$, $\cM_3(X)$, and $\cM_4(X)$ are fine moduli spaces. 
Moreover,
\newline$(i)$
$\cM_2(X) \cong \cM_4(X) \cong Z_2$ and the sheaves $\cE_{Z_2}$ and $\cE_{Z_2}^\vee \otimes \omega_X^{-1}$ are the corresponding universal families;
\newline$(ii)$
$\cM_3(X) \cong Z_3$ and the sheaves $\cE_{Z_3}$ and $\cE_{Z_3}^\vee \otimes \omega_X^{-1}$ are two universal families for this moduli problem.
\end{theorem}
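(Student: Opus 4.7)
The plan is to invert the Fourier--Mukai equivalence $\gamma_{Z_d}$ in families in order to produce classifying morphisms, thereby realizing $Z_d$ as a fine moduli space with universal family $\cE_{Z_d}$ or its dual twist. Concretely, I first verify that $\cE_{Z_d}$ is a flat family over $Z_d$ of Gieseker-stable sheaves on $X$ with Hilbert polynomial $h_d$ (for $d \in \{2,3\}$): flatness is part of Proposition~\ref{proposition:fm-zd}, each fiber $\cE_z$ is torsion-free by Lemma~\ref{lemma:e-z-small} (being the direct image of a line bundle under $\pi$), hence stable by Lemma~\ref{lemma:hd-stable}, with Hilbert polynomial $h_d$ by Lemma~\ref{lemma:hilbert-fz}. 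This yields a classifying morphism $f_d\colon Z_d \to \cM_d(X)$. A Riemann--Roch check gives $h_{\cE_z^\vee \otimes \omega_X^{-1}}(t) = h_d(-t-2) = h_{6-d}(t)$, and combined with the second part of Proposition~\ref{proposition:fm-zd}, the same reasoning shows that $\cE_{Z_d}^\vee \otimes \omega_X^{-1}$ is a flat family of stable sheaves with Hilbert polynomial $h_{6-d}$, giving a morphism $g_d\colon Z_d \to \cM_{6-d}(X)$. By Lemma~\ref{lemma:md-points} both $f_d$ and $g_d$ are bijective on closed points.

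To promote $f_d$ to a scheme-theoretic isomorphism and establish universality, I consider any flat family $\cF$ on $X \times S$ of Gieseker-stable sheaves with Hilbert polynomial $h_d$. Let $\Psi\colon \bD(X) \to \bD(Z_d)$ denote the quasi-inverse of $\gamma_{Z_d} = \Phi_{\cE_{Z_d}}$, realized as a Fourier--Mukai functor with appropriate kernel on $X \times Z_d$, and let $\Psi_S\colon \bD(X \times S) \to \bD(Z_d \times S)$ be its relative version (obtained by pulling the kernel back to $X \times Z_d \times S$). For each geometric point $s \in S$, base change identifies the restriction of $\Psi_S(\cF)$ to $Z_d \times \{s\}$ with $\Psi(\cF_s) \cong \cO_{z(s)}$, where $z(s) \in Z_d$ is the unique closed point with $\cF_s \cong \cE_{z(s)}$ given by Lemma~\ref{lemma:md-points}; the cohomological shift is zero since $\cE_{z(s)} \in \cA_d$ corresponds to $\cO_{z(s)}$ in degree $0$ under the equivalence. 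Lemma~\ref{lemma:fiberwise}$(iii)$, applied to $\Psi_S(\cF)$ over $S$, then produces a unique morphism $\varphi\colon S \to Z_d$ and a line bundle $\cL$ on $S$ such that $\Psi_S(\cF) \cong (\varphi, \mathrm{id}_S)_* \cL$. Applying the relative $\Phi_{\cE_{Z_d},S}$, which is inverse to $\Psi_S$ on families of objects of $\cA_d$, recovers $\cF \cong (\mathrm{id}_X \times \varphi)^* \cE_{Z_d} \otimes p_S^* \cL$, establishing universality. This proves $(i)$ for $d=2$ and the first half of $(ii)$; the analogous argument with kernel $\cE_{Z_d}^\vee \otimes \omega_X^{-1}$ and the dual part of Lemma~\ref{lemma:md-points} gives $\cM_4(X) \cong Z_2$ and the second universal family for $\cM_3(X)$.

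The principal technical obstacle is verifying that the inverse Fourier--Mukai functor $\Psi$ is represented by a kernel on $X \times Z_d$ well-behaved under base change. Since $X$ is singular, one cannot directly adopt the standard smooth-projective formula $\Psi = \Phi_{\cE_{Z_d}^\vee \otimes \omega_X[\dim X]}$; instead, the inverse should be constructed from the explicit presentation of $\gamma_{Z_d}$ in Proposition~\ref{proposition:gamma} as a composition of the resolution-pushforward $\pi_*$, the Auslander-algebra embedding of Proposition~\ref{proposition:tcai-auslander}, and the categorical-resolution pullback $\pi_m^*$, each of which admits a concrete adjoint compatible with the $\kk[t]/t^m$-module presentation of Lemma~\ref{lemma:sheaves-y-z}. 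Once compatibility of this composite inverse with base change along $S \to \Spec(\kk)$ is established, the fiberwise identification together with Lemma~\ref{lemma:fiberwise}$(iii)$ mechanically produces the classifying morphism.
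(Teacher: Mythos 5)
Your proposal is correct and follows essentially the same skeleton as the paper's proof: reduce to a flat family $\cF$ on $X\times S$, identify each fiber $\cF_s$ with some $\cE_z$ via Lemma~\ref{lemma:md-points}, pass to the base-changed semiorthogonal decomposition~\eqref{eq:dbx-s}, show the resulting object of $\bD(Z_d\times S)$ is fiberwise a skyscraper, and invoke Lemma~\ref{lemma:fiberwise}$(iii)$ to produce the classifying morphism; the dual decomposition handles $\cM_4$ and the second universal family for $\cM_3$.

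The one place where you diverge is in how you extract the object of $\bD(Z_d\times S)$, and here the paper's formulation dissolves what you call the ``principal technical obstacle.'' You propose to apply a relative quasi-inverse $\Psi_S$ of $\gamma_{Z_d}$, which forces you to realize the inverse as a Fourier--Mukai functor with a base-change-compatible kernel on the singular $X$ --- a genuine nuisance, as you note. The paper instead never inverts anything: it shows that the \emph{complementary} projections $\alpha_{d'S}(\cF)$ for $d'\ne d$ vanish (by restricting to fibers via \cite[(11)]{kuznetsov2011base} and applying Lemma~\ref{lemma:fiberwise}$(i)$), so that $\cF\cong\Phi_{\cE_{Z_d}\boxtimes\cO_S}(\cF_d)$ for \emph{some} $\cF_d$, and then identifies $i_{Z_d\times s}^*(\cF_d)\cong\cO_z$ using only that the forward functor $\Phi_{\cE_{Z_d}}$ commutes with restriction to fibers and is fully faithful there. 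Since the forward functor is manifestly a Fourier--Mukai functor with kernel $\cE_{Z_d}$, no inverse kernel is ever needed. If you rewrite your second paragraph in this form, your third paragraph becomes unnecessary. The rest of your argument, including the Hilbert polynomial computation $h_{\cE_z^\vee\otimes\omega_X^{-1}}(t)=h_d(-t-2)=h_{6-d}(t)$, is accurate.
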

\begin{proof}
Let $S$ be an arbitrary base scheme and $\cF$ a coherent sheaf on $X \times S$ flat over $S$ and such that 
for any geometric point $s \in S$ the restriction $\cF_s = i_{X \times s}^*(\cF)$ is a semistable (i.e., torsion free) sheaf with Hilbert polynomial~$h_d(t)$ for $d \in \{2,3\}$.
By Lemma~\ref{lemma:md-points} it follows that 
\begin{equation*}
\cF_s \cong \cE_z 
\end{equation*}
for a unique closed point $z \in Z_d$.

Consider the semiorthogonal decomposition
\begin{equation}\label{eq:dbx-s}
\bD(X \times S) = \langle \cA_{1S}, \cA_{2S}, \cA_{3S} \rangle = \langle \bD(S), \bD(Z_2 \times S), \bD(Z_3 \times S) \rangle,
\end{equation}
obtained by the base change $S \to \Spec(\kk)$ (\cite[Theorem~5.6]{kuznetsov2011base}) from~\eqref{eq:dbx-dz} 
(the second equality follows from \cite[Theorem~6.4]{kuznetsov2011base} and Proposition~\ref{proposition:fm-zd}).
Since by Proposition~\ref{proposition:fm-zd} the embedding functors of~\eqref{eq:dbx-dz} are the Fourier--Mukai functors given by the sheaves $\cE_{Z_d}$,
the embedding functors of~\eqref{eq:dbx-s} are given by the pullbacks $\cE_{Z_d} \boxtimes \cO_S$ of $\cE_{Z_d}$ via the maps $X \times Z_d \times S \to X \times Z_d$.

For any $\cF \in \bD(X \times S)$, a geometric point $s \in S$, and each $d' \in \{1,2,3\}$ we have by~\cite[(11)]{kuznetsov2011base} an isomorphism
\begin{equation*}
i_{Z_{d'} \times s}^*(\alpha_{d'S}(\cF)) \cong \alpha_{d'}(i_{X \times s}^*(\cF)) \in \bD(Z_{d'}),
\end{equation*}
where $\alpha_{d'} \colon \bD(X) \to \bD(Z_{d'})$ and $\alpha_{d'S} \colon \bD(X \times S) \to \bD(Z_{d'} \times S)$ are 
the projection functors of the semiorthogonal decompositions~\eqref{eq:dbx-dz} and~\eqref{eq:dbx-s} respectively.
Using this isomorphism for the sheaf~$\cF$ we started with, and taking into account Lemma~\ref{lemma:e-z-small}, we conclude that
\begin{equation*}
i_{Z_{d'} \times s}^*(\alpha_{d'S}(\cF)) \cong \alpha_{d'}(\cF_s) \cong \alpha_{d'}(\cE_z) \cong 
\begin{cases}
\cO_z, & \text{if $d' = d$},\\
0, & \text{otherwise}.
\end{cases}
\end{equation*} 
By Lemma~\ref{lemma:fiberwise}$(i)$ it follows that $\alpha_{d'S}(\cF) = 0$ when $d' \ne d$, hence 
\begin{equation*}
\cF \cong \Phi_{\cE_{Z_d} \boxtimes \cO_S}(\cF_d)
\end{equation*}
for some object $\cF_d \in \bD(Z_d \times S)$.
Moreover, the object $\cF_d$ is such that for any geometric point~$s \in S$ we have  $i_{Z_d \times s}^*(\cF_d) \cong \cO_z$ for some closed point~$z \in Z_d$.
By Lemma~\ref{lemma:fiberwise}$(iii)$ it follows that $\cF_d \cong \Gamma_{f_d*}\cL$, where $f_d \colon S \to Z_d$ is a morphism, $\Gamma_{f_d} \colon S \to Z_d \times S$ is its graph, and $\cL$ is a line bundle on $S$.
Therefore
\begin{equation*}
\cF \cong \Phi_{\cE_{Z_d} \boxtimes \cO_S}(\Gamma_{f_d*}\cL) 
\cong (\id_X \times f_d)^*\cE_{Z_d} \otimes p_S^*\cL.
\end{equation*}
This precisely means that the moduli functor $\cM_d(X)$ we are interested in is represented by the scheme~$Z_d$, 
and that $\cE_{Z_d}$ is a universal sheaf.

This proves the statement about~$\cM_2(X)$ and the first statement about~$\cM_3(X)$.
The same argument applied to the dual semiorthogonal decomposition~\eqref{eq:dbx-dual} instead of~\eqref{eq:dbx-dz} 
proves the statement about~$\cM_4(X)$ and the second statement about~$\cM_3(X)$.
\end{proof}

\begin{corollary}\label{corollary:tau-m3x}
There is an automorphism $\sigma \colon \cM_3(X) \to \cM_3(X)$ such that $\cE_{Z_3}^\vee \cong \sigma^*\cE_{Z_3} \otimes \omega_X$.
\end{corollary}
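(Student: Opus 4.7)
The strategy is to invoke the universal property of the fine moduli space $\cM_3(X) \cong Z_3$. By Theorem~\ref{theorem:moduli}$(ii)$, both $\cE_{Z_3}$ and $\cE_{Z_3}^\vee \otimes \omega_X^{-1}$ are universal families on $X \times Z_3$ for the same moduli problem $\cM_3(X)$. Applying the universal property of the pair $(Z_3, \cE_{Z_3})$ to the family $\cE_{Z_3}^\vee \otimes \omega_X^{-1}$ will produce a unique morphism $\sigma \colon Z_3 \to Z_3$ together with a line bundle $\cL$ on $Z_3$ such that
\[
\cE_{Z_3}^\vee \otimes \omega_X^{-1} \cong (\id_X \times \sigma)^*\cE_{Z_3} \otimes p_{Z_3}^*\cL.
\]

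To check that $\sigma$ is an automorphism, I would repeat the construction with the roles of the two universal families interchanged, obtaining a morphism $\tau \colon Z_3 \to Z_3$; uniqueness in the universal property then forces $\sigma \circ \tau = \tau \circ \sigma = \id_{Z_3}$. Equivalently, one can dualize the displayed isomorphism, twist by $\omega_X^{-1}$, and observe that the resulting identity expresses $\cE_{Z_3}$ as $(\id_X \times \sigma^2)^*\cE_{Z_3}$ up to a line bundle twist, so that uniqueness forces $\sigma^2 = \id_{Z_3}$.

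Finally, to eliminate $\cL$: by Corollary~\ref{corollary:dbx-z2-z3}, $Z_3$ is a zero-dimensional affine scheme of length~$2$, i.e.\ a disjoint union of spectra of local artinian $\kk$-algebras. Since every finitely generated projective module over a local ring is free, one has $\Pic(Z_3) = 0$, hence $\cL \cong \cO_{Z_3}$. Combining these steps yields the desired isomorphism $\cE_{Z_3}^\vee \cong \sigma^*\cE_{Z_3} \otimes \omega_X$.

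Because the heavy lifting has already been done in Theorem~\ref{theorem:moduli}, there is no real obstacle here; the only slightly delicate point is marshalling the universal property to get both the morphism $\sigma$ and the line bundle $\cL$, after which the triviality of $\Pic(Z_3)$ takes care of the remainder.
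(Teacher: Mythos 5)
Your proposal is correct and follows essentially the same route as the paper: both $\cE_{Z_3}$ and $\cE_{Z_3}^\vee \otimes \omega_X^{-1}$ are universal families for the same fine moduli problem, so they differ by an automorphism of $\cM_3(X)$ and a line bundle twist, and the line bundle is trivial because $\cM_3(X) \cong Z_3$ is zero-dimensional. The paper states this in two lines; your additional remarks on the uniqueness argument for invertibility of $\sigma$ are fine but not needed beyond what the universal property already gives.
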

\begin{proof}
Both $\cE_{Z_3}$ and $\cE_{Z_3}^\vee \otimes \omega_X^{-1}$ are universal families on $\cM_3(X) \times X$, hence they differ only by an automorphism of $\cM_3(X)$ and a twist by a line bundle on it.
But since $\cM_3(X)$ is zero-dimensional, it has no non-trivial line bundles.
\end{proof}

\subsection{Hilbert scheme interpretation}\label{subsection:hilbert}

Now consider the polynomial 
\begin{equation}\label{eq:hdprime}
h'_d(t) = dt + 1 \in \ZZ[t].
\end{equation} 
This is the Hilbert polynomial of a rational normal curve of degree $d$.

\begin{lemma}
Let $C \subset X \subset \P^6$ be a subscheme with Hilbert polynomial $h_C(t) = h'_d(t)$ and $1 \le d \le 3$.
Then $C$ is a connected arithmetically Cohen--Macaulay curve.
\end{lemma}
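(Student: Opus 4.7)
The form of $h'_d(t) = dt + 1$ gives $\dim C = 1$ and $\chi(\cO_C) = 1$, so $C$ has arithmetic genus zero. Recall that the arithmetically Cohen--Macaulay property for a one-dimensional subscheme of $\P^6$ is equivalent to the vanishing $H^1(\P^6, \cI_C(n)) = 0$ for all $n \in \ZZ$; this condition automatically forces Cohen--Macaulayness of the coordinate ring (hence absence of embedded points in $C$) and projective normality, and the $n = 0$ instance then yields $h^0(C, \cO_C) = 1$ and therefore connectedness as well. Everything is thus reduced to the cohomological vanishing $H^1(\P^6, \cI_C(n)) = 0$.

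My plan is to pass through the intermediate surface $X$. It is itself arithmetically Cohen--Macaulay in its anticanonical embedding $X \hookrightarrow \P^6$: the vanishing $H^1(X, \cO_X(-nK_X)) = 0$ for all $n \in \ZZ$ follows from Kawamata--Viehweg (valid since $X$ is Gorenstein with rational singularities and $-K_X$ is ample) for $n \geq 0$ and from Serre duality on $X$ for $n < 0$, while projective normality is standard for Gorenstein del Pezzo surfaces of degree at least three. Comparing the ideal sheaf sequences in $\P^6$ for $C$ and for $X$ then reduces the problem to the vanishings
\[
H^1(X, \cI_{C/X}(-nK_X)) = 0 \qquad \text{for all } n \in \ZZ.
\]

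To establish these, I would compute $\chi(X, \cI_{C/X}(-nK_X)) = h_{\cO_X}(n) - h'_d(n) = 3n^2 + (3-d)n$; Serre vanishing handles $n \gg 0$, the dual argument via Serre duality on $X$ handles $n \ll 0$, and restricting to a general $H \in |-K_X|$ (a smooth elliptic sextic in $\P^5$) allows inductive descent through the intermediate range $n \geq 1$. The crux is the $n = 0$ case, where $\chi(\cI_{C/X}) = 0$ leaves no slack. My preferred route is to lift $C$ to its strict transform $\widetilde C$ under the minimal resolution $\pi \colon \tX \to X$: the bound $-K_\tX \cdot \widetilde C = d \leq 3$ combined with $-K_\tX = 3h - e_1 - e_2 - e_3$ restricts the class $[\widetilde C] \in \Pic(\tX)$ to a short list (lines, conics, and the twisted cubic classes $h$ or $2h - e_1 - e_2 - e_3$), for each of which $H^1(\tX, \cO_\tX(-\widetilde C)) = 0$ by a direct check on the smooth surface $\tX$; descent via $R\pi_*$ (using rationality of the du Val singularities to control $R^1\pi_*$) then transfers the vanishing back to $X$. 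The main obstacle is this case analysis, unavoidable but of bounded scope thanks to $d \leq 3$.
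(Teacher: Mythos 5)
The crux of your argument is exactly where the gap is. Your reduction to the single vanishing $H^1(X,\cI_{C/X})=0$ is fine in outline, but your proposed proof of it --- passing to the strict transform $\widetilde C$ on $\tX$, bounding its divisor class by $-K_{\tX}\cdot\widetilde C=d$, and running a finite case analysis of classes --- presupposes that $C$ is a pure one-dimensional Cohen--Macaulay curve with a well-defined effective divisor class. That is essentially the conclusion of the lemma, not a permissible hypothesis. The subschemes that the lemma must exclude are precisely the impure ones: for $d=3$ the classification of subschemes of projective space with Hilbert polynomial $3t+1$ contains, besides generalized twisted cubics, the union of a plane cubic with a point (isolated or embedded). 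For such a $C$ the purely one-dimensional part $C'$ has $\chi(\cO_{C'})\le 0$, the point component carries no divisor class, $h^0(\cO_C)=2$, and $H^1(X,\cI_{C/X})\ne 0$; your case list of classes with $-K_{\tX}\cdot\widetilde C=d$ and the intersection-theoretic identities you would use (e.g.\ adjunction to pin down $p_a$) simply do not see this configuration. To close the gap you would have to separately rule out degree-$d$ curves of arithmetic genus $\ge 1$ on $X$ for $d\le 3$ and the presence of zero-dimensional components, which is an additional argument you have not supplied.

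For comparison, the paper's proof is a two-line reduction to known classifications: for $d=1,2$ the statement is standard (subschemes with Hilbert polynomial $t+1$ or $2t+1$ are lines and conics), and for $d=3$ the only non-ACM possibility in the classification is a plane cubic union a point; this is excluded because $X$ is an intersection of quadrics containing no planes, so a quadric vanishing on a plane cubic would vanish on its plane, forcing the plane into $X$. If you want a self-contained cohomological proof along your lines, that quadrics-and-planes observation (or an equivalent exclusion of arithmetic-genus-one cubics and of point components) is the missing ingredient, and it does the real work; the rest of your vanishing machinery is correct but substantially heavier than what is needed. A smaller caveat: your appeal to Kawamata--Viehweg for $H^1(X,\cO_X(-nK_X))=0$ is not available in positive characteristic, though for these rational surfaces the vanishing can be checked directly on the minimal resolution.
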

\begin{proof}
In cases $d = 1$ and $d = 2$ this is standard (see, e.g., \cite[Lemma~2.1.1]{kuznetsov2016hilbert}).
In case $d = 3$ the only other possibility for $C$ (see~\cite[\S1]{lehn2015twisted} and references therein) would be a union of a plane cubic curve with a point (possibly embedded).
But~$X$ is an intersection of quadrics by~\cite[Theorem~4.4]{hidaka1981normal} and contains no planes, so this is impossible.
\end{proof}

\begin{lemma}\label{lemma:oc-decomposition}
$(i)$ If $C \subset X$ is a curve with Hilbert polynomial $h'_d(t)$ with $d \in \{2,3\}$ then there is a unique closed point $z \in Z_d$ and an exact sequence
\begin{equation*}
0 \to \cE_z^\vee \to \cO_X \to \cO_C \to 0.
\end{equation*}
Conversely, any nonzero morphism $\cE_z^\vee \to \cO_X$ is injective with cokernel isomorphic to $\cO_C$.

$(ii)$ Similarly, if $L \subset X$ is a line, i.e., a curve with Hilbert polynomial $h'_1(t)$ then there are unique closed points $z_2 \in Z_2$ and $z_3 \in Z_3$ and an exact sequence
\begin{equation*}
0 \to \cE_{z_2} \to \cE_{z_3} \to \cO_L \to 0.
\end{equation*}
Conversely, any nonzero morphism $\cE_{z_2} \to \cE_{z_3}$ is injective with cokernel isomorphic to $\cO_L$.
\end{lemma}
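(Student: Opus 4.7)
For part $(i)$, I plan to identify $\cI_C$ with $\cE_z^\vee$ by a Hilbert-polynomial reduction to Lemma~\ref{lemma:md-points}$(ii)$. Given $C \subset X$ with $h_C(t) = h'_d(t)$, the defining sequence $0 \to \cI_C \to \cO_X \to \cO_C \to 0$ shows $\cI_C$ is torsion-free of rank~$1$ (since $X$ is integral) with Hilbert polynomial $h_{\cO_X}(t) - h'_d(t) = 3t^2 + (3-d)t$. A direct computation gives $h_{\cI_C \otimes \omega_X^{-1}}(t) = h_{\cI_C}(t+1) = h_{6-d}(t)$; since $6-d \in \{3,4\}$, Lemma~\ref{lemma:md-points}$(ii)$ produces a unique $z \in Z_d$ with $\cI_C \otimes \omega_X^{-1} \cong \cE_z^\vee \otimes \omega_X^{-1}$, and untwisting yields $\cI_C \cong \cE_z^\vee$ together with the desired exact sequence. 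The converse is standard: any nonzero morphism between rank-$1$ torsion-free sheaves on the integral scheme $X$ is injective, and the cokernel is of the form $\cO_{V(I)}$ for the image ideal $I$, with Hilbert polynomial $h'_d(t)$.

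For part $(ii)$ the analogous reduction fails since $h_{\cI_L}(t) = 3t^2 + 2t$ does not match the hypotheses of Lemma~\ref{lemma:md-points}. Instead, I plan to build the sequence from the right end: first produce a surjection $\phi \colon \cE_{z_3} \twoheadrightarrow \cO_L$, then identify its kernel. To find $z_3$, Lemma~\ref{lemma:chi-ez-f} gives $\chi(\fE_{Z_3},\cO_L) = \chi(\cO_X,\cO_L) + \chi(\cO_X,\cO_L(K_X)) - 2 r(\cO_L) = 1 + 0 - 0 = 1$, and Serre duality (Proposition~\ref{proposition:serre-gorenstein}) combined with torsion-freeness of $\cE_{z_3}$ and torsion of $\cO_L$ forces $\Ext^2(\cE_{z_3},\cO_L) = 0$ for each $z_3 \in Z_3$. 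Using the filtration of $\fE_{Z_3}$ by the sheaves $\cE_{z_3}$ (Proposition~\ref{proposition:e-z-big}), some $z_3$ must admit a nonzero morphism $\phi \colon \cE_{z_3} \to \cO_L$.

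Surjectivity of $\phi$ would follow from a Picard-lattice computation. Since $L \cong \P^1$, the image of $\phi$ is of the form $\cO_L(-D)$ for an effective divisor $D$ on $L$, and the kernel $\cK$ of $\phi$ is rank-$1$ torsion-free on $X$ with Hilbert polynomial $h_2(t) + \deg D$. A direct numerical check on $\Pic(\tX)$ using Lemma~\ref{lemma:pic-tx} shows that no rank-$1$ torsion-free sheaf on $X$ can have Hilbert polynomial $h_2(t) + n$ with $n \geq 1$ (the maximum of $D^2$ subject to $D\cdot K_\tX = -2$ over integer classes in $\Pic(\tX)$ equals $0$, attained precisely at $D = h - e_i$), so $\deg D = 0$ and $\phi$ is surjective. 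Then $\cK$ has Hilbert polynomial $h_2(t)$ and is torsion-free, hence by Lemma~\ref{lemma:md-points}$(i)$ is uniquely $\cE_{z_2}$ for some $z_2 \in Z_2$. Uniqueness of $z_3$ comes from the fact that the projection of $\cO_L$ onto $\cA_3 \cong \bD(Z_3)$ in the semiorthogonal decomposition~\eqref{eq:dbx} is a single object, which must be $\cO_{z_3}$ up to shift once existence is established. The converse is symmetric: a nonzero morphism $\cE_{z_2} \to \cE_{z_3}$ is injective (rank-$1$ torsion-free on the integral scheme $X$), its cokernel has Hilbert polynomial $t+1$, and the same lattice argument applied to the preimage in $\cE_{z_3}$ of any hypothetical embedded zero-dimensional component rules out embedded points; so the cokernel is pure of dimension~$1$ with Hilbert polynomial $t+1$, and hence the structure sheaf of a line.

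The main obstacle I expect is the non-existence of rank-$1$ torsion-free sheaves on $X$ with Hilbert polynomial $h_2(t) + n$ for $n \geq 1$, on which both surjectivity in the existence direction and purity in the converse rely. This rests on the short lattice maximization above, but the argument needs to be adapted to each of the six types of $X$ (with Weil-divisor classes replacing $\Pic(\tX)$ at the singular points); I expect no fundamental new difficulty here, only some careful casework.
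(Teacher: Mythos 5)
Your part $(i)$ is exactly the paper's argument: reduce to Lemma~\ref{lemma:md-points}$(ii)$ by computing the Hilbert polynomial of $\cI_C \otimes \omega_X^{-1}$; nothing to add there.

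Part $(ii)$ is correct in outline but takes a genuinely different route. The paper works from the left end of the sequence: it shows $\Ext^\bullet(\cO_L,\cO_X)=0$, so $\cO_L\in\langle\cA_2,\cA_3\rangle$, then computes $\Ext^\bullet(\cO_L,\fE_{Z_2})=\kk[-1]$ (Serre duality against the locally free $\fE_{Z_2}$, global generation, and Lemma~\ref{lemma:chi-ez-f}); this pins down the $\cA_2$-component of $\cO_L$ as $\cE_{z_2}[1]$ and yields the unique extension $0\to\cE_{z_2}\to\cF\to\cO_L\to0$ with $\cF\in\cA_3$, where $\cF$ is a rank-one sheaf in $\cA_3\cong\bD(Z_3)$ and hence some $\cE_{z_3}$. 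You work from the right end, and the price is your non-existence claim for rank-one torsion-free sheaves with Hilbert polynomial $h_2(t)+n$, $n\ge1$ --- the one step the paper never needs. The claim is true, and you can prove it uniformly, without the per-type Weil-divisor casework you anticipate: any rank-one torsion-free $\cK$ embeds in its reflexive hull with zero-dimensional quotient; since $X$ has rational singularities, every rank-one reflexive sheaf is $\pi_*\cL$ for a line bundle $\cL$ on $\tX$, and $h_{\pi_*\cL}(t)=\chi(\tX,\cL(-tK_{\tX}))-\ell(R^1\pi_*\cL)\le \chi(\tX,\cL(-tK_{\tX}))$, whose constant term is $2+\tfrac12 D^2$ once $D\cdot K_{\tX}=-2$; writing $D=ah-\sum b_ie_i$ with $3a-\sum b_i=2$ gives $D^2\le a^2-\tfrac{(3a-2)^2}{3}=-2a^2+4a-\tfrac43\le\tfrac23$, hence $D^2\le 0$, which settles all six types at once. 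Two smaller repairs: Proposition~\ref{proposition:serre-gorenstein} needs a perfect argument and $\cE_{z_3}$ need not be perfect on singular $X$, but you only need $\Ext^{\ge2}(\fE_{Z_3},\cO_L)=0$, which is immediate since $\fE_{Z_3}$ is locally free and $L$ is a curve; and uniqueness of the pair $(z_2,z_3)$ is cleanest by observing that $\cE_{z_3}\to\cO_L\to\cE_{z_2}[1]$ is the unique decomposition triangle of $\cO_L$ for $\langle\cA_2,\cA_3\rangle$. In short, the paper's route buys the avoidance of your ``main obstacle'' entirely, while yours buys a more hands-on, sheaf-theoretic construction of the sequence that does not pass through the projection functors.
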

\begin{proof}
For the first part of~$(i)$, by Lemma~\ref{lemma:md-points}$(ii)$ it is enough to show 
that $I_C \otimes \omega_X^{-1}$ is stable with Hilbert polynomial~$h_{6-d}(t)$.
Stability is clear by Lemma~\ref{lemma:hd-stable}, and the Hilbert polynomial evidently equals 
\begin{equation*}
h_{\cO_X}(t+1) - h'_d(t+1) = (3(t+1)(t+2) + 1) - (d(t+1) + 1) = (3t + 6 - d)(t + 1) = h_{6-d}(t).
\end{equation*}

For the first part of~$(ii)$ first note that by Serre duality 
\begin{equation*}
\Ext^\bullet(\cO_L,\cO_X) \cong \Ext^\bullet(\cO_X,\cO_L(K_X))^\vee = H^\bullet(L,\cO_L(-1)) = 0 
\end{equation*}
since $L \cong \P^1$ and $L \cdot K_X = -1$.
Thus by Theorem~\ref{theorem:dbx} the structure sheaf $\cO_L$ is contained in the subcategory~$\langle \cA_2, \cA_3 \rangle$ of $\bD(X)$.
Next, again by Serre duality
\begin{equation*}
\Ext^i(\cO_L,\fE_{Z_2}) \cong \Ext^{2-i}(\fE_{Z_2},\cO_L(K_X))^\vee.
\end{equation*}
Since $\fE_{Z_2}$ is a vector bundle and $L$ is a curve, the right hand side is zero unless $i \in \{1, 2\}$.
On the other hand, $\fE_{Z_2}$ is globally generated by Proposition~\ref{proposition:e-z-big} and~\eqref{eq:cezd}, while $\cO_L(K_X) \cong \cO_L(-1)$ has no global sections, hence for $i = 2$ the right hand side is also zero.
On the other hand, by Lemma~\ref{lemma:chi-ez-f} we have
\begin{equation*}
\chi(\fE_{Z_2},\cO_L(K_X)) = 2\chi(X,\cO_L(K_X)) + \chi(X,\cO_L(2K_X)) - 3r(\cO_L(K_X)) = 2 \cdot h'_1(-1) + h'_1(-2) = -1,
\end{equation*}
and we conclude that $\Ext^\bullet(\cO_L, \fE_{Z_2}) = \kk[-1]$.
This means that there is a unique closed point $z_2 \in Z_2$ and a unique extension
\begin{equation*}
0 \to \cE_{z_2} \to \cF \to \cO_L \to 0
\end{equation*}
such that $\cF \in \cA_3$. 
It remains to note that $\cF$ is a sheaf of rank 1, hence there is a unique $z_3 \in Z_3$ such that $\cF \cong \cE_{z_3}$.

The converse statements are evident.
\end{proof}

Consider the Hilbert scheme
\begin{equation}
\label{def:fd}
F_d(X) := \Hilb_{X,-K_X}(h'_d)
\end{equation} 
of subschemes of $X$ with Hilbert polynomial $h'_d(t)$.
Thus, $F_1(X)$ is the Hilbert scheme of lines, $F_2(X)$ is the Hilbert scheme of conics, and $F_3(X)$ is the Hilbert scheme of generalized twisted cubic curves on $X$.

Recall the notation $p_2$, $p_3$, $p_{23}$, $p_{X2}$, and $p_{X3}$ introduced before Lemma~\ref{lemma:ps-ez}.
By Lemma~\ref{lemma:ps-ez} the sheaves $p_{2*}\cE_{Z_2}$, $p_{3*}\cE_{Z_3}$ and $p_{23*}(p_{X2}^*\cE_{Z_2}^\vee \otimes p_{X3}^*\cE_{Z_3})$ 
are locally free of ranks $2$, $3$ and $1$ on $Z_2$, $Z_3$ and~$Z_2 \times Z_3$ respectively.

\begin{proposition}\label{proposition:fd-zd}
We have natural isomorphisms of Hilbert schemes
\begin{equation*}
F_1(X) \cong Z_2 \times Z_3,
\qquad
F_2(X) \cong \P_{Z_2}(p_{2*}\cE_{Z_2}),
\qquand 
F_3(X) \cong \P_{Z_3}(p_{3*}\cE_{Z_3}).
\end{equation*}
\end{proposition}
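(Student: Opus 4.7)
The strategy is to relativize Lemma~\ref{lemma:oc-decomposition} and identify each Hilbert scheme with the claimed target via mutually inverse morphisms built from universal families. Theorem~\ref{theorem:moduli} will be used throughout to recognize families of sheaves of type $\cE_z$, while Lemma~\ref{lemma:ps-ez} provides the base-change compatibility for $p_{d*}\cE_{Z_d}$ and, in the $d=1$ case, the distinguished line bundle on $Z_2\times Z_3$.

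For $d\in\{2,3\}$, let $\cC\subset X\times F_d(X)$ be the universal family and $\cI$ its ideal. A direct calculation gives $h_\cI(t)=h_{6-d}(t-1)$, so $\cI\otimes\omega_X^{-1}$ is an $F_d(X)$-flat family of torsion-free sheaves with Hilbert polynomial $h_{6-d}$. By Theorem~\ref{theorem:moduli}, using the universal family $\cE_{Z_d}^\vee\otimes\omega_X^{-1}$ of $\cM_{6-d}(X)\cong Z_d$, this determines a morphism $\phi_d\colon F_d(X)\to Z_d$ and a line bundle $\cL$ on $F_d(X)$ with $\cI\cong (\id_X\times\phi_d)^*\cE_{Z_d}^\vee\otimes p_{F_d(X)}^*\cL$. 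The inclusion $\cI\hookrightarrow\cO_{X\times F_d(X)}$ is then converted, via Grothendieck duality through the crepant resolution $\pi$ (which yields fiberwise $\Hom(\cE_z^\vee,\cO_X)\cong H^0(X,\cE_z)$) together with base change for $p_{d*}\cE_{Z_d}$, into a nowhere-vanishing morphism $\cL\hookrightarrow\phi_d^*p_{d*}\cE_{Z_d}$, and so by the universal property into a morphism $\Phi_d\colon F_d(X)\to\P_{Z_d}(p_{d*}\cE_{Z_d})$ over $Z_d$. The inverse $\Psi_d$ is built from the tautological subbundle $\cO_\P(-1)\hookrightarrow q^*p_{d*}\cE_{Z_d}$ on $\P:=\P_{Z_d}(p_{d*}\cE_{Z_d})$, with $q\colon\P\to Z_d$: the same duality produces on $X\times\P$ an injection $(\id_X\times q)^*\cE_{Z_d}^\vee\otimes p_\P^*\cO_\P(1)\hookrightarrow\cO_{X\times\P}$ whose cokernel, by Lemma~\ref{lemma:oc-decomposition}(i) applied fiberwise, is the structure sheaf of a flat family of conics or twisted cubics over $\P$. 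The uniqueness clauses in Lemma~\ref{lemma:oc-decomposition}(i) and Theorem~\ref{theorem:moduli} then force $\Phi_d$ and $\Psi_d$ to be mutually inverse.

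For $d=1$ the argument is parallel but based on Lemma~\ref{lemma:oc-decomposition}(ii). Setting $\cM:=p_{23*}(p_{X2}^*\cE_{Z_2}^\vee\otimes p_{X3}^*\cE_{Z_3})$, which is a line bundle on $Z_2\times Z_3$ by Lemma~\ref{lemma:ps-ez}, the tautological section is fiberwise the essentially unique nonzero element of $\Hom(\cE_{z_2},\cE_{z_3})$; by Lemma~\ref{lemma:oc-decomposition}(ii) it defines on $X\times Z_2\times Z_3$ an injection $p_{X2}^*\cE_{Z_2}\hookrightarrow p_{X3}^*\cE_{Z_3}\otimes p_{23}^*\cM^{-1}$ whose cokernel is the structure sheaf of a flat family of lines, yielding a morphism $Z_2\times Z_3\to F_1(X)$. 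Conversely, given the universal line $\cL\subset X\times F_1(X)$, the fiberwise exact sequence of Lemma~\ref{lemma:oc-decomposition}(ii) globalizes to an exact sequence $0\to\cG_2\to\cG_3\to\cO_\cL\to 0$ on $X\times F_1(X)$ whose kernel and middle term are flat families of sheaves classified by $Z_2$ and $Z_3$; two applications of Theorem~\ref{theorem:moduli} produce the inverse morphism.

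The hard part will be the duality identification $\Hom_{X\times T}((\id_X\times f)^*\cE_{Z_d}^\vee\otimes p_T^*\cL,\cO)\cong\Hom_T(\cL,f^*p_{d*}\cE_{Z_d})$, which is the key ingredient in both directions for $d\in\{2,3\}$: since $\cE_{Z_d}$ is not locally free in general (only the pushforward $\fE_{Z_d}$ is, by Proposition~\ref{proposition:e-z-big}), reflexivity cannot be invoked directly and the computation has to be routed through $\pi$, using $\pi^!\cO_X\cong\cO_\tX$ together with the projection formula to reduce all such duality questions to the level of line bundles on $\tX$, where they become trivial. The same reduction, combined with Lemma~\ref{lemma:e-z-small}, is also needed to guarantee that the cokernels appearing in the inverse constructions have no embedded components and genuinely are structure sheaves of subschemes flat over the base.
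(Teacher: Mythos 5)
Your plan is correct and follows essentially the same route as the paper: classify the (ideal sheaf of the) universal family using Lemma~\ref{lemma:oc-decomposition} together with the argument of Theorem~\ref{theorem:moduli}, then convert the resulting inclusion into a nowhere-vanishing map from a line bundle to $f_d^*p_{d*}\cE_{Z_d}$ (resp., for $d=1$, into a trivialization of the line bundle of Lemma~\ref{lemma:ps-ez}), with flatness of the cokernel equivalent to fiberwise nonvanishing --- the paper merely packages this as an isomorphism of functors of points over an arbitrary test scheme rather than as a pair of mutually inverse morphisms. The one step you single out as hard, namely identifying $\Hom((\id_X\times f_d)^*\cE_{Z_d}^\vee\otimes p_S^*\cL,\cO)$ with $\Hom(\cL,f_d^*p_{d*}\cE_{Z_d})$, is dispatched in the paper by plain adjunction and base change: the double-dual identification $(\cE_{Z_d}^\vee)^\vee\cong\cE_{Z_d}$ is automatic from the derived duality on the Gorenstein surface $X$ already established in Section~\ref{subsection:duality}, so no detour through the resolution $\pi$ is required.
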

\begin{proof}
Assume $d \in \{2,3\}$.
Let $\cC \subset X \times S$ be a flat $S$-family of subschemes in $X$ with Hilbert polynomial~$h'_d(t)$.
Consider the decomposition of the structure sheaf $\cO_\cC \in \bD(X \times S)$ with respect to the semiorthogonal decomposition 
\begin{equation*}
\bD(X \times S) = \langle \cA_{3S}^\vee, \cA_{2S}^\vee, \cA_{1S}^\vee \rangle,
\end{equation*}
obtained from the decomposition of Proposition~\ref{proposition:dbx-dual} 
by the base change $S \to \Spec(\kk)$ via~\cite[Theorem~5.6]{kuznetsov2011base}.
The argument of Theorem~\ref{theorem:moduli} together with the result of Lemma~\ref{lemma:oc-decomposition}$(i)$ shows 
that there is a morphism $f_d \colon S \to Z_d$, a line bundle $\cF_d$ on $S$, and an exact sequence
\begin{equation*}
0 \to (\id_X \times f_d)^*(\cE_{Z_d}^\vee) \otimes p_S^*(\cF_d) \to \cO_{X \times S} \to \cO_\cC \to 0
\end{equation*}
The left arrow corresponds to a global section of the bundle $(\id_X \times f_d)^*(\cE_{Z_d}) \otimes p_S^*(\cF_d^\vee)$, i.e., to a morphism from $p_S^*\cF_d$ to $(\id_X \times f_d)^*(\cE_{Z_d})$.
We have 
\begin{equation*}
\Hom(p_S^*\cF_d,(\id_X \times f_d)^*(\cE_{Z_d})) \cong
\Hom(\cF_d,p_{S*}((\id_X \times f_d)^*(\cE_{Z_d}))) \cong
\Hom(\cF_d,f_d^*p_{d*}(\cE_{Z_d})),
\end{equation*}
where the first equality follows from adjunction and the second is the base change for the diagram
\begin{equation*}
\xymatrix@C=5em{
X \times S \ar[r]^-{\id_X \times f_d} \ar[d]_{p_S} & X \times Z_d \ar[d]^{p_d} \\
S \ar[r]^-{f_d} & Z_d
}
\end{equation*}
The flatness over $S$ of the cokernel $\cO_\cC$ of the morphism $(\id_X \times f_d)^*(\cE_{Z_d}^\vee) \otimes p_S^*(\cF_d) \to \cO_{X \times S}$ 
is equivalent to the corresponding morphism $\cF_d \to f_d^*p_{d*}\cE_{Z_d}$ being nonzero for every closed point $s \in S$.
Thus the Hilbert scheme functor $F_d(X)$ is isomorphic to the functor that associates to a scheme $S$ a morphism $f_d \colon S \to Z_d$ and a line subbundle in $f_d^*p_{d*}\cE_{Z_d}$, 
hence is represented by the projective bundle $\P_{Z_d}(p_{d*}\cE_{Z_d})$.

Now assume $d = 1$ and let $\cC \subset X \times S$ be a flat family of lines.
Decomposing $\cO_\cC$ with respect to~\eqref{eq:dbx-s} 
and using the argument of Theorem~\ref{theorem:moduli} together with the result of Lemma~\ref{lemma:oc-decomposition}$(ii)$,
we conclude that there are morphisms $f_2 \colon S \to Z_2$ and $f_3 \colon S \to Z_3$,
line bundles $\cF_2$ and $\cF_3$ on $S$, and an exact sequence
\begin{equation}\label{eq:o-lines}
0 \to (\id_X \times f_{2})^*(\cE_{Z_2}) \otimes p_S^*(\cF_2) \to (\id_X \times f_{3})^*(\cE_{Z_3}) \otimes p_S^*(\cF_3) \to \cO_\cC \to 0.
\end{equation}
Moreover, the pushforward to $S$ of this sequence gives (again by base change) an exact sequence
\begin{equation*}
0 \to f_{2}^*(p_{2*}\cE_{Z_2}) \otimes \cF_2 \to f_{3}^*(p_{3*}\cE_{Z_3}) \otimes \cF_3 \to \cO_S \to 0.
\end{equation*}
Since by Lemma~\ref{lemma:ps-ez} we know that $f_{d}^*(p_{d*}\cE_{Z_d})$ is a vector bundle of rank $d$ on $S$, the comparison of determinants gives  a relation
\begin{equation}\label{eq:f23-relation}
\cF_3^3 \otimes \cF_2^{-2} \cong f_{2}^*(\det(p_{2*}\cE_{Z_2})) \otimes f_{3}^*(\det(p_{3*}\cE_{Z_3})^{-1}).
\end{equation}
Denoting by $f_{23} \colon S \to Z_2 \times Z_3$ the map induced by $f_2$ and $f_3$, we deduce by adjunction and base change 
\begin{equation*}
\Hom(f_{2}^*(p_{2*}\cE_{Z_2}) \otimes \cF_2, f_{3}^*(p_{3*}\cE_{Z_3}) \otimes \cF_3) \cong
\Hom(\cF_2 \otimes \cF_3^\vee,f_{23}^*p_{23*}(p_{X2}^*\cE_{Z_2}^\vee \otimes p_{X3}^*\cE_{Z_3}))
\end{equation*}
and note that $f_{23}^*p_{23*}(p_{X2}^*\cE_{Z_2}^\vee \otimes p_{X3}^*\cE_{Z_3})$ is a line bundle (again by Lemma~\ref{lemma:ps-ez}).
The flatness over~$S$ of the cokernel $\cO_\cC$ of the morphism of~\eqref{eq:o-lines} is equivalent to the pointwise injectivity 
of the corresponding morphism $\cF_2 \otimes \cF_3^\vee \to f_{23}^*p_{23*}(p_{X2}^*\cE_{Z_2}^\vee \otimes p_{X3}^*\cE_{Z_3})$,
i.e., to this map being an isomorphism.
On the other hand, any line bundle on $S$ can be written as the tensor product of line bundles $\cF_2 \otimes \cF_3^\vee$ satisfying~\eqref{eq:f23-relation} in a unique way.
This shows that the Hilbert scheme functor is isomorphic to the functor that associates to a scheme $S$ a pair of morphisms $f_2 \colon S \to Z_2$ and $f_3 \colon S \to Z_3$,
hence is represented by $Z_2 \times Z_3$.
\end{proof}

\begin{remark}
\label{remark:f4}
With the same argument one can prove that $p_{2*}(\cE_{Z_2}^\vee \otimes \omega_X^{-1})$ is a rank-4 vector bundle on~$Z_2$ 
and that~$F_4(X) \cong \P_{Z_2}(p_{2*}(\cE_{Z_2}^\vee \otimes \omega_X^{-1}))$.
\end{remark}

\section{Families of sextic del Pezzo surfaces}\label{section:families}

Starting from this section we assume that $\kk$ is an arbitrary field of characteristic distinct from~2 and~3, unless something else is specified explicitly.
We keep in mind the notation introduced in previous sections.

\subsection{Semiorthogonal decomposition}\label{subsection:derived-family}

The main result of this section is a description of the derived category of a du Val family (as defined below) of sextic del Pezzo surfaces.

\begin{definition}\label{definition:family-dp6}
A family $f \colon \cX \to S$ is a {\sf du Val family} of sextic del Pezzo surfaces, if $f$ is a flat projective morphism such that 
for every geometric point $s \in S$ the fiber $\cX_s$ of $\cX$ over $s$ is a sextic du Val del Pezzo surface (Definition~\ref{definition:dp6}), 
i.e., a normal integral surface with at worst du Val singularities such that $-K_{\cX_s}$ is an ample Cartier divisor and $K_{\cX_s}^2 = 6$.
\end{definition}

Note that this definition is much more general than the notion of a \emph{good family} used in~\cite{addington2016cubic}, 
since we do not assume any transversality.
Note also that by definition all fibers of $f \colon \cX \to S$ are Gorenstein, 
hence the relative dualizing complex $\omega_{\cX/S}^\bullet$ when restricted to any fiber of $f$ is an invertible sheaf, 
hence by Lemma~\ref{lemma:fiberwise} it is an invertible sheaf on the total space $\cX$.

If $f \colon \cX \to S$ is a du Val family of sextic del Pezzo surfaces then for any base change $S' \to S$ the induced family $f' \colon \cX' = \cX \times_S S' \to S'$ is still a du Val family.
So, du Val families of sextic del Pezzo surfaces form a stack over $(\mathrm{Sch}/\kk)$.
In Appendix~\ref{appendix:moduli-stack} we prove that this stack is smooth of finite type over~$\kk$ (but not separated), so in most arguments of this section one may safely assume 
that the base $S$ of the family is smooth and then deduce the necessary results for any family by a base change argument.

The main result of this section (and of the paper) is the following

\begin{theorem}
\label{theorem:sod-dp6}
Assume $f \colon \cX \to S$ is a du Val family of sextic del Pezzo surfaces.
Then there is an~$S$-linear semiorthogonal decomposition \textup{(}compatible with any base change\textup{)}
\begin{equation}
\label{eq:sod-dp6}
\bD(\cX) = \langle \bD(S), \bD(\cZ_2,\beta_{\cZ_2}), \bD(\cZ_3,\beta_{\cZ_3}) \rangle,
\end{equation}
where $\cZ_2 \to S$ and $\cZ_3 \to S$ are finite flat morphisms of degree $3$ and $2$ respectively, 
$\beta_{\cZ_2}$ and $\beta_{\cZ_3}$ are Brauer classes of order $2$ and $3$ respectively on them,
and the last two components are the twisted derived categories.
\end{theorem}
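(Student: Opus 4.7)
My plan is to realize the two twisted components of the claimed decomposition as relative moduli spaces of sheaves on $\cX \to S$, leveraging the pointwise identification of Section~\ref{section:moduli-spaces}. For each $d \in \{2, 3\}$, I would form the relative moduli stack $\cM_d(\cX/S)$ of Gieseker-semistable sheaves on the fibers of $f$ with Hilbert polynomial $h_d(t)$ computed against the $f$-ample relative anticanonical class $\omega_{\cX/S}^{-1}$ (which is well-defined since all fibers are Gorenstein). By Theorem~\ref{theorem:moduli}, on every geometric fiber over $s \in S$ this stack is a $\Gm$-gerbe over the zero-dimensional Gorenstein scheme $Z_d(\cX_s)$, of length $3$ when $d = 2$ and length $2$ when $d = 3$. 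Standard representability results for relative moduli of sheaves then produce a coarse moduli space $p_d \colon \cZ_d \to S$ carrying a Brauer obstruction class $\beta_{\cZ_d}$ arising from the gerbe $\cM_d(\cX/S) \to \cZ_d$. Since the Hilbert polynomial and fiber length are locally constant in $s$, the morphism $p_d$ is finite flat of degree $3$ (for $d=2$) or $2$ (for $d=3$). The fiberwise rank of the pushforward of the universal family, computed in Lemma~\ref{lemma:ps-ez}, forces the order of $\beta_{\cZ_d}$ to divide $d$.

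Next I would construct the Fourier--Mukai functors. The universal object on the gerbe descends to a $p_{\cZ_d}^*\beta_{\cZ_d}^{-1}$-twisted coherent sheaf $\cE_d$ on $\cX \times_S \cZ_d$, flat over $\cZ_d$, whose restriction over every geometric point $s \in S$ recovers the sheaf $\cE_{Z_d}$ from Proposition~\ref{proposition:fm-zd}. Writing $p_\cX, p_{\cZ_d}$ for the two projections, this yields $S$-linear Fourier--Mukai functors
\[
\Phi_d \colon \bD(\cZ_d, \beta_{\cZ_d}) \to \bD(\cX), \qquad \Phi_d(-) \;=\; p_{\cX *}\bigl(p_{\cZ_d}^*(-) \otimes \cE_d\bigr),
\]
which combine with the pullback $f^* \colon \bD(S) \to \bD(\cX)$ (fully faithful because $Rf_*\cO_\cX \cong \cO_S$ by rationality of the geometric fibers and cohomology and base change).

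Finally I would verify the three defining properties of a semiorthogonal decomposition by reduction to geometric fibers. For each $s \in S$, the base change of $\Phi_d$ coincides with the Fourier--Mukai functor of Proposition~\ref{proposition:fm-zd} attached to $\cX_s$, which is fully faithful with image the component $\cA_d$ of the decomposition~\eqref{eq:dbx-dz}. A fiberwise criterion for fully faithfulness of relative integral transforms, adapted to the twisted setting, then gives fully faithfulness of each $\Phi_d$; semiorthogonality of the three subcategories is checked by the same fiberwise reduction combined with Corollary~\ref{corollary:dbx-z2-z3}. For generation I would show that the right orthogonal to all three subcategories is zero: any such object restricts on every geometric fiber to an object orthogonal to~\eqref{eq:dbx-dz}, hence vanishes fiberwise, hence vanishes globally by Lemma~\ref{lemma:fiberwise}$(i)$. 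Compatibility with arbitrary base change follows from flatness of $\cZ_d \to S$ and $\cE_d$ over~$\cZ_d$. The main obstacle will be the careful handling of the twisted-sheaf formalism in families---in particular, constructing $\cE_d$ as a $\beta_{\cZ_d}^{-1}$-twisted sheaf from the moduli gerbe, pinning down the exact order of the Brauer class, and verifying that base change for twisted Fourier--Mukai kernels behaves as expected---whereas the remaining verifications reduce cleanly to the pointwise results of Sections~\ref{section:dp6-single} and~\ref{section:moduli-spaces}.
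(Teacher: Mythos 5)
Your proposal follows essentially the same route as the paper's proof: forming the relative moduli stacks $\cM_d(\cX/S)$, passing to coarse moduli spaces with Brauer obstruction classes and twisted quasiuniversal families, defining the Fourier--Mukai functors $\Phi_d$ from these kernels, and verifying full faithfulness, semiorthogonality, and generation by reduction to geometric fibers via Lemma~\ref{lemma:fiberwise} and the pointwise results of Sections~\ref{section:dp6-single} and~\ref{section:moduli-spaces}. The only substantive refinement in the paper is that the order of $\beta_{\cZ_d}$ is pinned down as the gcd of the values of $h_d(t)$ (giving $2$ and $3$ exactly), and that the adjunction unit is controlled at the level of kernels so that the fiberwise diagonal computation suffices.
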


The embedding functors of the components of~\eqref{eq:sod-dp6} are defined in~\eqref{eq:phid-definition},
and a more precise version of the semiorthogonal decomposition is stated in~\eqref{eq:dbcx-again}.

Base change compatibility means that for a du Val family $\cX' \to S'$ of sextic del Pezzo surfaces obtained from a family $\cX \to S$ by a base change, 
the decomposition of the theorem coincides with the decomposition obtained from~\eqref{eq:sod-dp6} by~\cite[Theorem~5.6]{kuznetsov2011base}.

The proof of the theorem takes all Section~\ref{subsection:derived-family}, and in Section~\ref{subsection:properties-family} we discuss some properties of this semiorthogonal decomposition.

Let $f \colon \cX \to S$ be a du Val family of sextic del Pezzo surfaces.
For $d \in \{2,3,4\}$ let $\cM_d(\cX/S)$ denote the relative moduli stack of semistable sheaves on fibers of $\cX$ over $S$ with Hilbert polynomial $h_d(t)$ defined in~\eqref{eq:hd}.

\begin{proposition}
\label{proposition:md-zd}
For $d \in \{2,3,4\}$ the stack $\cM_d(\cX/S)$ is a $\Gm$-gerbe over its coarse module space 
\begin{equation*}
\cZ_d(\cX/S) := (\cM_d(\cX/S))_{\mathrm{coarse}}
\end{equation*}
with an obstruction given by a Brauer class $\beta_{\cZ_d(\cX/S)}$ of order
\begin{equation}
\label{eq:brauer-order}
\operatorname{ord} \beta_{\cZ_d(\cX/S)} = 
\begin{cases}
2, & \text{if $d = 2$ or $d = 4$},\\
3, & \text{if $d = 3$}.
\end{cases}
\end{equation}
Moreover, there is an isomorphism of the coarse moduli spaces $\sigma_{2,4} \colon \cZ_4(\cX/S) \xrightarrow{\ \sim\ } \cZ_2(\cX/S)$ and
an automorphism $\sigma_{3,3} \colon \cZ_3(\cX/S) \xrightarrow{\ \sim\ } \cZ_3(\cX/S)$, 
such that 
\begin{equation*}
\beta_{\cZ_4(\cX/S)} = \sigma_{2,4}^*(\beta_{\cZ_2(\cX/S)}^{-1}) = \sigma_{2,4}^*(\beta_{\cZ_2(\cX/S)})
\qquand
\beta_{\cZ_3(\cX/S)} = \sigma_{3,3}^*(\beta_{\cZ_3(\cX/S)}^{-1}).
\end{equation*}
\end{proposition}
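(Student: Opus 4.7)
My plan is to verify the $\Gm$-gerbe structure from stability, construct a rank-$d$ twisted bundle to bound the Brauer class order, and use a duality autoequivalence to produce the $\sigma$ maps with the claimed Brauer relations. I read equation~(\ref{eq:brauer-order}) as asserting that $\operatorname{ord}(\beta_{\cZ_d})$ \emph{divides} the stated number (i.e.\ the class is $2$- or $3$-torsion), consistent with the trivial cases exhibited in the examples of Section~\ref{section:special-families}. By Lemma~\ref{lemma:hd-stable}, every semistable sheaf with Hilbert polynomial~$h_d$ is stable and hence Schur-simple, so each geometric point of $\cM_d(\cX/S)$ has automorphism group exactly $\Gm$, yielding the $\Gm$-gerbe structure over the coarse moduli space. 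The fiberwise identification via Theorem~\ref{theorem:moduli} shows the geometric fibers of $\cZ_d(\cX/S) \to S$ are the zero-dimensional schemes $Z_d(\cX_s)$ of Corollary~\ref{corollary:dbx-z2-z3}, of constant length $3$ for $d \in \{2,4\}$ and $2$ for $d = 3$; combined with properness of the moduli stack this gives finite flatness of $\cZ_d(\cX/S) \to S$.

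For the Brauer class order, the tautological sheaf on the gerbe $\cM_d(\cX/S)$ descends to a $\beta_{\cZ_d}^{-1}$-twisted universal sheaf $\tilde\cE_d$ on $\cX \times_S \cZ_d(\cX/S)$. Its pushforward to $\cZ_d(\cX/S)$ is, by the family analogue of Lemma~\ref{lemma:ps-ez} (using the higher cohomology vanishing of Proposition~\ref{proposition:e-z-big} combined with cohomology-and-base-change applied fiberwise), a $\beta_{\cZ_d}^{-1}$-twisted locally free sheaf of rank~$d$. Since existence of a rank-$n$ twisted vector bundle forces the associated Brauer class to be $n$-torsion, this gives the required divisibility of $\operatorname{ord}(\beta_{\cZ_d})$ for $d \in \{2, 3\}$.

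For $d = 4$ and the $\sigma$ isomorphisms, the relative dualizing sheaf $\omega_{\cX/S}$ is a genuine line bundle on $\cX$ (as noted in the preamble of this section, since fibers are Gorenstein), so $\cF \mapsto \cF^\vee \otimes \omega_{\cX/S}^{-1}$ is an involutive autoequivalence of $\bD(\cX)$. By Lemma~\ref{lemma:hilbert-fz} it takes sheaves with Hilbert polynomial $h_d$ to ones with $h_{6-d}$, inducing stack isomorphisms $\cM_d(\cX/S) \xrightarrow{\sim} \cM_{6-d}(\cX/S)$ and hence, on coarse moduli, the map $\sigma_{2,4}: \cZ_4 \xrightarrow{\sim} \cZ_2$ together with the involution $\sigma_{3,3}: \cZ_3 \to \cZ_3$. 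Dualization negates $\Gm$-weights, so a $\beta_{\cZ_d}^{-1}$-twisted universal family is sent to a $\beta_{\cZ_d}$-twisted family, and matching it with the universal family on the target of $\sigma$ yields the identity $\beta_{\cZ_{6-d}} = \sigma^*(\beta_{\cZ_d}^{-1})$. Since $\beta_{\cZ_2}$ is $2$-torsion by the previous paragraph, $\beta_{\cZ_2}^{-1} = \beta_{\cZ_2}$, giving both equalities in the $\sigma_{2,4}$ identity and divisibility of $\operatorname{ord}(\beta_{\cZ_4})$ by~$2$.

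The main obstacle will be justifying representability, finite flatness of $\cZ_d \to S$, and the twisted-pushforward calculations in families over an arbitrary base~$S$. My plan is to reduce to the universal case over the moduli stack of sextic du Val del Pezzo surfaces shown smooth in Appendix~\ref{appendix:moduli-stack}: over a smooth base, cohomology-and-base-change, flatness of $\cZ_d$, and descent of the universal twisted sheaf from the gerbe are all tractable, after which the case of an arbitrary $S$ follows by base change compatibility of the constructions (stability of pure sheaves, formation of the coarse moduli, and twisted pushforward all commute with flat base change).
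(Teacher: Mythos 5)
Your proof is correct and follows the same overall architecture as the paper's: stability of every semistable sheaf with Hilbert polynomial $h_d$ (Lemma~\ref{lemma:hd-stable}) gives the $\Gm$-gerbe structure over the coarse space, and the duality autoequivalence $\cF \mapsto \cF^\vee \otimes \omega_{\cX/S}^{-1}$, which exchanges $h_d$ and $h_{6-d}$ by Lemma~\ref{lemma:hilbert-fz}, produces $\sigma_{2,4}$ and $\sigma_{3,3}$ together with the inversion of the Brauer class. The one place you genuinely diverge is the order bound: the paper invokes the Huybrechts--Lehn result that the quasiuniversal family is a module over an Azumaya algebra whose order is the gcd of the values $h_d(t)$, $t \in \ZZ$ (equal to $2,3,2$ for $d=2,3,4$), whereas you construct an explicit $\beta_{\cZ_d}^{-1}$-twisted rank-$d$ bundle $p_{d*}\cE_{\cZ_d}$ and take its determinant. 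This is exactly the family version of Lemma~\ref{lemma:ps-ez} (which the paper proves and later uses for Proposition~\ref{proposition:fd-md}), so your variant is consistent with the rest of the paper and arguably more self-contained; it uses only the value $h_d(0)=d$, which coincides with the gcd for $d\in\{2,3\}$, and you correctly route $d=4$ through $\sigma_{2,4}$ rather than through a rank-$4$ bundle, which would only give $4$-torsion. Your reading of~\eqref{eq:brauer-order} as a divisibility statement is the right one, since the classes are trivial in the standard families of Section~\ref{section:special-families}.
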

\begin{proof}
By Lemma~\ref{lemma:hd-stable} all sheaves parameterized by the moduli stack $\cM_d(\cX/S)$ are strictly stable.
Therefore, by~\cite[Theorem~4.3.7]{huybrechts1997geometry}, the coarse moduli space ${\cZ_d(\cX/S)}$ exists and 
there is a quasiuniversal family $\cE_{\cZ_d(\cX/S)}$ on the fiber product $\cX \times_S \cZ_d{(\cX/S)}$ which has a module structure over
a sheaf of Azumaya algebras $\cB_{\cZ_d(\cX/S)}$ (defined up to a Morita equivalence) 
of order equal to the greatest common divisor of the values of the Hilbert polynomial $h_d(t)$ for $t \in \ZZ$.

This means that $\cM_d(\cX/S)$ is a $\Gm$-gerbe over $\cZ_d(\cX/S)$.
Its obstruction class is given by the Brauer class $\beta_{\cZ_d(\cX/S)}$ of the Azumaya algebra $\cB_{\cZ_d(\cX/S)}$, 
and a simple computation of the greatest common divisor of the values of the Hilbert polynomial $h_d(t)$ gives~\eqref{eq:brauer-order}.

By Theorem~\ref{theorem:moduli} the sheaf $\cE^\vee_{\cZ_2(\cX/S)} \otimes \omega_{\cX/S}^{-1}$ on $\cX \times_S \cZ_2(\cX/S)$ provides a family of stable sheaves with Hilbert polynomial $h_4(t)$
and the sheaf $\cE^\vee_{\cZ_4(\cX/S)} \otimes \omega_{\cX/S}^{-1}$ on $\cX \times_S \cZ_4(\cX/S)$ provides a family of stable sheaves with Hilbert polynomial $h_2(t)$.
Therefore, the moduli stacks $\cM_2(\cX/S)$ and $\cM_4(\cX/S)$ are isomorphic.
Denoting by $\sigma_{2,4}$ the induced isomorphism of the coarse moduli spaces, we conclude that 
the family $\sigma_{2,4}^*(\cE^\vee_{\cZ_2(\cX/S)} \otimes \omega_{\cX/S}^{-1})$ is a quasiuniversal family on $\cX \times_S \cZ_4(\cX/S)$.
It follows that the pullback of the opposite Azumaya algebra $\cB_{\cZ_2(\cX/s)}$ to $\cZ_4(\cX/S)$ is Morita-equivalent 
to the Azumaya algebra $\cB_{\cZ_4(\cX/S)}$, hence $\beta_{\cZ_4(\cX/S)} = \sigma_{2,4}^*(\beta_{\cZ_2(\cX/S)}^{-1})$.
Since by~\eqref{eq:brauer-order} the order of $\beta_{\cZ_2(\cX/S)}$ is~2, 
this can be also written as~$\sigma_{2,4}^*(\beta_{\cZ_2(\cX/S)}^{-1})$.

The second isomorphism $\sigma_{3,3}$ is constructed in the same way, and with its construction we also get an isomorphism of quasiuniversal families and an equality of Brauer classes.
\end{proof}

When there is no risk of confusion, we abbreviate $\cM_d(\cX/S)$ and $\cZ_d(\cX/S)$ to $\cM_d$ and $\cZ_d$.
Both the stack $\cM_d$ and the scheme $\cZ_d$ are proper over $S$.
We denote by
\begin{equation*}
f_d \colon \cZ_d \to S
\end{equation*}
the natural projection.
We replace the Azumaya algebra $\cB_{\cZ_d}$ by its Brauer class $\beta_{\cZ_d} \in \Br(\cZ_d)$ and 
consider the quasiuniversal family as a~$\beta^{-1}_{\cZ_d}$-twisted family of sheaves on $\cX \times_S \cZ_d$:
\begin{equation*}
\cE_{\cZ_d} \in \coh(\cX \times_S \cZ_d, \beta^{-1}_{\cZ_d}).
\end{equation*}
We will use these sheaves to construct Fourier--Mukai functors in~\eqref{eq:phid-definition}.
Note that the relation between the quasiuniversal families discussed in Proposition~\ref{proposition:md-zd} in terms of twisted universal families means that
there are line bundles $\cL_{2,4}$ and $\cL_{3,3}$ on $\cZ_4(\cX/S)$ and $\cZ_3(\cX/S)$ respectively, such that
\begin{equation*}
\cE_{\cZ_4(\cX/S)} = \sigma_{2,4}^*(\cE_{\cZ_2(\cX/S)}^{\vee} \otimes \omega_{\cX/S}^{-1}) \otimes \cL_{2,4}
\qquand
\cE_{\cZ_3(\cX/S)} = \sigma_{3,3}^*(\cE_{\cZ_3(\cX/S)}^{\vee} \otimes \omega_{\cX/S}^{-1}) \otimes \cL_{3,3}.
\end{equation*}


\begin{lemma}
\label{lemma:md-zd-base-change}
Let $\phi \colon S' \to S$ be a base change and denote $\cX' := \cX \times_S S'$.
We have
\begin{equation}
\label{eq:md-zd-base-change}
\cM_d(\cX'/S') \cong \cM_d(\cX/S) \times_S S'
\qquand
\cZ_d(\cX'/S') \cong \cZ_d(\cX/S) \times_S S'.
\end{equation} 
Moreover, isomorphisms~\eqref{eq:md-zd-base-change} are compatible with the universal families, i.e., 
\begin{equation}\label{eq:ced-base-change}
\beta_{\cZ'_d} = \phi_{\cZ_d}^*(\beta_{\cZ_d})
\qquand
\cE_{\cZ'_d} \cong \phi_{\cX \times_S \cZ_d}^*\cE_{\cZ_d}.
\end{equation}
where we set $\cZ'_d := \cZ_d(\cX'/S')$ and denote the morphisms induced by the base change
by $\phi_\cX \colon \cX' \to \cX$, $\phi_{\cZ_d} \colon \cZ'_d \to \cZ_d$, 
and $\phi_{\cX \times_S \cZ_d} \colon \cX' \times_{S'} \cZ'_d \to \cX \times_S \cZ_d$, respectively.
\end{lemma}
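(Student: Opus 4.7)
The plan is to derive this lemma directly from the functorial definition of the moduli stack and the fact that all objects parameterized are strictly stable (Lemma~\ref{lemma:hd-stable}), so that no subtleties with S-equivalence or semistable but not stable sheaves arise.

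First I would establish the isomorphism $\cM_d(\cX'/S') \cong \cM_d(\cX/S) \times_S S'$. By definition, a $T$-point of $\cM_d(\cX/S)$ for a morphism $T \to S$ is a $T$-flat coherent sheaf on $\cX \times_S T$ whose restriction to each geometric fiber is semistable (equivalently, stable, equivalently, torsion-free) with Hilbert polynomial $h_d(t)$. Since $\cX' = \cX \times_S S'$, any morphism $T \to S'$ gives a morphism $T \to S$, and the two descriptions of a $T$-flat family on $\cX' \times_{S'} T = \cX \times_S T$ coincide. This formal identification of functors on $(\mathrm{Sch}/S')$ yields the first isomorphism in~\eqref{eq:md-zd-base-change}.

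Next I would deduce the isomorphism of coarse moduli spaces. Proposition~\ref{proposition:md-zd} shows that $\cM_d(\cX/S) \to \cZ_d(\cX/S)$ is a $\Gm$-gerbe, and the coarse moduli space of a $\Gm$-gerbe is obtained by rigidification, which commutes with arbitrary base change on the base. Alternatively, one can invoke uniqueness of coarse moduli spaces: $\cZ_d(\cX/S) \times_S S'$ is a coarse moduli space for $\cM_d(\cX/S) \times_S S' \cong \cM_d(\cX'/S')$, hence coincides with $\cZ_d(\cX'/S')$. This gives the second isomorphism in~\eqref{eq:md-zd-base-change}.

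Finally, I would verify the compatibilities~\eqref{eq:ced-base-change} of the Brauer class and the twisted universal family. The pullback $\phi_{\cX \times_S \cZ_d}^*\cE_{\cZ_d}$ is a $\phi_{\cZ_d}^*(\beta_{\cZ_d}^{-1})$-twisted sheaf on $\cX' \times_{S'} \cZ'_d$ which, by the base change compatibility of the moduli functor established in the first step, is a quasiuniversal family over $\cZ'_d$. By uniqueness of the obstruction class of a $\Gm$-gerbe, this forces $\beta_{\cZ'_d} = \phi_{\cZ_d}^*(\beta_{\cZ_d})$, and then both sides of the second equality in~\eqref{eq:ced-base-change} are quasiuniversal families with the same Brauer twist; normalising them (for instance, by fixing the normalisation used to construct $\cE_{\cZ_d}$ in Proposition~\ref{proposition:md-zd} and pulling this normalisation back) identifies them. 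The only mildly subtle point is this last normalisation of the quasiuniversal sheaf, since a priori quasiuniversal families are only well-defined up to twist by a line bundle on the coarse moduli space; but since $\cE_{\cZ_d}$ was defined as a specific $\beta^{-1}_{\cZ_d}$-twisted sheaf on $\cX \times_S \cZ_d$, its pullback is likewise canonically defined, yielding the asserted isomorphism.
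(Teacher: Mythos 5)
Your proposal is correct and follows essentially the same route as the paper's (very terse) proof: the first isomorphism is immediate from the functorial definition of the relative moduli stack, the second from base-change compatibility of the coarse space (the paper cites the GIT construction where you invoke the $\Gm$-gerbe/rigidification structure, but these amount to the same thing here), and the compatibility of Brauer classes and universal families from the fact that the pullback of the quasiuniversal family is again a twisted family of stable sheaves. Your extra remark about the line-bundle ambiguity of quasiuniversal families is a point the paper passes over in silence, and your normalisation handles it correctly.
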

\begin{proof}
The first isomorphism in~\eqref{eq:md-zd-base-change} is clear from the definition of a relative moduli space, and the second follows from the GIT construction of the coarse moduli space.
Since the pullback of a universal family is a $\phi_{\cZ_d}^*(\beta_{\cZ_d})$-twisted family of stable sheaves on fibers of $\cX'$ over $S'$, the equality of the Brauer classes and 
the isomorphism of universal families follow.
\end{proof}

Considering base changes to geometric points of $S$ and using Theorem~\ref{theorem:moduli}, we obtain

\begin{corollary}
\label{corollary:S-s}
For any geometric point $s \in S$ there are isomorphisms
\begin{equation}
\label{eq:m-z-e-s}
\cM_d(\cX/S)_s \cong \cM_d(X),
\qquad
\cZ_d(\cX/S)_s \cong Z_d
\qquand
i_{Z_d}^*(\cE_{\cZ_d}) \cong \cE_{Z_d},
\end{equation}
where $X = \cX_s$ is the fiber of $\cX$ over $s$ and $i_{Z_d} \colon Z_d \cong \cZ_d(\cX/S)_s \hookrightarrow \cZ_d(\cX/S)$ 
is the natural embedding.
\end{corollary}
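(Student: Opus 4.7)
The plan is to simply apply Lemma~\ref{lemma:md-zd-base-change} to the base change $\phi \colon \{s\} \hookrightarrow S$ from a geometric point, and then combine with Theorem~\ref{theorem:moduli} which identifies the moduli spaces and universal families over an algebraically closed field. Essentially everything is set up for this to be mechanical.

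More concretely: let $\phi \colon S' := \Spec(\kappa(s)) \to S$ denote the inclusion of a geometric point $s \in S$, so that $\cX' = \cX \times_S S' = X$ is just the fiber $\cX_s$, which by Definition~\ref{definition:family-dp6} is a sextic du Val del Pezzo surface over the algebraically closed field $\kappa(s)$. By Lemma~\ref{lemma:md-zd-base-change} applied to $\phi$ we get
\begin{equation*}
\cM_d(X/S') \cong \cM_d(\cX/S) \times_S S' = \cM_d(\cX/S)_s
\qquand
\cZ_d(X/S') \cong \cZ_d(\cX/S) \times_S S' = \cZ_d(\cX/S)_s,
\end{equation*}
and the pullback of the universal family $\cE_{\cZ_d}$ on $\cX \times_S \cZ_d$ along $\phi_{\cX \times_S \cZ_d}$ coincides with the universal family $\cE_{\cZ'_d}$ on $X \times \cZ'_d$, where $\cZ'_d = \cZ_d(X/S')$.

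Next, Theorem~\ref{theorem:moduli} identifies the moduli space $\cM_d(X)$ of semistable sheaves on the single sextic del Pezzo surface $X$ with the finite length scheme $Z_d$, and shows that it is a fine moduli space with universal family $\cE_{Z_d}$. In particular $\cM_d(X)$ is already a scheme, so it coincides with its coarse moduli space $\cZ_d(X/S')$, and the Brauer class $\beta_{\cZ'_d}$ is trivial. This yields the first two isomorphisms in~\eqref{eq:m-z-e-s} by composing with the result of the previous step.

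For the third isomorphism, by uniqueness of the universal family on a fine moduli space (up to a line bundle on $\cZ'_d = Z_d$, but line bundles on a zero-dimensional scheme are trivial), the family $\cE_{\cZ'_d}$ of Lemma~\ref{lemma:md-zd-base-change} coincides with the universal family $\cE_{Z_d}$ produced by Theorem~\ref{theorem:moduli}. Combining with the base change identification $\cE_{\cZ'_d} \cong \phi_{\cX \times_S \cZ_d}^* \cE_{\cZ_d} = i_{Z_d}^* \cE_{\cZ_d}$ from~\eqref{eq:ced-base-change} gives the required isomorphism. There is no real obstacle here; the only subtlety worth noting is that pulling back the $\beta_{\cZ_d}^{-1}$-twisted sheaf $\cE_{\cZ_d}$ along $\phi_{\cX \times_S \cZ_d}$ produces an (untwisted) sheaf on $X \times Z_d$, consistently with the fact that $\beta_{\cZ'_d} = i_{Z_d}^* \beta_{\cZ_d}$ is trivial on the fiber.
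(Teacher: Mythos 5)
Your proposal is correct and is essentially identical to the paper's own argument: the corollary is stated there as an immediate consequence of Lemma~\ref{lemma:md-zd-base-change} applied to the base change to a geometric point, combined with Theorem~\ref{theorem:moduli}. The extra details you supply (fineness of $\cM_d(X)$ forcing the coarse space to agree with $Z_d$, and uniqueness of the universal family up to a line bundle, which is trivial on a zero-dimensional scheme) are exactly the implicit steps the paper leaves to the reader.
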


In the next Lemma we consider the coarse moduli spaces $\cZ_2$ and $\cZ_3$.
Analogous result for $\cZ_4$ follows via the isomorphism $\sigma_{2,4}$ of Proposition~\ref{proposition:md-zd}.

\begin{lemma}
\label{lemma:zd-flat}
The maps $f_2 \colon \cZ_2 \to S$ and $f_3 \colon \cZ_3 \to S$ are finite flat maps of degree $3$ and $2$ respectively.
The relative dualizing complexes $\omega_{\cZ_2/S}^\bullet$ and $\omega_{\cZ_3/S}^\bullet$ are line bundles.
\end{lemma}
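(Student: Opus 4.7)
The proof naturally breaks into four steps: properness, quasi-finiteness, flatness, and the line bundle property of $\omega_{\cZ_d/S}^\bullet$. Together the first three yield finite flatness of $f_d$.

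For properness and quasi-finiteness, the moduli stack $\cM_d$ is proper over $S$ by the standard valuative criterion for moduli of stable sheaves on a flat proper family (recall that by Lemma~\ref{lemma:hd-stable} every semistable sheaf in sight is automatically stable), so its coarse moduli space $\cZ_d$ is proper over $S$ as well. For each geometric point $\bar s$ of $S$, Corollary~\ref{corollary:S-s} together with Corollary~\ref{corollary:dbx-z2-z3} identifies the fiber $\cZ_d \times_S \bar s$ with $Z_d(\cX_{\bar s})$, a zero-dimensional scheme of length $3$ when $d = 2$ and length $2$ when $d = 3$. Thus $f_d$ is proper with zero-dimensional fibers, i.e., finite, and the fiber length is a locally constant function on~$S$.

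For flatness, I would invoke the base change compatibility of Lemma~\ref{lemma:md-zd-base-change} to reduce to the universal case $\cZ_d \to \DP$ over the moduli stack $\DP$ of du Val sextic del Pezzo surfaces. By Appendix~\ref{appendix:moduli-stack} the stack $\DP$ is smooth over $\kk$, so after pulling back to a smooth atlas the base is regular, hence reduced. Over a reduced Noetherian base, a finite morphism with locally constant fiber length is automatically flat, since $f_{d*}\cO_{\cZ_d}$ is a coherent sheaf of locally constant rank and therefore locally free. Flatness and the degree statement for arbitrary du Val families then follow by base change.

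For the relative dualizing complex, note that by Corollary~\ref{corollary:dbx-z2-z3} every geometric fiber $Z_d$ is a disjoint union of zero-dimensional complete intersections $\Spec(\kk)$ and $\Spec(\kk[t]/t^m)$, hence Gorenstein. Combined with flatness of $f_d$, this implies that every local ring of $\cZ_d$ is Gorenstein as an $\cO_S$-algebra. For a finite flat morphism the relative dualizing complex is concentrated in degree zero and equals $\CHom_{\cO_S}(f_{d*}\cO_{\cZ_d}, \cO_S)$ equipped with its natural $\cO_{\cZ_d}$-module structure; the fiberwise Gorenstein property is precisely what guarantees that this coherent $\cO_{\cZ_d}$-module is invertible, which can be verified pointwise along the lines of Lemma~\ref{lemma:fiberwise}.

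The main obstacle is the flatness step: the smoothness of $\DP$ from Appendix~\ref{appendix:moduli-stack} is essential here, since over a possibly non-reduced base one cannot deduce flatness from locally constant fiber length alone. Everything else is a routine bookkeeping application of results already established.
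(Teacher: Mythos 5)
Your proposal is correct and follows essentially the same route as the paper: reduce via base change and smoothness of the moduli stack $\DP$ to a reduced (smooth) base, conclude flatness from the locally constant length $6/d$ of the scheme-theoretic fibers computed in Corollaries~\ref{corollary:S-s} and~\ref{corollary:dbx-z2-z3}, and deduce that $\omega_{\cZ_d/S}^\bullet$ is a line bundle from flatness together with the Gorenstein property of the fibers. The extra detail you supply (the valuative criterion for properness, the explicit description of the dualizing sheaf for a finite flat map) is consistent with, and only elaborates on, the paper's argument.
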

\begin{proof}
Since flatness can be verified on an \'etale covering (note that formation of schemes $\cZ_d$ 
is compatible with base changes by~\eqref{eq:md-zd-base-change})
and the moduli stack of du Val sextic del Pezzo surfaces is smooth by Theorem~\ref{theorem:moduli-stack-dp6}, 
we may assume that $S$ is smooth over $\kk$, hence reduced.
Since $\cZ_d$ is proper over $S$, it is enough to show that the (scheme-theoretic) fiber of $\cZ_d$ 
over any geometric point $\bs \in S$ is zero-dimensional of length~$6/d$.
But as it was mentioned in Corollary~\ref{corollary:S-s}, the fiber $(\cZ_d)_s$ is identified with the zero-dimensional scheme~$Z_d$ 
associated with the surface $X = \cX_s$, and hence its length is indeed~$6/d$, see Corollary~\ref{corollary:dbx-z2-z3}.

Since $f_d$ is flat, to check that the relative dualizing complex is a line bundle, 
it is enough to check that each fiber of $f_d$ is a Gorenstein scheme, 
which holds true by~\eqref{eq:m-z-e-s} and Corollary~\ref{corollary:dbx-z2-z3}.
\end{proof}

For convenience we also define $\cZ_1 = S$, set $f_1 \colon \cZ_1 \to S$ to be the identity map, 
set $\beta_{\cZ_1}$ to be the trivial Brauer class, and $\cE_{\cZ_1} = \cO_\cX$.
Let $p \colon \cX \times_S \cZ_d \to \cX$ and $p_d \colon \cX \times_S \cZ_d \to \cZ_d$ be the projections.
Then we have a fiber square
\begin{equation}
\label{diagram:x-s-z}
\vcenter{\xymatrix@C=5em{
\cX \times_S \cZ_d \ar[r]^{p_d} \ar[d]_{p} 
& \cZ_d \ar[d]^{f_d} \\
\cX \ar[r]_-f & S
}}
\end{equation}
In what follows we work with Fourier--Mukai functors between twisted derived categories.
Since the Brauer classes we consider come from Azumaya algebras, one can consider those twisted derived categories as derived categories of \emph{Azumaya varieties}, as defined in~\cite[Appendix~A]{kuznetsov2006hyperplane};
for instance, as explained in \emph{loc.\ cit.}, we have all standard functors between these varieties and all standard functorial isomorphisms.

\begin{lemma}
\label{lemma:ezd-tor-ext}
For each $d \in \{1,2,3\}$ the sheaf $\cE_{\cZ_d}$ is flat over $\cZ_d$ and has finite $\Ext$-amplitude over $\cX$.
\end{lemma}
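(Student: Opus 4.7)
The case $d = 1$ is immediate: $\cE_{\cZ_1} = \cO_\cX$ is flat over $\cZ_1 = S$ since $f$ is flat, and as a line bundle it trivially has finite Ext-amplitude over $\cX$. For $d \in \{2, 3\}$, flatness of $\cE_{\cZ_d}$ over $\cZ_d$ is essentially tautological: by the GIT construction recalled in Proposition~\ref{proposition:md-zd} (see \cite[Theorem~4.3.7]{huybrechts1997geometry}), a quasiuniversal family on the $\Gm$-gerbe $\cM_d \to \cZ_d$ is flat over the base by definition, and the corresponding $\beta_{\cZ_d}^{-1}$-twisted sheaf $\cE_{\cZ_d}$ inherits this flatness.

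The real content is finite Ext-amplitude over $\cX$. The key observation is that the projection $p \colon \cX \times_S \cZ_d \to \cX$ is the base change along $f$ of the finite flat morphism $f_d \colon \cZ_d \to S$ from Lemma~\ref{lemma:zd-flat}, and is therefore itself finite and flat. For such a $p$ it suffices to prove that $p_*\cE_{\cZ_d}$ is locally free on $\cX$: indeed, $p_*$ is then exact, and local freeness of the pushforward is equivalent to $\cE_{\cZ_d}$ having Tor-amplitude zero as an $\cO_\cX$-module via $p^*$, and hence finite Ext-amplitude over $\cX$. Both conditions being \'etale-local on $\cX$, the Brauer twist $\beta_{\cZ_d}$ is harmless and can be trivialized after passing to an \'etale cover; the argument below may therefore be carried out for an ordinary (untwisted) sheaf without loss of generality.

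To prove local freeness of $p_*\cE_{\cZ_d}$ on $\cX$, I apply the fiberwise criterion relative to $f \colon \cX \to S$. Flatness of $\cE_{\cZ_d}$ over $\cZ_d$ combined with flatness of $\cZ_d$ over $S$ yields $S$-flatness of $\cE_{\cZ_d}$, and since $p$ is affine, $p_*\cE_{\cZ_d}$ is $S$-flat as well. For each geometric point $s \in S$, base change along $p$ combined with Corollary~\ref{corollary:S-s} identifies the restriction $(p_*\cE_{\cZ_d})|_{\cX_s}$ with $p_{X*}\cE_{Z_d} = \fE_{Z_d}$, which by Proposition~\ref{proposition:e-z-big} and~\eqref{eq:cezd} is a vector bundle of rank $\ell(Z_d)$ on $\cX_s$. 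A standard Nakayama argument at each stalk of $\cX$ --- using $S$-flatness of $p_*\cE_{\cZ_d}$ to lift a minimal fiber basis to a genuine local basis, then checking that the kernel of the resulting surjection vanishes modulo the maximal ideal of $\cO_{S,s}$ --- promotes fiberwise local freeness to local freeness on $\cX$. The main subtlety to guard against is the $\beta_{\cZ_d}$-twist, but as noted it is rendered inoffensive by the \'etale-local character of every condition in play.
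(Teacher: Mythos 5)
Your argument is correct and follows the paper's proof in all essentials: flatness comes for free from the quasiuniversal-family construction, and finite $\Ext$-amplitude is reduced, via finiteness of $p$, to local freeness of $p_*\cE_{\cZ_d}$ on $\cX$, which is then checked fiberwise over $S$ using Proposition~\ref{proposition:e-z-big}. The paper packages your Nakayama step as Lemma~\ref{lemma:fiberwise}$(ii)$ and the reduction along the finite morphism as a citation to \cite[Lemma~10.40]{kuznetsov2006hyperplane}, but the content is the same.
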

\begin{proof}
For $d \in \{2,3\}$ the sheaf $\cE_{\cZ_d}$ is flat over $\cZ_d$ since it comes from a quasiuniversal family for a moduli problem.
Since $p$ is a finite map (Lemma~\ref{lemma:zd-flat}), to check that $\cE_{\cZ_d}$ has finite $\Ext$-amplitude over $\cX$, 
it is enough (\cite[Lemma~10.40]{kuznetsov2006hyperplane}) to show that $p_*\cE_{\cZ_d}$ has finite $\Ext$-amplitude on $\cX$.
But this is in fact a locally free sheaf by Lemma~\ref{lemma:fiberwise} and Proposition~\ref{proposition:e-z-big}.

In case $d = 1$ both properties are evident.
\end{proof}

For each~$d \in \{1,2,3\}$ we consider the Fourier--Mukai functor whose kernel is the universal family $\cE_{\cZ_d}$, considered as a $\beta_{\cZ_d}^{-1}$-twisted sheaf on $\cX  \times_S \cZ_d$:
\begin{equation}
\label{eq:phid-definition}
\Phi_d = \Phi_{\cE_{\cZ_d}} := p_*(\cE_{\cZ_d} \otimes p_d^*(-)) \colon \bD(\cZ_d,\beta_{\cZ_d}) \to \bD(\cX).
\end{equation}

\begin{lemma}
\label{lemma:phi-d-shriek}
The functor $\Phi_d$ is $S$-linear, preserves boundedness and perfectness, and its right adjoint functor is
\begin{equation}
\label{eq:phid-shriek}
\Phi_d^! \colon \bD(\cX) \to \bD(\cZ_d,\beta_{\cZ_d}),\qquad
\Phi_d^!(\cG) \cong p_{d*}\RCHom(\cE_{\cZ_d},p^!\cG).
\end{equation}
\end{lemma}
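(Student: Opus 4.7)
The plan is to verify the three assertions ($S$-linearity, preservation of boundedness and perfectness, and the formula for the right adjoint) separately, using the standard calculus of twisted Fourier--Mukai functors combined with Lemmas~\ref{lemma:zd-flat} and~\ref{lemma:ezd-tor-ext}. For $S$-linearity I would check that $\Phi_d(\cF \otimes f_d^*\cH) \cong \Phi_d(\cF) \otimes f^*\cH$ for $\cF \in \bD(\cZ_d, \beta_{\cZ_d})$ and $\cH \in \bD(S)$: the commutativity of the fiber square~\eqref{diagram:x-s-z} yields $p_d^* f_d^* \cong p^* f^*$, after which the projection formula for $p$ finishes the job. For boundedness: the map $p_d$ is flat, being a base change of the flat morphism $f$, so $p_d^*$ preserves boundedness; tensoring with the $\cZ_d$-flat sheaf $\cE_{\cZ_d}$ against a complex pulled back from $\cZ_d$ also preserves boundedness; and $p$ is finite as a base change of $f_d$ (finite by Lemma~\ref{lemma:zd-flat}), so $p_*$ is exact. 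Preservation of perfectness should follow from the finite $\Ext$-amplitude of $\cE_{\cZ_d}$ over $\cX$ asserted in Lemma~\ref{lemma:ezd-tor-ext}, combined with the fact that $p_d^*$ and $p_*$ (for $p$ finite) preserve perfectness.

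For the right adjoint, the plan is to chain three standard adjunctions. Concretely, for $\cF \in \bD(\cZ_d, \beta_{\cZ_d})$ and $\cG \in \bD(\cX)$ I would compute
\[
\RHom(\Phi_d(\cF), \cG)
\cong \RHom(\cE_{\cZ_d} \otimes p_d^*\cF, p^!\cG)
\cong \RHom(p_d^*\cF, \RCHom(\cE_{\cZ_d}, p^!\cG))
\cong \RHom(\cF, p_{d*}\RCHom(\cE_{\cZ_d}, p^!\cG)),
\]
invoking in order Grothendieck duality for the finite (hence proper) morphism $p$, tensor--Hom adjunction, and the $p_d^* \dashv p_{d*}$ adjunction. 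The Yoneda lemma then reads off the desired formula for $\Phi_d^!$.

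The main obstacle I anticipate is the bookkeeping of Brauer twists throughout the argument, since $\cE_{\cZ_d}$ is a $\beta_{\cZ_d}^{-1}$-twisted sheaf on $\cX \times_S \cZ_d$. Tensoring with it must convert $p_d^*\bD(\cZ_d,\beta_{\cZ_d})$ to the untwisted derived category (so that $\Phi_d$ indeed lands in $\bD(\cX)$), while $\RCHom(\cE_{\cZ_d},-)$ must send untwisted complexes to $\beta_{\cZ_d}$-twisted ones (so that $\Phi_d^!$ lands in $\bD(\cZ_d,\beta_{\cZ_d})$). All the functors, adjunctions, and identities employed above must be interpreted in this twisted setting, which, as the author has already signalled, is handled by the Azumaya-variety formalism of~\cite[Appendix~A]{kuznetsov2006hyperplane}.
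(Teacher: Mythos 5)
Your proposal is correct and is essentially the paper's own argument written out in full: the paper simply cites Lemma~\ref{lemma:ezd-tor-ext} together with \cite[Lemma~2.4]{kuznetsov2006hyperplane}, and your verification of $S$-linearity via the projection formula, of boundedness and perfectness via the flatness/$\Ext$-amplitude of $\cE_{\cZ_d}$ and finiteness of $p$, and of the adjoint formula via the chain Grothendieck duality / tensor--Hom / $p_d^* \dashv p_{d*}$ is precisely the content of that cited lemma, carried out in the Azumaya-variety formalism as you note.
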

\begin{proof}
This holds by Lemma~\ref{lemma:ezd-tor-ext} and \cite[Lemma~2.4]{kuznetsov2006hyperplane}.
\end{proof}

Our goal is to show that the functors $\Phi_d$ are fully faithful and that 
\begin{equation}
\label{eq:dbcx-again}
\bD(\cX) = \langle \Phi_1(\bD(\cZ_1,\beta_{\cZ_1})), \Phi_2(\bD(\cZ_2,\beta_{\cZ_2})), \Phi_3(\bD(\cZ_3,\beta_{\cZ_3})) \rangle
\end{equation}
is an $S$-linear semiorthogonal decomposition (this is a more precise version of~\eqref{eq:sod-dp6}).

It is convenient to rewrite the functors $\Phi_d^!$ in a Fourier--Mukai form.

\begin{lemma}
\label{lemma:phid-shriek-other}
We have an isomorphism of functors
\begin{equation*}
\Phi_d^!(\cG) \cong p_{d*}(\cE_{\cZ_d}^\vee \otimes p_d^*\omega_{\cZ_d/S} \otimes p^*(\cG)),
\end{equation*}
where $\cE_{\cZ_d}^\vee := \RCHom(\cE_{\cZ_d},\cO_{\cX \times_S \cZ_d})$ is a coherent sheaf flat over $\cZ_d$.
The kernel~$\cE_{\cZ_d}^\vee \otimes p_d^*\omega_{\cZ_d/S}$ of this functor is compativle with base changes, i.e.,
if $\phi \colon S' \to S$ is a base change, then
\begin{equation*}
\phi_{\cX \times_S \cZ_d}^*(\cE_{\cZ_d}^\vee \otimes p_d^*\omega_{\cZ_d/S}) \cong \cE_{\cZ'_d}^\vee \otimes p_d^{\prime*}\omega_{\cZ'_d/S'}.
\end{equation*}
\end{lemma}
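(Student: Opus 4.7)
The plan is to start from the identity $\Phi_d^!(\cG) \cong p_{d*}\RCHom(\cE_{\cZ_d}, p^!\cG)$ supplied by Lemma~\ref{lemma:phi-d-shriek} and simplify the twisted pullback. Since the square~\eqref{diagram:x-s-z} is a fiber square and the morphism $f_d \colon \cZ_d \to S$ is finite flat with \emph{line bundle} relative dualizing sheaf $\omega_{\cZ_d/S}$ by Lemma~\ref{lemma:zd-flat}, the morphism $p$ is itself finite flat with relative dualizing complex $\omega_p \cong p_d^*\omega_{\cZ_d/S}$. Hence $p^!\cG \cong p^*\cG \otimes p_d^*\omega_{\cZ_d/S}$ for any $\cG \in \bD(\cX)$, and since a line bundle is perfect it factors out of the $\RCHom$, yielding
$$
\RCHom(\cE_{\cZ_d}, p^!\cG) \cong \RCHom(\cE_{\cZ_d}, p^*\cG) \otimes p_d^*\omega_{\cZ_d/S}.
$$

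To rewrite this in the claimed Fourier--Mukai form it is cleanest to use the standard description of right adjoints of Fourier--Mukai functors: the right adjoint of $\Phi_d = \Phi_{\cE_{\cZ_d}}$ is the Fourier--Mukai functor with kernel $\cE_{\cZ_d}^R := \RCHom(\cE_{\cZ_d}, p^!\cO_\cX) \cong \cE_{\cZ_d}^\vee \otimes p_d^*\omega_{\cZ_d/S}$, where $\cE_{\cZ_d}^\vee := \RCHom(\cE_{\cZ_d}, \cO)$. Granting that $\cE_{\cZ_d}^\vee$ is a coherent sheaf, one obtains
$$
\Phi_d^!(\cG) \cong p_{d*}\bigl(\cE_{\cZ_d}^R \otimes p^*\cG\bigr) \cong p_{d*}\bigl(\cE_{\cZ_d}^\vee \otimes p_d^*\omega_{\cZ_d/S} \otimes p^*\cG\bigr),
$$
which is the asserted formula.

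The principal technical step is therefore to show that $\cE_{\cZ_d}^\vee$ is a coherent sheaf flat over $\cZ_d$. I would combine the fiberwise criterion of Lemma~\ref{lemma:fiberwise} with Grothendieck duality for the finite flat morphism $p$. Fiberwise: for each closed point $z \in \cZ_d$ the restriction $\cE_{\cZ_d}|_{\cX_{f_d(z)}}$ equals $\cE_z$ by flatness over $\cZ_d$ and Corollary~\ref{corollary:S-s}, and its derived dual $\cE_z^\vee$ is a stable sheaf by Lemma~\ref{lemma:hilbert-fz}. To globalize, Grothendieck duality for the finite flat $p$ gives
$$
p_*\bigl(\cE_{\cZ_d}^\vee \otimes p_d^*\omega_{\cZ_d/S}\bigr) \cong \RCHom_\cX(p_*\cE_{\cZ_d}, \cO_\cX);
$$
a relative version of Proposition~\ref{proposition:e-z-big}, obtained from Lemma~\ref{lemma:fiberwise}(ii) applied to $p$ together with the fact that each fiber $\fE_{Z_d}$ is a vector bundle, shows that $p_*\cE_{\cZ_d}$ is a vector bundle on $\cX$, so the right-hand side is also a vector bundle. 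Under the equivalence between coherent sheaves on $\cX \times_S \cZ_d$ and coherent modules over the locally free $\cO_\cX$-algebra $p_*\cO_{\cX \times_S \cZ_d}$, this forces $\cE_{\cZ_d}^\vee$ to be concentrated in degree zero, and flatness over $\cZ_d$ then follows from the fiberwise statement in the single-fiber case.

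Finally, base change compatibility of the kernel $\cE_{\cZ_d}^\vee \otimes p_d^*\omega_{\cZ_d/S}$ combines three ingredients: base change compatibility of $\cE_{\cZ_d}$ itself (Lemma~\ref{lemma:md-zd-base-change}); compatibility of $\omega_{\cZ_d/S}$ with base change, which holds because $f_d$ is finite flat and Gorenstein; and the commutation of derived dualization with pullback along the base change morphism $\cX' \times_{S'} \cZ'_d \to \cX \times_S \cZ_d$, which reduces to underived pullback by the flatness of $\cE_{\cZ_d}^\vee$ over $\cZ_d$ just established. The main obstacle in the whole argument is verifying concentration of $\cE_{\cZ_d}^\vee$ in degree zero over the possibly singular total space $\cX \times_S \cZ_d$, since $\cE_{\cZ_d}$ need not be perfect there; the Grothendieck duality reduction to the vector bundle $p_*\cE_{\cZ_d}$ on $\cX$ is the crucial simplification that makes this tractable.
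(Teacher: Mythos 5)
There is a genuine gap at the central step. After factoring the line bundle $p_d^*\omega_{\cZ_d/S}$ out of the $\RCHom$, what remains to be shown is that the natural map
\begin{equation*}
\cE_{\cZ_d}^\vee \otimes p^*\cG \longrightarrow \RCHom(\cE_{\cZ_d}, p^*\cG)
\end{equation*}
is an isomorphism for every $\cG \in \bD(\cX)$. You dispose of this by invoking ``the standard description of right adjoints of Fourier--Mukai functors,'' but that description is established under hypotheses (smooth ambient varieties, or a perfect kernel) that fail here: $\cX \times_S \cZ_d$ may be singular and $\cE_{\cZ_d}$ need not be perfect on it, only of finite $\Ext$-amplitude over $\cX$ (Lemma~\ref{lemma:ezd-tor-ext}). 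In general the map $\cE^\vee \otimes \cF \to \RCHom(\cE,\cF)$ is an isomorphism only when $\cE$ or $\cF$ is perfect, and it genuinely fails otherwise (for $\cE = \cF$ the residue field at a node, one side is unbounded below and the other unbounded above). So citing the adjoint-kernel formula here is circular --- it \emph{is} the statement being proved. The paper closes exactly this gap with Neeman's argument: both sides commute with arbitrary direct sums (the $\RCHom$ side because its left adjoint preserves perfectness, by \cite[Theorem~5.1]{neeman1996grothendieck}), they agree on perfect $\cG$ (where $p^!\cG$ is perfect and the $\RCHom$ untwists), and perfect complexes generate the unbounded derived category under triangles and direct sums.

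Two secondary points. Your derivation that $\cE_{\cZ_d}^\vee$ is a sheaf, via Grothendieck duality for the finite flat $p$ and local freeness of $p_*\cE_{\cZ_d}$, is workable, but the concluding sentence ``flatness over $\cZ_d$ then follows from the fiberwise statement'' presupposes that derived restriction to a fiber commutes with dualization, which is essentially the flatness being proved. The paper gets both the sheaf property and flatness in one stroke from Proposition~\ref{proposition:md-zd}: up to an isomorphism of the base and a line bundle twist, $\cE_{\cZ_d}^\vee \otimes \omega_{\cX/S}^{-1}$ \emph{is} the quasiuniversal family for $\cM_{6-d}(\cX/S)$, hence flat by construction; combined with Lemma~\ref{lemma:md-zd-base-change} this also yields the base-change compatibility directly, replacing your three-ingredient argument.
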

\begin{proof}
For the first part it is enough to show that 
\begin{equation}
\label{eq:isomorphism-functors}
\RCHom(\cE_{\cZ_d},p^!\cG) \cong \cE_{\cZ_d}^\vee \otimes p_d^*\omega_{\cZ_d/S} \otimes p^*(\cG).
\end{equation}
For this we use an argument of Neeman from~\cite[Theorem~5.4]{neeman1996grothendieck}.

First, the functor in the right hand side of~\eqref{eq:isomorphism-functors} 
commutes with arbitrary direct sums since the pullback and the tensor product functors do.
The functor in the left hand side is right adjoint to the functor~$p_*(\cE_{\cZ_d} \otimes (-))$.
The latter functor preserves perfectness by an argument analogous to that of Lemma~\ref{lemma:phi-d-shriek},
hence the former commutes with direct sums by~\cite[Theorem~5.1]{neeman1996grothendieck}.

Further, if $\cG$ is a perfect complex, then $p^!(\cG) \cong p^*(\cG) \otimes p_d^*\omega^\bullet_{\cZ_d/S}$.
By Lemma~\ref{lemma:zd-flat} the dualizing complex~$\omega^\bullet_{\cZ_d/S}$ is a line bundle $\omega_{\cZ_d/S}$, 
hence $p^!(\cG)$ is a perfect complex, and~\eqref{eq:isomorphism-functors} in this case follows.

Now the Neeman's trick comes.
There is a natural transformation from the functor in the right hand side of~\eqref{eq:isomorphism-functors} 
to the functor in the left hand side.
The subcategory of objects on which this transformation is an isomorphism is 
a triangulated subcategory of the unbounded derived category of $\cX$ 
which contains all perfect complexes and is closed under arbitrary direct sums, hence is the whole category.
This proves~\eqref{eq:isomorphism-functors} for all $\cG$.

The fact that $\cE_{\cZ_d}^\vee$ is a sheaf flat over $\cZ_d$ follows from the isomorphisms
in the proof of Proposition~\ref{proposition:md-zd} since $\omega_{\cX/S}$ is a line bundle.
Using Lemma~\ref{lemma:md-zd-base-change} we deduce the required base change isomorphism.
\end{proof}

Given a base change~$\phi \colon S' \to S$, we abbreviate the Fourier--Mukai functor $\Phi_{\cE_{\cZ'_d}} \colon \bD(\cZ'_d) \to \bD(\cX')$ as~$\Phi'_d$.
We have the following property.

\begin{lemma}
\label{lemma:phid-linear}
The functors $\Phi_d$ and $\Phi_d^!$ are $S$-linear and compatible with base changes, i.e.,
\begin{align*}
\Phi'_d \circ \phi_{\cZ_d}^* &\cong \phi_{\cX}^* \circ \Phi_d,
&
\phi_{\cX*} \circ \Phi'_d &\cong \Phi_d \circ \phi_{\cZ_d*},
\\
\Phi^{\prime!}_d \circ \phi_{\cX}^* &\cong \phi_{\cZ_d}^* \circ \Phi_d^!,
&
\phi_{\cZ_d*} \circ \Phi^{\prime!}_d &\cong \Phi_d^! \circ \phi_{\cX*}.
\end{align*}
\end{lemma}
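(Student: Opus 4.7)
The plan is to verify each of the four isomorphisms directly by unwinding the Fourier--Mukai definitions and invoking two standard tools: the projection formula and derived flat base change. The key inputs are already at hand: Lemma~\ref{lemma:zd-flat} guarantees that $f_d \colon \cZ_d \to S$ is flat (so the square~\eqref{diagram:x-s-z} is tor-independent, and derived base change along $p$ or $p_d$ holds), and Lemma~\ref{lemma:md-zd-base-change} records the crucial identifications $\beta_{\cZ'_d} = \phi_{\cZ_d}^*\beta_{\cZ_d}$ and $\phi_{\cX \times_S \cZ_d}^*\cE_{\cZ_d} \cong \cE_{\cZ'_d}$. Working in the framework of Azumaya varieties of \cite[Appendix~A]{kuznetsov2006hyperplane} allows us to treat the twisted situation exactly as the untwisted one.

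For $S$-linearity of $\Phi_d$, given $\cF \in \bD(\cZ_d,\beta_{\cZ_d})$ and $\cG \in \bD(S)$, I would expand
\begin{equation*}
\Phi_d(\cF \otimes f_d^*\cG) = p_*\bigl(\cE_{\cZ_d} \otimes p_d^*\cF \otimes p_d^*f_d^*\cG\bigr),
\end{equation*}
then use commutativity of~\eqref{diagram:x-s-z} to rewrite $p_d^*f_d^*\cG \cong p^*f^*\cG$ and apply the projection formula to pull this factor outside of $p_*$, yielding $\Phi_d(\cF) \otimes f^*\cG$. The $S$-linearity of $\Phi_d^!$ follows by the same maneuver, starting from the Fourier--Mukai form $\Phi_d^!(\cG) \cong p_{d*}(\cE_{\cZ_d}^\vee \otimes p_d^*\omega_{\cZ_d/S} \otimes p^*\cG)$ of Lemma~\ref{lemma:phid-shriek-other} (or, alternatively, formally by passing to right adjoints).

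For the four base change isomorphisms, consider the commutative cube formed by the fiber square~\eqref{diagram:x-s-z} over $S$, its analogue over $S'$, and the base change maps $\phi_\cX$, $\phi_{\cZ_d}$, $\phi_{\cX \times_S \cZ_d}$. The top isomorphism $\Phi'_d \circ \phi_{\cZ_d}^* \cong \phi_\cX^* \circ \Phi_d$ is obtained by chasing around this cube: derived flat base change along $f_d$ gives $\phi_\cX^* p_* \cong p'_* \phi_{\cX \times_S \cZ_d}^*$, while $\phi_{\cX \times_S \cZ_d}^* p_d^* \cong p_d^{\prime *} \phi_{\cZ_d}^*$ is clear, and the identification $\phi_{\cX \times_S \cZ_d}^*\cE_{\cZ_d} \cong \cE_{\cZ'_d}$ converts the kernel. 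The pushforward version $\phi_{\cX*} \circ \Phi'_d \cong \Phi_d \circ \phi_{\cZ_d*}$ then follows symmetrically: commute $\phi_{\cX*}$ past $p'_*$, apply the projection formula to the morphism $\phi_{\cX \times_S \cZ_d}$ to move $\cE_{\cZ_d}$ outside $\phi_{\cX \times_S \cZ_d *}$, and finally base change $\phi_{\cX \times_S \cZ_d *} p_d^{\prime*} \cong p_d^* \phi_{\cZ_d *}$. The two statements for $\Phi_d^!$ are obtained the same way, now using the Fourier--Mukai expression and the base change isomorphism $\phi_{\cX \times_S \cZ_d}^*(\cE_{\cZ_d}^\vee \otimes p_d^*\omega_{\cZ_d/S}) \cong \cE_{\cZ'_d}^\vee \otimes p_d^{\prime *}\omega_{\cZ'_d/S'}$ supplied by Lemma~\ref{lemma:phid-shriek-other}.

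No step is genuinely hard; the only real work is bookkeeping. The most delicate point is justifying derived flat base change in the twisted setting, but since the Brauer classes pull back correctly and the standard functorial machinery extends to Azumaya varieties, this reduces to the untwisted statement, which holds because $f_d$ is flat (Lemma~\ref{lemma:zd-flat}) and $\cE_{\cZ_d}$ has finite $\Ext$-amplitude over $\cX$ (Lemma~\ref{lemma:ezd-tor-ext}).
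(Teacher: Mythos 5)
Your proposal is correct and follows essentially the same route as the paper: the paper simply observes that $\Phi_d$ and $\Phi_d^!$ are both Fourier--Mukai functors with kernels compatible with base change (by~\eqref{eq:phid-definition} and Lemma~\ref{lemma:phid-shriek-other}) and cites \cite[Lemma~2.42]{kuznetsov2006hyperplane}, whereas you unwind that citation into the standard projection-formula and flat-base-change diagram chase. The flatness you invoke is correctly that of $p$ and $p_d$ (inherited from $f_d$ and $f$), which is exactly why no Tor-finiteness assumption on $\phi$ is needed, matching the paper's parenthetical remark.
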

\begin{proof}
For a Fourier--Mukai functor compatibilities with base changes are proved in~\cite[Lemma~2.42]{kuznetsov2006hyperplane}
(the assumption of finiteness of $\Tor$-dimension of the base change morphism is only used in the second half of that lemma).
It remains to note that both $\Phi_d$ and $\Phi_d^!$ are Fourier--Mukai functors (the first by definition~\eqref{eq:phid-definition} and the second by Lemma~\ref{lemma:phid-shriek-other}).
\end{proof}

Now we are ready to prove the theorem.

\begin{proof}[Proof of Theorem~\textup{\ref{theorem:sod-dp6}}]
Take $d_1,d_2 \in \{1,2,3\}$ and consider the diagram
\begin{equation*}
\xymatrix{
& \cZ_{d_1} \times_S \cX \times_S \cZ_{d_2} 
\ar[dl]_{\pr_{1,2}} \ar[d]^{\pr_{1,3}} \ar[dr]^{\pr_{2,3}} \\
\cZ_{d_1} \times_S \cX & \cZ_{d_1} \times_S \cZ_{d_2} & \cX \times_S \cZ_{d_2} 
}
\end{equation*}
A standard computation shows that the composition of functors $\Phi_{d_2}^! \circ \Phi_{d_1}$ is a Fourier--Mukai functor given by the object
\begin{equation}
\label{eq:convolution}
\pr_{1,3*}(\pr_{2,3}^*(\cE_{\cZ_{d_2}}^\vee \otimes p_{d_2}^*\omega_{\cZ_{d_2}/S}) \otimes \pr_{1,2}^*(\cE_{\cZ_{d_1}})) \in \bD(\cZ_{d_1} \times_S \cZ_{d_2}, \beta_{\cZ_{d_1}}^{-1} \boxtimes \beta_{\cZ_{d_2}}).
\end{equation}

Let us prove that $\Phi_d$ is fully faithful for each $d \in \{1,2,3\}$.
For this it is enough to check that the object~\eqref{eq:convolution} in case $d_1 = d_2 = d$ is isomorphic 
(via the unit of adjunction morphism) to the structure sheaf of the diagonal.
Since the unit of the adjunction is induced by a morphism of kernels \cite{anno2012adjunction}, 
by Lemma~\ref{lemma:fiberwise} it is enough to check that for any geometric point $s \in S$ we have
\begin{equation}
\label{eq:isomorphism-diagonal}
i_{Z_d \times Z_d}^*(\pr_{1,3*}(\pr_{2,3}^*(\cE_{\cZ_{d}}^\vee \otimes p_{d}^*\omega_{\cZ_{d}/S}) \otimes \pr_{1,2}^*(\cE_{\cZ_{d}}))) \cong \delta_*\cO_{Z_d} \in \bD(Z_d \times Z_d)
\end{equation}
where $i_{Z_d \times Z_d} \colon Z_d \times Z_d \to \cZ_d \times_S \cZ_d$ is the natural embedding 
and $\delta \colon Z_d \to Z_d \times Z_d$ is the diagonal.
Using base change and isomorphisms of Lemma~\ref{lemma:md-zd-base-change} and Lemma~\ref{lemma:phid-shriek-other}, 
we can rewrite the left hand side of~\eqref{eq:isomorphism-diagonal} as the Fourier--Mukai kernel of the functor
\begin{equation*}
\bD(Z_d) \xrightarrow{\ \Phi_{\cE_{Z_d}}\ } \bD(X) \xrightarrow{\ \Phi_{\cE_{Z_d}}^!\ } \bD(Z_d),
\end{equation*}
where the functor $\Phi_{\cE_{Z_d}}$ is identified in Proposition~\ref{proposition:fm-zd} 
with the equivalence $\bD(Z_d) \cong \cA_d \subset \bD(X)$, and~$\Phi_{\cE_{Z_d}}^!$ is its right adjoint.
This composition is isomorphic to the identity, since the functor $\Phi_{\cE_{Z_d}}$ is fully faithful, 
hence we have~\eqref{eq:isomorphism-diagonal}.
Therefore, the functor $\Phi_d$ is fully faithful.

Next, let us prove that the subcategories $\Phi_d(\bD(\cZ_d,\beta_{\cZ_d})) \subset \bD(\cX)$ for $1 \le d \le 3$ are semiorthogonal.
For this it is enough to check that the object~\eqref{eq:convolution} in case $d_1 < d_2$ is zero.
Again, using Lemma~\ref{lemma:fiberwise} and base change isomorphisms of Lemma~\ref{lemma:md-zd-base-change} 
and Lemma~\ref{lemma:phid-shriek-other} we reduce to the case of $S = \Spec(\kk)$ with algebraically closed $\kk$.
In the latter case, the required vanishing follows from semiorthogonality of the subcategories 
$\Phi_{\cE_{Z_{d_i}}}(\bD(Z_{d_i})) = \cA_{d_i}$ in $\bD(X)$.

Finally, let us prove that the subcategories $\Phi_d(\bD(\cZ_d,\beta_{\cZ_d})) \subset \bD(\cX)$ for $1 \le d \le 3$ generate $\bD(\cX)$.
Take any $\cG \in \bD(\cX)$ and set 
\begin{equation*}
\cG_2 := \Cone(\Phi_3\Phi_3^!\cG \to \cG),
\qquad
\cG_1 := \Cone(\Phi_2\Phi_2^!\cG_2 \to \cG_2),
\qquand
\cG_0 := \Cone(\Phi_1\Phi_1^!\cG_1 \to \cG_1).
\end{equation*}
From full faithfulness and semiorthogonality it easily follows that $\Phi_1^!(\cG_0) = \Phi_2^!(\cG_0) = \Phi_3^!(\cG_0) = 0$.
Then Lemma~\ref{lemma:phid-linear} implies that for any geometric point $s \in S$ setting $\cG_{0s} = i_{X}^*(\cG_0) \in \bD(X)$ to be the restriction of $\cG_0$ to the fiber $X = \cX_s$, 
we have $\Phi_d^!(\cG_{0s}) = 0$ for all $d$.
By semiorthogonal decomposition~\eqref{eq:dbx-dz} this means that $\cG_{0s} = 0$.
Hence by Lemma~\ref{lemma:fiberwise}$(i)$ we have $\cG_0 = 0$.
Thus we have a chain of morphisms
\begin{equation*}
\cG =: \cG_3 \to \cG_2 \to \cG_1 \to \cG_0 = 0
\end{equation*}
with $\Cone(\cG_{d} \to \cG_{d-1}) \in \Phi_d(\bD(\cZ_d,\beta_{\cZ_d}))$, which proves the required semiorthogonal decomposition.

This semiorthogonal decomposition is $S$-linear since by Lemma~\ref{lemma:phi-d-shriek} its embedding functors~$\Phi_d$ are $S$-linear,
and its compatibility with base changes follows from~\cite[Theorem~6.4]{kuznetsov2011base} 
together with Lemma~\ref{lemma:phid-linear} and Lemma~\ref{lemma:md-zd-base-change}.
\end{proof}

\subsection{Some properties of the semiorthogonal decomposition}\label{subsection:properties-family}

Now we list some properties of the semiorthogonal decomposition of Theorem~\ref{theorem:sod-dp6}.

\begin{proposition}\label{proposition:sod-dp6-perfect-unbounded}
The components of the semiorthogonal decomposition~\eqref{eq:sod-dp6} are admissible and their projection functors have finite cohomological amplitude.
Moreover, the functors $\Phi_d$ preserve perfectness and induce a semiorthogonal decomposition
\begin{equation}
\label{eq:sod-dpx-perf}
\bD^{\mathrm{perf}}(\cX) = \langle \Phi_1(\bD^{\mathrm{perf}}(S)), \Phi_2(\bD^{\mathrm{perf}}(\cZ_2,\beta_{\cZ_2})), \Phi_3(\bD^{\mathrm{perf}}(\cZ_3,\beta_{\cZ_3})) \rangle.
\end{equation}
\end{proposition}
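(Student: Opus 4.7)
The plan has three steps. First, to establish admissibility: by Theorem~\ref{theorem:sod-dp6} each $\Phi_d$ is fully faithful, and by Lemma~\ref{lemma:phi-d-shriek} it has a right adjoint $\Phi_d^!$ that preserves boundedness, so each image $\Phi_d(\bD(\cZ_d,\beta_{\cZ_d}))$ is left-admissible. For right-admissibility I would construct the left adjoint $\Phi_d^*$ as a Fourier--Mukai functor using that $\cE_{\cZ_d}$ is flat over $\cZ_d$ (Lemma~\ref{lemma:ezd-tor-ext}), that $p$ is finite flat, and that both $\cX$ and $\cZ_d$ are Gorenstein over $S$ with invertible relative dualizing sheaves ($\omega_{\cX/S}$ by the discussion preceding Definition~\ref{definition:family-dp6}, and $\omega_{\cZ_d/S}$ by Lemma~\ref{lemma:zd-flat}). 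A computation analogous to Lemma~\ref{lemma:phid-shriek-other} yields a Fourier--Mukai left adjoint with kernel $\cE_{\cZ_d}^\vee \otimes p^*\omega_{\cX/S}$ up to a shift, which preserves boundedness by the same reasoning as in Lemma~\ref{lemma:phi-d-shriek}.

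Second, for finite cohomological amplitude: the functors $\Phi_d$, $\Phi_d^!$, and $\Phi_d^*$ are Fourier--Mukai functors whose kernels are coherent sheaves flat over $\cZ_d$ (by Lemmas~\ref{lemma:ezd-tor-ext} and~\ref{lemma:phid-shriek-other} and the construction in Step~1), and the projection $p$ is finite; these two facts together imply that each of these functors has finite cohomological amplitude. The projection functors onto the components of the decomposition~\eqref{eq:sod-dp6} are built as finite compositions of $\Phi_d$, $\Phi_d^!$, and the embeddings (as in the generation argument at the end of the proof of Theorem~\ref{theorem:sod-dp6}), hence they inherit finite cohomological amplitude.

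Third, for~\eqref{eq:sod-dpx-perf}: each $\Phi_d$ preserves perfectness by Lemma~\ref{lemma:phi-d-shriek}, and combined with the finite cohomological amplitude just established, this verifies the hypotheses of~\cite[Proposition~4.1]{kuznetsov2011base}, which then yields the claimed semiorthogonal decomposition of $\bD^\perf(\cX)$. The main obstacle is the left-adjoint construction in the first step: without smoothness of $\cX$ over $S$ one must carefully track the dualizing sheaves and the Brauer twist $\beta_{\cZ_d}$ on the dualized kernel. Once the kernel is correctly identified, the boundedness-preservation follows from the same flatness-and-finiteness package used for $\Phi_d$ and $\Phi_d^!$, and the remaining two steps are essentially bookkeeping.
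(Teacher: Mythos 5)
Your conclusions are the ones the paper proves, and your treatment of finite cohomological amplitude and of the preservation of perfectness by $\Phi_d$ matches the paper's argument closely. The substantive divergence is in how admissibility is obtained. The paper never constructs $\Phi_d^*$ as an explicit Fourier--Mukai functor: it observes that the whole proof of Theorem~\ref{theorem:sod-dp6} can be rerun starting from the twisted decompositions~\eqref{eq:dbx-other} in place of~\eqref{eq:dbx}, producing the two additional decompositions~\eqref{eq:more-sod-cx}; since $\omega_{\cX/S}$ is invertible, each component then occurs at the far left of one decomposition and at the far right of another, which yields both adjoints at once. For~\eqref{eq:sod-dpx-perf} the paper then takes the left adjoints $\Phi_d^*$ as existing abstractly (from admissibility), invokes Neeman's theorem ($\Phi_d$ commutes with arbitrary direct sums, hence $\Phi_d^*$ preserves compact, i.e.\ perfect, objects), writes the projection functors as combinations of $\Phi_d\circ\Phi_d^*$, and restricts.

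The weak point of your route is exactly the step you flag. Your candidate kernel $\cE_{\cZ_d}^\vee\otimes p^*\omega_{\cX/S}$ (up to shift) is the formally expected one, but verifying the adjunction requires replacing $\RCHom(\cE_{\cZ_d},-)$ by $\cE_{\cZ_d}^\vee\otimes(-)$ and $\cE_{\cZ_d}^{\vee\vee}$ by $\cE_{\cZ_d}$, and $\cE_{\cZ_d}$ is \emph{not} a perfect complex on $\cX\times_S\cZ_d$: its restriction to a fiber $X$ is a sheaf of the form $\pi_*\cO_{\tX}(h-e_i)$, which fails to be locally free at the singular points of $X$. Lemma~\ref{lemma:ezd-tor-ext} provides flatness over $\cZ_d$ and finite $\Ext$-amplitude over $\cX$, which is what makes $\Phi_d^!$ work; the left adjoint needs the mirror properties over $\cZ_d$ (reflexivity and vanishing of higher local $\CExt$'s of the fibers $\cE_z$), and these are established nowhere in the paper, so as written this step is a genuine gap rather than bookkeeping --- the mutation argument via~\eqref{eq:more-sod-cx} is the intended way around it. (A side remark on terminology: a right adjoint to the embedding gives \emph{right} admissibility in the paper's conventions, so it is the other adjoint you still owe.) Finally, your appeal to~\cite[Proposition~4.1]{kuznetsov2011base} for~\eqref{eq:sod-dpx-perf} parallels the paper's treatment of the single-surface case in Theorem~\ref{theorem:dbx}, but note that finite cohomological amplitude alone does not force a projection functor on a singular $\cX$ to preserve perfectness; the paper's Neeman argument is precisely what supplies that missing preservation statement, so you should either include it or check that the cited proposition really applies under your hypotheses.
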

\begin{proof}
The functor $\Phi_d$ has finite cohomological amplitude because the sheaf $\cE_{\cZ_d}$ is flat over $\cZ_d$ by Lemma~\ref{lemma:ezd-tor-ext}
(in fact, since $p \colon \cX \times_S \cZ_d \to \cX$ is finite, the functor $\Phi_d$ is exact).
The functor $\Phi_d^!$ has finite cohomological amplitude because the sheaf $\cE_{\cZ_d}$ has finite $\Ext$-amplitude over $\cZ_d$ by Lemma~\ref{lemma:ezd-tor-ext}
(in fact, since $p_d \colon \cX \times_S \cZ_d \to \cZ_d$ is flat of relative dimension 2, the cohomological amplitude of the functor $\Phi_d^!$ equals $(0,2)$).
Thus $\Phi_d \circ \Phi_d^!$ has finite cohomological amplitude.
Since the projection functors of~\eqref{eq:sod-dp6} can be expressed as combinations of these functors
(see the proof of Theorem~\ref{theorem:sod-dp6}), 
it follows that the projection functors also have finite cohomological amplitude.

To show that the components of~\eqref{eq:sod-dp6} are admissible, note that we have two more decompositions
\begin{equation}
\label{eq:more-sod-cx}
\begin{aligned}
\bD(\cX) &= \langle \bD(\cZ_3,\beta_{\cZ_3}) \otimes \omega_{\cX/S}, \bD(S), \bD(\cZ_2, \beta_{\cZ_2}) \rangle,
\\
\bD(\cX) &= \langle \bD(\cZ_2, \beta_{\cZ_2}) \otimes \omega_{\cX/S}, \bD(\cZ_3,\beta_{\cZ_3}) \otimes \omega_{\cX/S}, \bD(S) \rangle.
\end{aligned}
\end{equation} 
Indeed, they can be established by the same argument as~\eqref{eq:sod-dp6} 
starting with the semiorthogonal decompositions~\eqref{eq:dbx-other} instead of~\eqref{eq:dbx}.
Since $\omega_{\cX/S}$ is invertible, the two above decompositions together with~\eqref{eq:sod-dp6} 
prove admissibility of all components (each of them appears on the left in one decomposition and on the right in another).

The functors $\Phi_d$ preserve perfectness by Lemma~\ref{lemma:phi-d-shriek}.
It remains to prove~\eqref{eq:sod-dpx-perf}.
For this note that the functors $\Phi_d$ have left adjoints $\Phi_d^*$ because the components of~\eqref{eq:sod-dp6} are admissible.
Since the functors $\Phi_d$ commute with arbitrary direct sums, it follows from~\cite[Theorem~5.1]{neeman1996grothendieck}
that the functors $\Phi_d^*$ preserve perfectness.
Since the projection functors of~\eqref{eq:sod-dp6} can be expressed as combinations of $\Phi_d \circ \Phi_d^*$,
they preserve perfectness, hence a restriction of~\eqref{eq:sod-dp6} gives~\eqref{eq:sod-dpx-perf}.
%
%
%
\end{proof}

Since the morphism $f \colon \cX \to S$ is Gorenstein, the relative dualizing complex $\omega_{\cX/S}$ is (up to a shift) a line bundle.
Therefore, the relative duality functor 
\begin{equation*}
\rD_{\cX/S}(\cF) := \RCHom(\cF , f^*\omega_S^\bullet)
\end{equation*}
is an anti-autoequivalence of the category $\bD(\cX)$ (see Section~\ref{subsection:duality}).

\begin{proposition}
\label{proposition:sod-dual}
The relative duality functor gives a semiorthogonal decomposition
\begin{equation}
\label{eq:sod-dp6-dual}
\bD(\cX) = \langle \rD_{\cX/S}(\bD(\cZ_3,\beta_{\cZ_3})), \rD_{\cX/S}(\bD(\cZ_2,\beta_{\cZ_2})), \rD_{\cX/S}(\bD(S)) \rangle,
\end{equation}
whose components are equivalent to $\bD(\cZ_3,\beta_{\cZ_3})$, $\bD(\cZ_2,\beta_{\cZ_2})$, and $\bD(S)$ respectively.
Moreover, this decomposition is right mutation-dual to~\eqref{eq:sod-dp6}.
\end{proposition}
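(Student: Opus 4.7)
The approach is to apply the anti-autoequivalence $\rD_{\cX/S}$ to~\eqref{eq:sod-dp6} and then unpack the three resulting components. Since $\rD_{\cX/S}$ is an anti-autoequivalence --- as noted in the paragraph preceding the proposition, using that $f$ is Gorenstein so that $\omega_{\cX/S}$ is a line bundle shift and $f^*\omega_S^\bullet$ differs from $\omega_\cX^\bullet \cong f^!\omega_S^\bullet$ only by an invertible twist --- applying it to~\eqref{eq:sod-dp6} reverses the order of components and produces~\eqref{eq:sod-dp6-dual} as a semiorthogonal decomposition; this is the standard effect of anti-autoequivalences on SODs.

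For the identification of each component, observe first that since $\rD_{\cX/S}$ is an anti-equivalence and each $\Phi_d$ is fully faithful by Theorem~\ref{theorem:sod-dp6}, one has $\rD_{\cX/S}(\Phi_d(\bD(\cZ_d,\beta_{\cZ_d}))) \cong \bD(\cZ_d,\beta_{\cZ_d})^{\mathrm{op}}$. I would then apply Grothendieck duality on the Gorenstein scheme $\cZ_d$ (Gorenstein by Lemma~\ref{lemma:zd-flat}), which induces an equivalence $\bD(\cZ_d,\beta_{\cZ_d})^{\mathrm{op}} \cong \bD(\cZ_d,\beta_{\cZ_d}^{-1})$, the inversion of the Brauer class being the standard effect of duality on Azumaya algebras. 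The remaining identification with $\bD(\cZ_d,\beta_{\cZ_d})$ is handled case by case: for $d = 1$ the Brauer class is trivial; for $d = 2$ the Brauer class has order $2$ by Proposition~\ref{proposition:md-zd}, so $\beta_{\cZ_2}^{-1} = \beta_{\cZ_2}$; and for $d = 3$ the automorphism $\sigma_{3,3}$ of that same proposition satisfies $\sigma_{3,3}^*\beta_{\cZ_3} = \beta_{\cZ_3}^{-1}$, so pullback along $\sigma_{3,3}$ yields $\bD(\cZ_3,\beta_{\cZ_3}^{-1}) \cong \bD(\cZ_3,\beta_{\cZ_3})$.

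Finally, for the mutation-duality claim I would follow the approach of Proposition~\ref{proposition:dbx-dual}. The component $\Phi_1(\bD(S)) = f^*\bD(S)$ is stable under $\rD_{\cX/S}$ as a subcategory of $\bD(\cX)$: indeed $\rD_{\cX/S}(f^*\cG)$ is $f^*\rD_S(\cG)$ tensored with an invertible object pulled back from $S$, and this twist is absorbed into the subcategory $f^*\bD(S)$. For the outermost component I invoke the standard fact that iterated left mutation through all preceding components of an SOD coincides with the action of the Serre functor up to a shift; the relative Serre functor of $\cX/S$ being $\cS_{\cX/S} = (-\otimes\omega_{\cX/S})[2]$, this gives $\LL_{\Phi_1(\bD(S))}\LL_{\Phi_2(\bD(\cZ_2,\beta_{\cZ_2}))}(\Phi_3(\bD(\cZ_3,\beta_{\cZ_3}))) = \Phi_3(\bD(\cZ_3,\beta_{\cZ_3})) \otimes \omega_{\cX/S}$, and a direct check (using the explicit Fourier--Mukai form of $\Phi_3^!$ in Lemma~\ref{lemma:phid-shriek-other}) shows this coincides with $\rD_{\cX/S}(\Phi_3(\bD(\cZ_3,\beta_{\cZ_3})))$. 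The middle component is then forced by the SOD axioms: $\rD_{\cX/S}(\Phi_2(\cdots)) = \Phi_1(\bD(S))^{\perp} \cap {}^{\perp}\bigl(\rD_{\cX/S}(\Phi_3(\cdots))\bigr)$, matching the analogous Serre-duality computation $\LL_{\tcA_1}(\tcA_2) = (\tcA_1^\vee)^\perp \cap {}^\perp(\tcA_3\otimes\omega_\tX) = \tcA_2^\vee$ from Lemma~\ref{lemma:dual-sod-tx}. The main obstacle is keeping the identifications with $\bD(\cZ_d,\beta_{\cZ_d})$ (rather than with $\bD(\cZ_d,\beta_{\cZ_d}^{-1})$) coherent across all three components; for $\cZ_3$ this crucially relies on the globality --- over the entire base $S$, not just fiberwise --- of the automorphism $\sigma_{3,3}$ provided by Proposition~\ref{proposition:md-zd}.
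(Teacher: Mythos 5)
Your proposal is correct, but it follows a genuinely different route from the paper's proof, which is much shorter: the paper simply introduces the right mutation-dual decomposition $\langle \LL_1\LL_2(\Phi_3(\bD(\cZ_3,\beta_{\cZ_3}))), \LL_1(\Phi_2(\bD(\cZ_2,\beta_{\cZ_2}))), \Phi_1(\bD(S))\rangle$ (well defined by admissibility, Proposition~\ref{proposition:sod-dp6-perfect-unbounded}) and shows it coincides with the duality decomposition by restricting the relevant Fourier--Mukai kernels to geometric fibers, where the statement is exactly Proposition~\ref{proposition:dbx-dual}, and then globalizing with Lemma~\ref{lemma:fiberwise}; the identification of the components with $\bD(\cZ_d,\beta_{\cZ_d})$ then comes for free, since mutation functors are equivalences. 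You instead identify the components directly --- via Grothendieck duality on the Gorenstein scheme $\cZ_d$ followed by the Brauer-class bookkeeping ($\beta_{\cZ_2}^{-1}=\beta_{\cZ_2}$ for order $2$, and $\sigma_{3,3}$ from Proposition~\ref{proposition:md-zd} for $\cZ_3$) --- and you prove mutation-duality by a global kernel-level computation. This buys an explicit explanation of where the Brauer-class inversion gets absorbed, which the paper's argument hides, but it front-loads the real work into two steps you only sketch: the identity $\LL_1\LL_2(\Phi_3(\cdots))=\Phi_3(\cdots)\otimes\omega_{\cX/S}$, which for singular $\cX$ should be justified by the already-established decompositions~\eqref{eq:more-sod-cx} rather than by the ``standard'' Serre-functor fact for smooth varieties; and the ``direct check'' that $\rD_{\cX/S}(\Phi_3(\bD(\cZ_3,\beta_{\cZ_3})))=\Phi_3(\bD(\cZ_3,\beta_{\cZ_3}))\otimes\omega_{\cX/S}$, which requires the global relation $\cE_{\cZ_3}=\sigma_{3,3}^*(\cE_{\cZ_3}^\vee\otimes\omega_{\cX/S}^{-1})\otimes\cL_{3,3}$ between the quasiuniversal families --- precisely the point where the paper's fiberwise reduction does the job with no computation. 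Both routes close the argument; yours is more self-contained over the base but longer once the sketched steps are written out.
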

\begin{proof}
Since $\rD_{\cX/S}$ is an anti-autoequivalence, \eqref{eq:sod-dp6-dual} is a semiorthogonal decomposition.
Consider also the right mutation-dual decomposition
\begin{equation*}
\bD(\cX) = \langle \LL_1(\LL_2(\Phi_3(\bD(\cZ_3,\beta_{\cZ_3}))), \LL_1(\Phi_2(\bD(\cZ_2,\beta_{\cZ_2}))), \Phi_1(\bD(S)) \rangle,
\end{equation*}
where $\LL_1$ and $\LL_2$ are the mutation functors through the first and the second components of~\eqref{eq:dbcx-again} 
(they are well-defined because the components of the decomposition are admissible).
By base change together with Proposition~\ref{proposition:dbx-dual} and Lemma~\ref{lemma:fiberwise}, these two decompositions coincide.
In particular, the components of~\eqref{eq:sod-dp6-dual} are equivalent 
to $\bD(\cZ_3,\beta_{\cZ_3})$, $\bD(\cZ_2,\beta_{\cZ_2})$, and $\bD(S)$ respectively.
\end{proof}

\subsection{Geometric implications}

The following regularity criterion is very useful.

\begin{proposition}
\label{proposition:smoothness}
Let $\cX \to S$ be a du Val family of sextic del Pezzo surfaces.
The total space $\cX$ of the family is regular if and only if all three schemes $S$, $\cZ_2$ and $\cZ_3$ are regular.
\end{proposition}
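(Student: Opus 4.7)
My plan is to reduce the statement to the homological criterion that a Noetherian scheme $Y$ is regular if and only if every bounded complex of coherent sheaves is perfect, i.e.\ $\bD^{\perf}(Y) = \bD(Y)$. I would first extend this criterion to Azumaya varieties: for a Brauer class $\beta \in \Br(Y)$ represented by an Azumaya algebra $\cB$ on a Noetherian scheme $Y$, the equality $\bD^{\perf}(Y,\beta) = \bD(Y,\beta)$ holds if and only if $Y$ itself is regular. Granting this lemma, the proposition will follow by direct comparison of the two semiorthogonal decompositions of $\bD(\cX)$ from Theorem~\ref{theorem:sod-dp6} and of $\bD^{\perf}(\cX)$ from Proposition~\ref{proposition:sod-dp6-perfect-unbounded}.

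In more detail, for the forward direction I would assume $S$, $\cZ_2$ and $\cZ_3$ are all regular. Then each of $\bD^{\perf}(S)$, $\bD^{\perf}(\cZ_2,\beta_{\cZ_2})$ and $\bD^{\perf}(\cZ_3,\beta_{\cZ_3})$ equals its bounded version by the Azumaya criterion, so the decomposition~\eqref{eq:sod-dpx-perf} exhausts~\eqref{eq:sod-dp6}, forcing $\bD^{\perf}(\cX) = \bD(\cX)$ and hence $\cX$ regular. Conversely, if $\cX$ is regular then $\bD^{\perf}(\cX) = \bD(\cX)$; since the fully faithful functors $\Phi_d$ preserve perfectness and, by Proposition~\ref{proposition:sod-dp6-perfect-unbounded}, so do their left adjoints, an object $M \in \bD(\cZ_d,\beta_{\cZ_d})$ is perfect if and only if $\Phi_d(M)$ is perfect in $\bD(\cX)$. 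Thus $\bD^{\perf}(\cZ_d,\beta_{\cZ_d}) = \bD(\cZ_d,\beta_{\cZ_d})$ for each $d \in \{1,2,3\}$ (with $\cZ_1 = S$ and trivial $\beta_{\cZ_1}$), and the Azumaya criterion forces $S$, $\cZ_2$ and $\cZ_3$ to be regular.

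The main obstacle is the Azumaya-variety extension of the regularity criterion. For the nontrivial direction I would argue as follows: given a coherent $\cO_Y$-module $\cF$, the $\cB$-module $\cF \otimes_{\cO_Y} \cB$ has the same $\cO_Y$-projective dimension as $\cF$ because $\cB$ is locally free of finite rank over $\cO_Y$. Working \'etale-locally on $Y$ where $\cB$ becomes a matrix algebra $M_n(\cO)$, Morita equivalence identifies $\beta$-twisted sheaves with untwisted ones, so perfectness in $\bD(Y,\beta)$ corresponds \'etale-locally to perfectness of the Morita image in the ordinary derived category. Since both perfectness and regularity are \'etale-local properties, the equality $\bD^{\perf}(Y,\beta) = \bD(Y,\beta)$ is thus equivalent to every coherent $\cO_Y$-module having finite projective dimension, i.e.\ to $Y$ being regular; the reverse implication is by the same Morita reduction to the untwisted case.
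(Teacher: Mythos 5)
Your proposal is correct and follows essentially the same route as the paper: the paper's proof also compares the bounded decomposition~\eqref{eq:dbcx-again} with the perfect decomposition~\eqref{eq:sod-dpx-perf} and concludes via the criterion that a scheme is regular if and only if its perfect and bounded derived categories coincide. The only difference is that you spell out the twisted (Azumaya/Morita) version of that criterion and the use of perfectness-preserving adjoints, both of which the paper leaves implicit.
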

\begin{proof}
Consider decompositions~\eqref{eq:dbcx-again} and~\eqref{eq:sod-dpx-perf}.
If $\cX$ is regular then $\bD^\perf(\cX) = \bD(\cX)$, hence it follows that $\bD^\perf(S) = \bD(S)$ and $\bD^\perf(\cZ_d,\beta_{\cZ_d}) = \bD(\cZ_d,\beta_{\cZ_d})$ for $d = 2,3$,
which means that $S$ and $\cZ_d$ are regular. 
The other implication is analogous.
\end{proof}

\begin{remark}
\label{remark:smoothness}
Note also that a du Val family of sextic del Pezzo surfaces $\cX \to S$ is smooth over $S$ 
if and only if the maps $\cZ_2 \to S$ and $\cZ_3 \to S$ are smooth (i.e.\ unramified).
Indeed, this follows from Corollary~\ref{corollary:S-s} 
and the description of the fibers~$Z_d$ of $\cZ_d \to S$ in Corollary~\ref{corollary:dbx-z2-z3}.
\end{remark}

One of the consequences of the regularity criterion is the following.

\begin{corollary}\label{corollary:birational-isomorphism-zd}
Let $\cX \to S$ and $\cX' \to S$ be two du Val families of sextic del Pezzo surfaces with regular total spaces $\cX$ and $\cX'$.
Assume there is a dense open subset $S_0 \subset S$ such that for some $d \in \{2,3\}$ there is 
an $S_0$-isomorphism $\varphi_0 \colon \cZ_d(\cX_0/S_0) \xrightarrow{\ \sim\ }\cZ_d(\cX'_0/S_0)$, where $\cX_0 = \cX \times_S S_0$ and~$\cX'_0 = \cX' \times_S S_0$.
Then~$\varphi_0$ extends to an isomorphism $\varphi \colon \cZ_d(\cX/S) \xrightarrow{\ \sim\ } \cZ_d(\cX'/S)$ over $S$.
Moreover, if we have an equality of Brauer classes $\beta_{\cZ_d(\cX_0/S_0)} = \varphi_0^*\beta_{\cZ_d(\cX'_0/S_0)}$ then it extends to $\beta_{\cZ_d(\cX/S)} = \varphi^*\beta_{\cZ_d(\cX'/S)}$.
\end{corollary}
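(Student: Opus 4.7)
The plan is to reduce the statement to a characterization of $\cZ_d(\cX/S)$ by its generic fiber (for the isomorphism part) and to purity of the Brauer group on regular schemes (for the Brauer class part), both of which are accessible once the regularity criterion of Proposition~\ref{proposition:smoothness} is in force.

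First, I would apply Proposition~\ref{proposition:smoothness}: since $\cX$ and $\cX'$ are regular, so are $S$, $\cZ_d(\cX/S)$, and $\cZ_d(\cX'/S)$. After decomposing $S$ into connected components, I may assume $S$ is regular integral with function field $K$, and then both $\cZ_d(\cX/S) \to S$ and $\cZ_d(\cX'/S) \to S$ are finite flat covers (Lemma~\ref{lemma:zd-flat}) with regular, hence normal, total spaces. Such a normal finite cover of $S$ is uniquely determined up to $S$-isomorphism by its generic fiber: writing the generic fiber as $\Spec(A)$ with $A$ a finite reduced $K$-algebra with residue field factors $L_i$, the cover must be the disjoint union of the integral closures of~$S$ in the $L_i$, and these are automatically finite over $S$ since the original cover was.

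Second, I would extend $\varphi_0$ using this uniqueness. Since $S_0 \subset S$ is dense and hence contains the generic point $\eta$, the isomorphism $\varphi_0$ restricts to a $K$-algebra isomorphism of the generic fibers of $\cZ_d(\cX/S)$ and $\cZ_d(\cX'/S)$. By the characterization of the previous step, this generic isomorphism extends to a unique $S$-isomorphism $\varphi \colon \cZ_d(\cX/S) \xrightarrow{\ \sim\ } \cZ_d(\cX'/S)$, and by separatedness of finite morphisms together with reducedness of $\cZ_d(\cX_0/S_0)$, two $S_0$-morphisms between finite $S_0$-schemes that coincide over $\eta$ must coincide, so $\varphi|_{S_0} = \varphi_0$.

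Third, for the Brauer class equality, I would invoke the Auslander--Goldman--Gabber purity theorem, according to which $\Br(Y) \hookrightarrow \Br(K(Y))$ for any regular Noetherian integral scheme~$Y$. Applied componentwise to $\cZ_d(\cX/S)$ (which is regular by the first step), and using that $\cZ_d(\cX_0/S_0) \subset \cZ_d(\cX/S)$ is dense with the same function fields on corresponding components, this gives the injectivity of the restriction map $\Br(\cZ_d(\cX/S)) \to \Br(\cZ_d(\cX_0/S_0))$. Hence the equality of Brauer classes over $S_0$ lifts uniquely to $S$. The main technical point will be the uniqueness-from-generic-fiber statement for finite flat normal covers in the possibly disconnected generic fiber case, where the components of $\cZ_d$ need not each map isomorphically onto $S$; the Brauer purity input, in contrast, is by now a standard black box.
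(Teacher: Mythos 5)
Your proposal is correct and follows essentially the same route as the paper: regularity of $\cZ_d(\cX/S)$ via Proposition~\ref{proposition:smoothness} forces normality, so the scheme is recovered as the integral closure of $S$ in the (total ring of) rational functions of the generic fibre, which pins down the extension of $\varphi_0$; the Brauer statement is exactly the injectivity of restriction to a dense open subset of a regular scheme, which the paper asserts and you justify by Auslander--Goldman--Gabber purity. Your additional care about disconnected generic fibres and about checking $\varphi|_{S_0}=\varphi_0$ only fills in details the paper leaves implicit.
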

\begin{proof}
For the first part note, that $\cZ_d(\cX/S)$ is regular by Proposition~\ref{proposition:smoothness}, and in particular normal.
Hence it is isomorphic to the normal closure of $S$ in the field of rational functions on $\cZ_d(\cX_0/S_0)$.
The same argument works for $\cZ_d(\cX'/S)$, hence $\varphi_0$ extends to an isomorphism $\varphi$.

The second claim is evident because the restriction morphisms $\Br(\cZ_d(\cX/S)) \to  \Br(\cZ_d(\cX_0/S_0))$ and~$\Br(\cZ_d(\cX'/S)) \to  \Br(\cZ_d(\cX'_0/S_0))$ 
of the Brauer groups are injective.
\end{proof}

We also have a Hilbert scheme interpretation for the semiorthogonal decomposition.
Let $F_d(\cX/S)$ be the relative Hilbert scheme of subschemes in the fibers of $\cX$ over $S$ with Hilbert polynomial $h'_d(t)$ defined by~\eqref{eq:hdprime}.
Thus, $F_1(\cX/S)$ is the relative Hilbert scheme of lines, $F_2(\cX/S)$ is the relative Hilbert scheme of conics, and $F_3(\cX/S)$ is the relative Hilbert scheme of twisted cubic curves.

\begin{proposition}
\label{proposition:fd-md}
For each $1 \le d \le 3$ the scheme $F_d(\cX/S)$ is flat over $S$, and
\begin{itemize}
\item 
$F_1(\cX/S) \cong \cZ_2 \times_S \cZ_3$,
\item 
$F_2(\cX/S)$ is an \'etale locally trivial $\P^1$-bundle over $\cZ_2$, and 
\item 
$F_3(\cX/S)$ is an \'etale locally trivial $\P^2$-bundle over $\cZ_3$.
\end{itemize}
Moreover, $F_2(\cX/S)$ and $F_3(\cX/S)$ are Severi--Brauer varieties over~$\cZ_2$ and~$\cZ_3$
associated with the Brauer classes $\beta_{\cZ_2}$ and $\beta_{\cZ_3}$, respectively.
\end{proposition}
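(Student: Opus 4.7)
The plan is to run the argument of Proposition~\ref{proposition:fd-zd} in the relative setting, using the base change compatibility of the semiorthogonal decomposition~\eqref{eq:sod-dp6} from Theorem~\ref{theorem:sod-dp6} together with the dual decomposition~\eqref{eq:sod-dp6-dual} of Proposition~\ref{proposition:sod-dual}. Fix a base $T \to S$ and a flat $T$-family $\cC \subset \cX \times_S T$ of subschemes with Hilbert polynomial~$h'_d$. Decomposing the structure sheaf $\cO_\cC$ with respect to the dual decomposition~\eqref{eq:sod-dp6-dual} pulled back to $T$, and using Lemma~\ref{lemma:fiberwise} together with Lemma~\ref{lemma:oc-decomposition} applied at each geometric fiber, one gets a pointwise recognition of the components, which upgrades to an honest relative statement because the component categories are controlled by flat twisted sheaves on $\cX \times_S \cZ_d$.

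For $d \in \{2,3\}$ the argument proceeds as in the second half of the proof of Proposition~\ref{proposition:fd-zd}: the decomposition of $\cO_\cC$ produces an $S$-morphism $f_d \colon T \to \cZ_d$ and a short exact sequence
\begin{equation*}
0 \to (\id_\cX \times f_d)^* \cE_{\cZ_d}^\vee \otimes p_T^*\cF_d \to \cO_{\cX\times_S T} \to \cO_\cC \to 0,
\end{equation*}
where $\cF_d$ is a $(f_d^*\beta_{\cZ_d})$-twisted line bundle on $T$. By adjunction and flat base change for the square~\eqref{diagram:x-s-z}, such data is the same as a $\beta_{\cZ_d}$-twisted line subbundle of $f_d^*(p_{d*}\cE_{\cZ_d})$; flatness of $\cO_\cC$ over $T$ translates to fiberwise non-vanishing. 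Since $p_{d*}\cE_{\cZ_d}$ is a $\beta_{\cZ_d}$-twisted locally free sheaf of rank $d$ on $\cZ_d$ (by the relative version of Lemma~\ref{lemma:ps-ez}, which goes through verbatim using flatness of the quasiuniversal sheaf and the finiteness of $\cZ_d \to S$), this identifies $F_d(\cX/S)$ with the Brauer--Severi scheme associated to $p_{d*}\cE_{\cZ_d}$, whose Brauer class is $\beta_{\cZ_d}$. Flatness over $S$ then follows from flatness of $\cZ_d \to S$ (Lemma~\ref{lemma:zd-flat}) composed with the smooth morphism $F_d(\cX/S) \to \cZ_d$.

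For $d = 1$ the same technique applies using Lemma~\ref{lemma:oc-decomposition}$(ii)$: the decomposition of $\cO_\cL$ for a family of lines yields morphisms $f_2 \colon T \to \cZ_2$ and $f_3 \colon T \to \cZ_3$ together with twisted line bundles $\cF_2,\cF_3$ on $T$ and an exact sequence as in~\eqref{eq:o-lines}. The crucial point is that although $\cF_2$ and $\cF_3$ are individually twisted (by $f_2^*\beta_{\cZ_2}$ and $f_3^*\beta_{\cZ_3}$ respectively), the combination $\cF_2\otimes \cF_3^\vee$ lives in the same twist as the relative version of the line bundle $p_{23*}(p_{\cX 2}^*\cE_{\cZ_2}^\vee \otimes p_{\cX 3}^*\cE_{\cZ_3})$ on $\cZ_2 \times_S \cZ_3$, whose Brauer twist $f_2^*\beta_{\cZ_2}^{-1} \otimes f_3^*\beta_{\cZ_3}$ has order dividing $\gcd(2,3) = 1$, hence is trivial. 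The flatness of $\cO_\cL$ is equivalent to this comparison being a pointwise isomorphism, and as in the argument over an algebraically closed field this removes the ambiguity in $\cF_2,\cF_3$ and identifies the moduli functor of lines with $\cZ_2 \times_S \cZ_3$.

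The main obstacle will be the careful bookkeeping of the Brauer twists throughout: one has to check that the pushforward $p_{d*}\cE_{\cZ_d}$ is a twisted locally free sheaf of the expected rank (requiring the analogue of Lemma~\ref{lemma:ps-ez} in the twisted relative context, which in turn rests on the fibrewise vanishing and rank computations in Lemma~\ref{lemma:e-z-small} and Proposition~\ref{proposition:e-z-big}), and that the projectivization of a twisted locally free sheaf of rank $n$ yields a Severi--Brauer scheme whose Brauer class equals the twisting class. Everything else is a relative reenactment of the computation over an algebraically closed field, with Lemma~\ref{lemma:fiberwise} and the base-change compatibility of~\eqref{eq:sod-dp6} doing the work of transferring fiberwise statements to the total space.
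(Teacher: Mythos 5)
Your proposal is essentially the paper's own proof: the paper likewise just observes that the construction of Proposition~\ref{proposition:fd-zd} carries over to the relative setting and identifies $F_d(\cX/S)$ with the projectivization of the twisted vector bundle $p_{d*}\cE_{\cZ_d}$, whose associated Severi--Brauer class is $\beta_{\cZ_d}$; your write-up merely makes the twisted bookkeeping explicit. One small correction in your $d=1$ case: the order of the product $f_2^*\beta_{\cZ_2}^{-1}\otimes f_3^*\beta_{\cZ_3}$ a priori divides $\mathrm{lcm}(2,3)=6$, not $\gcd(2,3)$; the correct mechanism is that the existence of the rank-one twisted sheaf $p_{23*}(p_{X2}^*\cE_{\cZ_2}^\vee\otimes p_{X3}^*\cE_{\cZ_3})$ forces the two pulled-back classes to coincide on $\cZ_2\times_S\cZ_3$, and a class annihilated by both $2$ and $3$ is trivial, which yields the triviality you need.
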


\begin{proof}
Assume $d \in \{2,3\}$.
The construction of the morphism $F_d(X) \to Z_d$ from Proposition~\ref{proposition:fd-zd} works well 
in an arbitrary du Val family of sextic del Pezzo surfaces and provides a morphism~\mbox{$F_d(\cX/S) \to \cZ_d(\cX/S)$}.
Moreover, it identifies $F_d(\cX/S)$ with the projectivization of the (twisted) vector bundle $p_{d*}\cE_{\cZ_d}$, 
where~$\cE_{\cZ_d}$ is the (twisted) universal sheaf on the product $\cX \times_S \cZ_d$ 
and~$p_{d} \colon \cX \times_S \cZ_d \to \cZ_d$ is the projection.
This is equivalent to the statement of the proposition.

For $d = 1$ also a relative version of the argument of Proposition~\ref{proposition:fd-zd} works.
\end{proof}

\begin{remark}
\label{remark:f4-family}
Taking into account Remark~\ref{remark:f4} and Proposition~\ref{proposition:md-zd} one can prove with the same argument
that the relative Hilbert scheme $F_4(\cX/S)$ is an \'etale locally trivial $\P^3$-bundle over $\cZ_2$ 
with the same Brauer class~$\beta_{\cZ_2}$ as~$F_2(\cX/S)$.
\end{remark}

\begin{corollary}
The total space of $\cX$ is regular if and only if $S$, $F_2(\cX/S)$ and $F_3(\cX/S)$ are regular.
\end{corollary}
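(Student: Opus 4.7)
The plan is to reduce this to Proposition~\ref{proposition:smoothness}, which already characterizes regularity of $\cX$ in terms of regularity of $S$, $\cZ_2$, and $\cZ_3$. So it suffices to show that $\cZ_d$ is regular if and only if $F_d(\cX/S)$ is regular for $d = 2, 3$.

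For this, I would invoke Proposition~\ref{proposition:fd-md}: the projections $F_2(\cX/S) \to \cZ_2$ and $F_3(\cX/S) \to \cZ_3$ are \'etale locally trivial projective bundles (namely Severi--Brauer varieties associated with $\beta_{\cZ_2}$ and $\beta_{\cZ_3}$), and in particular they are smooth and surjective. The key general fact is then that for a smooth surjective morphism $\varphi \colon F \to Z$, the scheme $F$ is regular if and only if $Z$ is regular: indeed, smoothness implies that $\varphi$ has regular fibers and is flat, and regularity of $F$ at a point is equivalent to regularity of $Z$ at its image together with regularity of the fiber, while surjectivity ensures every point of $Z$ is covered.

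Combining these two steps gives $\cX$ regular $\iff$ $S$, $\cZ_2$, $\cZ_3$ all regular $\iff$ $S$, $F_2(\cX/S)$, $F_3(\cX/S)$ all regular, which is the claim. There is essentially no obstacle here; the only mild subtlety is the direction showing regularity of $\cZ_d$ from regularity of $F_d(\cX/S)$, which uses that $\varphi$ is faithfully flat (so descent of regularity applies), and this is immediate from surjectivity of the Severi--Brauer projection. No Brauer-theoretic complications enter, since regularity is a local property insensitive to the twist.
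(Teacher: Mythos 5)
Your proposal is correct and follows exactly the paper's own argument: reduce to Proposition~\ref{proposition:smoothness} via the fact from Proposition~\ref{proposition:fd-md} that $F_d(\cX/S)\to\cZ_d$ is a smooth surjective (Severi--Brauer) morphism, so regularity transfers between $F_d(\cX/S)$ and $\cZ_d$. The only difference is that you spell out the descent-of-regularity step that the paper leaves implicit.
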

\begin{proof}
By Proposition~\ref{proposition:fd-md} if $d \in \{2,3\}$, the morphism $F_d(\cX/S) \to \cZ_d$ is smooth, 
hence the Hilbert scheme~$F_d(\cX/S)$ is regular if and only if $\cZ_d$ is regular.
So, Proposition~\ref{proposition:smoothness} applies.
\end{proof}

\begin{corollary}
\label{corollary:birational-isomorphism-fd}
Let $\cX \to S$ and $\cX' \to S$ be two du Val families of sextic del Pezzo surfaces with regular total spaces $\cX$ and $\cX'$.
Assume for $d \in \{2,3\}$ there is a birational $S$-isomorphism $\psi \colon \xymatrix@1{F_d(\cX/S) \ar@{-->}[r]^-{\sim} & F_d(\cX'/S)}$.
Then it induces a biregular isomorphism $\varphi \colon \cZ_d(\cX/S) \xrightarrow{\ \sim\ } \cZ_d(\cX'/S)$ over~$S$ 
and we have $\beta_{\cZ_d(\cX/S)} = \varphi^*\beta_{\cZ_d(\cX'/S)}$.
\end{corollary}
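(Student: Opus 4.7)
The plan is to combine the Severi--Brauer description from Proposition~\ref{proposition:fd-md} with Corollary~\ref{corollary:birational-isomorphism-zd} via a Stein factorization argument. By Proposition~\ref{proposition:fd-md} the projections $\pi\colon F_d(\cX/S) \to \cZ_d(\cX/S)$ and $\pi'\colon F_d(\cX'/S) \to \cZ_d(\cX'/S)$ are Severi--Brauer bundles with geometrically integral fibers, so they coincide with the Stein factorizations of the structure maps to $S$. Since $\cX$ and $\cX'$ are regular by hypothesis, Proposition~\ref{proposition:smoothness} ensures that $\cZ_d(\cX/S)$ and $\cZ_d(\cX'/S)$ are regular as well.

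First, I would restrict $\psi$ to its (dense open) domain of definition and compose with $\pi'$ to obtain an $S$-morphism into the finite $S$-scheme $\cZ_d(\cX'/S)$. By the universal property of Stein factorization this morphism descends through $\pi$, producing a birational $S$-isomorphism $\varphi_0\colon \cZ_d(\cX/S) \dashrightarrow \cZ_d(\cX'/S)$; repeating the argument with $\psi^{-1}$ shows $\varphi_0$ is indeed birational. Corollary~\ref{corollary:birational-isomorphism-zd} then extends $\varphi_0$ uniquely to a biregular $S$-isomorphism $\varphi$.

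For the Brauer class equality, I would invoke purity of the Brauer group on regular schemes: $\Br(\cZ_d(\cX/S))$ injects into the product of the Brauer groups of the residue fields at its generic points. At each generic point $\xi$, the map $\psi$ restricts to a birational equivalence over $\kappa(\xi)$ of the Severi--Brauer fibers $F_d(\cX/S)_\xi$ and $F_d(\cX'/S)_{\varphi(\xi)}$. Since every central simple algebra of degree~$2$ or~$3$ is cyclic (Wedderburn in the degree-$3$ case), Amitsur's theorem forces the corresponding Brauer classes to generate the same cyclic subgroup of $\Br(\kappa(\xi))$. For $d = 2$ this subgroup has order at most~$2$, so the classes themselves coincide and the second assertion of Corollary~\ref{corollary:birational-isomorphism-zd} delivers the global equality. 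For $d = 3$ the classes could a priori differ by inversion; this ambiguity is absorbed by possibly composing $\varphi$ with the involution $\sigma_{3,3}$ of Proposition~\ref{proposition:md-zd}, which inverts $\beta_{\cZ_3(\cX'/S)}$.

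The hard part is precisely this degree-$3$ case: a birational isomorphism of Severi--Brauer surfaces only pins down the Brauer class up to inversion, and one must appeal to the moduli-theoretic involution $\sigma_{3,3}$ to select the correct representative. Once this is handled, the remainder of the argument is a formal combination of Stein factorization, Corollary~\ref{corollary:birational-isomorphism-zd}, and purity of the Brauer group on regular schemes.
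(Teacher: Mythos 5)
Your proposal is correct and follows essentially the same route as the paper: descend $\psi$ to a birational map of the $\cZ_d$'s (you use Stein factorization of $F_d\to S$ where the paper uses the maximal rationally connected fibration — an interchangeable choice here, since the Severi--Brauer fibers are both geometrically connected and rationally connected), extend it via Corollary~\ref{corollary:birational-isomorphism-zd}, and pin down the Brauer class by Amitsur's theorem at the generic points together with injectivity of the Brauer group of the regular scheme $\cZ_d$, absorbing the $d=3$ inversion ambiguity into $\sigma_{3,3}$. Your explicit appeal to Amitsur and to cyclicity of degree $2$ and $3$ algebras just spells out what the paper leaves implicit in the phrase ``the birational isomorphism of Hilbert schemes implies \dots''.
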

\begin{proof}
Recall that $\cZ_d(\cX/S)$ is the base of the maximal rationally connected fibration for the morphism~$F_d(\cX/S) \to S$.
Therefore, the birational isomorphism $\psi$ of Hilbert schemes induces 
a birational isomorphism $\varphi_0$ over~$S$ of $\cZ_d(\cX/S)$ and $\cZ_d(\cX'/S)$.
Using Corollary~\ref{corollary:birational-isomorphism-zd} we deduce that it extends 
to an isomorphism $\varphi \colon \cZ_d(\cX/S) \xrightarrow{\ \sim\ } \cZ_d(\cX'/S)$.
Furthermore, since $F_d(\cX/S)$ is a Severi--Brauer variety associated with the Brauer class $\beta_{\cZ_d(\cX/S)}$ of order~$2$ or~$3$, 
the birational isomorphism of Hilbert schemes implies that either $\beta_{\cZ_d(\cX/S)} = \varphi^*\beta_{\cZ_d(\cX'/S)}$
or~$d = 3$ and~$\beta_{\cZ_d(\cX/S)} = \varphi^*\beta_{\cZ_d(\cX'/S)}^{-1}$.
In the latter case we replace~$\varphi$ by its composition with the involution $\sigma_{3,3}$
defined in Proposition~\ref{proposition:md-zd}.
\end{proof}

We would like to thank the referee for pointing out the following interesting consequence of Proposition~\ref{proposition:fd-md} 
which might be useful for applications in birational geometry.

\begin{proposition}
Let $\cX \to S$ be a generically smooth du Val family of sextic del Pezzo surfaces with regular total space~$\cX$.

\noindent$(1)$ The Brauer class $\beta_{\cZ_2}$ vanishes if and only if the family $\cX \to S$ has a rational $3$-multisection.

\noindent$(2)$ The Brauer class $\beta_{\cZ_3}$ vanishes if and only if the family $\cX \to S$ has a rational $2$-multisection.
\end{proposition}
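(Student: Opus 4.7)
The argument rests on Proposition~\ref{proposition:fd-md}, which identifies $F_d(\cX/S) \to \cZ_d$ as the Severi--Brauer variety of class $\beta_{\cZ_d}$ for $d \in \{2, 3\}$, together with Proposition~\ref{proposition:smoothness}, which gives regularity of $\cZ_d$ and hence injectivity of $\Br(\cZ_d) \hookrightarrow \Br(k(\cZ_d))$. I will treat $(1)$ and $(2)$ uniformly, with $(d, k) = (2, 3)$ and $(3, 2)$ respectively, noting that $k$ and $\operatorname{ord}(\beta_{\cZ_d})$ are always coprime.

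For the implication $\beta_{\cZ_d} = 0 \Rightarrow \cX \to S$ admits a rational $k$-multisection, the vanishing of $\beta_{\cZ_d}$ gives a rational section of the Severi--Brauer $F_d(\cX/S) \to \cZ_d$; pulling back the universal curve yields a flat family $\mathcal{C} \subset \cX \times_S \cZ_d$ of smooth degree-$d$ rational curves over a dense open of $\cZ_d$. Restricting to the geometric generic fiber $X_{\bar\eta}$ of $\cX \to S$, this produces $6/d$ Galois-permuted curves on $X_{\bar\eta}$ lying in the classes $h - e_i$ for $i \in \{1,2,3\}$ (when $d = 2$) or $h$ and $2h - e_1 - e_2 - e_3$ (when $d = 3$). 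Using the intersection numbers $(h - e_i)(h - e_j) = 1$ for $i \neq j$ and $h \cdot (2h - e_1 - e_2 - e_3) = 2$, the pairwise intersections of these curves form a Galois-stable $0$-dimensional subscheme of $X_{\bar\eta}$ of total length $k$ (three points, one per pair of conics, for $d = 2$; two transverse intersection points of the two twisted cubics, for $d = 3$). Galois descent yields a degree-$k$ closed subscheme of $\cX_\eta$; its Zariski closure in $\cX$ is the sought $k$-multisection.

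Conversely, let $T = \bigcup T_i$ be a rational $k$-multisection with $k_i := [k(T_i) : k(S)]$ satisfying $\sum k_i = k$. For each $i$, the tautological morphism $T_i \to \cX$ gives a $k(T_i)$-rational point of $\cX \otimes_{k(S)} k(T_i)$. The key claim is that any $K$-rational point of a du Val family of sextic del Pezzo surfaces $X \to \Spec K$ forces $\beta_{\cZ_d(X/K)} = 0$: evaluating the $\beta_{\cZ_d}^{-1}$-twisted universal sheaf $\cE_{\cZ_d}$ at the $K$-point produces a $\beta_{\cZ_d}^{-1}$-twisted line bundle on $\cZ_d(X/K)$ (since $\cE_{\cZ_d}$ has generic rank $1$ and a generic $K$-point lies in the locally free locus of each universal sheaf $\cE_z$, cf.\ Proposition~\ref{proposition:e-z-big}), trivializing the gerbe. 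Applying this to each $\cX \otimes_{k(S)} k(T_i) \to \Spec k(T_i)$, using base-change compatibility (Lemma~\ref{lemma:md-zd-base-change}), and invoking the corestriction-restriction identity for the degree-$k_i$ extensions $k(T_i)/k(S)$, we obtain $k_i \beta_{\cZ_d} = 0$ in $\Br(\cZ_d)$ for each $i$. Summing yields $k \beta_{\cZ_d} = 0$, and since $\gcd(k, \operatorname{ord}(\beta_{\cZ_d})) = 1$, we conclude $\beta_{\cZ_d} = 0$.

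The main obstacle I expect is making the gerbe-trivialization claim precise: one must track the locally free locus of the universal sheaves $\cE_z$ on the fibers $\cX_s$ and argue that a generic rational point lands there, so that the pullback of $\cE_{\cZ_d}$ really is a twisted \emph{line bundle} rather than a more complicated twisted sheaf whose class in the relevant $K$-group fails to trivialize the Brauer obstruction. A secondary technical point in the $(\Rightarrow)$ direction is verifying transversality of the pairwise intersections at the generic fiber, which follows from generic smoothness of the family together with standard facts about smooth rational curves on smooth sextic del Pezzo surfaces.
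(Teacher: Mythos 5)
Your proof is correct, and it splits naturally into two halves: the forward implication is essentially the paper's argument, while the converse takes a genuinely different route. For the forward direction the paper also starts from a rational section of the Severi--Brauer fibration $F_d(\cX/S)\to\cZ_d$ and extracts the multisection from the pairwise intersections of the resulting $6/d$ conjugate curves in distinct linear systems; it phrases this via the locus where the projection $\cC\to\cX$ fails to be injective rather than via Galois descent on the geometric generic fiber, but the content (including the intersection numbers $1$ and $2$) is identical, and your transversality worry is harmless since two distinct irreducible curves with no common component already meet in a scheme of length equal to their intersection number. For the converse the paper stays entirely inside the Hilbert schemes: given a $2$-multisection $T$ it considers $F_3^T(\cX/S)\subset F_3(\cX/S)$, the cubics containing the fibers of $T$, and observes that through two general points each linear system of cubics has either a unique member or a pencil, so $F_3^T$ is a rational section or a sub-$\P^1$-bundle of the Severi--Brauer $\P^2$-bundle, either of which forces $\beta_{\cZ_3}$ to vanish generically and hence, by regularity of $\cZ_3$, everywhere; for a $3$-multisection it runs the same argument with quartics in $F_4(\cX/S)\to\cZ_4\cong\cZ_2$, falling back on $F_2^T$ in the case where the three points lie on a conic. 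Your alternative --- a rational point of the smooth generic fiber restricts the rank-one twisted universal sheaf to a twisted line bundle on $\cZ_{d,\eta}\otimes k(T_i)$ and so splits the class there, after which restriction--corestriction for the degree-$k_i$ extensions, the coprimality $\gcd(k,\operatorname{ord}\beta_{\cZ_d})=1$, and the injectivity of $\Br(\cZ_d)\to\Br(k(\cZ_d))$ finish the job --- is sound and arguably cleaner: it treats the two cases uniformly and makes visible exactly where the coprimality of $2$ and $3$ enters, at the cost of invoking corestriction and the ``rational point kills the Brauer class'' principle, whereas the paper's argument is self-contained within the geometry of linear systems. The one point you flagged, that the rational point must lie in the locally free locus of the universal sheaves, is precisely where the generic smoothness hypothesis is consumed, and it is correctly disposed of since every $\cE_z$ is a line bundle on a smooth fiber.
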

\begin{proof}
Note that the assumption implies that $S$, $\cZ_2$, and $\cZ_3$ are all regular (Proposition~\ref{proposition:smoothness}).
Furthermore, by base change to an open subscheme in $S$ we may assume that $\cX \to S$ is smooth
(hence $\cZ_d \to S$ are smooth as well by Remark~\ref{remark:smoothness}).

If $\beta_{\cZ_2}$ vanishes then the map $F_2(\cX) \to \cZ_2$ has a rational section.
Moreover, one can assume that such a rational section is not contained in the rational $2$-multisection of $F_2(\cX) \to \cZ_2$
that parameterizes singular conics on fibers of $\cX$.
This means that there is a birational morphism $\cZ'_2 \to \cZ_2$ and a subscheme~$\cC \subset \cX \times_S \cZ'_2$ 
which is a family of conics on fibers of $\cX$ with the general fiber over~$\cZ'_2$ a union of three smooth conics
from distinct linear systems on a smooth sextic del Pezzo surface.
Note that smooth conics from different linear systems intersect at a point, 
hence the projection $\cC \to \cX$ fails to be an isomorphism onto its image over a subscheme of $\cX$ 
which is a rational 3-multisection of $\cX \to S$.

Similarly, if $\beta_{\cZ_3}$ vanishes then the map $F_3(\cX) \to \cZ_3$ has a nice rational section,
hence there is a birational morphism $\cZ'_3 \to \cZ_3$ and a subscheme~$\cC \subset \cX \times_S \cZ'_3$ 
which is a family of rational cubics on fibers of $\cX$ with the general fiber over~$\cZ'_3$ a union of two smooth cubics
from distinct linear systems on a smooth sextic del Pezzo surface.
Smooth cubics from different linear systems intersect at two points, 
hence the projection $\cC \to \cX$ fails to be an isomorphism onto its image over a subscheme of $\cX$ 
which is a rational 2-multisection of $\cX \to S$.

Conversely, assume a 2-multisection of $\cX \to S$ exists, i.e., there is a proper subscheme $T \subset \cX$ 
such that the map $T \to S$ is generically finite of degree~2. In the Hilbert scheme $F_3(\cX/S)$ consider 
the subscheme $F_3^T(\cX/S)$ that parameterizes cubic curves containing the fibers of~$T$ over~$S$.
Since on a smooth del Pezzo surface in each linear system of cubic curves there is either a unique curve 
containing a given pair of points (if the points do not lie on a line), or a pencil of such cubic curves (otherwise),
we conclude that $F_3^T(\cX/S) \subset F_3(\cX/S)$ is 
either a rational section of $F_3(\cX/S) \to \cZ_3$ or is generically a sub-$\P^1$-bundle. 
In both cases it follows that the Brauer class $\beta_{\cZ_3}$ vanishes generically over~$\cZ_3$, 
and since~$\cZ_3$ is regular, $\beta_{\cZ_3} = 0$.

Similarly, if a 3-multisection $T$ of $\cX \to S$ exists, we consider the subscheme $F_4^T(\cX/S) \subset F_4(\cX/S)$
that parameterizes quartic curves containing the fibers of~$T$ over~$S$.
Since on a smooth del Pezzo surface in each linear system of quartic curves there is either a unique curve 
containing a given triple of points (if the points do not lie on a conic), 
or a pencil of such curves (if the points lie on a conic, but not on a line),
or a 2-dimensional linear system (otherwise),
we conclude that $F_4^T(\cX/S) \subset F_4(\cX/S)$ is 
either a rational section of $F_4(\cX/S) \to \cZ_4 = \cZ_2$ (in the first case),
or is generically a sub-$\P^2$-bundle (in the third case), 
while in the second case the subscheme~$F_2^T(\cX/S) \subset F_2(\cX/S)$ provides a rational section of $F_2(\cX/S) \to \cZ_2$.
In all cases it follows that the Brauer class $\beta_{\cZ_2}$ vanishes generically over~$\cZ_2$, 
and since~$\cZ_2$ is regular, $\beta_{\cZ_2} = 0$.
\end{proof}

\section{Standard families}\label{section:special-families}

In this section we discuss some standard families of sextic del Pezzo surfaces and their special features.
Throughout this section the base field $\kk$ is an arbitrary field of characteristic coprime to 2 and 3.

\subsection{Linear sections of $\P^2 \times \P^2$}\label{subsection:p2p2}

The simplest way to construct a sextic del Pezzo surface is by considering an intersection of $\P^2 \times \P^2$ 
with a linear subspace of codimension 2 in the Segre embedding.

We denote by $W_1$ and $W_2$ a pair of vector spaces of dimension 3 and let 
\begin{equation*}
\P(W_1) \times \P(W_2) \hookrightarrow \P(W_1 \otimes W_2)
\end{equation*}
be the Segre embedding. 
To give its linear section of codimension 2, we need a two-dimensional subspace~$K \subset W_1^\vee \otimes W_2^\vee$.
We denote by $K^\perp \subset W_1 \otimes W_2$ its codimension 2 annihilator, and set
\begin{equation}\label{eq:xk}
X_K := (\P(W_1) \times \P(W_2)) \cap \P(K^\perp) \subset \P(W_1 \otimes W_2), 
\end{equation}
to be the corresponding linear section.

\begin{lemma}\label{lemma:xk-duval}
Assume the base field $\kk$ is algebraically closed.
The intersection $X_K$ defined by~\textup{\eqref{eq:xk}} is a sextic du Val del Pezzo surface if and only if 
\begin{itemize}
\item the line $\P(K)$ does not intersect the dual Segre variety $\P(W_1^\vee) \times \P(W_2^\vee) \subset \P(W_1^\vee \otimes W_2^\vee)$, and
\item the line $\P(K)$ is not contained in the discriminant cubic hypersurface $\cD_{W_1,W_2} \subset \P(W_1^\vee \otimes W_2^\vee)$.
\end{itemize}
Furthermore, $X_K$ is smooth if and only if the line $\P(K)$ is transversal to $\cD_{W_1,W_2}$.
\end{lemma}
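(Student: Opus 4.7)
The plan is to view $K$ as a pencil of bilinear forms on $W_1\times W_2$, equivalently a pencil of $(1,1)$-divisors on $\P(W_1)\times\P(W_2)\cong\P^2\times\P^2$, so that $X_K$ is the base locus of this pencil. A preliminary computation handles degree and canonical class: assuming $X_K$ is two-dimensional, it is the complete intersection of two $(1,1)$-divisors, so by adjunction $\omega_{X_K}\cong\cO(-1,-1)|_{X_K}$ is anti-ample and invertible, and a Chow-ring computation in $\P^2\times\P^2$ gives $(-K_{X_K})^2=(H_1+H_2)^4=6$.

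The core of the argument is the identification of singularities of $X_K$ with the scheme $\P(K)\cap\cD_{W_1,W_2}$. I would first observe the standard correspondences: rank-$1$ tensors in $K$ are exactly those whose associated $(1,1)$-divisor is reducible (a union of fibers of the two projections), while $\cD_{W_1,W_2}$ parametrizes $(1,1)$-divisors admitting a singular point on $\P(W_1)\times\P(W_2)$. The Jacobian criterion then shows that $p\in X_K$ is singular on $X_K$ exactly when some nonzero $q\in K$ has vanishing differential at $p$, equivalently when some $q\in K$ is singular at $p$. If such a $q$ has rank $2$ with $\ker Q=\langle v_2\rangle$ and $\ker Q^{T}=\langle v_1\rangle$, the unique singular point of $\{q=0\}$ is $p=([v_1],[v_2])$. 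The cofactor formula $d\det|_{Q}(Q')\propto v_1^{T}Q'v_2$ then identifies $T_q\cD_{W_1,W_2}=\{Q':v_1^{T}Q'v_2=0\}$, and since $v_1^{T}Q'v_2=q'(p)$, one obtains the key equivalence: $p\in X_K$ iff $\P(K)\subset T_q\cD_{W_1,W_2}$, i.e., iff $\P(K)$ is tangent to $\cD_{W_1,W_2}$ at $q$.

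Combining the two hypotheses: the first condition forces every element of $\P(K)\cap\cD_{W_1,W_2}$ to have rank $2$, and the second forces $\P(K)\cap\cD_{W_1,W_2}$ to be a finite scheme of length $3$. Thus $X_K$ has finitely many singular points, one for each $q\in\P(K)\cap\cD_{W_1,W_2}$ at which $\P(K)$ is tangent to $\cD_{W_1,W_2}$. To identify these singularities, I would work locally near such a $p$, put the rank-$2$ form $q$ into the normal form $x_1y_1+x_2y_2$ on a $4$-dimensional tangent chart of $\P^2\times\P^2$, expand the second generator of $K$ in power series about $p$, and eliminate two of the four variables using the implicit function theorem; the resulting surface singularity comes out as an $A_{\mu-1}$ singularity, where $\mu$ is the intersection multiplicity of $\P(K)$ with $\cD_{W_1,W_2}$ at $q$. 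In particular, all singularities are du Val, and the possible configurations match the six types listed in Section~\ref{subsection:dp6} by distributing $\sum\mu_i=3$ among the points of $\P(K)\cap\cD_{W_1,W_2}$. Integrality of $X_K$ under the first condition is a separate standard check: both defining divisors are integral $(1,1)$-divisors and their intersection has the expected dimension, yielding an integral complete intersection.

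The converse and the smoothness statement follow directly. If the first condition fails, a rank-$1$ element $w_1^{*}\otimes w_2^{*}\in K$ makes $X_K$ contain a component of a fiber of one of the projections, breaking integrality or dimension. If the second condition fails, the tangency equivalence yields a positive-dimensional singular locus on $X_K$, so $X_K$ is not normal. Finally, $X_K$ is smooth iff it has no singularities iff every point of $\P(K)\cap\cD_{W_1,W_2}$ is a transverse intersection, which is exactly transversality of $\P(K)$ to $\cD_{W_1,W_2}$. The main obstacle is the local analysis identifying the singularity as $A_{\mu-1}$: once the pencil is in normal form this is a routine power-series computation, but some care is needed to see that the tangency order of $\P(K)$ with $\cD_{W_1,W_2}$ matches precisely the Milnor number of the resulting surface singularity.
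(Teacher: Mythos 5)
Your route is genuinely different from the paper's. The paper proves necessity by exhibiting explicit non-integral structure ($X_K$ a union of two cubic scrolls when $\P(K)$ meets the Segre, and a trichotomy of cases when $\P(K)\subset\cD_{W_1,W_2}$, each producing a plane or a quadric inside $X_K$), and proves sufficiency by picking a rank-$3$ form $b_0\in K$, using it to identify $W_2\cong W_1^\vee$ so that the pencil becomes $\{b+t\,\id\}$ for an operator $b\in\End(W_1)$, and then reading off the three possible Jordan types, each giving an explicit surface that one checks by hand. Your approach instead runs the singularities of $X_K$ through the discriminant: the Jacobian criterion identifies $\Sing(X_K)$ with the singular points of pencil members lying on $X_K$, and the cofactor computation of $T_q\cD_{W_1,W_2}$ converts this into tangency of $\P(K)$ with $\cD_{W_1,W_2}$, with the contact order $\mu$ controlling the $A_{\mu-1}$ type. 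This is more conceptual and would generalize to other Segre products, at the price of a local normal-form computation that the paper's explicit surfaces avoid; note also that only the partitions $1{+}1{+}1$, $2{+}1$, $3$ of the intersection cycle occur, i.e.\ only types $0$, $2$, $4$ of Section~\ref{subsection:dp6}, not all six as you suggest (this is harmless for the lemma).

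Two steps need repair. First, your justification of integrality --- ``both defining divisors are integral and their intersection has the expected dimension, yielding an integral complete intersection'' --- is not a valid implication: two irreducible hypersurfaces can meet in a reducible or non-reduced complete intersection of the expected dimension. The correct route is available from what you have already established: $X_K$ is a $2$-dimensional complete intersection, hence Cohen--Macaulay ($S_2$), your tangency analysis shows $\Sing(X_K)$ is finite, so $X_K$ is normal by Serre's criterion, and it is connected as a complete intersection of ample divisors; normal and connected gives integral. Second, in the converse when $\P(K)\subset\cD_{W_1,W_2}$ (but $\P(K)$ avoids the Segre), your claim of a positive-dimensional singular locus does not follow directly from the tangency equivalence: the map sending a rank-$2$ form $q$ to the singular point of $\{q=0\}$ may contract the whole line $\P(K)$ to a single point $p$. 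In that degenerate case every member of the pencil kills a fixed $v_2\in W_2$ (and a fixed $v_1^{T}$), so $X_K$ contains the plane $\P(W_1)\times\{[v_2]\}$ and fails integrality rather than $R_1$ --- exactly the first two branches of the paper's trichotomy. You need to split into these two alternatives (image of the Gauss-type map a curve, giving non-normality, versus a point, giving a plane inside $X_K$) for the converse to be complete.
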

\begin{proof}
First, let us show that the condition is necessary.
If $\P(K)$ intersects $\P(W_1^\vee) \times \P(W_2^\vee)$ then $K$ contains a bilinear form $b \in W_1^\vee \otimes W_2^\vee$ of rank 1, i.e., $b = \varphi_1 \otimes \varphi_2$, where $\varphi_i \in W_i^\vee$.
The zero locus of~$b$ on $\P(W_1) \times \P(W_2)$ is equal to $(\P(\varphi_1^\perp) \times \P(W_2)) \cup (\P(W_1) \times \P(\varphi_2^\perp))$. 
Consequently, $X_K$ is its hyperplane section, hence is a union of two cubic scrolls, hence is not an integral surface.

Next, assume that $\P(K)$ is contained in $\cD_{W_1,W_2}$. 
Then there are three possibilities: either
\begin{itemize}
\item 
$K \subset w_1^\perp \otimes W_2^\vee$ for some $w_1 \in W_1$, or
\item 
$K \subset W_1^\vee \otimes w_2^\perp$ for some $w_2 \in W_2$, or
\item 
$K \subset \Ker(W_1^\vee \otimes W_2^\vee \to U_1^\vee \otimes U_2^\vee)$ for some 2-dimensional subspaces $U_1 \subset W_1$ and $U_2  \subset W_2$.
\end{itemize}
In the first case we have $\{w_1\} \times \P(W_2) \subset X_K$, in the second case $\P(W_1) \times \{w_2\} \subset X_K$, and in the third case $\P(U_1) \times \P(U_2) \subset X_K$,
so in all these cases the surface $X_K$ is not integral.

Now, let us show that the conditions are sufficient.
So, assume $K \subset W_1^\vee \otimes W_2^\vee$ is such that the line~$\P(K)$ is not contained in $\cD_{W_1,W_2}$ and does not intersect $\P(W_1^\vee) \times \P(W_2^\vee)$.
Let $b_0 \in K$ be a bilinear form of rank 3 (it exists since $\P(K)$ is not contained in $\cD_{W_1,W_2}$).
Then $b_0$ identifies $W_2$ with $W_1^\vee$ and under this identification $b_0$ corresponds to the identity in $\P(W_1^\vee \otimes W_1) \cong \P(\End(W_1))$, 
so its zero locus is isomorphic to the flag variety $\Fl(1,2;W_1) \subset \P(W_1) \times \P(W_1^\vee)$. 
The subspace $K$ is then determined by an operator $b \in \End(W_1)$ defined up to a scalar multiple of the identity, and the condition that $\P(K)$ does not intersect $\P(W_1^\vee) \otimes \P(W_2^\vee)$
can be rephrased by saying that the pencil $\{b + t\id\}$ does not contain operators of rank~1.
From the Jordan Theorem it is clear that there are only three types of such $b$:
\begin{itemize}
\item[$(0)$]
$b$ is diagonal with three distinct eigenvalues;
\item[$(1)$]
$b$ has two Jordan blocks of sizes 2 and 1 with distinct eigenvalues; 
\item[$(2)$]
$b$ has one Jordan block of size 3.
\end{itemize}
It is easy to see that in case (0) the surface $X_b$ is smooth (hence of type 0), 
in case (1) it has one $A_1$ singularity (and is of type 2), and in case (3) it has one $A_2$ singularity (and is of type 4).

It remains to note that case (0) happens precisely when the line $\P(K)$ is transversal to $\cD_{W_1,W_2}$.
\end{proof}

Consider the universal family of codimension 2 linear sections of $\P(W_1) \times \P(W_2)$ that are sextic du Val del Pezzo surfaces.
According to Lemma~\ref{lemma:xk-duval} it can be described as follows.
Consider the Grassmannian~$\Gr(2,W_1^\vee \otimes W_2^\vee)$ parameterizing all two-dimensional subspaces $K \subset W_1^\vee \otimes W_2^\vee$, 
and its open subset parameterizing subspaces satisfying the conditions of Lemma~\ref{lemma:xk-duval}:
\begin{equation*}
S := \{ K \in \Gr(2,W_1^\vee \otimes W_2^\vee) \quad \mid \quad 
\P(K) \not\subset \cD_{W_1,W_2} \quand \P(K) \cap (\P(W_1^\vee) \times \P(W_2^\vee)) = \varnothing \}.
\end{equation*}
Let $\cK \subset W_1^\vee \otimes W_2^\vee \otimes \cO_S$ be the tautological rank 2 bundle, 
let $\cK^\perp \subset W_1 \otimes W_2 \otimes \cO_S$ be its rank~7 annihilator, and let 
\begin{equation}
\label{eq:cx-p2p2}
\cX := (\P(W_1) \times \P(W_2)) \times_{\P(W_1 \otimes W_2)} \P_S(\cK^\perp)
\end{equation}
be the corresponding du Val family of sextic del Pezzo surfaces.

\begin{proposition}
\label{proposition:dp-p2p2}
Let $\cX \to S$ be the du Val family of sextic del Pezzo surfaces defined by~\textup{\eqref{eq:cx-p2p2}}. 
Then 
\begin{equation*}
\cZ_2 = \P_S(\cK) \times_{\P(W_1^\vee \otimes W_2^\vee)} \cD_{W_1,W_2},
\qquad 
\cZ_3 = S \sqcup S,
\end{equation*}
and the Brauer classes $\beta_{\cZ_2}$ and $\beta_{\cZ_3}$ are both trivial.
\end{proposition}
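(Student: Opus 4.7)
First I would derive a second $S$-linear semiorthogonal decomposition of $\bD(\cX)$ by applying the symmetric homological projective duality for $\P(W_1)\times\P(W_2)$ established in Appendix~\ref{appendix:hpd-p2p2}, and then identify $\cZ_2,\cZ_3$ and prove the triviality of their Brauer classes by comparison with the decomposition of Theorem~\ref{theorem:sod-dp6}. The appendix provides a rectangular Lefschetz decomposition of length~$3$ on $\bD(\P(W_1)\times\P(W_2))$ with respect to $\cO(1,1)$, whose HPD dual is a categorical crepant resolution $Y^\natural\to\cD_{W_1,W_2}\subset\P(W_1^\vee\otimes W_2^\vee)$ of the dual cubic. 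Applied in family to $\cX\to S$ (parametrized by the tautological subbundle $\cK\subset W_1^\vee\otimes W_2^\vee\otimes\cO_S$), HPD produces
\begin{equation*}
\bD(\cX) = \langle \bD(\cY),\, \bD(S),\, \bD(S),\, \bD(S) \rangle,
\end{equation*}
where the three $\bD(S)$-blocks come from the rectangular Lefschetz pieces of $\bD(\P(W_1)\times\P(W_2))$ restricted to~$\cX$, and $\cY:=Y^\natural\times_{\P(W_1^\vee\otimes W_2^\vee)}\P_S(\cK)$. The genericity conditions defining~$S$ ensure that $\cY$ coincides with the scheme-theoretic intersection $\P_S(\cK)\cap\cD_{W_1,W_2}$, finite flat of degree~$3$ over~$S$ (since $\P(K)$ meets $\cD_{W_1,W_2}$ only at smooth points, where $Y^\natural\to\cD_{W_1,W_2}$ is an isomorphism).

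\textbf{Matching.} Next I would compare this decomposition with $\bD(\cX)=\langle\bD(S),\bD(\cZ_2,\beta_{\cZ_2}),\bD(\cZ_3,\beta_{\cZ_3})\rangle$ from Theorem~\ref{theorem:sod-dp6}, recalling that $\cZ_2\to S$ and $\cZ_3\to S$ have degrees~$3$ and~$2$ by Lemma~\ref{lemma:zd-flat}. Restricting both decompositions to a geometric fiber $s\in S$ via Lemma~\ref{lemma:fiberwise} and Corollary~\ref{corollary:S-s}, both specialize to the decomposition of $\bD(X_{K(s)})$ from Corollary~\ref{corollary:dbx-z2-z3}, which features one length-$3$ block and three length-$1$ blocks. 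The alignment of fiberwise lengths and $S$-linearity then force equivalences $\bD(\cZ_2,\beta_{\cZ_2})\simeq\bD(\cY)$ and $\bD(\cZ_3,\beta_{\cZ_3})\simeq\bD(S)\times\bD(S)$. Since the right-hand categories are honest (untwisted) derived categories of schemes, we conclude $\beta_{\cZ_2}=\beta_{\cZ_3}=0$ together with the isomorphisms $\cZ_2\cong\cY=\P_S(\cK)\times_{\P(W_1^\vee\otimes W_2^\vee)}\cD_{W_1,W_2}$ and $\cZ_3\cong S\sqcup S$. As a geometric cross-check on $\cZ_3$, Proposition~\ref{proposition:fd-md} identifies $F_3(\cX/S)$ with $(S\times\P(W_1^\vee))\sqcup(S\times\P(W_2^\vee))$, the two honest $\P^2$-bundles of twisted cubics obtained as pullbacks of lines under the two projections $\cX\to S\times\P(W_i)$, from which $\cZ_3\cong S\sqcup S$ with trivial Brauer class is immediate.

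\textbf{Main obstacle.} The real technical work lies in Appendix~\ref{appendix:hpd-p2p2}: constructing the rectangular Lefschetz decomposition on $\bD(\P(W_1)\times\P(W_2))$, constructing the HPD dual $Y^\natural$ (the crepant categorical resolution of $\cD_{W_1,W_2}$ that the appendix builds via $\Fl(1,2;3)$), and proving the symmetric HPD theorem. Granted that, the proof of the proposition is essentially a bookkeeping exercise: a fiberwise comparison of two $S$-linear semiorthogonal decompositions, with the only delicate point being the alignment of component orderings (handled via mutations, as in the proof of Theorem~\ref{theorem:dbx}). The triviality of both Brauer classes is automatic from the HPD description, since it realizes the relevant components as derived categories of honest schemes rather than twisted ones.
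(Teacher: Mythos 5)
Your overall strategy --- apply the symmetric HPD of Appendix~\ref{appendix:hpd-p2p2} to get a second $S$-linear semiorthogonal decomposition of $\bD(\cX)$ and compare it with~\eqref{eq:sod-dp6} --- is exactly the paper's, and your description of the HPD output (one component over $\P_S(\cK)\times_{\P(W_1^\vee\otimes W_2^\vee)}\cD_{W_1,W_2}$ plus three copies of $\bD(S)$, rearranged by mutations) is correct. The gap is in your ``Matching'' step. You assert that ``the alignment of fiberwise lengths and $S$-linearity then force equivalences'' between the components of the two decompositions, and from an abstract equivalence $\bD(\cZ_2,\beta_{\cZ_2})\simeq\bD(\cY)$ you conclude $\cZ_2\cong\cY$ and $\beta_{\cZ_2}=0$. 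Neither inference is justified: two $S$-linear semiorthogonal decompositions with matching fiberwise numerics need not have identified components, and even a genuine equivalence of (twisted) derived categories of finite $S$-schemes does not by itself produce an isomorphism of the schemes or kill the Brauer class. This is precisely where the modular interpretation of $\cZ_d$ (Theorem~\ref{theorem:moduli} and Proposition~\ref{proposition:md-zd}) has to enter, and your proposal never invokes it.

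The paper closes this gap as follows. It verifies --- and this is the technical core of the proof, not bookkeeping --- that the HPD kernels $\cE_2$ and $\cE_3$ are flat families of torsion-free rank-one sheaves on the fibers of $\cX/S$ with Hilbert polynomials $h_2(t)$ and $h_3(t)$; for $\cE_2$ this requires showing the comparison map~\eqref{eq:map-cx2} is flat and identifying the fiberwise restriction of $\cE_2$ as the twisted ideal sheaf $\cI_L\otimes\cO_{\P(W_1)}(1)$ of a line. By the universal property of the coarse moduli space this yields classifying morphisms $\mu_d\colon\cZ'_d\to\cZ_d(\cX/S)$ with $\mu_d^*\beta_{\cZ_d}$ trivial and $\Phi_{\cE_d}\cong\Phi_{\cE_{\cZ_d}}\circ\mu_{d*}\circ T_d$ for a line bundle twist $T_d$. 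Full faithfulness of $\Phi_{\cE_d}$ and $\Phi_{\cE_{\cZ_d}}$ forces $\mu_{d*}$ to be fully faithful, the comparison of the two decompositions forces it to be essentially surjective, and an equivalence of the form $\mu_{d*}$ does imply $\mu_d$ is an isomorphism --- whence $\cZ_d\cong\cZ'_d$ and $\beta_{\cZ_d}=0$. Your Hilbert-scheme cross-check for $\cZ_3$ also runs backwards: Proposition~\ref{proposition:fd-md} computes $F_3(\cX/S)$ \emph{from} $\cZ_3$, and recovering $\cZ_3$ from a description of $F_3(\cX/S)$ would require something like Corollary~\ref{corollary:birational-isomorphism-fd}, which presupposes a second family with known $\cZ_3$ and regular total space. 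You should rebuild the matching step around the moduli-theoretic identification of the HPD kernels.
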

\begin{proof}
Using homological projective duality for $\P(W_1) \times \P(W_2)$, see Theorem~\ref{theorem:hpd-p2p2}, 
we obtain from~\cite[Theorem~6.27]{kuznetsov2007hpd} a semiorthogonal decomposition
\begin{equation*}
\bD(\cX) = \langle \Phi_{\cE_2}(\bD(\P_S(\cK) \times_{\P(W_1^\vee \otimes W_2^\vee)} \YY_2)), \bD(S) \otimes \cO_\cX(1,0), \bD(S) \otimes \cO_\cX(0,1), \bD(S) \otimes \cO_\cX(1,1) \rangle,
\end{equation*}
where $\YY_2 \to \cD_{W_1,W_2}$ is the resolution of singularities defined in~\eqref{eq:def-yy2}, and
$\cE_2$ is the derived pullback of the sheaf $\EE_2$ defined by~\eqref{eq:hpd-kernel-xx3} with respect to the natural map 
\begin{equation}\label{eq:map-cx2}
\cX \times_S (\P_S(\cK) \times_{\P(W_1^\vee \otimes W_2^\vee)} \YY_2) \to \cX_2 \times_{\P(W_1^\vee \otimes W_2^\vee)} \YY_2
\end{equation} 
(we will discuss this map below), where $\cX_2$ is the universal hyperplane section of $\P(W_1) \times \P(W_2)$.
Mutating the last component to the far left (note that $K_\cX = \cO_\cX(-1,-1)$ in the relative Picard group~$\Pic(\cX/S)$), we get
\begin{equation}
\label{eq:sod-cx-p2p2}
\bD(\cX) = \langle \bD(S) \otimes \cO_\cX, \Phi_{\cE_2}(\bD(\P_S(\cK) \times_{\P(W_1^\vee \otimes W_2^\vee)} \YY_2)), \bD(S) \otimes \cO_\cX(1,0), \bD(S) \otimes \cO_\cX(0,1) \rangle.
\end{equation}
We claim that this decomposition agrees with the general decomposition~\eqref{eq:dbcx-again} of a du Val family of sextic del Pezzo surfaces.

Indeed, the last two components of~\eqref{eq:sod-cx-p2p2} can be considered as the derived category of~$S \sqcup S$ (the trivial double covering of $S$) embedded via the Fourier--Mukai functor with kernel
\begin{equation*}
\cE_3 := \cO_\cX(1,0) \sqcup \cO_\cX(0,1) \in \bD(\cX \sqcup \cX) = \bD(\cX \times_S (S \sqcup S)),
\end{equation*}
and the second component is the derived category of 
\begin{equation*}
\P_S(\cK) \times_{\P(W_1^\vee \otimes W_2^\vee)} \YY_2 = \P_S(\cK) \times_{\P(W_1^\vee \otimes W_2^\vee)} \cD_{W_1,W_2}
\end{equation*}
(recall that $\YY_2$ maps birationally onto $\cD_{W_1,W_2}$ and $\P_S(\cK)$ by definition of $S$ 
does not touch the indeterminacy locus $\P(W_1^\vee) \times \P(W_2^\vee)$ of that birational isomorphism),
which is a flat degree~3 covering of~$S$ (since $\cD_{W_1,W_2}$ is a hypersurface of degree~3), 
and it is embedded into~$\bD(\cX)$ via the Fourier--Mukai functor with kernel~$\cE_2$.

Let us check that both $\cE_2$ and $\cE_3$ are flat families of torsion-free rank 1 sheaves 
on fibers of~$\cX$ over~$S$ with Hilbert polynomials $h_2(t)$ and $h_3(t)$ respectively, parameterized by the schemes
\begin{equation*}
\cZ'_2 := \P_S(\cK) \times_{\P(W_1^\vee \otimes W_2^\vee)} \YY_2
\qquand
\cZ'_3 := S \sqcup S.
\end{equation*}
For the second family flatness is clear and Hilbert polynomial computation is straightforward, so we skip it.
For the first family we note that the map~\eqref{eq:map-cx2} is flat.
Indeed, by~\eqref{eq:cx-p2p2} we have
\begin{equation*}
\cX \times_S (\P_S(\cK) \times_{\P(W_1^\vee \otimes W_2^\vee)} \YY_2) =
\Big((\P(W_1) \times \P(W_2)) \times_{\P(W_1 \otimes W_2)} \P_S(\cK^\perp)\Big) \times_S \Big(\P_S(\cK) \times_{\P(W_1^\vee \otimes W_2^\vee)} \YY_2\Big),
\end{equation*}
while 
\begin{equation*}
\cX_2 \times_{\P(W_1^\vee \otimes W_2^\vee)} \YY_2 = 
(\P(W_1) \times \P(W_2)) \times_{\P(W_1 \otimes W_2)} Q \times_{\P(W_1^\vee \otimes W_2^\vee)} \YY_2,
\end{equation*}
where $Q \subset \P(W_1 \otimes W_2) \times \P(W_1^\vee \otimes W_2^\vee)$ is the incidence quadric, so the map~\eqref{eq:map-cx2} is induced 
by the natural map $\P_S(\cK^\perp) \times_S \P_S(\cK) \to Q = \Fl(1,8;W_1^\vee \otimes W_2^\vee)$.
This map factors as 
\begin{equation*}
\P_S(\cK^\perp) \times_S \P_S(\cK) \hookrightarrow 
\P_{\Gr(2,W_1^\vee \otimes W_2^\vee)}(\cK^\perp) \times_{\Gr(2,W_1^\vee \otimes W_2^\vee)} \P_{\Gr(2,W_1^\vee \otimes W_2^\vee)}(\cK) = 
\Fl(1,2,8;W_1^\vee \otimes W_2^\vee) \to Q.
\end{equation*}
The first map is an open embedding, while the last map is a $\P^6$-bundle, so the composition is flat (and even smooth).

Recall the embedding of the Weil divisor $\YY_2 \times \P(W_2) \hookrightarrow \cX_2 \times_{\P(W_1^\vee \otimes W_2^\vee)} \YY_2$ defined by~\eqref{diagram:yy-cx3}.
Its preimage under the map~\eqref{eq:map-cx2} is a Weil divisor in $\cX \times_S (\P_S(\cK) \times_{\P(W_1^\vee \otimes W_2^\vee)} \YY_2)$ 
such that for any geometric point~$(K,b,w_1)$ of $\cZ'_2 := \P_S(\cK) \times_{\P(W_1^\vee \otimes W_2^\vee)} \YY_2$ its fiber is the Weil divisor 
\begin{equation*}
L := X_K \times_{\P(W_1)} \{w_1\} \subset X_K,
\end{equation*}
i.e., a line on the sextic del Pezzo surface $X_K$ contracted by the projection $X_K \to \P(W_1)$ to the point~$\{w_1\} \in \P(W_1)$.
Thus, by~\eqref{eq:hpd-kernel-xx3}, the sheaf we are interested in is the twisted ideal $\cI_L \otimes \cO_{\P(W_1)}(1)$.
In particular, it is torsion-free, and its Hilbert polynomial equals~$h_2(t)$.

Therefore, the families of sheaves $\cE_2$ and $\cE_3$ induce maps 
\begin{equation*}
\mu_2 \colon \cZ'_2 \to \cZ_2(\cX/S)
\qquand 
\mu_3 \colon \cZ'_3 \to \cZ_3(\cX/S)
\end{equation*}
such that the Brauer classes $\mu_2^*(\beta_{\cZ_2})$ and $\mu_3^*(\beta_{\cZ_3})$ are trivial 
and $\cE_2$ and $\cE_3$ are isomorphic (up to twists by line bundles on~$\cZ'_2$ and $\cZ'_3$) 
to the pullbacks of the universal bundles $\cE_{\cZ_2}$ and $\cE_{\cZ_3}$, respectively.
This means that for $d \in \{2,3\}$ we have isomorphisms of Fourier--Mukai functors
\begin{equation*}
\Phi_{\cE_{d}} \cong \Phi_{\cE_{\cZ_d}} \circ \mu_{d*} \circ T_d \colon \bD(\cZ'_d) \to \bD(\cX),
\end{equation*}
where $T_d$ is a line bundle twist in $\bD(\cZ'_d)$.

Since both $\Phi_{\cE_{\cZ_d}}$ and $\Phi_{\cE_d}$ are fully faithful, 
so is the composition $\mu_{d*} \circ T_d \colon \bD(\cZ'_d) \to \bD(\cZ_d)$.
Moreover, comparing semiorthogonal decompositions given by~\eqref{eq:dbcx-again} and~\eqref{eq:sod-cx-p2p2}, we conclude that $\mu_{d*} \circ T_d$ is essentially surjective, i.e., an equivalence of categories.
Since $T_d$ is also an equivalence, we conclude that $\mu_{d*}$ is an equivalence, hence $\mu_d$ is an isomorphism.
Thus $\cZ'_d \cong \cZ_d(\cX/S)$ and the Brauer classes $\beta_{\cZ_2}$ and $\beta_{\cZ_3}$  on~$\cZ_2(\cX/S)$ and~$\cZ_3(\cX/S)$ vanish.
\end{proof}

\begin{remark}
\label{remark:extended-family}
Considering the family $\bar\cX \subset \P(W_1) \times \P(W_2) \times \bar{S}$ of \emph{all} codimension 2 linear sections of~$\P(W_1) \times \P(W_2)$ over $\bar{S} := \Gr(2,W_1^\vee \otimes W_2^\vee)$ 
and applying the semiorthogonal decomposition of Theorem~\ref{theorem:hpd-p2p2}, we obtain
\begin{equation*}
\bD(\bar\cX) = \langle \bD(\bar{S}), \bD(\bar{\cZ}_2), \bD(\bar{\cZ}_3) \rangle,
\end{equation*}
where $\bar{\cZ_2} = \P_S(\cK) \times_{\P(W_1^\vee \otimes W_2^\vee)} \YY_2$ and $\bar{\cZ_3} = \bar{S} \sqcup \bar{S}$.
Note that the map $\bar{\cZ_2} \to \bar{S}$ is not flat --- its non-flat locus $\bar{S} \setminus S$ is equal 
to the locus of non-integral fibers of the family $\bar{\cX} \to \bar{S}$.
\end{remark}

The statement of Proposition~\ref{proposition:dp-p2p2} can be inverted as follows.

\begin{lemma}
\label{lemma:reconstructing-family}
Let $\cX \to S$ be a du Val family of sextic del Pezzo surfaces.
If $\cZ_3(\cX/S) = S \sqcup S$ and the Brauer class $\beta_{\cZ_3}$ is trivial, 
then Zariski locally over $S$ the family $\cX \to S$ can be represented as a family of codimension $2$ linear sections of\/ $\P^2 \times \P^2$.
In particular, if $\cX$ is regular then $\beta_{\cZ_2}$ is trivial.
\end{lemma}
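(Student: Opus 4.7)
The plan is to reverse-engineer the embedding of $\cX$ into $\P^2\times\P^2$ from the two line bundles furnished by the (split) universal family on the trivial double cover $\cZ_3=S\sqcup S$, and then invoke Proposition~\ref{proposition:dp-p2p2} to read off $\beta_{\cZ_2}$. Since $\cZ_3=S\sqcup S$ and $\beta_{\cZ_3}=0$, the universal twisted sheaf $\cE_{\cZ_3}$ on $\cX\times_S\cZ_3\cong \cX\sqcup\cX$ is a genuine pair of $S$-flat sheaves $\cL_1,\cL_2$ on $\cX$, each fiberwise a rank-$1$ torsion-free sheaf with Hilbert polynomial $h_3(t)$. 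The hypothesis $\cZ_3\cong S\sqcup S$ forces every geometric fiber of $\cZ_3\to S$ to be reduced of length $2$, so by Corollary~\ref{corollary:dbx-z2-z3} every fiber $\cX_s$ is of type $0$, $2$, or $4$; in each of these types one checks directly that the classes $h$ and $2h-e_1-e_2-e_3$ are orthogonal to every $(-2)$-curve on $\tX_s$, hence descend to Cartier divisors on $\cX_s$. Combined with Lemma~\ref{lemma:e-z-small}(3), this shows $\cL_i|_{\cX_s}$ is a line bundle, and $S$-flatness promotes $\cL_i$ to a line bundle on $\cX$.

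Next I would set $\cW_i:=f_*\cL_i$. The relative version of Lemma~\ref{lemma:ps-ez} gives that $\cW_i$ is a rank-$3$ vector bundle on $S$, and fiberwise global generation together with the vanishing of higher fiber cohomology (Proposition~\ref{proposition:e-z-big}) yields a surjection $f^*\cW_i\twoheadrightarrow\cL_i$. Since $\cL_i$ is a line bundle, this defines an $S$-morphism $\phi_i\colon\cX\to\P_S(\cW_i^\vee)$, and combining yields $\phi:=(\phi_1,\phi_2)\colon\cX\to\P_S(\cW_1^\vee)\times_S\P_S(\cW_2^\vee)$. Zariski localizing on $S$ to trivialize $\cW_1$ and $\cW_2$, the target becomes $\P^2\times\P^2\times U$ for $U\subset S$ open.

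The identification of the image as a relative codimension-$2$ linear section comes from the multiplication map $\cW_1\otimes\cW_2\to f_*(\cL_1\otimes\cL_2)\to f_*\omega^{-1}_{\cX/S}$, where the second arrow uses the natural map $\cL_1\otimes\cL_2\to\omega^{-1}_{\cX/S}$ (an isomorphism on each fiber because $h+(2h-e_1-e_2-e_3)=-K_{\cX_s}$ and all three classes are Cartier in types $0,2,4$). Fiberwise, $\dim\cW_1\otimes\cW_2=9$ while $h^0(\cX_s,-K_{\cX_s})=7$, so this map is surjective with a rank-$2$ kernel $\cK\subset\cW_1\otimes\cW_2$ (a rank-$2$ subbundle on $S$). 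Under the relative Segre embedding, this $\cK$ determines a relative codimension-$2$ linear subspace, and $\phi$ factors through its zero locus $\cY\subset\P_S(\cW_1^\vee)\times_S\P_S(\cW_2^\vee)$. On each fiber, Lemma~\ref{lemma:xk-duval} identifies $\cY_s$ with $\cX_s$ via $\phi_s$, and both $\cX$ and $\cY$ are $S$-flat with the same Hilbert polynomial, so $\phi\colon\cX\xrightarrow{\sim}\cY$ is an isomorphism over any $U$ on which $\cW_i$ are trivialized.

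Finally, for the parenthetical claim: assume $\cX$ is regular, so $\cZ_2$ is regular by Proposition~\ref{proposition:smoothness}. Covering $S$ by opens $U_\alpha$ over which the above Zariski-local embedding is realized, Proposition~\ref{proposition:dp-p2p2} gives $\beta_{\cZ_2}|_{\cZ_2\times_S U_\alpha}=0$. Picking any $U_\alpha$ dense in $S$, $\cZ_2\times_S U_\alpha$ is dense open in $\cZ_2$, and the standard injectivity of the Brauer group of a regular Noetherian scheme into the Brauer group of its function field (Auslander--Goldman / Grothendieck) forces $\beta_{\cZ_2}=0$. The main obstacle in this plan will be Step~3 --- verifying that $\phi$ is an isomorphism onto $\cY$ and not merely a morphism into it; the argument sketched (fiberwise iso plus $S$-flatness of both sides with matching Hilbert polynomials) should suffice, but care is needed to ensure that the image really is cut out set-theoretically and scheme-theoretically by $\cK$, especially near fibers of types~$2$ and $4$ where $\cX_s$ has du Val singularities and the Segre presentation is classical but not entirely transparent.
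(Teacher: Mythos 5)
Your proposal is correct and follows essentially the same route as the paper: extract the two line bundles $\cL_1,\cL_2$ from the split universal family on $\cZ_3=S\sqcup S$, push forward to rank-$3$ bundles $\cW_i$ on $S$, embed $\cX$ into $\P_S(\cW_1)\times_S\P_S(\cW_2)$, trivialize Zariski-locally, and then combine Proposition~\ref{proposition:dp-p2p2} with regularity of $\cZ_2$ (Proposition~\ref{proposition:smoothness}) and injectivity of the Brauer group of a regular scheme to kill $\beta_{\cZ_2}$. The only difference is that you supply more detail than the paper (which invokes Proposition~\ref{proposition:fd-md} and asserts the closed embedding directly), in particular making explicit via the multiplication map $\cW_1\otimes\cW_2\to f_*\omega_{\cX/S}^{-1}$ why the image is a codimension-$2$ linear section.
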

\begin{proof}
By Proposition~\ref{proposition:fd-md} there is a pair of rank 3 vector bundles $\cW_1$ and $\cW_2$ on $S$ such that
\begin{equation*}
F_3(\cX/S) = \P_S(\cW_1) \sqcup \P_S(\cW_2).
\end{equation*}
Moreover, the restrictions of the universal sheaf $\cE_{\cZ_3}$ on $\cX \times_S \cZ_3 = \cX \sqcup \cX$ to the two components are line bundles defining regular maps $\cX \to \P_S(\cW_1)$ and $\cX \to \P_S(\cW_2)$ respectively.
Thus, the induced map 
\begin{equation*}
\cX \to \P_S(\cW_1) \times_S \P_S(\cW_2)
\end{equation*}
is a closed embedding.
Zariski locally the bundles $\cW_1$ and $\cW_2$ are trivial, hence we obtain the required local presentation of $\cX$.
The Brauer class $\beta_{\cZ_2}$ vanishes Zariski locally by Proposition~\ref{proposition:dp-p2p2}
and~$\cZ_2$ is regular by Proposition~\ref{proposition:smoothness}, hence $\beta_{\cZ_2} = 0$.
\end{proof}

A slightly more general family of sextic del Pezzo surfaces can be obtained by replacing the projective spaces $\P(W_1)$ and $\P(W_2)$ 
by a pair of projectively dual (i.e., corresponding to mutually inverse Brauer classes) Severi--Brauer surfaces.
In this way one can obtain a family of sextic del Pezzo surfaces with a nontrivial Brauer class $\beta_{\cZ_3}$ 
(however, this class will be ``constant in a family'').
Another possible generalization, is to consider a double covering $\tS \to S$ and a Brauer class $\beta$ on $\tS$ of order~3 
such that~$\beta = \sigma^*(\beta^{-1})$, 
where $\sigma \colon \tS \to \tS$ is the involution of the double covering 
(cf.~the isomorphism of Brauer classes in Proposition~\ref{proposition:md-zd}).
Then one can apply the Weil restriction of scalars to obtain an \'etale locally trivial fibration over $S$ 
with fibers $\P^2 \times \P^2$, and then consider its linear section of codimension~2.

\subsection{Hyperplane sections of $\P^1 \times \P^1 \times \P^1$}
\label{subsection:p1p1p1}

Another simple way to construct a sextic del Pezzo surface is by considering a hyperplane section of $\P^1 \times \P^1 \times \P^1$ in the Segre embedding.

We denote by $V_1$, $V_2$, and $V_3$ three vector spaces of dimension 2 and let 
\begin{equation*}
\P(V_1) \times \P(V_2) \times \P(V_3) \hookrightarrow \P(V_1 \otimes V_2 \otimes V_3)
\end{equation*}
be the Segre embedding. 
For a trilinear form $b \in V_1^\vee \otimes V_2^\vee \otimes V_3^\vee$ we denote by
\begin{equation}\label{eq:xb}
X_b := (\P(V_1) \times \P(V_2) \times \P(V_3)) \cap \P(b^\perp) \subset \P(V_1 \otimes V_2 \otimes V_3) 
\end{equation}
the corresponding hyperplane section, where $b^\perp \subset V_1 \otimes V_2 \otimes V_3$ is the annihilator hyperplane of $b$.

Recall that the group $\rG := (\PGL(V_1) \times \PGL(V_2) \times \PGL(V_3)) \rtimes \fS_3$ acts on $\P(V_1^\vee \otimes V_2^\vee \otimes V_3^\vee)$ with four orbits.
The orbits closures are:
\begin{itemize}
\item $\rO_3 = \P(V_1^\vee) \times \P(V_2^\vee) \times \P(V_3^\vee)$;
\item $\overline\rO_4 = (\P(V_1^\vee) \times \P(V_2^\vee \otimes V_3^\vee)) \cup (\P(V_2^\vee) \times \P(V_1^\vee \otimes V_3^\vee)) \cup (\P(V_3^\vee) \times \P(V_1^\vee \otimes V_2^\vee))$;
\item $\overline\rO_6 = (\P(V_1) \times \P(V_2) \times \P(V_3))^\vee$ is the projectively dual quartic hypersurface; and
\item $\overline\rO_7 = \P(V_1^\vee \otimes V_2^\vee \otimes V_3^\vee)$
\end{itemize}
(we use the dimensions of the orbits as indices).
The equation of the quartic hypersurface $\overline\rO_6$ is given by the Cayley's hyperdeterminant, see~\cite[14.1.7]{gkz1994}.

\begin{lemma}\label{lemma:xb-duval}
Assume the base field $\kk$ is algebraically closed.
The hyperplane section $X_b$ defined by~\textup{\eqref{eq:xb}} is a sextic du Val del Pezzo surface if and only if $b \in \P(V_1^\vee \otimes V_2^\vee \otimes V_3^\vee) \setminus \overline\rO_4$.
Furthermore, $X_b$ is smooth if and only if $b \in \P(V_1^\vee \otimes V_2^\vee \otimes V_3^\vee) \setminus \overline\rO_6$.
\end{lemma}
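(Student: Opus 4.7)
My strategy is to analyze $X_b$ orbit by orbit in the $\rG$-action on $\P(V_1^\vee \otimes V_2^\vee \otimes V_3^\vee)$, since the isomorphism type of $X_b$ depends only on the $\rG$-orbit of $b$. The orbit decomposition is already given in the paragraph preceding the lemma, so only four cases need to be examined.

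For necessity of the first claim, suppose $b \in \overline\rO_4 = \rO_4 \cup \rO_3$. After applying a permutation in $\fS_3$ we may assume $b$ lies in the image of the Segre $\P(V_1^\vee) \times \P(V_2^\vee \otimes V_3^\vee) \to \P(V_1^\vee \otimes V_2^\vee \otimes V_3^\vee)$, so $b = \varphi_1 \otimes \psi$ with $\varphi_1 \in V_1^\vee$ and $\psi \in V_2^\vee \otimes V_3^\vee$ both nonzero. Then the equation of $X_b$ factors as a product, and
\begin{equation*}
X_b = \big(\{\varphi_1{=}0\} \times \P(V_2) \times \P(V_3)\big) \cup \big(\P(V_1) \times \{\psi{=}0\}\big),
\end{equation*}
the first component being an honest $\P^1 \times \P^1$. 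Thus $X_b$ has two distinct $2$-dimensional components, so is reducible, hence not integral, hence not a du Val del Pezzo by Definition~\ref{definition:dp6}.

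For sufficiency, assume $b \notin \overline\rO_4$. Adjunction applied to the anticanonically Segre-embedded Fano threefold $\P^1 \times \P^1 \times \P^1 \subset \P^7$ gives, for any integral divisor in $|\cO(1,1,1)|$, the isomorphism $\omega_{X_b} \cong \cO_{X_b}(-1,-1,-1)$, so $-K_{X_b}$ is the (ample) hyperplane class and $K_{X_b}^2 = (1,1,1)^3 = 6$. The tridegree-$(1,1,1)$ polynomial $b$ is irreducible precisely when it does not factor as a linear form on one $V_i^\vee$ tensored with a bilinear form on the other two, i.e.\ precisely when $b \notin \overline\rO_4$; hence $X_b$ is integral. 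Since it is a Cartier divisor in a smooth ambient, $X_b$ is Cohen--Macaulay, so once the singular locus is shown to consist of isolated du Val singularities, normality follows from Serre's $R_1+S_2$ criterion, and all conditions of Definition~\ref{definition:dp6} are verified.

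It remains to treat singularities. By classical projective duality, the dual variety of the Segre $\P(V_1) \times \P(V_2) \times \P(V_3) \subset \P^7$ is the quartic Cayley hyperdeterminant hypersurface, which coincides with $\overline\rO_6$; the standard smoothness criterion for hyperplane sections in terms of the dual variety then proves the ``furthermore'' assertion, and in particular handles $b \in \rO_7$. For the remaining orbit $\rO_6$, which is a single $\rG$-orbit, it suffices to verify the du Val property on one representative. I would take $b = y_1 x_2 x_3 + x_1 y_2 x_3 + x_1 x_2 y_3$, where $x_i, y_i \in V_i^\vee$ are dual to a basis $v_i, w_i$ of $V_i$; a direct Jacobian computation for the equation $w_1 v_2 v_3 + v_1 w_2 v_3 + v_1 v_2 w_3 = 0$ shows that the unique critical point is $([0{:}1],[0{:}1],[0{:}1])$, at which the affine local equation $v_1 v_2 + v_1 v_3 + v_2 v_3 = 0$ is a nondegenerate quadratic form, yielding a single $A_1$-singularity. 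The main technical step is this explicit singularity analysis on a representative of $\rO_6$, but it reduces to a routine local calculation once the orbit structure has done the heavy lifting.
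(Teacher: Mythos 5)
Your proof is correct and follows essentially the same route as the paper: the paper's argument is exactly the orbit-by-orbit check on representatives ($X_b$ smooth on $\rO_7$, one $A_1$ on $\rO_6$, a union of a quadric and a quartic scroll on $\rO_4$, a union of three quadrics on $\rO_3$), which you have simply written out in more detail, supplementing it with the factorization criterion for integrality and the projective-duality characterization of $\overline\rO_6$ already recorded in the surrounding text. The explicit $A_1$ computation at $([0{:}1],[0{:}1],[0{:}1])$ for the $\rO_6$ representative checks out (modulo the harmless notational slip of writing the equation in the basis vectors $v_i,w_i$ rather than the dual coordinates).
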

\begin{proof}
Indeed, choosing a representative of each orbit, it is easy to see that 
$X_b$ is smooth, if $b \in \rO_7$; 
has one $A_1$ singularity (and is of type 1), if $b \in \rO_6$;
is a union of a smooth quadric and a quartic scroll, if~$b \in \rO_4$; 
and is a union of three quadrics, if $b \in \rO_3$.
\end{proof}

Consider the universal family of hyperplane sections of $\P(V_1) \times \P(V_2) \times \P(V_3)$ that are sextic du Val del Pezzo surfaces.
By Lemma~\ref{lemma:xb-duval} it can be described as follows.
Consider the open subset 
\begin{equation*}
S := \rO_6 \cup \rO_7 = \P(V_1^\vee \otimes V_2^\vee \otimes V_3^\vee) \setminus \overline\rO_4 \subset \P(V_1^\vee \otimes V_2^\vee \otimes V_3^\vee).
\end{equation*}
Let $\cL \subset V_1^\vee \otimes V_2^\vee \otimes V_3^\vee \otimes \cO_S$ be the tautological line bundle, let $\cL^\perp \subset V_1 \otimes V_2 \otimes V_3 \otimes \cO_S$ be its rank 7 annihilator, and let 
\begin{equation}\label{eq:cx-p1p1p1}
\cX := (\P(V_1) \times \P(V_2) \times \P(V_3)) \times_{\P(V_1 \otimes V_2 \otimes V_3)} \P_S(\cL^\perp)
\end{equation}
be the corresponding du Val family of sextic del Pezzo surfaces.

Denote by ${\cD_{V_1,V_2,V_3}} \to \P(V_1^\vee \otimes V_2^\vee \otimes V_3^\vee)$ the double covering branched along the Cayley quartic hypersurface~$\overline\rO_6$.

\begin{proposition}\label{proposition:dp-p1p1p1}
Let $\cX \to S$ be the du Val family of sextic del Pezzo surfaces defined by~\textup{\eqref{eq:cx-p1p1p1}}. 
Then 
\begin{equation*}
\cZ_2 = S \sqcup S \sqcup S,
\qquad 
\cZ_3 = S \times_{\P(V_1^\vee \otimes V_2^\vee \otimes V_3^\vee)} {\cD_{V_1,V_2,V_3}},
\end{equation*}
and the Brauer classes $\beta_{\cZ_2}$ and $\beta_{\cZ_3}$ are both trivial.
\end{proposition}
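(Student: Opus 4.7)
The plan is to follow, essentially verbatim, the strategy of the proof of Proposition~\ref{proposition:dp-p2p2}, substituting the symmetric homological projective duality for $\P(V_1) \times \P(V_2) \times \P(V_3)$ from Appendix~\ref{appendix:hpd-p1p1p1} for the HPD of $\P(W_1) \times \P(W_2)$. Applied to the universal hyperplane section family $\cX \to S$ of~\eqref{eq:cx-p1p1p1}, that HPD together with~\cite[Theorem~6.27]{kuznetsov2007hpd} produces an $S$-linear semiorthogonal decomposition
\begin{equation*}
\bD(\cX) = \langle \Phi_{\cE_3}(\bD(\cZ'_3)), \bD(S) \otimes \cO_\cX(1,0,0), \bD(S) \otimes \cO_\cX(0,1,0), \bD(S) \otimes \cO_\cX(0,0,1), \bD(S) \otimes \cO_\cX(1,1,1) \rangle,
\end{equation*}
in which $\cZ'_3 := S \times_{\P(V_1^\vee \otimes V_2^\vee \otimes V_3^\vee)} \cD_{V_1,V_2,V_3}$ is the pullback of the Cayley double cover and $\cE_3$ is the derived pullback of the HPD kernel to $\cX \times_S \cZ'_3$.

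Since $K_\cX = \cO_\cX(-1,-1,-1)$ in $\Pic(\cX/S)$, mutating the last component to the far left yields
\begin{equation*}
\bD(\cX) = \langle \bD(S) \otimes \cO_\cX, \Phi_{\cE_3}(\bD(\cZ'_3)), \bD(S) \otimes \cO_\cX(1,0,0), \bD(S) \otimes \cO_\cX(0,1,0), \bD(S) \otimes \cO_\cX(0,0,1) \rangle,
\end{equation*}
and the last three summands read as a single Fourier--Mukai image $\Phi_{\cE_2}(\bD(\cZ'_2))$ for
\begin{equation*}
\cZ'_2 := S \sqcup S \sqcup S, \qquad \cE_2 := \cO_\cX(1,0,0) \sqcup \cO_\cX(0,1,0) \sqcup \cO_\cX(0,0,1) \in \bD(\cX \times_S \cZ'_2).
\end{equation*}

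Next I would verify that $\cE_2$ and $\cE_3$ are flat families of stable torsion-free rank-one sheaves on the fibers of $\cX \to S$ with Hilbert polynomials $h_2(t)$ and $h_3(t)$ respectively. For $\cE_2$ this is direct: each restriction is a line bundle on a sextic du Val del Pezzo $X_b$ with $h^0(\cO_{X_b}(1,0,0)) = \dim V_1^\vee = 2$ and $h^0(\cO_{X_b}(1,0,0) \otimes \omega_{X_b}) = 0$, which pins down $h_2(t)$. For $\cE_3$ I would use the analogue of diagram~\eqref{diagram:yy-cx3} and of the description~\eqref{eq:hpd-kernel-xx3} of the HPD kernel coming from Appendix~\ref{appendix:hpd-p1p1p1}, together with flatness of the analogue of the map~\eqref{eq:map-cx2}, to identify $\cE_3$ on each geometric fiber with a twisted ideal sheaf of a rational cubic curve on $X_b$ (a fiber of one of the projections $X_b \to \P(V_i) \times \P(V_j)$ in class $(1,1,0)$, $(1,0,1)$, or $(0,1,1)$), which is torsion-free of rank one and has Hilbert polynomial $h_3(t)$. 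Then the universal property of the coarse moduli spaces from Theorem~\ref{theorem:moduli} and Proposition~\ref{proposition:md-zd} produces morphisms $\mu_d \colon \cZ'_d \to \cZ_d(\cX/S)$ under which $\mu_d^* \beta_{\cZ_d(\cX/S)}$ is trivial and $\cE_d$ is identified, up to a line bundle twist on $\cZ'_d$, with the pullback of $\cE_{\cZ_d}$; comparing the mutated decomposition above with~\eqref{eq:dbcx-again} exactly as at the end of the proof of Proposition~\ref{proposition:dp-p2p2} forces each $\mu_d$ to be an isomorphism and $\beta_{\cZ_2} = \beta_{\cZ_3} = 0$.

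The main obstacle is the fiberwise identification of the HPD kernel~$\cE_3$: one needs the explicit form of the Lefschetz decomposition and the HPD kernel for $\P(V_1) \times \P(V_2) \times \P(V_3)$ from Appendix~\ref{appendix:hpd-p1p1p1}, together with the geometric description of its restriction to a fiber as the ideal of an honest (possibly reducible) rational cubic curve on $X_b$. Everything else is routine bookkeeping strictly parallel to the $\P^2 \times \P^2$ argument.
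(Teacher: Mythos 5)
Your strategy is exactly the paper's: its proof of Proposition~\ref{proposition:dp-p1p1p1} is explicitly "parallel to that of Proposition~\ref{proposition:dp-p2p2}" and runs through the same steps you outline --- the symmetric HPD of Appendix~\ref{appendix:hpd-p1p1p1}, a mutation, the identification of $\cE_2$ and $\cE_3$ as flat families of stable torsion-free rank-one sheaves with Hilbert polynomials $h_2$ and $h_3$, and the comparison with~\eqref{eq:dbcx-again} via the maps $\mu_d$.

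There is, however, a concrete bookkeeping error. What \cite[Theorem~6.27]{kuznetsov2007hpd} gives for a hyperplane section with respect to the rectangular Lefschetz decomposition~\eqref{eq:xx-lefschetz} is
$\bD(\cX) = \langle \Phi_{\cE_3}(\bD(\cZ'_3)),\, \bD(S)\otimes\cO_\cX(1,1,1),\, \bD(S)\otimes\cO_\cX(2,1,1),\, \bD(S)\otimes\cO_\cX(1,2,1),\, \bD(S)\otimes\cO_\cX(1,1,2)\rangle$:
the surviving ambient block is $\cA^{\rm sym}_{\XX_3}\otimes\cO(1,1,1)$, not the collection $\cO(1,0,0),\cO(0,1,0),\cO(0,0,1),\cO(1,1,1)$ you wrote. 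One must then mutate all \emph{four} line-bundle components to the far left; they become $\cO,\cO(1,0,0),\cO(0,1,0),\cO(0,0,1)$, and $\Phi_{\cE_3}(\bD(\cZ'_3))$ lands in the \emph{last} position, matching the order $\langle\bD(S),\bD(\cZ_2),\bD(\cZ_3)\rangle$ of~\eqref{eq:dbcx-again}. Your version places the $\cZ'_3$-component second and the $\cZ'_2$-component last; since $\cA_2$ and $\cA_3$ are not completely orthogonal (the gluing bimodule $\Ext^\bullet(\fE_{Z_2},\fE_{Z_3})\cong\kk[Z_2\times Z_3]$ of Lemma~\ref{lemma:ps-ez} is nonzero), $\langle\bD(S),\cA_3,\cA_2\rangle$ is not a semiorthogonal decomposition, so the claimed intermediate decomposition cannot hold and the final comparison would not go through as stated. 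This is repaired simply by performing the mutation correctly. A second, minor point: the fiber of the kernel $\cE_3$ is the twisted ideal $\cI_L\otimes\cO_{\P(V_1)\times\P(V_2)}(1,1)$ of a \emph{line} $L$ (a one-dimensional fiber of $X_b\to\P(V_1)\times\P(V_2)$), not the ideal of a cubic curve; it is indeed rank-one torsion-free with Hilbert polynomial $h_3(t)$, but your geometric description of it is off.
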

\begin{proof}
The proof is parallel to that of Proposition~\ref{proposition:dp-p2p2}.

Using homological projective duality for $\P(V_1) \times \P(V_2) \times \P(V_3)$, see Theorem~\ref{theorem:hpd-p1p1p1}, 
we obtain from~\cite[Theorem~6.27]{kuznetsov2007hpd} a semiorthogonal decomposition
\begin{multline*}
\bD(\cX) = \langle \Phi_{\cE_3}(\bD(S \times_{\P(V_1^\vee \otimes V_2^\vee \otimes V_3^\vee)} \YY_3)), \\
\bD(S) \otimes \cO_\cX(1,1,1), \bD(S) \otimes \cO_\cX(2,1,1), \bD(S) \otimes \cO_\cX(1,2,1), \bD(S) \otimes \cO_\cX(1,1,2) \rangle,
\end{multline*}
where $\YY_3 \to \cD_{V_1,V_2,V_3}$ is the resolution of singularities defined in~\eqref{eq:def-yy3} and
where $\cE_3$ is the derived pullback of the sheaf $\EE_3$ defined by~\eqref{eq:hpd-kernel-xxx3} with respect to the natural map 
\begin{equation}\label{eq:map-cx3}
\cX \times_S (S \times_{\P(V_1^\vee \otimes V_2^\vee \otimes V_3^\vee)} \YY_3) \to \cX_3 \times_{\P(V_1^\vee \otimes V_2^\vee \otimes V_3^\vee)} \YY_3
\end{equation} 
(we will discuss this map below).
Mutating the last four components to the far left 
(again, note that~\mbox{$K_\cX = \cO_\cX(-1,-1,-1)$} in the relative Picard group~$\Pic(\cX/S)$), we get
\begin{multline}
\bD(\cX) = \langle \bD(S) \otimes \cO_\cX, \bD(S) \otimes \cO_\cX(1,0,0), \bD(S) \otimes \cO_\cX(0,1,0), \bD(S) \otimes \cO_\cX(0,0,1), \\
\Phi_{\cE_3}(\bD(S \times_{\P(V_1^\vee \otimes V_2^\vee \otimes V_3^\vee)} \YY_3)) \rangle.
\end{multline}
We claim that this decomposition agrees with the general decomposition~\eqref{eq:dbcx-again} for a du Val family of sextic del Pezzo surfaces.
Indeed, its second, third and fourth components can be considered as the derived category of $S \sqcup S \sqcup S$ (the trivial triple covering of $S$) embedded via the Fourier--Mukai functor with kernel
\begin{equation*}
\cE_2 := \cO_\cX(1,0,0) \sqcup \cO_\cX(0,1,0) \sqcup \cO_\cX(0,0,1) \in \bD(\cX \sqcup \cX \sqcup \cX) = \bD(\cX \times_S (S \sqcup S \sqcup S)),
\end{equation*}
while the last component is the derived category 
of~$S \times_{\P(V_1^\vee \otimes V_2^\vee \otimes V_3^\vee)} \YY_3 = 
S \times_{\P(V_1^\vee \otimes V_2^\vee \otimes V_3^\vee)} {\cD_{V_1,V_2,V_3}}$
(recall that $\YY_3$ maps birationally onto ${\cD_{V_1,V_2,V_3}}$ and $S$ by definition 
does not touch the image $\overline\rO_4$ (see Remark~\ref{remark:fibers-resolution-dvvv}) 
of the indeterminacy locus of that birational isomorphism),
which is a flat degree~2 covering of~$S$ (since ${\cD_{V_1,V_2,V_3}}$ is flat over the complement of~$\overline\rO_4$), 
and it is embedded into~$\bD(\cX)$ via the Fourier--Mukai functor with kernel~$\cE_3$.

Let us check that both $\cE_2$ and $\cE_3$ are flat families of torsion-free rank 1 sheaves on the fibers of $\cX$ over~$S$ with Hilbert polynomials $h_2(t)$ and $h_3(t)$ respectively, parameterized by 
\begin{equation*}
\cZ'_2 := S \sqcup S \sqcup S
\qquand
\cZ'_3 := S \times_{\P(V_1^\vee \otimes V_2^\vee \otimes V_3^\vee)} \YY_3.
\end{equation*}
For the first flatness is clear and Hilbert polynomial computation is straightforward, so we skip it.
For the second we note that the map~\eqref{eq:map-cx3} is flat.
Indeed, arguing as in the proof of Proposition~\ref{proposition:dp-p2p2} we can check
that the map~\eqref{eq:map-cx3} is obtained by base change from the map 
\begin{equation*}
\P_S(\cL^\perp) \times_S \P_S(\cL) \to Q = \Fl(1,7;V_1^\vee \otimes V_2^\vee \otimes V_3^\vee) 
\end{equation*}
which is an open embedding.

Recall the embedding of the Weil divisor $\YY_3 \times \P(V_3) \hookrightarrow \cX_3 \times_{\P(V_1^\vee \otimes V_2^\vee \otimes V_3^\vee)} \YY_3$ defined by~\eqref{diagram:yy-cx}.
The preimage of $\YY_3 \times \P(V_3)$ under~\eqref{eq:map-cx3} is a Weil divisor in $\cX \times_S (S \times_{\P(V_1^\vee \otimes V_2^\vee \otimes V_3^\vee)} \YY_3)$ 
such that for any closed point $(b,v_1,v_2)$ of $\cZ'_3 = S \times_{\P(V_1^\vee \otimes V_2^\vee \otimes V_3^\vee)} \YY_3$ its fiber is the Weil divisor 
\begin{equation*}
L := X_b \times_{\P(V_1) \times \P(V_2)} \{(v_1,v_2)\} \subset X_b,
\end{equation*}
i.e., a line on the sextic del Pezzo surface $X_b$ contracted by the projection $X_b \to \P(V_1) \times \P(V_2)$ 
to the point $(v_1,v_2) \in \P(V_1) \times \P(V_2)$.
Thus, by~\eqref{eq:hpd-kernel-xxx3}, the sheaf we are interested in is the twisted ideal \mbox{$\cI_L \otimes \cO_{\P(V_1) \times \P(V_2)}(1,1)$}.
In particular, it is torsion-free, and its Hilbert polynomial equals~$h_3(t)$.

The rest of the proof repeats the argument of Proposition~\ref{proposition:dp-p2p2}.
\end{proof}

The statement of Proposition~\ref{proposition:dp-p1p1p1} can be inverted as follows (and the proof repeats the proof of Lemma~\ref{lemma:reconstructing-family}).

\begin{lemma}
Let $\cX \to S$ be a du Val family of sextic del Pezzo surfaces.
If $\cZ_2(\cX/S) = S \sqcup S \sqcup S$ and the Brauer class $\beta_{\cZ_2}$ is trivial, then Zariski locally over $S$ 
the family $\cX \to S$ can be represented as a family of hyperplane sections of\/ $\P^1 \times \P^1 \times \P^1$.
In particular, if $\cX$ is regular then $\beta_{\cZ_3}$ is trivial.
\end{lemma}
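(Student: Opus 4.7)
The plan is to mirror the proof of Lemma~\ref{lemma:reconstructing-family}, using conics in three distinct rulings in place of twisted cubics in two rulings. By Proposition~\ref{proposition:fd-md}, the triviality of $\beta_{\cZ_2}$ together with the decomposition $\cZ_2 = S\sqcup S\sqcup S$ produces an identification
\[
F_2(\cX/S)\;\cong\;\P_S(\cW_1)\sqcup\P_S(\cW_2)\sqcup\P_S(\cW_3),
\]
where each $\cW_i$ is a rank-$2$ vector bundle on the $i$-th copy $S_i\cong S$ of $\cZ_2$ (cf.~Lemma~\ref{lemma:ps-ez}). Writing $\cX\times_S\cZ_2 = \cX_1\sqcup\cX_2\sqcup\cX_3$ with each $\cX_i\cong\cX$, let $\cL_i$ denote the restriction of the universal sheaf $\cE_{\cZ_2}$ to $\cX_i$. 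The first step is to check that each $\cL_i$ is in fact a \emph{line bundle} on $\cX$. Since $\cZ_2=S\sqcup S\sqcup S$, Corollary~\ref{corollary:dbx-z2-z3} forces every geometric fiber of $\cX\to S$ to be of type~$0$ or~$1$, and on such fibers the sheaf $\cE_z = \pi_*\cO_{\tX}(h-e_i)$ is a line bundle (the class $h-e_i$ is orthogonal to the only possible $(-2)$-curve $\Delta_{123}$, so it descends to a Cartier class on $X$). The flatness of $\cL_i$ over $S$ combined with pointwise freeness on fibers then yields local freeness on the total space by the standard Nakayama argument.

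Next, cohomology and base change give $f_*\cL_i\cong\cW_i$ and a fiberwise surjective (hence surjective) evaluation map $f^*\cW_i\to\cL_i$ (using global generation from Lemma~\ref{lemma:e-z-small}), which defines an $S$-morphism $\varphi_i\colon\cX\to\P_S(\cW_i)$. Assembling these,
\[
\varphi\;=\;(\varphi_1,\varphi_2,\varphi_3)\colon\;\cX\;\longrightarrow\;Y := \P_S(\cW_1)\times_S\P_S(\cW_2)\times_S\P_S(\cW_3).
\]
On each geometric fiber, $\varphi_s\colon\cX_s\hookrightarrow(\P^1)^3$ is the triple of conic-bundle projections of the du Val sextic del Pezzo $\cX_s$, whose image is a hyperplane section of class $\cO(1,1,1)$ on $(\P^1)^3$. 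Combined with the properness of $\varphi$ and flatness of $\cX/S$ with constant Hilbert polynomial, this fiberwise closed embedding upgrades to a closed embedding $\varphi\colon\cX\hookrightarrow Y$ over $S$, whose ideal sheaf $\cI_\cX\subset\cO_Y$ is a line bundle with fiberwise class $\cO(-1,-1,-1)$ (by the same Nakayama argument as before). Trivializing the $\cW_i$ over a Zariski open $U\subset S$ and then trivializing the resulting line bundle on $U$ realizes $\cX|_U$ as the zero locus of a section of $\cO(1,1,1)$ on $U\times\P^1\times\P^1\times\P^1$, giving the required local presentation.

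For the Brauer-class statement, assume $\cX$ is regular. Then $\cZ_3$ is regular by Proposition~\ref{proposition:smoothness}, and finite flat of degree~$2$ over $S$ by Lemma~\ref{lemma:zd-flat}. The local presentation just established, combined with the base-change compatibility of the coarse moduli construction (Lemma~\ref{lemma:md-zd-base-change}) and the vanishing $\beta_{\cZ_3}=0$ for the universal family of hyperplane sections of $\P^1\times\P^1\times\P^1$ (Proposition~\ref{proposition:dp-p1p1p1}), shows that $\beta_{\cZ_3}$ vanishes on the preimage of a Zariski open cover of $S$. Since every connected component of the regular scheme $\cZ_3$ is integral and the Brauer group of a regular integral Noetherian scheme injects into the Brauer group of its function field (Auslander--Goldman), it follows that $\beta_{\cZ_3}=0$.

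The principal technical obstacle is the verification that the restrictions $\cL_i$ of the universal sheaf are line bundles on the total space $\cX$ (rather than merely reflexive rank-$1$ sheaves): the hypothesis $\cZ_2 = S\sqcup S\sqcup S$ is precisely what excludes fibers of types~$2$--$5$ (in which the pushforward $\pi_*\cO_{\tX}(h-e_i)$ can fail to be invertible) and so keeps the three conic-bundle projections genuinely well-behaved across the family. Once this is in place, the argument is a formal analogue of the $\P^2\times\P^2$ case treated in Lemma~\ref{lemma:reconstructing-family}.
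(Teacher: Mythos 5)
Your proposal is correct and follows essentially the same route as the paper, which simply states that the proof repeats that of Lemma~\ref{lemma:reconstructing-family}: use Proposition~\ref{proposition:fd-md} to split $F_2(\cX/S)$ into three projective bundles $\P_S(\cW_i)$, observe that the restrictions of the universal sheaf are line bundles defining maps $\cX\to\P_S(\cW_i)$ whose product is a closed embedding, trivialize the $\cW_i$ Zariski locally, and then deduce $\beta_{\cZ_3}=0$ from its local vanishing (Proposition~\ref{proposition:dp-p1p1p1}) plus regularity of $\cZ_3$ (Proposition~\ref{proposition:smoothness}). Your extra verification that the hypothesis $\cZ_2=S\sqcup S\sqcup S$ excludes fiber types $2$--$5$, so that the $\cE_z$ are genuinely invertible, is a correct filling-in of a detail the paper leaves implicit.
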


\subsection{Blowup families}\label{subsection:blowups}

For each length 3 subscheme $Y \subset \P^2$, consider the blowup 
\begin{equation*}
\widehat{X} := \Bl_Y(\P^2).
\end{equation*}
Unless $Y$ is the infinitesimal neighborhood of a point (i.e., is given by the square of the maximal ideal of a point), 
$\widehat{X}$ is a weak del Pezzo surface of degree 6.
In particular, the anticanonical class of $\widehat{X}$ is nef and big and
the anticanonical model of $\widehat{X}$ (i.e., the image of $\widehat{X}$ under the anticanonical map) is a sextic du Val del Pezzo surface.

This construction can be also performed in a family.
Let 
\begin{equation}\label{eq:hilb-p2}
S := (\P^2)^{[3]} \setminus \P^2
\end{equation}
be the open subset of the Hilbert cube of the plane parameterizing length 3 subschemes in $\P^2$ avoiding infinitesimal neighborhoods of points.
Let $\cY \subset \P^2 \times S$ be the corresponding family of subschemes.
Let
\begin{equation*}
\hcX := \Bl_\cY(\P^2 \times S)
\end{equation*}
be the blowup and $\cX \to S$ its relative anticanonical model.
In particular, we have a morphism
\begin{equation*}
\hpi \colon \hcX \to \cX.
\end{equation*}
Let $S_1 \subset S$ be the divisor parameterizing subschemes in $\P^2$ contained in a line.
It is easy to see that over~$S_1$ there is a~$\P^1$-bundle $\Delta \to S_1$ (formed by strict transforms of lines supporting the subschemes) 
and an embedding $\Delta \hookrightarrow \hcX$, such that~$\cX$ is the contraction of $\Delta$ to $S_1$.
In particular, over $S_0 := S \setminus S_1$ the map $\hpi$ is an isomorphism.
One can check that~$\hcX$ is a small resolution of singularities of $\cX$.

\begin{proposition}\label{proposition:blowup-p2}
Let $S$ be defined by~\eqref{eq:hilb-p2} and let $\cX \to S$ be the relative anticanonical model of the blowup $\hcX = \Bl_\cY(\P^2 \times S) \to S$.
Then
\begin{equation*}
\cZ_2 \cong \cY
\qquand 
\cZ_3 \cong S \,\mathop\cup\limits_{S_1}\, S,
\end{equation*}
and both Brauer classes $\beta_{\cZ_2}$ and $\beta_{\cZ_3}$ are trivial.
\end{proposition}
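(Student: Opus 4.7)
My strategy is to produce natural morphisms $\cY \to \cZ_2$ and $S \cup_{S_1} S \to \cZ_3$ from explicit families of rank-1 sheaves on $\cX$, and verify these are isomorphisms by comparison with the fiberwise descriptions of Corollary~\ref{corollary:dbx-z2-z3} and Lemma~\ref{lemma:e-z-small}. Triviality of the Brauer classes will follow from the fact that these families are genuine (not merely quasi-) universal families.

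For $\cZ_2 \cong \cY$: over each $s \in S$, the points of $Z_2(X_s)$ are in bijection with $Y_s \subset \P^2$ by Lemma~\ref{lemma:e-z-small}(2), with corresponding sheaves $\pi_{s*}\cO_{\tX_s}(h - e_y)$ for $y \in Y_s$. To globalize, I would consider the $\P^1$-bundle $\cL := \cY \times_{\P^2} \Fl(1,2;\kk^3)$ over $\cY$ of pairs (a point of $\cY$, a line in $\P^2$ through it); for each such pair over $s \in S$, the strict transform of the line in $\hcX_s$ descends under $\hpi$ to a conic in $\cX_s$, yielding a flat family of conics on $\cX$ parameterized by $\cL$ and therefore a morphism $\cL \to F_2(\cX/S)$. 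This morphism is a fiberwise bijection of $\P^1$-bundles flat over the degree-$3$ covers $\cY$ and $\cZ_2$, hence an isomorphism; passing to bases identifies $\cY \cong \cZ_2$, and the triviality of $\beta_{\cZ_2}$ is automatic because $\cL$ is the projectivization of a rank-$2$ vector bundle.

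For $\cZ_3 \cong S \cup_{S_1} S$: set $\cE_1 := \hpi_*\cO_\hcX(H)$ and $\cE_2 := \hpi_*\cO_\hcX(2H - E)$, where $H$ denotes the pullback of $\cO_{\P^2}(1)$ to $\hcX$ and $E$ the exceptional divisor of $\hcX \to \P^2 \times S$. The curves contracted by $\hpi$ over $S_1$ are $\P^1$s on which both line bundles restrict to $\cO(1)$, so a cohomology-and-base-change argument shows $\cE_1, \cE_2$ are $S$-flat coherent sheaves whose fibers over $s \in S$ are $\pi_{s*}\cO_{\tX_s}(h)$ and $\pi_{s*}\cO_{\tX_s}(2h - e_1 - e_2 - e_3)$ respectively; these are stable rank-$1$ sheaves of Hilbert polynomial $h_3(t)$ by Lemma~\ref{lemma:hilbert-fz}, giving classifying morphisms $\sigma_1, \sigma_2 \colon S \to \cZ_3$. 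They agree over $S_1$ by Lemma~\ref{lemma:e-z-small}(3b) but differ over $S_0$, so the universal property of the pushout produces a morphism $\sigma \colon S \cup_{S_1} S \to \cZ_3$ over $S$.

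The main obstacle is to show $\sigma$ is an isomorphism and not merely a bijection on closed points. Both $S \cup_{S_1} S \to S$ and $\cZ_3 \to S$ are finite flat of degree~$2$ (Lemma~\ref{lemma:zd-flat}), and by Remark~\ref{remark:smoothness} the branch divisor of the latter equals the non-smooth locus of $\cX \to S$, which is exactly $S_1$; the former is branched precisely on $S_1$ by construction. Locally writing the two covers as $\Spec_S(\cO_S[b]/(b^2 - gb))$ and $\Spec_S(\cO_S[t]/(t^2 - ft))$ with $(f) = (g) = \cI_{S_1}$, the compatibility of $\sigma$ with the two sections forces it to send $b \mapsto (g/f)\,t$, and $g/f$ is a unit since $f$ and $g$ generate the same principal ideal. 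Finally, the triviality of $\beta_{\cZ_3}$ follows by gluing $\cE_1$ and $\cE_2$ into a universal family on $\cX \times_S (S \cup_{S_1} S)$ using the natural isomorphism $\cE_1|_{\cX|_{S_1}} \cong \cE_2|_{\cX|_{S_1}}$ obtained from the exact sequence $0 \to \cO_\hcX(H) \to \cO_\hcX(2H - E) \to \cO_\Delta(-1) \to 0$ on $\hcX|_{S_1}$ after pushforward via $\hpi$, where $\Delta$ is the $\P^1$-bundle over $S_1$ of strict transforms of the lines supporting the collinear triples.
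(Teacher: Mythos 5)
The paper gives no proof of this proposition (it is explicitly ``left to the interested reader''), so there is nothing to compare against line by line; judged on its own terms, your strategy --- build explicit families indexed by $\cY$ and by the two tautological line bundles on the blowup, get classifying maps to $\cZ_2$ and $\cZ_3$, and match them against the fiberwise description of Corollary~\ref{corollary:dbx-z2-z3} --- is the right one, and the construction of the two sections $\sigma_1,\sigma_2\colon S\to\cZ_3$ from $\hpi_*\cO_{\hcX}(H)$ and $\hpi_*\cO_{\hcX}(2H-E)$ is essentially sound. (One slip there: on the contracted curves $\Delta\sim H-E$ one has $(2H-E)\cdot(H-E)=-1$, so the second bundle restricts to $\cO(-1)$, not $\cO(1)$ --- consistent with your own final exact sequence --- but $H^1$ still vanishes, so cohomology and base change goes through.) The problem is that both halves break down exactly where the proposition has content: identifying the \emph{scheme structures} along the degeneracy divisors.

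For $\cZ_2$: a morphism of finite flat degree-$3$ covers of $S$ (or of $\P^1$-bundles over them) that is bijective on closed points need not be an isomorphism --- consider $\Spec\kk[t]/t^2\to\Spec\kk[t]/t^2$, $t\mapsto 0$ --- so ``fiberwise bijection, hence an isomorphism'' is not a valid step. The difficulty sits entirely over the divisor where $\cY\to S$ is non-reduced, and there you would first have to actually construct the flat family of conics over the non-reduced parameter scheme (your ``strict transform of the line'' does not define it over nilpotents, and is already wrong on closed points for a line through two of the blown-up points, where the strict transform is a line; the correct object is the member of the pencil $|h-e_y|$, i.e.\ total transform minus the exceptional divisor over $y$), and then verify the induced map on the non-reduced fibers is an isomorphism of schemes, not just of point sets. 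For $\cZ_3$: writing $\cZ_3=\Spec_S(\cO_S[t]/(t^2-ft))$ with $(f)=\cI_{S_1}$ is circular, since that presentation \emph{is} the assertion $\cZ_3\cong S\cup_{S_1}S$. What you actually have is $f_{3*}\cO_{\cZ_3}=\cO_S\oplus\cN$ with the two sections exhibiting $\cZ_3\cong S\cup_{V(\lambda)}S$ for some $\lambda\colon\cN\to\cO_S$ whose zero divisor contains $S_1$ and is supported on $S_1$; it remains to exclude $V(\lambda)=nS_1$ with $n\ge2$ (compare $u^2=x^2$ with $u^2=x^4$ over $\AA^1$: both are degree-$2$ covers branched set-theoretically at the origin, and they are not isomorphic). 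This requires a first-order computation transverse to $S_1$, e.g.\ that moving one of three collinear points off the line separates $\sigma_1$ from $\sigma_2$ to first order. Note also that Remark~\ref{remark:smoothness} does not deliver what you claim: the non-smooth locus of $\cX\to S$ is the \emph{union} of $S_1$ with the non-reduced locus of $\cY\to S$, and equals the union of the branch loci of $\cZ_2\to S$ and $\cZ_3\to S$, so it neither equals $S_1$ nor isolates the branch divisor of $\cZ_3\to S$, let alone with multiplicity one. The Brauer-class conclusions are fine once the isomorphisms and genuine universal families are in place.
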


Here $S \,\mathop\cup\limits_{S_1}\, S$ is the gluing of two copies of the scheme $S$ along the divisor~$S_1$.
In particular, in this example both $\cX$ and $\cZ_3$ are not regular.
We leave the proof of this proposition to the interested reader.

A similar argument applies to blowups of $\P^1 \times \P^1$.

\begin{proposition}\label{proposition:blowup-p1p1}
Let $S := (\P^1 \times \P^1)^{[2]}$ be the Hilbert square of $\P^1 \times \P^1$ and let $\cY \subset (\P^1 \times \P^1) \times S$ be the universal family of subschemes.
Let $\hcX := \Bl_{\cY}(\P^1 \times \P^1 \times S)$ be the blowup and let $\cX \to S$ be its relative anticanonical model.
Let $S_{1,0} \subset S$ and $S_{0,1} \subset S$ be the divisors parameterizing subschemes contained in a horizontal or a vertical ruling of $\P^1 \times \P^1$ respectively.
Then
\begin{equation*}
\cZ_2 \cong S \,\mathop\cup\limits_{S_{1,0}}\, S \,\mathop\cup\limits_{S_{0,1}}\, S
\qquand 
\cZ_3 \cong \cY
\end{equation*}
and both Brauer classes $\beta_{\cZ_2}$ and $\beta_{\cZ_3}$ are trivial.
\end{proposition}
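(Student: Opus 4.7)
The plan is to parallel the (omitted) proof of Proposition~\ref{proposition:blowup-p2}: compute the relative Hilbert schemes $F_2(\cX/S)$ and $F_3(\cX/S)$ directly from the blowup model, and then invoke Proposition~\ref{proposition:fd-md} to recover $\cZ_2$, $\cZ_3$, and their Brauer classes.

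For $F_3(\cX/S)$, I will use that on a smooth fiber $X_s$, the twisted cubics are the $(1,1)$-curves on $\P^1 \times \P^1$ passing through one of the two points of $\cY_s$, each choice giving a $\P^2$-family. To globalize, let $\sigma \colon \cY \to (\P^1 \times \P^1) \times_S \cY$ be the section induced by $\cY \hookrightarrow \P^1 \times \P^1 \times S$; then $\cV_3 := p_{\cY *}(\cO(1,1) \otimes \cI_\sigma)$ is a rank-$3$ vector bundle on $\cY$ (constancy of $h^0$ and vanishing of $h^1$ on fibers follow from twisting $0 \to \cI_\sigma \to \cO \to \cO_\sigma \to 0$ by $\cO(1,1)$). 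The family of ideal sheaves of the universal $(1,1)$-curve, lifted to $\hcX$ and pushed forward by $\hpi \colon \hcX \to \cX$, yields a morphism $\P_\cY(\cV_3) \to F_3(\cX/S)$, and one checks fiber by fiber using Corollary~\ref{corollary:dbx-z2-z3} that this morphism is an isomorphism.

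For $F_2(\cX/S)$, the three conic systems on each fiber are the horizontal rulings in $|\cO(0,1)|$, the vertical rulings in $|\cO(1,0)|$, and the pencil $|\cO(1,1) \otimes \cI_{\cY_s}|$ of $(1,1)$-curves through both points of $\cY_s$. The first two give trivial $\P^1$-bundles $S \times \P^1$, and the third is $\P_S(\cV_2)$ for $\cV_2 := p_{S*}(\cO(1,1) \otimes \cI_\cY)$ locally free of rank $2$. The essential point is the gluing: over $S_{1,0}$, every member of the third pencil is reducible as (the horizontal ruling containing $\cY_s$) $\cup$ (a vertical ruling), and after contraction of the resulting $(-2)$-curve by $\hpi_s$ is identified with a member of the vertical-ruling pencil on $X_s$. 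This identifies the third $\P^1$-bundle with the second along $S_{1,0}$ and, symmetrically, with the first along $S_{0,1}$, so $F_2(\cX/S)$ descends to a $\P^1$-bundle over $\cZ'_2 := S \,\mathop\cup\limits_{S_{1,0}}\, S \,\mathop\cup\limits_{S_{0,1}}\, S$, a scheme finite flat of degree~$3$ over $S$ by the same pushout-algebra computation as for Proposition~\ref{proposition:blowup-p2}.

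Proposition~\ref{proposition:fd-md} then gives $\cZ_3 \cong \cY$ and $\cZ_2 \cong \cZ'_2$; since both Hilbert schemes are projectivizations of actual vector bundles on their bases, the Brauer classes $\beta_{\cZ_2}$ and $\beta_{\cZ_3}$ vanish. The main obstacle will be verifying the gluing for $\cZ_2$ not just set-theoretically but scheme-theoretically across all degeneration strata, in particular over the intersections of $S_{1,0}$ and $S_{0,1}$ with the diagonal in $S = (\P^1 \times \P^1)^{[2]}$, where the singularity type of $X_s$ jumps to types~3 or higher and one must match the non-reduced structure of $\cZ_2$ predicted by Corollary~\ref{corollary:dbx-z2-z3}.
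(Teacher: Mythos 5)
The paper does not actually prove this statement --- it is stated as an analogue of Proposition~\ref{proposition:blowup-p2}, whose proof is explicitly ``left to the interested reader'' --- so there is no argument of the authors to compare yours against, and I judge the proposal on its own terms. Your geometric content is correct: the three conic pencils are $|\cO(1,0)|$, $|\cO(0,1)|$, $|\cO(1,1)\otimes\cI_{\cY_s}|$, the two nets of twisted cubics are the $(1,1)$-curves through the points of $\cY_s$, and over $S_{1,0}$ (resp.\ $S_{0,1}$) the third conic pencil degenerates to (ruling containing $\cY_s$) plus a member of the complementary ruling pencil, which after contraction of the resulting $(-2)$-curve gives the asserted gluing. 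Note, however, that your route differs from the one the paper uses for its other explicit families (Propositions~\ref{proposition:dp-p2p2} and~\ref{proposition:dp-p1p1p1}): there the authors write down explicit flat families of stable sheaves with Hilbert polynomials $h_2$, $h_3$ and obtain maps $\cZ'_d \to \cZ_d$ directly from the moduli interpretation, whereas you go through the Hilbert schemes $F_d(\cX/S)$. Your route is quicker to set up, but it leaves a genuine logical gap as written: Proposition~\ref{proposition:fd-md} presupposes $\cZ_d$ and does not by itself say that \emph{any} presentation of $F_d(\cX/S)$ as a projectivized vector bundle over some finite flat $\cZ'_d \to S$ forces $\cZ'_d \cong \cZ_d$; and you cannot fall back on Corollaries~\ref{corollary:birational-isomorphism-zd} or~\ref{corollary:birational-isomorphism-fd}, because their hypothesis of a regular total space fails here ($\cZ_2$ is non-normal, so $\cX$ cannot be regular by Proposition~\ref{proposition:smoothness}). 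The fix is short and should be stated: since $F_d(\cX/S)\to\cZ_d$ is an \'etale locally trivial $\P^{d-1}$-bundle, $\cZ_d$ is recovered as $\Spec_S$ of the pushforward of $\cO_{F_d(\cX/S)}$ to $S$, so an $S$-isomorphism $F_d(\cX/S)\cong\P_{\cZ'_d}(\cV)$ with $\cV$ an honest vector bundle does determine $\cZ_d\cong\cZ'_d$ and forces $\beta_{\cZ_d}=0$. (Alternatively, convert your universal curves into families of sheaves via Lemma~\ref{lemma:oc-decomposition} and argue exactly as in Proposition~\ref{proposition:dp-p2p2}.)

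Two remarks that largely defuse the difficulty you flag at the end. First, $S_{1,0}\cap S_{0,1}=\varnothing$: a length-$2$ subscheme contained in both a horizontal and a vertical ruling would lie in their intersection, a reduced point. Hence the triple gluing is two disjoint gluings, only surfaces of types $0$--$3$ occur in this family, and no fiber $\Spec(\kk[t]/t^3)$ of $\cZ_2$ ever has to be matched. Second, the fiber of $S\cup_{S_{1,0}}S$ over a point of $S_{1,0}$ is $\Spec(\kk[t]/t^2)$, with $t$ the class of a local equation of $S_{1,0}$ under the module isomorphism $\cO_S\times_{\cO_{S_{1,0}}}\cO_S\cong\cO_S\oplus\cO_S(-S_{1,0})$; this is exactly the fiber predicted by Corollary~\ref{corollary:dbx-z2-z3} for types $2$ and $3$. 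So the scheme-theoretic check reduces to verifying that your identification of the third $\P^1$-bundle with the ruling bundles along $S_{1,0}$ and $S_{0,1}$ is induced by an isomorphism of rank-$2$ bundles $p_{S*}(\cO(1,1)\otimes\cI_\cY)\vert_{S_{1,0}}\cong H^0(\cO(1,0))\otimes\cL$ for a line bundle $\cL$ on $S_{1,0}$ (and symmetrically), which follows from the fact that a section of $\cO(1,1)$ vanishing on $\cY_s$ must vanish on the whole ruling containing it. With these points added, the argument closes.
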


Note that in this example the base $S$ of the family is proper.

\appendix%
\section{Auslander algebras}\label{appendix:auslander}

Let $Z = \Spec(\kk[t]/t^m)$ be a non-reduced zero-dimensional scheme.
The Auslander algebra $\tR_m$ defined below provides a categorical resolution of the derived category $\bD(Z)$, see~\cite[Section~5]{kuznetsov2015categorical} for details.

The algebra $\tR_m$ is defined as the path algebra of a quiver with relations:
\begin{equation}\label{eq:trm}
\tR_m = \kk\left\{\left. 
\xymatrix@1{\bullet \ar@<-.5ex>[r]_-{\beta_1} & \bullet \ar@<-.5ex>[l]_-{\alpha_1} \ar@<-.5ex>[r]_-{\beta_2} & \bullet \ar@<-.5ex>[l]_-{\alpha_2} \ar@{}[r]|-{\dots} & \bullet \ar@<-.5ex>[r]_-{\beta_{m-1}} & \bullet \ar@<-.5ex>[l]_-{\alpha_{m-1}} } 
\ \right|\ 
\text{$\beta_i\alpha_i = \alpha_{i+1}\beta_{i+1}$ for $1 \le i \le m-2$, $\beta_{m-1}\alpha_{m-1} = 0$}\
\right\}.
\end{equation}
Alternatively, it can be written as a matrix algebra
\begin{equation}
\label{eq:trm-matrix}
\tR_m = \bigoplus_{i,j = 0}^{m-1} (\tR_m)_{ij},
\qquad  
(\tR_m)_{ij} = 
\begin{cases}
\kk[t]/t^{m-i}, & \text{if $i \ge j$},\\
t^{j-i}\kk[t]/t^{m-i}, & \text{if $i \le j$},
\end{cases}
\end{equation}
with multiplication induced by the natural maps $(\tR_m)_{ij} \otimes (\tR_m)_{jk} \to (\tR_m)_{ik}$.
We identify the category of representations of the quiver with the category of left modules over its path algebra.

We denote by $\epsilon_i$ the $i$-th vertex idempotent in $\tR_m$ (in terms of~\eqref{eq:trm-matrix} it is the unit in $(\tR_m)_{ii} \cong \kk[t]/t^{m-i}$).
For every left $\tR_m$-module $M$ we have 
\begin{equation*}
M = \bigoplus_{i=0}^{m-1} \epsilon_i M.
\end{equation*}
We call $M_i := \epsilon_i M$ the {\sf $i$-th component} of $M$, and the vector $(\dim M_0,\dim M_1, \dots) \in \ZZ^m$ the {\sf dimension vector} of $M$.

For each $0 \le i \le m-1$ we denote by $S_i$ the {\sf simple module} of $i$-th vertex of the quiver (its dimension vector is $(\underbrace{0, \dots, 0}_{i},1,\underbrace{0,\dots, 0}_{m-1-i})$) and by
\begin{equation*}
P_i = \tR_m \epsilon_i
\end{equation*}
its indecomposable projective cover ({\sf projective module} of $i$-th vertex).

The algebra $\tR_m$ has finite global dimension (it is bounded by $2m - 2$, see~\cite[Proposition~A.14]{kuznetsov2015categorical})
and its derived category $\bD(\tR_m)$ is generated by an exceptional collection (see Lemma~\ref{lemma:auslander-ext-ei})
consisting of representations~$E_i$ with dimension vectors 
\begin{equation*}
\dim(E_i) = (\underbrace{1,\dots,1}_{i+1},\underbrace{0,\dots,0}_{m-1-i})
\end{equation*}
and with $\beta$-arrows acting by zero and $\alpha$-arrows acting by identity.

\begin{remark}
Actually, the algebra $\tR_m$ is quasihereditary and the modules $E_i$ are its standard modules.
\end{remark}

We call the $E_i$ {\sf standard exceptional modules}.
They have nice projective resolutions
\begin{equation}
\label{eq:projective-resolution-ei}
0 \to P_{i+1} \xrightarrow{\ \beta_{i+1}\ } P_i \to E_i \to 0,
\end{equation}
with the maps induced by the right $\beta_{i+1}$-multiplication.
Using these, it is easy to check that $E_0,\dots,E_{m-1}$ is an exceptional collection and to compute its endomorphism algebra.

\begin{lemma}
\label{lemma:auslander-ext-ei}
The collection $E_0,E_1,\dots,E_{m-1}$ is exceptional and $\Ext^\bullet(E_i,E_j) = \kk \oplus \kk[-1]$ for all $i < j$.
Moreover, the multiplication map
\begin{equation*}
\Ext^p(E_i,E_j) \otimes \Ext^q(E_j,E_k) \to \Ext^{p+q}(E_i,E_k),
\qquad i < j < k
\end{equation*}
is an isomorphism when $p = 0$ or $q = 0$.
\end{lemma}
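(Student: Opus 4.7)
The plan is to reduce everything to a direct computation with the projective resolution~\eqref{eq:projective-resolution-ei}, and then to verify the multiplication assertion by exhibiting explicit generators and Yoneda-computing their products.

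First, I compute $\Ext^\bullet(E_i, E_j)$ by applying $\Hom(-, E_j)$ to the resolution $0 \to P_{i+1} \xrightarrow{\beta_{i+1}} P_i \to E_i \to 0$. Using the standard identification $\Hom(P_l, M) = \epsilon_l M = M_l$ for any left $\tR_m$-module $M$, this reduces the computation to the cohomology of the two-term complex $(E_j)_i \to (E_j)_{i+1}$ whose differential is the action of $\beta_{i+1}$ on $E_j$. By the very definition of the standard modules, every $\beta$-arrow acts as zero on $E_j$, so this differential vanishes identically. Inspecting the dimensions of $(E_j)_i$ and $(E_j)_{i+1}$ then yields all the claims simultaneously: $\Ext^\bullet(E_i, E_j) = 0$ for $i > j$ (semiorthogonality); $\End(E_i) = \kk$ with higher $\Ext$ vanishing for $i = j$ (exceptionality); and $\Hom(E_i, E_j) = \Ext^1(E_i, E_j) = \kk$ with higher $\Ext$ vanishing for $i < j$.

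For the multiplication assertion, fix $i < j < k$ and identify explicit generators. The nonzero $f_{ij} \in \Hom(E_i, E_j)$ is the embedding of $E_i$ as the submodule of $E_j$ supported on the vertices $\{0, \dots, i\}$ (which is indeed a submodule because the $\alpha$-arrows lower the vertex index and the $\beta$-arrows act by zero on $E_j$). A generator of $\Ext^1(E_j, E_k)$ is realized by the explicit extension $0 \to E_k \to X_{jk} \to E_j \to 0$, where $X_{jk}$ has underlying vector space $E_j \oplus E_k$, the $\alpha$-arrows act block-diagonally as identities, and the $\beta$-arrows act as zero within each summand but send the basis vector of $E_j$ at vertex $l-1$ to the basis vector of $E_k$ at vertex $l$ for every $1 \le l \le j+1$. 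A direct check confirms the defining relations $\beta_l \alpha_l = \alpha_{l+1}\beta_{l+1}$ and $\beta_{m-1}\alpha_{m-1} = 0$ (the inequalities $j < k \le m-1$ force $j \le m-2$, so the potentially troublesome case of the last relation never arises), and it shows that $X_{jk}$ is non-split since the $\beta$-action does not preserve the $E_k$-submodule. An entirely analogous construction produces a non-split $Y_{ij}$ representing a generator of $\Ext^1(E_i, E_j)$.

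The case $p = q = 0$ is trivial: $g_{jk} \circ f_{ij}$ is the nonzero inclusion $E_i \hookrightarrow E_k$. For $p = 0$, $q = 1$, the Yoneda product $f_{ij} \cdot [X_{jk}]$ is represented by the pullback extension $f_{ij}^* X_{jk} = X_{jk} \times_{E_j} E_i$; a basis calculation shows it inherits the nonzero $\beta$-gluing from $X_{jk}$ and therefore remains non-split. Symmetrically, for $p = 1$, $q = 0$, the product is represented by the pushout $g_{jk,*} Y_{ij}$, which is non-split by the same type of inspection. Since the target $\Ext^1(E_i, E_k)$ is one-dimensional, both multiplication maps are nonzero maps $\kk \to \kk$, hence isomorphisms. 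The main technical obstacle is the bookkeeping for $X_{jk}$ and $Y_{ij}$: one must lay out bases, verify the $\tR_m$-relations, and then track the inheritance of non-splitness through pullback and pushout; this is a finite combinatorial calculation, but requires keeping the $\alpha$- and $\beta$-conventions straight.
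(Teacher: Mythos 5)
Your proposal is correct. The first half (computing $\Ext^\bullet(E_i,E_j)$ by applying $\Hom(-,E_j)$ to the resolution~\eqref{eq:projective-resolution-ei} and observing that the differential is the $\beta_{i+1}$-action on $E_j$, hence zero) is exactly the paper's argument. For the multiplication statement, however, you take a genuinely different route. The paper never writes down an extension module: it uses the exact sequences~\eqref{eq:resolution-simple} to present the cokernel of the canonical inclusion $E_j\hookrightarrow E_k$ (resp.\ $E_i\hookrightarrow E_j$) as an iterated extension of simple modules $S_l$, computes $\Ext^\bullet(E_i,S_l)=0$ for $l\ge i+2$ from~\eqref{eq:projective-resolution-ei} and $\Ext^\bullet(S_l,E_k)=0$ for $1\le l<k$ from a projective resolution of $S_l$, and concludes by d\'evissage that composition with the inclusion induces an isomorphism on $\Ext^\bullet(E_i,-)$ and on $\Ext^\bullet(-,E_k)$. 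You instead build explicit non-split extensions $X_{jk}$ and $Y_{ij}$ realizing the generators of the one-dimensional $\Ext^1$-spaces and track non-splitness through the Yoneda pullback and pushout; I checked that your modules satisfy the quiver relations (including the boundary relation $\beta_{m-1}\alpha_{m-1}=0$, which is harmless since $(E_j)_{m-1}=0$ for $j\le m-2$) and that the products are indeed non-split, so the argument closes since source and target are one-dimensional. The paper's route is shorter and needs no case-checking of relations; yours costs more bookkeeping but produces explicit representatives of the $\Ext^1$-classes, which makes the Yoneda products completely concrete. Either is acceptable.
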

\begin{proof}
Using~\eqref{eq:projective-resolution-ei} we see that $\Ext^\bullet(E_i,E_j)$ is computed by the complex
\begin{equation*}
(E_j)_i \xrightarrow{\ \beta_{i+1}\ } (E_j)_{i+1}.
\end{equation*}
If $i > j$ both spaces are zero and if $i = j$ the first is $\kk$ and the second is zero, hence the collection is exceptional.
Similarly, if $j > i$ both spaces are $\kk$ and the arrow is zero, hence $\Hom(E_i,E_j) = \Ext^1(E_i,E_j) = \kk$.

For the second statement, note that we have an exact sequence
\begin{equation}\label{eq:resolution-simple}
0 \to E_{i-1} \xrightarrow{\ \alpha_i\ } E_i \to S_i \to 0,
\end{equation} 
which shows that for $j \le k$ the natural map $E_j \to E_k$ is injective and its cokernel is an extension of simple modules $S_l$ with $j+1 \le l \le k$.
On the other hand, \eqref{eq:projective-resolution-ei} implies that $\Ext^\bullet(E_i,S_l) = 0$ as soon as~$l \ge i+2$, hence the map 
\begin{equation*}
\Ext^\bullet(E_i,E_j) \to \Ext^\bullet(E_i,E_k)
\end{equation*}
induced by the embedding $E_j \to E_k$ is an isomorphism. This proves the case when $q = 0$.

Similarly, merging~\eqref{eq:resolution-simple} with~\eqref{eq:projective-resolution-ei} we obtain for $l \ge 1$ a projective resolution
\begin{equation}\label{eq:projective-resolution-simples}
0 \to P_l \xrightarrow{\ (-\alpha_{l+1},\beta_l)\ } P_{l+1} \oplus P_{l-1} \xrightarrow{\ (\beta_{l+1},\alpha_l)\ } P_l \to S_l \to  0
\end{equation}
of the simple module $S_l$.
It follows that $\Ext^\bullet(S_l,E_k)$ for $1 \le l < k$ are computed by the complex
\begin{equation*}
(E_k)_l \xrightarrow{\ (0,1)\ } (E_k)_{l+1} \oplus (E_k)_{l-1} \xrightarrow{\ (-1,0)} (E_k)_l,
\end{equation*}
hence for $1 \le l < k$ we have $\Ext^\bullet(S_l,E_k) = 0$.
On the other hand, the cokernel of the embedding $E_i \to E_j$ is an extension of simple modules $S_l$ with $1 \le l \le j$, hence the map 
\begin{equation*}
\Ext^\bullet(E_j,E_k) \to \Ext^\bullet(E_i,E_k)
\end{equation*}
induced by this embedding is an isomorphism. This proves the case when $p = 0$.
\end{proof}

The following characterization of the categories $\bD(\tR_2)$ and $\bD(\tR_3)$ is quite useful.

\begin{proposition}\label{proposition:rm-equivalence}
Assume $\cT$ is a triangulated category admitting a DG enhancement.

\noindent$(a)$
If $\cT$ is generated by an exceptional pair $(\cL_0,\cL_1)$ such that $\Ext^\bullet(\cL_0,\cL_1) \cong \kk \oplus \kk[-1]$ then there is an equivalence of categories $\cT \cong \bD(\tR_2)$ taking $\cL_i$ to $E_i$.

\noindent$(b)$
If $\cT$ is generated by an exceptional triple $(\cL_0,\cL_1,\cL_2)$ 
such that $\Ext^\bullet(\cL_i,\cL_j) \cong \kk \oplus \kk[-1]$ for all~\mbox{$i < j$}
and the multiplication map $\Ext^p(\cL_0,\cL_1) \otimes \Ext^q(\cL_1,\cL_2) \to  \Ext^{p+q}(\cL_0,\cL_2)$ 
is an isomorphism when $p = 0$ or $q = 0$, then there is an equivalence of categories $\cT \cong \bD(\tR_3)$ taking $\cL_i$ to $E_i$.
\end{proposition}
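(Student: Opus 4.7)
The plan is to prove both parts uniformly by constructing a classical tilting object $\cP \in \cT$ whose opposite endomorphism algebra is~$\tR_m$, and then invoking Keller's theorem on tilting equivalences for triangulated categories with DG enhancement. The tilting object will mimic the direct sum of indecomposable projective $\tR_m$-modules, so that under the equivalence the objects $\cL_i$ automatically match the standard modules $E_i$ via the resolutions~\eqref{eq:projective-resolution-ei}.

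Concretely, I would recursively build objects $\cP_{m-1}, \dots, \cP_0$ in $\cT$. Set $\cP_{m-1} := \cL_{m-1}$, and assuming $\cP_{i+1}$ has been constructed, first prove that $\Ext^1(\cL_i, \cP_{i+1}) \cong \kk$ by applying $\RHom(\cL_i, -)$ to the already-constructed triangles and chasing the long exact sequence. In case $(b)$, the crucial input here is the multiplicativity hypothesis: it makes the relevant connecting homomorphism $\Hom(\cL_i, \cL_{i+1}) \to \Ext^1(\cL_i, \cL_{i+2})$ (composition with $\xi_{i+1}$) an isomorphism, which collapses the dimensions to give the correct one-dimensional $\Ext^1$. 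Pick a generator $\xi_i$ of $\Ext^1(\cL_i, \cP_{i+1})$ and define $\cP_i$ by an exact triangle
\begin{equation*}
\cP_{i+1} \xrightarrow{\ a_i\ } \cP_i \xrightarrow{\ b_i\ } \cL_i \xrightarrow{\ \xi_i\ } \cP_{i+1}[1].
\end{equation*}
By the same kind of long exact sequence argument, applied now to $\RHom(-, \cP_j)$ and $\RHom(\cP_i,-)$, I would prove by downward induction on $i$ that $\RHom(\cP_i, \cP_j)$ is concentrated in cohomological degree~$0$ with dimension matching the entries of the matrix~\eqref{eq:trm-matrix}. In particular, $\cP := \bigoplus_i \cP_i$ satisfies $\Ext^{>0}(\cP, \cP) = 0$. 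The triangles above place each $\cL_i$ in the thick envelope of $\cP$, and since the $\cL_i$ classically generate $\cT$ by hypothesis, so does $\cP$.

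Next, I would identify $\End_\cT(\cP)^{\mathrm{op}}$ with $\tR_m$. The maps $a_i \colon \cP_{i+1} \to \cP_i$ supply the generators~$\alpha_{i+1}$, while each $\beta_{i+1} \colon \cP_i \to \cP_{i+1}$ is produced from the composition of $b_i$ with the unique lift of $\xi_{i+1}$ through $\cL_i$; this is possible precisely because the $\Ext$-vanishing established in the previous step guarantees the lift is unique. Tracing through the defining triangles, the dimension count already forces $\End_\cT(\cP)^{\mathrm{op}}$ and~$\tR_m$ to have the same size with matching idempotent decomposition; the quiver relations $\beta_i\alpha_i = \alpha_{i+1}\beta_{i+1}$ (for $1 \le i \le m-2$) and $\beta_{m-1}\alpha_{m-1} = 0$ are then verified by a direct diagram chase that again uses the multiplicativity hypothesis in case $(b)$ to identify both sides of the commuting-square relation with the same generator of the one-dimensional space $\Hom(\cP_{i-1},\cP_{i+1})$.

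Finally, since $\cT$ is DG-enhanced and $\cP$ is a classical generator with $\Ext^{>0}(\cP,\cP) = 0$, the Keller tilting theorem gives an equivalence $\RHom_\cT(\cP, -) \colon \cT \xrightarrow{\ \sim\ } \bD(\End_\cT(\cP)^{\mathrm{op}}) = \bD(\tR_m)$ sending $\cP_i$ to $P_i$. Under this equivalence, the defining triangle for $\cP_i$ matches the projective resolution~\eqref{eq:projective-resolution-ei} of~$E_i$, so $\cL_i$ corresponds to~$E_i$, as required. The main technical obstacle is twofold and concentrated in case~$(b)$: the inductive $\Ext$-calculations that establish the dimensions in~\eqref{eq:trm-matrix} and the verification of the relation $\beta_i\alpha_i = \alpha_{i+1}\beta_{i+1}$ both hinge on the multiplicativity hypothesis; without it, the $\Ext^1$ spaces would have the wrong dimension and the quiver relations would fail.
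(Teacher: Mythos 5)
Your argument is correct, but it takes a genuinely different route from the paper. The paper's proof is a three-line formality argument: since $\cT$ is enhanced, it is equivalent to the derived category of the DG algebra $\RHom_\cT(\oplus\cL_i,\oplus\cL_i)$, whose cohomology algebra is identified (using exceptionality and, in case (b), the multiplicativity hypothesis) with $\Ext^\bullet_{\bD(\tR_m)}(\oplus E_i,\oplus E_i)$; this graded algebra is then intrinsically formal for $m\le 3$ because a nontrivial higher $A_\infty$-operation needs at least three non-identity arguments, hence at least four vertices of the quiver. You instead bypass $A_\infty$-machinery entirely by building a classical tilting object $\cP=\oplus\cP_i$ out of universal extensions that mimic the projectives $P_i$ via~\eqref{eq:projective-resolution-ei}, computing $\RHom(\cP_i,\cP_j)$ by long exact sequences, and invoking Keller's tilting theorem. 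Both proofs use the enhancement in an essential way (yours through Keller, the paper's through the passage to the DG endomorphism algebra), and both consume the multiplicativity hypothesis in (b) at the analogous spot (you use it to make the connecting homomorphisms isomorphisms; the paper uses it to pin down the graded algebra structure). What the paper's approach buys is brevity and the fact that $\cL_i\mapsto E_i$ is immediate from the construction; what yours buys is an elementary, explicitly computable argument that also makes the gluing data between the $\cL_i$ visible and would extend to larger $m$ once the extra hypotheses of the Remark following the Proposition are imposed. The one place your sketch is thinnest is the final identification $\End_\cT(\cP)\cong\tR_m$: verifying the relations and matching total dimensions gives a homomorphism from (a quotient of) the path algebra, and you still need it to be injective or surjective --- e.g.\ that the powers of the loop $\alpha_1\beta_1$ at the zeroth vertex remain nonzero in $\End(\cP_0)$ --- but for $m\le 3$ this drops out of the same long exact sequences you have already set up, so it is a deferred detail rather than a gap.
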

\begin{proof}
Since the category $\cT$ is enhanced, there is an equivalence of $\cT$ with the derived category of the DG algebra $\RHom_\cT(\oplus \cL_i,\oplus \cL_i)$.
By the assumption, its cohomology is isomorphic (as a graded algebra) to the graded algebra $\Ext^\bullet_{\bD(\tR_m)}(\oplus E_i, \oplus E_i)$.
To get the desired equivalence, it remains to check that the latter algebra (considered as a DG algebra with trivial differential) is formal.

But formality is clear, since any higher $A_\infty$-operation $\bm_i$ (with $i \ge 3$) requires at least three non-trivial arguments, 
so one needs the quiver to have at least four vertices to admit such an operation.
So, for $\tR_2$ and $\tR_3$ all higher operations vanish and the algebra is formal.
\end{proof}

\begin{remark}
Let $m \ge 4$ and assume $\cT$ is an enhanced triangulated category generated by an exceptional collection $\cL_0,\cL_1,\dots,\cL_{m-1}$ satisfying the properties of Lemma~\ref{lemma:auslander-ext-ei}.
To establish an equivalence of $\cT$ with~$\bD(\tR_m)$ one should additionally check that higher $A_\infty$-operations in $\cT$ vanish when one of the arguments is contained in the space $\Hom(\cL_i,\cL_{i+1}) = \kk$.
\end{remark}

The endomorphism algebra of the projective module $P_0$ is
\begin{equation*}
\End_{\tR_m}(P_0) = \epsilon_0 \tR_m \epsilon_0 = (\tR_m)_{00} = \kk[t]/t^m.
\end{equation*}
Using this we define an adjoint pair of functors
\begin{equation}
\label{eq:pms}
\begin{aligned}
\pi_{m*} & \colon \tR_m\mmod \to (\kk[t]/t^m)\mmod, \quad & M & \mapsto \Hom_{\tR_m}(P_0,M) = \epsilon_0 M,\\
\pi_m^* & \colon (\kk[t]/t^m)\mmod \to \tR_m\mmod, \quad & N & \mapsto P_0 \otimes_{\kk[t]/t^m} N.
\end{aligned}
\end{equation} 
Following our convention, we denote in the same way their derived functors.

Note that the functor $\pi_{m*} \colon \bD(\tR_m) \to \bD(\kk[t]/t^m)$ preserves boundedness, while its left adjoint $\pi_m^*$ does not.
Recall that $\langle - \rangle^\oplus$ denotes the minimal triangulated subcategory of $\bD^-(\tR_m)$ closed under infinite direct sums.

\begin{proposition}\label{proposition:auslander-localization}
The functor $\pi_m^* \colon \bD^-(\kk[t]/t^m) \to \bD^-(\tR_m)$ is fully faithful, 
and its right adjoint functor~$\pi_{m*} \colon \bD(\tR_m) \to \bD(\kk[t]/t^m)$ preserves boundedness and is essentially surjective. 
We have 
\begin{equation*}
\Ker \pi_{m*} = \langle S_1,\dots,S_{m-1} \rangle^\oplus
\qquand
\Ima \pi_m^* = \langle P_0 \rangle^\oplus = {}^\perp\langle S_1,\dots,S_{m-1} \rangle.
\end{equation*}
Moreover, $\pi_{m*}(S_0) = \kk$ and $\pi_{m*}(P_0) = \kk[t]/t^m$.
\end{proposition}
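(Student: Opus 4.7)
The plan is to proceed in four steps, starting with the easy ``moreover'' part and ending with the image identification.

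First I would verify the concrete computations: by the matrix description~\eqref{eq:trm-matrix}, $\epsilon_0 P_0 = (\tR_m)_{00} = \kk[t]/t^m$ and $\epsilon_0 S_0 = S_0 = \kk$, which gives the last sentence of the proposition. Since $\pi_{m*}$ is multiplication by the idempotent $\epsilon_0$, it is exact and therefore preserves boundedness. Next, I would check that $\pi_m^*$ has finite Tor-amplitude: as a right $\kk[t]/t^m$-module, $P_0 \cong \bigoplus_{i=0}^{m-1} \kk[t]/t^{m-i}$, and each summand admits a length-one projective resolution over $\kk[t]/t^m$, so $\pi_m^*$ maps $\bD^-$ to $\bD^-$ and even $\bD$ to $\bD$.

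The second step is full faithfulness of $\pi_m^*$, which by adjunction reduces to $\pi_{m*} \circ \pi_m^* \cong \id$. Since $\epsilon_0$ is exact and commutes with tensor products and direct sums, for any $N \in \bD^-(\kk[t]/t^m)$ one has
\begin{equation*}
\pi_{m*}(\pi_m^* N) = \epsilon_0\bigl(P_0 \otimes^L_{\kk[t]/t^m} N\bigr) \cong (\epsilon_0 P_0) \otimes^L_{\kk[t]/t^m} N \cong N.
\end{equation*}
This also gives the essential surjectivity of $\pi_{m*}$ on both categories, using the Tor-amplitude bound above.

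The third step is the kernel computation. The condition $\pi_{m*}(M) = \epsilon_0 M = 0$ says that every cohomology module of $M$ is supported only at the vertices $1, \dots, m-1$; such modules are iterated extensions and (in $\bD^-$) direct sums of the simples $S_1, \dots, S_{m-1}$, giving $\Ker \pi_{m*} = \langle S_1,\dots,S_{m-1}\rangle^\oplus$.

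The fourth and most delicate step is the image description. The inclusion $\Ima \pi_m^* \subseteq {}^\perp\langle S_1,\dots,S_{m-1}\rangle$ is immediate from adjunction and $\pi_{m*}(S_i)=0$ for $i \ge 1$. The equality $\Ima \pi_m^* = \langle P_0\rangle^\oplus$ follows because $P_0 \cong \pi_m^*(\kk[t]/t^m)$, and any $N \in \bD^-(\kk[t]/t^m)$ admits a (possibly infinite) free resolution, whose image under $\pi_m^*$ lies in $\langle P_0\rangle^\oplus$. For the reverse inclusion ${}^\perp\langle S_1,\dots,S_{m-1}\rangle \subseteq \Ima \pi_m^*$, given $M$ in the left orthogonal, I would consider the unit triangle $\pi_m^*\pi_{m*} M \to M \to C$. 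Applying $\pi_{m*}$ and using step two gives $\pi_{m*} C = 0$, so $C \in \langle S_1,\dots,S_{m-1}\rangle^\oplus$; on the other hand both $\pi_m^*\pi_{m*}M$ and $M$ lie in ${}^\perp\langle S_i\rangle$, hence so does $C$. The key obstacle is to conclude $C = 0$ from $C$ lying simultaneously in $\langle S_1,\dots,S_{m-1}\rangle^\oplus$ and in its own left orthogonal; I would handle this by a top-cohomology argument, taking the largest $k$ with $\cH^k(C)\ne 0$, noting that this module has all components supported at vertices $\ge 1$ and therefore admits a surjection onto some $S_i$ with $i \ge 1$, which produces a nonzero morphism $C \to S_i[-k]$ contradicting $C \in {}^\perp\langle S_1,\dots,S_{m-1}\rangle$.
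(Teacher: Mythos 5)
Your write-up is more self-contained than the paper's own proof, which simply cites \cite[Theorem~5.23 and~(48)]{kuznetsov2015categorical} for full faithfulness and for $\pi_{m*}\circ\pi_m^*\cong\id$; your direct computation $\epsilon_0(P_0\otimes_{\kk[t]/t^m}N)\cong(\epsilon_0P_0)\otimes_{\kk[t]/t^m}N\cong N$ and your top-cohomology argument for the inclusion ${}^\perp\langle S_1,\dots,S_{m-1}\rangle\subseteq\Ima\pi_m^*$ (which the paper dismisses as ``evident'') are both correct. There is, however, one genuine error in your first step. It is true that $P_0\cong\bigoplus_{i=0}^{m-1}\kk[t]/t^{m-i}$ as a right $\kk[t]/t^m$-module, but it is false that the summands with $i\ge1$ admit length-one projective resolutions: for $0<j<m$ the module $\kk[t]/t^{j}$ has the $2$-periodic resolution $\cdots\to\kk[t]/t^m\xrightarrow{\ t^{m-j}\ }\kk[t]/t^m\xrightarrow{\ t^{j}\ }\kk[t]/t^m\to\kk[t]/t^{j}\to 0$, and $\Tor_i^{\kk[t]/t^m}(\kk[t]/t^{j},\kk)\cong\kk$ for \emph{every} $i\ge0$, so these summands have infinite Tor-amplitude. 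Consequently $\pi_m^*$ does \emph{not} take $\bD$ to $\bD$ --- the paper says so explicitly right before the proposition --- and your deduction of essential surjectivity of $\pi_{m*}$ on the bounded categories, which rests on that boundedness, does not work.

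The claim itself survives, but via a different mechanism: $\pi_{m*}$ is exact (it is multiplication by the idempotent $\epsilon_0$), hence commutes with canonical truncations, so for $N\in\bD(\kk[t]/t^m)$ one takes $M:=\tau^{\ge p}\pi_m^*(N)$ with $p\ll0$ and gets $\pi_{m*}(M)\cong\tau^{\ge p}(\pi_{m*}\pi_m^*N)\cong\tau^{\ge p}N\cong N$ with $M$ bounded. This truncation trick is exactly what the paper uses. Everything else in your proposal --- the kernel description, the identification of the image, and the evaluations $\pi_{m*}(S_0)=\kk$ and $\pi_{m*}(P_0)=\kk[t]/t^m$ --- goes through as written.
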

\begin{proof}
The functor $\pi_m^*$ is fully faithful by~\cite[Theorem~5.23]{kuznetsov2015categorical} 
and $\pi_{m*} \circ \pi_m^* \cong \id$ by~\cite[(48)]{kuznetsov2015categorical}.
Furthermore, $\pi_{m*}$ is exact by definition, hence for any $N \in \bD(\kk[t]/t^m)$, 
taking $M := \tau^{\ge p}\pi_m^*(N)$ (the canonical truncation) with $p \ll 0$, we obtain $N \cong \pi_{m_*}M$, 
hence $\pi_{m*}$ is essentially surjective.

The image of~$\pi_m^*$ by definition equals the subcategory $\langle P_0 \rangle^\oplus \subset \bD^-(\tR_m)$ and
this category is evidently equal to the orthogonal of the simple modules $S_1$, \dots, $S_{m-1}$.
Since the functor $\pi_{m*}$ is the right adjoint of~$\pi_m^*$, 
we have $\Ker \pi_{m*} = (\Ima \pi_m^*)^\perp = P_0^\perp = \langle S_1,\dots,S_{m-1} \rangle^\oplus$.
Finally, applying~\eqref{eq:pms} we easily get~$\pi_{m*}(S_0) = \epsilon_0S_0 = \kk$ 
and $\pi_{m*}(P_0) = \epsilon_0P_0 = \epsilon_0\tR_m\epsilon_0 = (\tR_m)_{00} = \kk[t]/t^m$.
\end{proof}

\section{Moduli stack of sextic du Val del Pezzo surfaces}\label{appendix:moduli-stack}

The moduli stack $\DP$ of sextic du Val del Pezzo surfaces is the fibered category over $(\mathrm{Sch}/\kk)$ whose fiber over a $\kk$-scheme $S$ is 
the groupoid of all du Val $S$-families of sextic del Pezzo surfaces $f \colon \cX \to S$ (in the sense of Definition~\ref{definition:family-dp6}).
A morphism from $f' \colon \cX' \to S'$ to $f \colon \cX \to S$ is a fiber product diagram
\begin{equation*}
\xymatrix{
\cX' \ar[r] \ar[d]_{f'} & \cX \ar[d]^f \\ S' \ar[r] & S
}
\end{equation*}
The main result of this section was communicated by Jenya Tevelev.

\begin{theorem}\label{theorem:moduli-stack-dp6}
The moduli stack $\DP$ is a smooth Artin stack of finite type over $\kk$.
\end{theorem}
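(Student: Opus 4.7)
The plan is to realize $\DP$ as a quotient stack $[U/\PGL_7]$ with $U$ an open subscheme of a Hilbert scheme, and then to verify smoothness either through a smooth atlas or a direct obstruction calculation.

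First I would set up the quotient stack presentation. For any sextic du Val del Pezzo surface $X$ over an algebraically closed field, Riemann--Roch combined with Kodaira vanishing (which applies since $X$ is rational Gorenstein with du Val singularities and $-K_X$ is ample) gives $h^0(X,-K_X) = K_X^2+1 = 7$ and $h^i(X,-K_X) = 0$ for $i>0$. Moreover $-K_X$ is very ample: on the minimal resolution $\tX$ one has $-K_\tX = \pi^*(-K_X)$ by~\eqref{eq:ktx}, and the anticanonical morphism of $\tX$ factors through $X$ via $\pi$, so it is enough to verify that $|-K_\tX|$ separates points and tangent vectors away from the $(-2)$-curves, which can be done directly from the explicit geometry of Section~\ref{subsection:dp6}. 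Cohomology and base change then extend all this to any du Val family $f \colon \cX \to S$: $f_*\omega_{\cX/S}^{-1}$ is locally free of rank~$7$ and induces a closed embedding $\cX \hookrightarrow \P_S(f_*\omega_{\cX/S}^{-1})$. Let $H := \Hilb_{\P^6}(3t^2+3t+1)$ and let $U \subset H$ be the locus of anticanonically embedded sextic du Val del Pezzo surfaces. Openness of $U$ follows from openness in flat families of the conditions of being a normal integral surface, having at worst du Val singularities, and having an ample Cartier anticanonical class inducing the projective embedding. Then $\DP \cong [U/\PGL_7]$, and this is an Artin stack of finite type over $\kk$.

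Next I would prove smoothness by constructing a smooth atlas. Let $\tilde{S}$ be the open locus of the iterated blowup parameter space
\begin{equation*}
\{(P_1, P_2, P_3) \mid P_1 \in \P^2,\ P_2 \in X_1 := \Bl_{P_1}\P^2,\ P_3 \in X_2 := \Bl_{P_2}X_1\}
\end{equation*}
whose associated surface $\tX = X_3$ yields, via its anticanonical morphism, a sextic du Val del Pezzo surface $X$ (by the classification of Section~\ref{subsection:dp6}, this is an open condition which holds as long as no four of the infinitely near points lie in a special configuration). The space $\tilde{S}$ is smooth since it is built by successive smooth projective bundles over $\P^2$. Over $\tilde{S}$ there is a universal family $\tilde\cX \to \tilde{S}$ of weak del Pezzo surfaces whose relative anticanonical model is a du Val family $\cX \to \tilde{S}$ in the sense of Definition~\ref{definition:family-dp6}, and every sextic du Val del Pezzo surface arises this way. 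Hence the classifying morphism $\tilde{S} \to \DP$ is surjective on geometric points, and to conclude smoothness of $\DP$ it suffices to show that this morphism is smooth.

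The principal obstacle is exactly this last smoothness assertion, which reduces to showing that every infinitesimal deformation of a sextic du Val del Pezzo surface $X$ is realized by deforming the blowup centers in the minimal resolution $\tX$. Concretely, one must check that the natural map from the functor of deformations of $(\tX, \bDelta)$ (the resolution with its distinguished set of $(-2)$-curves) to the functor of deformations of $X$ is smooth and surjective. The local problem is that of simultaneous resolution of ADE singularities of types $A_1$ and $A_2$, which is handled by Brieskorn--Tjurina: deformations of such singularities are unobstructed, and a simultaneous resolution exists (after base change for the Weyl-group cover, but here we are working with the equisingular locus, so no cover is needed). Globally, deformations of $\tX$ as an iterated blowup of $\P^2$ are manifestly unobstructed, the line bundles $\cO_\tX(e_i - e_j)$ underlying the $(-2)$-curves deform unobstructedly since $H^2(\tX, \cO_\tX) = 0$, and obstructions to deforming the $(-2)$-curves themselves are controlled by $H^1$ of their normal bundles $\cO_{\P^1}(-2)$, which vanishes. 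Combining the local smoothing-theoretic statement with the global vanishing would yield smoothness of $\tilde{S} \to \DP$, and together with the smoothness of $\tilde{S}$, smoothness of $\DP$ follows. An alternative direct route would be to verify that $\Ext^2(L_{X/\kk}, \cO_X) = 0$ for every $X$, reducing via the local-to-global spectral sequence to the vanishing of $H^2(\tX, T_\tX(-\log \bDelta))$, which is a cohomology computation on the rational surface $\tX$.
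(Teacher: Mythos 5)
Your reduction of the Artin/finite-type claim to a locally closed locus in a Hilbert scheme via the anticanonical embedding is fine and is exactly the paper's (unelaborated) ``Hilbert scheme argument.'' The problem is your main smoothness argument: the classifying morphism $\tilde{S} \to \DP$ from the blowup parameter space is \emph{not} smooth at points where the fiber is singular, so it is not a smooth atlas. The parenthetical ``no cover is needed'' is precisely where this breaks. For the germ of an $A_1$ point the semiuniversal deformation is $xy = z^2 + t$, and a simultaneous resolution exists only after the Weyl-group base change $t = s^2$; consequently the map from deformations of the resolution (equivalently, of the blowup centers) to deformations of $X$ has \emph{zero} differential in the smoothing direction. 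More globally, by Burns--Wahl the composite $T^1_{\tX} \to T^1_X \to \bigoplus T^1_{(X,x_i)}$ vanishes, so first-order deformations of the blowup data only produce locally trivial first-order deformations of $X$. Since singular sextic del Pezzo surfaces do smooth, the Kodaira--Spencer map of your family $\cX \to \tilde{S}$ fails to be surjective at every point of $\tilde{S}$ with singular fiber, and smoothness of $\DP$ cannot be deduced this way. (Your atlas does show the stack is smooth along the open locus of smooth surfaces, which is not the issue.)

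The paper avoids this entirely by proving unobstructedness of $\mathrm{Def}(X)$ directly on the singular surface: du Val points are isolated hypersurface singularities, so $\Omega_X$ has a length-one locally free resolution, $\CExt^{\ge 2}(\Omega_X,\cO_X)=0$ and $\CExt^1(\Omega_X,\cO_X)$ has zero-dimensional support; the local-to-global spectral sequence then gives $\Ext^{\ge 2}(\Omega_X,\cO_X) \cong H^{\ge 2}(X,T_X)$, and $H^2(X,T_X)=0$ by Hacking--Prokhorov. Your ``alternative direct route'' is in the right spirit but misidentifies the target: the spectral sequence lands you at $H^2(X,T_X)$ (equivalently $H^2(\tX,T_\tX)$, using that $R^1\pi_*T_\tX$ and $\mathrm{coker}(\pi_*T_\tX \to T_X)$ are supported in dimension zero), not at $H^2(\tX,T_\tX(-\log\bDelta))$; the logarithmic tangent sheaf controls deformations of the pair $(\tX,\bDelta)$, i.e.\ the equisingular deformations, which is again the wrong functor for smoothness of the whole stack. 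If you want a self-contained argument, prove $H^2(\tX,T_\tX)=0$ on the rational surface $\tX$ (e.g.\ via Serre duality and the injection $\Omega^1_\tX\otimes\omega_\tX \hookrightarrow \Omega^1_\tX$ given by an anticanonical section) and transport it to $X$ as above.
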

\begin{proof}
By Hilbert scheme argument, the stack $\DP$ is an Artin stack of finite type.
So, by deformation theory it is enough to check that deformations of a sextic du Val del Pezzo surface $X$ over an algebraically closed field are unobstructed, i.e., that $\Ext^p(\Omega_X,\cO_X) = 0$ for $p \ge 2$.
On the other hand, since $X$ has isolated hypersurface singularities, 
the sheaf $\Omega_X$ has a locally free resolution of length~1, 
hence~$\CExt^p(\Omega_X,\cO_X) = 0$ for $p \ge 2$ and $\CExt^1(\Omega_X,\cO_X)$ has zero-dimensional support.
By the local-to-global spectral sequence this means that 
\begin{equation*}
\Ext^{\ge 2}(\Omega_X,\cO_X) = H^{\ge 2}(X,T_X).
\end{equation*}
Since $X$ is a surface, we just have to check that $H^2(X,T_X) = 0$.
This holds by~\cite[Proposition~3.1]{hacking2010smoothable}.
\end{proof}

\begin{remark}
The stack $\DP$ is not separated ---
one can construct two du Val families $\cX \to \AA^1$ and~$\cX' \to \AA^1$ that are isomorphic over $\AA^1 \setminus \{0\}$, but differ by a small birational transformation over the whole base.
The simplest example is to consider the blowup $\hcX = \Bl_{\cY}(\PP^2 \times \AA^1) \to \AA^1$, 
where $\cY = \Big(\{(1,0,0)\} \times \AA^1\Big) \sqcup \Big(\{(0,1,0)\} \times \AA^1\Big) \sqcup \Big\{ ((1,1,t),t) \mid t \in \AA^1 \Big\}$,
and define $\cX \to \AA^1$ as the family of relative anticanonical models of $\hcX$.
Then $\cX \to \AA^1$ is smooth over~$\AA^1 \times \{0\}$ and degenerates to a singular del Pezzo surface (of type 1) at point $t = 0$.
On the other hand, the family $\cX \times_{\AA^1} (\AA^1 \setminus \{0\}) \to \AA^1 \setminus \{0\}$ is isomorphic to the trivial family, hence can be extended to a trivial family $\cX' \to \AA^1$.
Thus, the families~$\cX$ and $\cX'$ are isomorphic over $\AA^1 \setminus \{0\}$, but have different fibers over $0$.
\end{remark}

\section{Homological projective duality for $\P^2 \times \P^2$ and $\Fl(1,2;3)$}\label{appendix:hpd-p2p2}

We refer to~\cite{kuznetsov2007hpd,icm2014,thomas2015notes} for the definition and a review of homological projective duality.
In this section we construct the homologically projective dual variety for $\P^2 \times \P^2$ and the flag variety~$\Fl(1,2;3)$ with respect to a certain symmetric rectangular Lefschetz decomposition.

Denote by $W_1$ and $W_2$ two three-dimensional vector spaces, consider the product
\begin{equation*}
\XX_2 := \P(W_1) \times \P(W_2),
\end{equation*}
its Segre embedding $\XX_2 \hookrightarrow \P(W_1 \otimes W_2) =: \P(\WW)$,
and the standard exceptional collection on ${\XX_2}$:
\begin{equation*}
\bD({\XX_2}) = \langle  \cO_{\XX_2}, \cO_{\XX_2}(1,0), \cO_{\XX_2}(2,0), \cO_{\XX_2}(0,1), \cO_{\XX_2}(1,1), \cO_{\XX_2}(2,1), \cO_{\XX_2}(0,2), \cO_{\XX_2}(1,2), \cO_{\XX_2}(2,2) \rangle. 
\end{equation*}
We modify this collection slightly to turn it into a symmetric rectangular Lefschetz decomposition with respect to the line bundle~$\cO_{\XX_2}(1,1)$.
For this we mutate $\cO_{\XX_2}(2,0)$ and $\cO_{\XX_2}(0,2)$ to the far left.
An easy computation shows that the result is the following exceptional collection
\begin{equation*}
\bD({\XX_2}) = \langle \cO_{\XX_2}(0,-1), \cO_{\XX_2}(-1,0), \cO_{\XX_2}, \cO_{\XX_2}(1,0), \cO_{\XX_2}(0,1), \cO_{\XX_2}(1,1), \cO_{\XX_2}(2,1), \cO_{\XX_2}(1,2), \cO_{\XX_2}(2,2) \rangle. 
\end{equation*}
Clearly, this is a rectangular Lefschetz collection with respect to $\cO_{\XX_2}(1,1)$ with three blocks equal to 
\begin{equation}\label{eq:xx3-lefschetz}
\cA_{\XX_2}^{\rm{sym}} = \langle \cO_{\XX_2}(0,-1), \cO_{\XX_2}(-1,0), \cO_{\XX_2} \rangle. 
\end{equation}
It is symmetric with respect to the transposition of factors.

The homological projective duality of $\XX_2$ with respect to the standard Lefschetz decomposition
\begin{equation*}
\bD(\XX_2) = \langle \bD(\P(W_1)), \bD(\P(W_1)) \otimes \cO_{\XX_2}(1,1), \bD(\P(W_1)) \otimes \cO_{\XX_2}(2,2) \rangle
\end{equation*}
is described in~\cite{bernardara2016determinantal}.
For this the linear homological projective duality argument~\cite[Section~8]{kuznetsov2007hpd} is used.
Indeed, the scheme~${\XX_2}$ can be considered as a projectivization of a vector bundle
\begin{equation*}
{\XX_2} \cong \P_{\P(W_1)}(\cW_2),
\qquad 
\cW_2 := W_2\otimes \cO_{\P(W_1)}(-1) \subset \WW \otimes \cO_{\P(W_1)}.
\end{equation*}
Consequently, by~\cite[Corollary~8.3]{kuznetsov2007hpd} the homological projectively dual of ${\XX_2}$ with respect to the Lefschetz decomposition with the first block $\bD(\P(W_1))$ is
\begin{equation}
\label{eq:def-yy2}
{\YY_2} := \P_{\P(W_1)}(\cW_2^\perp),
\end{equation}
where $\cW_2^\perp := \Ker(\WW^\vee \otimes \cO_{\P(W_1)} \to \cW_2^\vee) \cong W_2^\vee \otimes \Omega_{\P(W_1)}(1)$ is a rank 6 vector bundle on~$\P(W_1)$.
In the next theorem we show that the result of homological projective duality with respect to~\eqref{eq:xx3-lefschetz} is the same.

\begin{theorem}\label{theorem:hpd-p2p2}
The variety ${\YY_2}$ is homologically projectively dual to the variety ${\XX_2}$ with respect to the Lefschetz decomposition of $\bD(\XX_2)$ with first block~\eqref{eq:xx3-lefschetz}.
\end{theorem}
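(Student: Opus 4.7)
My plan is to deduce the theorem from Kuznetsov's linear HPD for projective bundles combined with a mutation of the Lefschetz centre. Under the identification $\XX_2 \cong \P_{\P(W_1)}(\cW_2)$, with $\cW_2 = W_2 \otimes \cO_{\P(W_1)}(-1) \subset \WW \otimes \cO_{\P(W_1)}$, Orlov's projective bundle formula gives the rectangular Lefschetz decomposition
\[
\bD(\XX_2) = \langle \cB_0,\ \cB_0(1,1),\ \cB_0(2,2) \rangle, \qquad \cB_0 := \pr_1^*\bD(\P(W_1)) = \langle \cO, \cO(1,0), \cO(2,0) \rangle.
\]
By \cite[Corollary~8.3]{kuznetsov2007hpd} the variety $\YY_2 = \P_{\P(W_1)}(\cW_2^\perp)$ defined in~\eqref{eq:def-yy2} is the HPD of $\XX_2$ with respect to this decomposition, and the Fourier--Mukai kernel is the natural twisted-ideal-type sheaf coming from the tautological exact sequence on $\YY_2$.

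The next step is to transport this result across the mutation relating $\cB_0$ to the symmetric block $\cA_{\XX_2}^{\rm{sym}} = \langle \cO(0,-1), \cO(-1,0), \cO \rangle$ used in the statement. This mutation is the one described at the start of Appendix~\ref{appendix:hpd-p2p2}: moving $\cO(2,0)$ and (via the rotation-by-twist) $\cO(0,2)$ to the far left of the full exceptional collection. Mutations inside the Lefschetz centre preserve the HPD variety as a scheme and only act on the HPD side by twisting the Fourier--Mukai kernel and mutating the Lefschetz decomposition there. In the rectangular case all blocks coincide, and I will match the mutated primitive block in $\bD(\XX_2)$ with $\cA_{\XX_2}^{\rm{sym}}$ block-by-block, using that both decompositions are rectangular of the same width $3$ with the same twist $\cO(1,1)$.

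To extract the concrete statement, I will verify the two defining properties of HPD: (a) that $\Phi_{\EE_2} \colon \bD(\YY_2) \to \bD(\cX_2)$ with the kernel $\EE_2$ to be defined in~\eqref{eq:hpd-kernel-xx3} is fully faithful, and (b) that
\[
\bD(\cX_2) = \langle \Phi_{\EE_2}(\bD(\YY_2)),\ \cA_{\XX_2}^{\rm{sym}}(1),\ \cA_{\XX_2}^{\rm{sym}}(2) \rangle,
\]
where $(i)$ denotes tensoring with the $i$-th power of $\cO_{\P(\WW^\vee)}(1)|_{\cX_2}$. Both follow from the linear HPD statement by applying the known mutation behaviour of Lefschetz decompositions on the universal hyperplane section $\cX_2 \subset \XX_2 \times \P(\WW^\vee)$, and then identifying the resulting Fourier--Mukai kernel with $\EE_2$.

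The main obstacle, I expect, will be the matching in part~(b): one must track how the sequence of left mutations acts on the Fourier--Mukai kernel through the incidence diagram $\cX_2 \times_{\P(\WW^\vee)} \YY_2 \to \cX_2$, and then identify the resulting object with the concrete twisted ideal sheaf of a relative line on the fibres of $\cX_2 \to \P(\WW^\vee)$, which are degenerate sextic del Pezzos parametrised by the cubic hypersurface $\cD_{W_1,W_2}$ resolved by $\YY_2$. Concretely this should reduce to a computation involving the Koszul resolution of the incidence quadric, the tautological sequence on $\P_{\P(W_1)}(\cW_2^\perp)$, and the relative Euler sequence. Once this identification is done, the two HPD axioms follow by a direct rectangular-Lefschetz bookkeeping argument.
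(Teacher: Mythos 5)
Your proposal follows essentially the same route as the paper: both start from linear HPD for the projective bundle $\XX_2 \cong \P_{\P(W_1)}(\cW_2)$ via \cite[\S8]{kuznetsov2007hpd}, then perform explicit mutations of the resulting semiorthogonal decomposition of the universal hyperplane section $\cX_2$ to make the complementary blocks match the symmetric Lefschetz block~\eqref{eq:xx3-lefschetz}, and finally identify the mutated Fourier--Mukai kernel with the twisted ideal sheaf $\EE_2$ supported on $\YY_2 \times_{\P(\WW^\vee)} \cX_2$. One small slip to fix when writing this up: the Lefschetz twists in your displayed decomposition of $\bD(\cX_2)$ must be by $\cO_{\P(\WW)}(1)\vert_{\XX_2} = \cO_{\XX_2}(1,1)$, not by $\cO_{\P(\WW^\vee)}(1)$ --- the latter would not separate the blocks, since each block is already $\bD(\P(\WW^\vee))$-linear.
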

\begin{proof}
Let ${\cX_2} \subset {\XX_2} \times \P(\WW^\vee)$ be the universal hyperplane section of ${\XX_2}$.
By~\cite[Theorem~8.2]{kuznetsov2007hpd}, there is a semiorthogonal decomposition
\begin{equation}
\label{eq:dbxx3-1}
\bD({\cX_2}) = \langle i_*\phi^*(\bD({\YY_2})), \bD(\P(W_1) \times \P(\WW^\vee)) \otimes \cO_{\XX_2}(1,1), \bD(\P(W_1) \times \P(\WW^\vee)) \otimes \cO_{\XX_2}(2,2) \rangle,
\end{equation}
where the morphisms $i$ and $\phi$ are defined by the Cartesian diagram
\begin{equation}
\label{diagram:yy-cx3}
\vcenter{\xymatrix{
{\YY_2} \times \P(W_2) \ar[r]^-i \ar[d]_\phi & {\cX_2} \ar[d]^{p_{\cX_2}} \\
{\YY_2} \ar[r]^-{p_{\YY_2}} & \P(W_1) \times \P(\WW^\vee) \ar[r]^-{p_2} & \P(\WW^\vee)
}}
\end{equation}
and the map $p_{\YY_2}$ is induced by the embedding $\cW_2^\perp \hookrightarrow \WW^\vee \otimes \cO_{\P(W_1)}$.
We modify~\eqref{eq:dbxx3-1} by a sequence of mutations to change it to the form we need.

First, using the standard exceptional collection $\bD(\P(W_1)) = \langle \cO_{\P(W_1)}(-1), \cO_{\P(W_1)}, \cO_{\P(W_1)}(1) \rangle$,
decomposition \eqref{eq:dbxx3-1} can be rewritten as
\begin{multline*}
\bD({\cX_2}) = \langle i_*\phi^*(\bD({\YY_2})), \\ 
\bD(\P(\WW^\vee)) \otimes \cO_{\XX_2}(0,1), 
\bD(\P(\WW^\vee)) \otimes \cO_{\XX_2}(1,1), 
\bD(\P(\WW^\vee)) \otimes \cO_{\XX_2}(2,1), \\
\bD(\P(\WW^\vee)) \otimes \cO_{\XX_2}(1,2), 
\bD(\P(\WW^\vee)) \otimes \cO_{\XX_2}(2,2), 
\bD(\P(\WW^\vee)) \otimes \cO_{\XX_2}(3,2)
\rangle.
\end{multline*}
Mutating the last component to the far left and taking into account that $\omega_{\cX_2} \cong \cO_{\XX_2}(-2,-2)$ up to a line bundle pulled back from $\P(\WW^\vee)$, we obtain a semiorthogonal decomposition
\begin{multline*}
\bD({\cX_2}) = \langle \bD(\P(\WW^\vee)) \otimes \cO_{\XX_2}(1,0),
i_*\phi^*(\bD({\YY_2})), \\ 
\bD(\P(\WW^\vee)) \otimes \cO_{\XX_2}(0,1), 
\bD(\P(\WW^\vee)) \otimes \cO_{\XX_2}(1,1), \\
\bD(\P(\WW^\vee)) \otimes \cO_{\XX_2}(2,1), 
\bD(\P(\WW^\vee)) \otimes \cO_{\XX_2}(1,2), 
\bD(\P(\WW^\vee)) \otimes \cO_{\XX_2}(2,2)
\rangle.
\end{multline*}

Next, mutating the second component one step to the left, we get
\begin{multline}\label{eq:dbxx3-2}
\bD({\cX_2}) = \langle \Phi(\bD({\YY_2})), \\
\bD(\P(\WW^\vee)) \otimes \cO_{\XX_2}(1,0),
\bD(\P(\WW^\vee)) \otimes \cO_{\XX_2}(0,1), 
\bD(\P(\WW^\vee)) \otimes \cO_{\XX_2}(1,1), \\
\bD(\P(\WW^\vee)) \otimes \cO_{\XX_2}(2,1), 
\bD(\P(\WW^\vee)) \otimes \cO_{\XX_2}(1,2), 
\bD(\P(\WW^\vee)) \otimes \cO_{\XX_2}(2,2)
\rangle.
\end{multline}
where $\Phi = \LL_{\bD(\P(\WW^\vee)) \otimes \cO_{\XX_2}(1,0)} \circ i_* \circ \phi^* \colon \bD({\YY_2}) \to \bD({\cX_2})$.

This almost proves the result. 
The only small thing left is to show that the functor $\Phi$ is a Fourier--Mukai functor 
whose kernel is supported on the fiber product ${\YY_2} \times_{\P(\WW^\vee)} {\cX_2}$.
For this we note that by~\eqref{diagram:yy-cx3} the functor $i_* \circ \phi^*$ is a Fourier--Mukai functor 
with kernel $j_*\cO_{{\YY_2} \times \P(W_2)}$, 
where $j \colon \YY_2 \times \P(W_2) \to \YY_2 \times_{\P(\WW^\vee)} \cX_2$ is the embedding 
induced by the commutative square~\eqref{diagram:yy-cx3}.
Moreover, using notation from the diagram~\eqref{diagram:yy-cx3}, 
the projection functor onto $\bD(\P(\WW^\vee)) \otimes \cO_{\XX_2}(1,0)$ is given by
\begin{equation*}
\cF \mapsto p_{\cX_2}^*(p_2^*p_{2*}(p_{\cX_2*}\cF \otimes \cO_{\P(W_1)}(-1)) \otimes \cO_{\P(W_1)}(1)),
\end{equation*}
and its composition with $i_* \circ \phi^*$ is given by
\begin{equation*}
\cF \mapsto p_{\cX_2}^*(p_2^*p_{2*}(p_{\YY_2*}\cF \otimes \cO_{\P(W_1)}(-1)) \otimes \cO_{\P(W_1)}(1)).
\end{equation*}
In other words, this composition is a Fourier--Mukai functor with kernel $p_{\YY_2}^*(\cO_{\P(W_1)}(-1)) \boxtimes p_{\cX_2}^*(\cO_{\P(W_1)}(1))$ on $\YY_2 \times_{\P(\WW^\vee)} \cX_2$.
Therefore, the functor $\Phi$ fits into a distinguished triangle
\begin{equation*}
\Phi_{p_{\YY_2}^*(\cO_{\P(W_1)}(-1)) \boxtimes p_{\cX_2}^*(\cO_{\P(W_1)}(1))} \to \Phi_{j_*\cO_{{\YY_2} \times \P(W_2)}} \to \Phi.
\end{equation*}
It follows from~\eqref{diagram:yy-cx3} that 
\begin{equation*}
j^*(p_{\YY_2}^*(\cO_{\P(W_1)}(-1)) \boxtimes p_{\cX_2}^*(\cO_{\P(W_1)}(1))) \cong \cO_{{\YY_2} \times \P(W_2)}, 
\end{equation*}
hence by~\cite{anno2012adjunction} the above triangle of functors is induced by (a rotation of the twist of) 
the triangle associated with the standard exact sequence
\begin{equation*}
0 \to \cI_{{\YY_2} \times \P(W_2), {\YY_2} \times_{\P(\WW^\vee)} {\cX_2}} \to \cO_{{\YY_2} \times_{\P(\WW^\vee)} {\cX_2}} \to j_*\cO_{{\YY_2} \times \P(W_2)} \to 0.
\end{equation*}
It finally proves, that up to an irrelevant shift, the functor $\Phi$ is indeed a Fourier--Mukai functor with the kernel
\begin{equation}\label{eq:hpd-kernel-xx3}
\EE_2 := \cI_{{\YY_2} \times \P(W_2), {\YY_2} \times_{\P(\WW^\vee)} {\cX_2}} \otimes (p_{\YY_2}^*(\cO_{\P(W_1)}(-1)) \boxtimes p_{\cX_2}^*(\cO_{\P(W_1)}(1))) \in \bD({\YY_2} \times_{\P(\WW^\vee)} \cX_2),
\end{equation}
and thus completes the proof of the theorem.
\end{proof}

\begin{remark}
We could, of course, exchange the role of $W_1$ and $W_2$ in the construction.
Then we would get a slightly different homological projectively dual variety as the result, that is
\begin{equation*}
\YY_2' = \P_{\P(W_2)}(W_1^\vee \otimes \Omega_{\P(W_2)}(1)).
\end{equation*}
Note that the natural maps $\YY_2 \to \P(\WW^\vee)$ and $\YY_2' \to \P(\WW^\vee)$ are birational onto the same discriminant cubic hypersurface $\cD_{W_1,W_2} \subset \P(\WW^\vee) = \P(W_1^\vee \otimes W_2^\vee)$
and provide two small resolutions of singularities of~$\cD_{W_1,W_2}$, related to each other by a flop, identifying their derived categories.
So, the homological projectively dual of $\XX_2$ as a category is unambiguously defined, but has two different geometric models~$\YY_2$ and~$\YY_2'$, breaking down its inner symmetry.
\end{remark}

One can also use the above result to construct a symmetric homological projective duality 
for the flag variety $\Fl(W) = \Fl(1,2;W)$ of a three-dimensional vector space $W$.
Note that $\Fl(W)$ is a smooth hyperplane section of $\P(W) \times \P(W^\vee)$ corresponding 
to the natural bilinear pairing $b_0$ between $W$ and~$W^\vee$.
Since~$b_0$ is nondegenerate, it does not lie in the image $\cD_{W,W^\vee}$ of $\YY_2$ in $\P(W \otimes W^\vee)$, 
hence by homological projective duality \cite[Theorem~6.3]{kuznetsov2007hpd} we obtain a semiorthogonal decomposition
\begin{equation*}
\bD(\Fl(W)) = \langle \cO_{\Fl(W)}(0,-1), \cO_{\Fl(W)}(-1,0), \cO_{\Fl(W)}, \cO_{\Fl(W)}(1,0), \cO_{\Fl(W)}(0,1), \cO_{\Fl(W)}(1,1) \rangle. 
\end{equation*}
Clearly, this is a Lefschetz collection with respect to $\cO_{\Fl(W)}(1,1)$ with two blocks equal to 
\begin{equation}\label{eq:flw-lefschetz}
\cA_{\Fl(W)}^{\rm{sym}} = \langle \cO_{\Fl(W)}(0,-1), \cO_{\Fl(W)}(-1,0), \cO_{\Fl(W)} \rangle. 
\end{equation}
On the other hand, the linear projection with center at $b_0$ defines a regular map $\YY_2 \to \P((W \otimes W^\vee)/b_0)$.

\begin{corollary}\label{corollary:hpd-fl3}
The variety ${\YY_2}$ defined by~\eqref{eq:def-yy2} is homological projectively dual to the flag variety $\Fl(W)$ with respect to the Lefschetz decomposition of $\bD(\Fl(W))$ with first block~\eqref{eq:flw-lefschetz}.
\end{corollary}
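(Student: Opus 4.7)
The plan is to deduce this corollary from Theorem~\ref{theorem:hpd-p2p2} by means of HPD base change along the embedding $\Fl(W) \hookrightarrow \XX_2$, where we take $W_1 = W$ and $W_2 = W^\vee$ so that $\WW = W \otimes W^\vee$. The crucial geometric input is that the nondegenerate trace form $b_0 \in \WW^\vee$ does not lie on the discriminant cubic $\cD_{W,W^\vee}$, while the natural projection $\YY_2 \to \P(\WW^\vee)$ factors through $\cD_{W,W^\vee}$. Consequently the linear projection $\P(\WW^\vee) \dashrightarrow \P(\WW^\vee/\langle b_0\rangle)$ with center $b_0$ restricts to a regular morphism $\YY_2 \to \P(\WW^\vee/\langle b_0\rangle)$, and this is the morphism with respect to which HPD should hold.

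Next I would identify the universal hyperplane section of $\Fl(W) \subset \P(b_0^\perp)$ with a base change of the universal two-plane section of $\XX_2$. Indeed, the parameter space of hyperplanes in $\P(b_0^\perp)$ is $\P(\WW^\vee/\langle b_0\rangle)$, and a class $[\ell]$ there corresponds to the $2$-plane $P_\ell := \langle b_0, \ell\rangle \subset \WW^\vee$; the associated hyperplane section of $\Fl(W)$ equals $\XX_2 \cap \P(P_\ell^\perp)$. Hence the universal hyperplane section $\cU \to \P(\WW^\vee/\langle b_0\rangle)$ of $\Fl(W)$ is the pullback of the universal $2$-plane section of $\XX_2$ along the linear embedding $\P(\WW^\vee/\langle b_0\rangle) \hookrightarrow \Gr(2,\WW^\vee)$, $[\ell] \mapsto P_\ell$.

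I would then combine Theorem~\ref{theorem:hpd-p2p2} with the HPD base change theorem \cite[Theorem~6.3]{kuznetsov2007hpd} applied to this embedding to produce a semiorthogonal decomposition of $\bD(\cU)$. Its first block should be the derived category of the fiber product $\YY_2 \times_{\P(\WW^\vee)} \P_{\P(\WW^\vee/\langle b_0\rangle)}(\cP)$, where $\cP$ is the tautological rank $2$ bundle, and its remaining blocks are the Lefschetz components $\cA_{\XX_2}^{\rm sym}$ from~\eqref{eq:xx3-lefschetz} restricted to $\Fl(W)$ and twisted appropriately. The genericity condition $b_0 \notin \cD_{W,W^\vee}$ ensures that, for every $y \in \YY_2$ with image $b \in \cD_{W,W^\vee}$, the $2$-plane $\langle b_0, b\rangle$ is uniquely determined by $y$, so the fiber product is isomorphic to $\YY_2$ itself via the regular projection constructed in the first step.

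It then remains to verify that this semiorthogonal decomposition satisfies the defining axioms of HPD for $\Fl(W)$ with respect to the symmetric rectangular Lefschetz decomposition~\eqref{eq:flw-lefschetz} and that the Fourier--Mukai kernel realizing the embedding $\bD(\YY_2) \hookrightarrow \bD(\cU)$ is obtained from $\EE_2$ of~\eqref{eq:hpd-kernel-xx3} by pullback and an appropriate mutation. The main obstacle will be the bookkeeping of the mutations required to pass from the left-aligned Lefschetz decomposition in which \cite[Theorem~6.3]{kuznetsov2007hpd} is phrased to the symmetric rectangular decomposition~\eqref{eq:xx3-lefschetz}, analogous to the mutations already performed in the proof of Theorem~\ref{theorem:hpd-p2p2}; once these are in place, the Lefschetz structure on $\Fl(W)$ inherited by restriction is exactly~\eqref{eq:flw-lefschetz}.
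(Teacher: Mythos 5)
Your plan is correct and is, in substance, the argument behind the paper's proof --- the difference is that the paper does not carry it out: it simply invokes the general compatibility of homological projective duality with linear sections on one side and linear projections on the other, citing \cite[Theorem~3.4]{carocci2015homological} (see also \cite[Proposition~A.10]{kuznetsov2018cones}) applied to the hyperplane $\P(b_0^\perp)$. Your proposal is essentially an in-line proof of that general statement in this special case, and the two geometric points you isolate --- that hyperplanes of $\P(b_0^\perp)$ correspond to the $2$-planes $\langle b_0,\ell\rangle\subset\WW^\vee$, and that the fiber product $\YY_2\times_{\P(\WW^\vee)}\P_{\P(\WW^\vee/\langle b_0\rangle)}(\cP)$ collapses to $\YY_2$ precisely because $b_0\notin\cD_{W,W^\vee}$ --- are exactly the hypotheses under which the cited theorem applies. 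Two minor points: for the universal statement over $\P(\WW^\vee/\langle b_0\rangle)$ you need the family version \cite[Theorem~6.27]{kuznetsov2007hpd} rather than the pointwise Theorem~6.3; and the verification of the HPD axioms and the mutation bookkeeping that you defer at the end is genuinely the content of \cite[Theorem~3.4]{carocci2015homological}, so it must either be completed along the lines of the proof of Theorem~\ref{theorem:hpd-p2p2} or replaced by that citation. What the citation buys is brevity; what your route buys is an explicit identification of the Fourier--Mukai kernel on $\YY_2\times_{\P(\WW^\vee/\langle b_0\rangle)}\cU$ as a pullback of $\EE_2$.
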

\begin{proof}
Follows from Theorem~\ref{theorem:hpd-p2p2} by~\cite[Theorem~3.4]{carocci2015homological}. 
(see also~\cite[Proposition~A.10]{kuznetsov2018cones}).
\end{proof}

\section{Homological projective duality for $\P^1 \times \P^1 \times \P^1$}\label{appendix:hpd-p1p1p1}

In this section we construct the homologically projective dual variety for $\P^1 \times \P^1 \times \P^1$ with respect to a certain symmetric rectangular Lefschetz decomposition.

Denote by $V_1$, $V_2$, and $V_3$ three two-dimensional vector spaces, consider the product
\begin{equation*}
{\XX_3} := \P(V_1) \times \P(V_2) \times \P(V_3),
\end{equation*}
its Segre embedding $\XX_3  \hookrightarrow \P(V_1 \otimes V_2 \otimes V_3) =: \P(\VV)$,
and the standard exceptional collection on ${\XX_3}$:
\begin{equation*}
\bD({\XX_3}) = \langle  \cO_{\XX_3}, \cO_{\XX_3}(1,0,0), \cO_{\XX_3}(0,1,0), \cO_{\XX_3}(1,1,0), \cO_{\XX_3}(0,0,1), \cO_{\XX_3}(1,0,1), \cO_{\XX_3}(0,1,1), \cO_{\XX_3}(1,1,1) \rangle. 
\end{equation*}
We modify this collection slightly to turn it into a symmetric rectangular Lefschetz decomposition with respect to the line bundle $\cO_{\XX_3}(1,1,1)$.
For this we mutate $\cO_{\XX_3}(1,1,0)$, $\cO_{\XX_3}(1,0,1)$ and $\cO_{\XX_3}(0,1,1)$ to the far right.
An easy computation shows that what we get is an exceptional collection
\begin{equation*}
\bD({\XX_3}) = \langle  
\cO_{\XX_3}, \cO_{\XX_3}(1,0,0), \cO_{\XX_3}(0,1,0), \cO_{\XX_3}(0,0,1), 
\cO_{\XX_3}(1,1,1), \cO_{\XX_3}(2,1,1), \cO_{\XX_3}(1,2,1), \cO_{\XX_3}(1,1,2) 
\rangle. 
\end{equation*}
Clearly, this is a rectangular Lefschetz collection with respect to $\cO_{\XX_3}(1,1,1)$ with two blocks equal to 
\begin{equation}
\label{eq:xx-lefschetz}
\cA_{\XX_3}^{\rm{sym}} = \langle  \cO_{\XX_3}, \cO_{\XX_3}(1,0,0), \cO_{\XX_3}(0,1,0), \cO_{\XX_3}(0,0,1) \rangle. 
\end{equation}
It is symmetric with respect to the action of the group $\fS_3$ by permutations of factors.

As in Appendix~\ref{appendix:hpd-p2p2}, we use a small modification of linear homological projective duality.
Again, the scheme $\XX_3$ can be represented as a projectivization of a vector bundle
\begin{equation*}
{\XX_3} \cong \P_{\P(V_1) \times \P(V_2)}(\cV_3),
\qquad 
\cV_3 := V_3\otimes \cO_{\P(V_1) \times \P(V_2)}(-1,-1) \subset \VV \otimes \cO_{\P(V_1) \times \P(V_2)}.
\end{equation*}
Consequently, by~\cite[Corollary~8.3]{kuznetsov2007hpd} the homological projectively dual of ${\XX_3}$ with respect to the Lefschetz decomposition with the first block $\bD(\P(V_1) \times \P(V_2))$ is
\begin{equation}\label{eq:def-yy3}
{\YY_3} := \P_{\P(V_1) \times \P(V_2)}(\cV_3^\perp),
\end{equation}
where $\cV_3^\perp := \Ker(\VV^\vee \otimes \cO_{\P(V_1) \times \P(V_2)} \to \cV_3^\vee) \cong V_3^\vee \otimes \Omega_{\P(V_1 \otimes V_2)}(1) \vert_{\P(V_1) \times \P(V_2)}$ is a rank 6 vector bundle on~$\P(V_1) \times \P(V_2)$.

\begin{theorem}\label{theorem:hpd-p1p1p1}
The variety ${\YY_3}$ is homologically projectively dual to the variety ${\XX_3}$ with respect to the Lefschetz decomposition of $\bD(\XX_3)$ with first block~\eqref{eq:xx-lefschetz}.
\end{theorem}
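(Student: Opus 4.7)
The plan is to mimic the proof of Theorem~\ref{theorem:hpd-p2p2} closely, starting from linear homological projective duality and turning its output into the symmetric rectangular form via a sequence of mutations, while tracking the Fourier--Mukai kernel.

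First, let $\cX_3 \subset \XX_3 \times \P(\VV^\vee)$ be the universal hyperplane section of $\XX_3$. By~\cite[Theorem~8.2]{kuznetsov2007hpd} applied to the projectivization $\XX_3 = \P_{\P(V_1)\times\P(V_2)}(\cV_3)$, there is a semiorthogonal decomposition of the form
\begin{equation*}
\bD(\cX_3) = \langle i_*\phi^*(\bD(\YY_3)),\ \bD(\P(V_1)\times\P(V_2)\times\P(\VV^\vee)) \otimes \cO_{\XX_3}(1,1,1) \rangle,
\end{equation*}
where $i$ and $\phi$ fit into the analogue of~\eqref{diagram:yy-cx3}, namely
\begin{equation}
\label{diagram:yy-cx}
\vcenter{\xymatrix{
\YY_3 \times \P(V_3) \ar[r]^-i \ar[d]_\phi & \cX_3 \ar[d]^{p_{\cX_3}} \\
\YY_3 \ar[r]^-{p_{\YY_3}} & \P(V_1)\times\P(V_2)\times\P(\VV^\vee) \ar[r]^-{p_3} & \P(\VV^\vee)
}}
\end{equation}

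Second, I would expand the factor $\bD(\P(V_1)\times\P(V_2))$ using its standard exceptional collection of four line bundles $\cO, \cO(1,0), \cO(0,1), \cO(1,1)$ (pulled back to $\XX_3$). This replaces the second component above by four blocks of the form $\bD(\P(\VV^\vee)) \otimes \cO_{\XX_3}(a,b,1)$ with $(a,b) \in \{(0,0),(1,0),(0,1),(1,1)\}$, giving in total the $\bD(\YY_3)$-component followed by eight $\bD(\P(\VV^\vee))$-blocks. Using $\omega_{\cX_3} \cong \cO_{\XX_3}(-1,-1,-1)$ up to a pullback from $\P(\VV^\vee)$, Serre duality and mutation allow moving $\bD(\P(\VV^\vee)) \otimes \cO_{\XX_3}(1,1,2)$, $\cO_{\XX_3}(1,2,1)$, $\cO_{\XX_3}(2,1,1)$ from the far right to the far left at the cost of a twist by~$\omega_{\cX_3}^{-1}$. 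After this, the remaining task is to mutate the $\bD(\YY_3)$-component to the leftmost position past the three leading blocks $\bD(\P(\VV^\vee)) \otimes \cO_{\XX_3}(1,0,0)$, $\cO_{\XX_3}(0,1,0)$, $\cO_{\XX_3}(0,0,1)$, so that it lands at the end of the decomposition and the remaining line bundles arrange into two rectangular Lefschetz blocks based on~\eqref{eq:xx-lefschetz}.

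Third, as in the previous theorem, the resulting embedding functor $\Phi \colon \bD(\YY_3) \to \bD(\cX_3)$ is a composition $\LL \circ i_* \circ \phi^*$, where $\LL$ is an iterated left mutation through the three twisted copies of $\bD(\P(\VV^\vee))$ encountered. To see that $\Phi$ is Fourier--Mukai with kernel supported on $\YY_3 \times_{\P(\VV^\vee)} \cX_3$, I would argue exactly as in Theorem~\ref{theorem:hpd-p2p2}: the functor $i_*\phi^*$ has kernel $j_*\cO_{\YY_3 \times \P(V_3)}$ for $j \colon \YY_3 \times \P(V_3) \hookrightarrow \YY_3 \times_{\P(\VV^\vee)} \cX_3$, while each left mutation is described via a projection functor whose kernel is a rank-one sheaf of the form $p_{\YY_3}^*\cL \boxtimes p_{\cX_3}^*\cL'$ on the fiber product, with $\cL, \cL'$ pullbacks of line bundles from $\P(V_1) \times \P(V_2)$. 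Iterating the standard triangle
\begin{equation*}
\cI_{\YY_3\times\P(V_3),\,\YY_3\times_{\P(\VV^\vee)}\cX_3} \otimes (p_{\YY_3}^*\cL \boxtimes p_{\cX_3}^*\cL') \to p_{\YY_3}^*\cL \boxtimes p_{\cX_3}^*\cL' \to j_*\cO_{\YY_3\times\P(V_3)}
\end{equation*}
and identifying the cones yields an explicit kernel $\EE_3 \in \bD(\YY_3 \times_{\P(\VV^\vee)} \cX_3)$, which up to shift realizes $\Phi$ as a Fourier--Mukai functor. This kernel is the one referenced as~\eqref{eq:hpd-kernel-xxx3} in the rest of the paper.

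I expect the main obstacle to be bookkeeping the mutations correctly: one has to perform the mutations in the right order to avoid accidental collisions between the twisted blocks and to end up with the symmetric Lefschetz form~\eqref{eq:xx-lefschetz} rather than an asymmetric variant (reflecting the distinguished role played by the factor $V_3$ in the choice of projectivization). Once the line-bundle bookkeeping is complete, the verification that $\Phi$ is Fourier--Mukai with the stated kernel is a direct adaptation of the argument from Theorem~\ref{theorem:hpd-p2p2} and should proceed without serious difficulties.
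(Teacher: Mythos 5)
Your overall strategy --- run linear homological projective duality for the projectivization $\XX_3=\P_{\P(V_1)\times\P(V_2)}(\cV_3)$ and then mutate the resulting decomposition of the universal hyperplane section into the symmetric form while tracking the Fourier--Mukai kernel --- is exactly the paper's, and your treatment of the kernel (the triangle involving $\cI_{\YY_3\times\P(V_3),\,\YY_3\times_{\P(\VV^\vee)}\cX_3}$) is essentially right in outline. However, the mutation bookkeeping, which you correctly identify as the crux, is wrong as written and would not terminate in a decomposition of the form required by HPD. Concretely: expanding the single ambient component $\bD(\P(V_1)\times\P(V_2)\times\P(\VV^\vee))\otimes\cO_{\XX_3}(1,1,1)$ along the standard exceptional collection of $\P(V_1)\times\P(V_2)$ produces \emph{four} blocks (not eight), with twists $(1,1,1)$, $(2,1,1)$, $(1,2,1)$, $(2,2,1)$ --- your list $\cO_{\XX_3}(a,b,1)$ with $(a,b)\in\{(0,0),(1,0),(0,1),(1,1)\}$ drops the overall $\cO_{\XX_3}(1,1,1)$ twist. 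The blocks you then propose to move, $\cO_{\XX_3}(1,1,2)$, $\cO_{\XX_3}(1,2,1)$, $\cO_{\XX_3}(2,1,1)$, are not all present at that stage; a move from the far right to the far left twists by $\omega_{\cX_3}$, not $\omega_{\cX_3}^{-1}$; and the blocks $\cO_{\XX_3}(1,0,0)$, $\cO_{\XX_3}(0,1,0)$, $\cO_{\XX_3}(0,0,1)$ never occur in the decomposition of $\cX_3$ at all (they arise only later, for a complete linear section, in the proof of Proposition~\ref{proposition:dp-p1p1p1}). Finally, the target is not ``$\bD(\YY_3)$ at the end'': to read off HPD with respect to~\eqref{eq:xx-lefschetz} one must arrive at
\begin{equation*}
\bD(\cX_3)=\big\langle\, \Phi(\bD(\YY_3)),\ \cA_{\XX_3}^{\rm{sym}}\otimes\cO_{\XX_3}(1,1,1)\boxtimes\bD(\P(\VV^\vee))\,\big\rangle ,
\end{equation*}
with the dual category as the \emph{left}most component.

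The correct sequence is shorter than what you envisage: move the single block $\bD(\P(\VV^\vee))\otimes\cO_{\XX_3}(2,2,1)$ to the far left, where the Serre functor turns it into $\cO_{\XX_3}(1,1,0)$; mutate $i_*\phi^*(\bD(\YY_3))$ left through that one block --- this single mutation already produces the kernel $\EE_3$ of~\eqref{eq:hpd-kernel-xxx3}, so no iteration of ideal-sheaf triangles is needed; then move $\cO_{\XX_3}(1,1,0)$ to the far right past $(1,1,1)$, $(2,1,1)$, $(1,2,1)$, where the pullback of the Euler-type sequence $0\to\cO_{\P(V_3)}\to V_3\otimes\cO_{\P(V_3)}(1)\to\cO_{\P(V_3)}(2)\to 0$ converts it into $\cO_{\XX_3}(1,1,2)$, completing the block $\cA_{\XX_3}^{\rm{sym}}\otimes\cO_{\XX_3}(1,1,1)$. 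With these corrections your argument coincides with the paper's proof.
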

\begin{proof}
Let ${\cX_3} \subset {\XX_3} \times \P(\VV^\vee)$ be the universal hyperplane section of ${\XX_3}$.
By~\cite[Theorem~8.2]{kuznetsov2007hpd}, there is a semiorthogonal decomposition
\begin{equation}\label{eq:dbxx1}
\bD({\cX_3}) = \langle i_*\phi^*(\bD({\YY_3})), \bD(\P(V_1) \times \P(V_2) \times \P(\VV^\vee)) \otimes \cO_{\XX_3}(1,1,1) \rangle,
\end{equation}
where the morphisms $i$ and $\phi$ are defined by the commutative diagram
\begin{equation}\label{diagram:yy-cx}
\vcenter{\xymatrix{
{\YY_3} \times \P(V_3) \ar[r]^-i \ar[d]_\phi & {\cX_3} \ar[d]^{p_{\cX_3}} \\
{\YY_3} \ar[r]^-{p_{\YY_3}} & \P(V_1) \times \P(V_2) \times \P(\VV^\vee) \ar[r] & \P(\VV^\vee)
}}
\end{equation}
and the map $p_{\YY_3}$ is induced by the embedding $\cV_3^\perp \hookrightarrow \VV^\vee \otimes \cO_{\P(V_1) \times \P(V_2)}$.
We modify~\eqref{eq:dbxx1} by a sequence of mutations to change it to the form we need.

First, using the standard exceptional collection 
\begin{equation*}
\bD(\P(V_1) \times \P(V_2)) = \langle \cO_{\P(V_1) \times \P(V_2)}, \cO_{\P(V_1) \times \P(V_2)}(1,0), \cO_{\P(V_1) \times \P(V_2)}(0,1), \cO_{\P(V_1) \times \P(V_2)}(1,1) \rangle
\end{equation*}
in $\bD(\P(V_1) \times \P(V_2))$, 
we rewrite \eqref{eq:dbxx1} as
\begin{multline*}
\bD({\cX_3}) = \langle i_*\phi^*(\bD({\YY_3})), \bD(\P(\VV^\vee)) \otimes \cO_{\XX_3}(1,1,1), \\ \bD(\P(\VV^\vee)) \otimes \cO_{\XX_3}(2,1,1), \bD(\P(\VV^\vee)) \otimes \cO_{\XX_3}(1,2,1), \bD(\P(\VV^\vee)) \otimes \cO_{\XX_3}(2,2,1) \rangle.
\end{multline*}
Mutating the last component to the far left, and taking into account that $\omega_{\cX_3} \cong \cO_{\XX_3}(-1,-1,-1)$ up to a line bundle pulled back from $\P(\VV^\vee)$, we get a semiorthogonal decomposition
\begin{multline*}
\bD({\cX_3}) = \langle \bD(\P(\VV^\vee)) \otimes \cO_{\XX_3}(1,1,0), i_*\phi^*(\bD({\YY_3})), \bD(\P(\VV^\vee)) \otimes \cO_{\XX_3}(1,1,1), \\ \bD(\P(\VV^\vee)) \otimes \cO_{\XX_3}(2,1,1), \bD(\P(\VV^\vee)) \otimes \cO_{\XX_3}(1,2,1) \rangle.
\end{multline*}

Next, mutating the second component to the left, we get
\begin{multline*}
\bD({\cX_3}) = \langle \Phi(\bD({\YY_3})), \bD(\P(\VV^\vee)) \otimes \cO_{\XX_3}(1,1,0), \bD(\P(\VV^\vee)) \otimes \cO_{\XX_3}(1,1,1), \\ \bD(\P(\VV^\vee)) \otimes \cO_{\XX_3}(2,1,1), \bD(\P(\VV^\vee)) \otimes \cO_{\XX_3}(1,2,1) \rangle.
\end{multline*}
where $\Phi = \LL_{\bD(\P(\VV^\vee)) \otimes \cO_{\XX_3}(1,1,0)} \circ i_* \circ \phi^* \colon \bD({\YY_3}) \to \bD({\cX_3})$.

Finally, mutating the second component to the far right and using the pullback 
of the standard exact sequence $0 \to \cO_{\P(V_3)} \to V_3 \otimes \cO_{\P(V_3)}(1) \to \cO_{\P(V_3)}(2) \to 0$, we obtain
\begin{multline}\label{eq:dbxx6}
\bD({\cX_3}) = \langle \Phi(\bD({\YY_3})), \bD(\P(\VV^\vee)) \otimes \cO_{\XX_3}(1,1,1), \\ \bD(\P(\VV^\vee)) \otimes \cO_{\XX_3}(2,1,1), \bD(\P(\VV^\vee)) \otimes \cO_{\XX_3}(1,2,1), \bD(\P(\VV^\vee)) \otimes \cO_{\XX_3}(1,1,2) \rangle.
\end{multline}

As before, this almost proves the result. 
The only small thing left is to show that the functor $\Phi$ is a Fourier--Mukai functor 
whose kernel is supported on the fiber product ${\YY_3} \times_{\P(\VV^\vee)} {\cX_3}$.
The same computations as in the proof of Theorem~\ref{theorem:hpd-p2p2} show that the functor~$\Phi$ fits into a distinguished triangle
\begin{equation*}
\Phi_{p_{\YY_3}^*(\cO_{\P(V_1) \times \P(V_2)}(-1,-1)) \boxtimes p_{\cX_3}^*(\cO_{\P(V_1) \times \P(V_2)}(1,1))} \to \Phi_{j_*\cO_{{\YY_3} \times \P(V_3)}} \to \Phi,
\end{equation*}
with notation introduced in~\eqref{diagram:yy-cx}.
Again, it follows that this triangle is associated (up to a twist and a rotation) with the standard exact sequence
\begin{equation*}
0 \to \cI_{{\YY_3} \times \P(V_3), {\YY_3} \times_{\P(\VV^\vee)} {\cX_3}} \to \cO_{{\YY_3} \times_{\P(\VV^\vee)} {\cX_3}} \to j_*\cO_{{\YY_3} \times \P(V_3)} \to 0,
\end{equation*}
hence 
the functor $\Phi$ is indeed a Fourier--Mukai functor with kernel
\begin{equation}\label{eq:hpd-kernel-xxx3}
\EE_3 := \cI_{{\YY_3} \times \P(V_3), {\YY_3} \times_{\P(\VV^\vee)} {\cX_3}} \otimes (p_{\YY_3}^*(\cO_{\P(V_1) \times \P(V_2)}(-1,-1)) \boxtimes p_{\cX_3}^*(\cO_{\P(V_1) \times \P(V_2)}(1,1)))
\end{equation}
and thus completes the proof of the theorem.
\end{proof}

\begin{remark}
\label{remark:fibers-resolution-dvvv}
The natural map ${\YY_3} \to \P(\VV^\vee)$ is generically finite of degree 2, 
and its branch divisor is the Cayley quartic $\overline\rO_6$ (see Section~\ref{subsection:p1p1p1}).
Moreover, the fiber of this morphism is isomorphic to~$\P^1$ over~$\rO_4$, and to a reducible conic over~$\rO_3$.
In particular, ${\YY_3}$ provides a small resolution of singularities 
of the double cover $\cD_{V_1,V_2,V_3}$ of~$\P(\VV^\vee)$ branched over~$\overline\rO_6$.
\end{remark}

\begin{remark}
We could, of course, exchange the role of $V_i$ in the construction.
Then we would get a slightly different homological projectively dual variety as the result, 
that is for each permutation ${\sigma} \in \fS_3$ we would get
\begin{equation*}
\YY_3^{\sigma} = \P_{\P(V_{{\sigma}(1)}) \times \P(V_{{\sigma}(2)})} 
\Big(V_{{\sigma}(3)}^\vee \otimes \Omega_{\P(V_{{\sigma}(1)} \otimes V_{{\sigma}(2)})}(1)\vert_{\P(V_{{\sigma}(1)}) \times \P(V_{{\sigma}(2)})}\Big).
\end{equation*}
Note that for each ${\sigma}$ the natural map $\YY_3^{\sigma} \to \P(\VV^\vee)$ factors 
through a birational morphism onto the double cover $\cD_{V_1,V_2,V_3}$ of~$\P(\VV^\vee)$ branched over $\overline\rO_6$.
They provide six small resolutions of singularities of~$\cD_{V_1,V_2,V_3}$, 
related to each other by flops, identifying their derived categories.
So, the homological projectively dual of~$\XX_3$ as a category is unambiguously defined, 
but has six different geometric models~$\YY_3^{\sigma}$, ${\sigma} \in \fS_3$, breaking down its inner symmetry.
\end{remark}

\end{document}